\documentclass[12pt, a4paper, oneside]{book}

\usepackage{amsmath,amssymb,amsthm,graphicx,mathrsfs,bbm,url}
\usepackage{amsthm}
\usepackage{wrapfig}
\usepackage{enumitem}
\usepackage{mathtools}
\usepackage[all]{xy}
\usepackage[utf8]{inputenc}

\usepackage{comment}

\usepackage{tikz}
\usepackage{tikz-cd} 
\usetikzlibrary{positioning}
\usetikzlibrary{matrix}
\usepackage[toc]{appendix}
\usepackage[usenames,dvipsnames]{xcolor}
\usepackage[colorlinks=true,linkcolor=Red,citecolor=Green]{hyperref}
\usepackage[super]{nth}
\usepackage[open, openlevel=2, depth=3, atend]{bookmark}
\hypersetup{pdfstartview=XYZ}
\usepackage[font=footnotesize]{caption}
\usepackage{a4wide}
\usepackage{subcaption}
\usepackage{caption}
\usepackage{a4wide}
\usepackage{tikz}
\usepackage{setspace}
\usepackage[normalem]{ ulem }
\usepackage{soul}

\usetikzlibrary{decorations.pathreplacing}
\usetikzlibrary{arrows}
\usetikzlibrary{shapes.misc}
\usetikzlibrary{shapes.symbols}
\usetikzlibrary{patterns}
\usepackage[bottom=40mm, top=35mm]{geometry}

\usepackage[utf8]{inputenc}
\usepackage[english]{babel}
\usepackage{indentfirst} 

\usepackage[nocompress]{cite}

\usepackage{adjustbox}
\usepackage{wrapfig}

\usepackage{mathtools}
\usepackage{amsthm}
\usepackage{amssymb}
\usepackage{amsmath}
\usepackage[mathscr]{urwchancal}
\usepackage{mathptmx}
\usepackage{latexsym}
\usepackage{graphicx}
\DeclareGraphicsExtensions{.pdf,.jpg}

\usepackage{amsfonts}
\usepackage{bm}
\usepackage{msc}
\usepackage{afterpage}
\usepackage{csquotes}

\usepackage{url}
\usepackage{verbatim}
\usepackage[justification=centering]{caption}
\usepackage{color,xcolor}
\usepackage{ifpdf}
\usepackage{psfrag}
\usepackage{xparse}
\usepackage{bigints}
\usepackage{hyperref}
\usepackage{url}
\usepackage{ulem} 
\usepackage{makeidx}

\usepackage{cleveref}
\usepackage{setspace}
\usetikzlibrary{arrows}
\usetikzlibrary{shapes.misc}
\usetikzlibrary{shapes.symbols}
\usetikzlibrary{patterns}
\usepackage{tikz}
\usetikzlibrary{positioning}
\usetikzlibrary{matrix}

\usepackage{tikz}
\usetikzlibrary{arrows,chains,matrix,positioning,scopes}

\usetikzlibrary{decorations.pathreplacing}
\usetikzlibrary{fadings}
\usepackage{bbm}

\usepackage{emptypage}

\usepackage{stmaryrd} 

\usepackage{titlesec}
\usepackage{epigraph}

\usepackage{pgfplots}
\usepackage{pgfplotstable}
\usetikzlibrary{fillbetween}
\usepgfplotslibrary{fillbetween}
\usetikzlibrary{shapes.misc}
\usetikzlibrary{math} 

\usepackage{algpseudocode}
\usepackage{mathrsfs} 
\usepackage{multirow}

\theoremstyle{plain}
\newtheorem{teo}{Theorem}[section]
\newtheorem{theorem}[teo]{Theorem}
\newtheorem{lemma}[teo]{Lemma}

\newtheorem{corollary}[teo]{Corollary}
\newtheorem{example}[teo]{Example}
\newtheorem*{question}{Question}
\newtheorem*{conjecture}{Conjecture}

\theoremstyle{definition}
\newtheorem{defn}{Definition}[chapter]
\newtheorem{definition}[defn]{Definition}
\theoremstyle{remark}

\newtheorem{remark}[teo]{Remark}

\newcommand{\R}{\mathbb{R}}
\newcommand{\N}{\mathbb{N}}

\usepackage{titlesec} 
\usepackage{microtype} 
\usepackage{textcase} 

\colorlet{halfgray}{black!45} 

\DeclareFixedFont{\chapterNumber}{T1}{pplj}{m}{n}{70} 

\DeclareRobustCommand{\spacedallcaps}[1]{\textls[100]{\noindent\bfseries\LARGE\MakeTextUppercase{#1}}} 

\titleformat{\chapter}[display]%
{\relax}{\mbox{} \marginpar{\vspace*{-3\baselineskip}\color{halfgray}\chapterNumber\thechapter}}{0pt}%
{\raggedright\spacedallcaps}[\normalsize\vspace*{.8\baselineskip}\titlerule]

\usepackage{fancyhdr}
\fancypagestyle{plain}{%
	\fancyhead{}
	
}

\makeindex

\begin{document}


\begin{titlepage}
\begin{center}
\Large
{\textsc{Two Instances of Chaos in Deterministic and Quantum Dynamical Systems}}
\end{center}
\vspace{5mm}
%
\vspace{10mm}
\large \begin{center}
{\textsc{Dissertation}}\\
\vspace{2mm}
{\textsc{zur}}\\
\vspace{2mm}
{\textsc{Erlangung der naturwissenschaftlichen Doktorw\"urde}}\\
\vspace{4mm}
{\textsc{(Dr. sc. nat.)}}\\
\vspace{4mm}
{\textsc{vorgelegt der}}\\
\vspace{2mm}
{\textsc{Mathematisch--naturwissenschaftlichen Fakult\"at}}\\
\vspace{2mm}
{\textsc{der}}\\
\vspace{2mm}
{\textsc{Universit\"at Z\"urich}}\\
\vspace{10mm}
{\textsc{von}}\\
\vspace{2mm}
{\textsc{Andrea Ulliana}}\\
\vspace{4mm}
{\textsc{aus}}\\
\vspace{2mm}
{\textsc{Italien}}\\
\vspace{10mm}
{Promotionskommision}\\
\vspace{4mm}
{Prof. Dr. Artur Avila (Vorsitz und Leitung)}\\
{Prof. Dr. Alexander Gorodnik}\\
{Prof. Dr. Corinna Ulcigrai}\\

\vspace{10mm} {Z\"urich, 2026}
\end{center}
\end{titlepage}


\frontmatter
\chapter*{Abstract}\markboth{ABSTRACT}{} \markright{ABSTRACT}{} 
This thesis consists of two distinct projects situated in the areas of smooth dynamics and spectral theory, respectively. They are united by a common interest in mechanisms of chaos and statistical behavior in classical and quantum dynamical systems.\newline
The first concerns smooth dynamics. We prove that all ergodic linear automorphisms of the $N$-dimensional torus with two-dimensional center are stably ergodic, including all ergodic automorphisms in dimensions $N\leq 5$ and $N=7$. This generalizes a previous result of Rodriguez-Hertz, which required an additional algebraic condition on the characteristic polynomial of the linear automorphism.\newline
The second project deals with spectral theory of Schr\"odinger operators. We prove that delocalization of most eigenvectors is topologically common in the space of deterministic Schr\"odinger Operators on a given large finite graph, provided that the IDS satisfies a suitable regularity condition. This result generalizes a recent theorem of Avila and Damanik. We also describe a flexible family of graphs satisfying our criterion, by proving a variant of the Thouless formula.


\setcounter{tocdepth}{1}
\tableofcontents




\mainmatter
\chapter*{Introduction} 
\phantomsection
\addcontentsline{toc}{chapter}{Introduction}
This thesis collects the results of two projects.
The first one, joint with Fernando Argentieri, deals with the problem of the stability of ergodicity of toral linear automorphisms, in the context of Smooth Dynamical Systems.  The second one, joint with Artur Avila, concerns the study of the delocalization of eigenvectors for discrete Schr\"odinger Operators on large graphs, with applications to Quantum Dynamics.

\paragraph*{Dynamical Systems} The field of Dynamical Systems studies the asymptotical or eventual properties of an iterated procedure. In a more general sense, a dynamical system is the action of a semigroup $G$ on a set $X$. The semigroup may represent time, discrete if $G=\mathbb{N}$ and continuous if $G=\mathbb{R}^+$. In the first case, we deal with the iteration of a map $f:X\rightarrow X$ and, in the second one, with a flow $\{f_t\}$ on $X$. This framework includes a great variety of systems, ranging from PDEs to Markov chains.\newline
Classical questions deal with the qualitative study of orbits, e.g. the sequences $(f^n(x))_n$, for discrete time systems. Depending on the structure of $X$, one can analyse properties of topological nature, as  recurrence, or statistical ones, as equidistribution (ergodicity).\newline
By Smooth Dynamics we mean the study of the dynamics of a diffeomorphism $f$ on a smooth manifold $M$.

\paragraph*{Quantum Dynamics}

Quantum Dynamics is the study of how quantum systems evolve over time. The state of such a system is encoded by a wave function $\phi$, which is an element of a suitable Hilbert space $\mathcal{H}$. Its time-evolution obeys the Schr\"odinger Equation
\begin{equation*}
    i\frac{d\phi}{dt} = -H\phi,
\end{equation*}
where $H:\mathcal{H}\rightarrow\mathcal{H}$ is the Hamiltonian of the system. Qualitative features of the solutions are deeply linked to the Spectral Theory of such operators.\newline
A remarkable class of Hamiltonians is give by the Schr\"odinger Operators, which have the form
\begin{equation*}
    H = \Delta + V,
\end{equation*}
where $\Delta$ is the Laplacian (possibly the combinatorial one) and $V$ is a diagonal operator, called potential. For the class of the Ergodic Schr\"odinger Operators, the potential is defined by sampling an observable along the orbits of an ergodic dynamical system. This wide framework links the study of many physical models, such as the Anderson model or quasicystals, to a dynamical system analysis.

\paragraph*{Outline}

Chapter \ref{Chapter - Preliminaries} provides a general introduction to both Dynamical Systems and the Spectral Theory of Schr\"odinger Operators, highlighting the many points of contact. These can be found, for example, in the first two sections: "Ergodic Theory" and "Cocycles and Lyapunov Exponents". Moreover, we describe the specific context and background of the works collected in the following two chapters.\newline
Chapter \ref{Chapter - Torus} consists of the submitted version of the manuscript "Advances on Stable Ergodicity of Toral Automorphisms", coauthored with Fernando Argentieri. We prove that all ergodic linear automorphisms with $2$-dimensional center of the $N$-dimensional Torus are, in fact, stably ergodic. This result generalizes a theorem of Rodriguez-Hertz and is related to a program of Pugh and Shub, which is central in smooth dynamics.\newline
Chapter \ref{Chapter - Schreodinger} contains a preliminary version of the work "Quantitative Delocalization For Schr\"odinger Operators on Certaian Families of Graphs", joint with Artur Avila. We consider Schr\"odinger Operators on finite graphs and we show that, under some geometric assumptions, delocalization of most eigenfunctions  can be achieved with a small perturbation of the potential, building on a previous work of Avila and Damanik.

\chapter{Preliminaries}\label{Chapter - Preliminaries}

\section{Ergodic Theory}
Ergodic Theory has its roots in Boltzmann's Ergodic Hypotesis, according to which, over time, the state of a chaotic system visits every region of the phase space with frequency proportional to their volume. In the modern understanding, this is a feature that characterizes all irreducible measurable dynamical systems.\newline

We consider a map $T:X\rightarrow X$ and we endow $X$ with a $\sigma$-algebra $\mathcal{B}$. We say that a measure $\mu$ on $(X,\mathcal{B})$ is invariant if $T_*\mu=\mu$, i.e. $\mu(T^{-1}(B))=\mu(B)$ for any $B\in\mathcal{B}$.

The existence of invariant measures alone provides rich information about the dynamics of the system. The following theorem is one of the most prominent examples.

\begin{theorem}[Poincar\'e recurrence theorem, \cite{poincare1890probleme}] Let $(X,\mathcal{B},\mu,T)$ be a measure preserving system and $A\subset X$ a measurable set. If $\mu(A) >0$ then for $\mu$-a.e. $x\in A$ there exists $n>0$ such that $T^n(x)\in A$.
\end{theorem}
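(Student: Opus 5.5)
We argue by considering the set of points of $A$ that never return and show that it has measure zero. Define
\[
B = \{x\in A : T^n(x)\notin A \text{ for all } n>0\} = A\setminus \bigcup_{n\geq 1} T^{-n}(A).
\]
This set is measurable, since it is obtained from $A$ and the measurable sets $T^{-n}(A)$ by countable set operations. The statement is equivalent to $\mu(B)=0$. We assume, as is implicit in the setting of a measure preserving system, that $\mu$ is finite (for instance a probability measure). This is essential: translation on $\mathbb{R}$ with Lebesgue measure is a counterexample without it.

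The key step is to show that the preimages $T^{-n}(B)$, $n\geq 0$, are pairwise disjoint. Suppose $x\in T^{-m}(B)\cap T^{-n}(B)$ with $0\le m<n$. Set $y=T^m(x)$. Then $y\in B\subset A$, and also $T^{n-m}(y)=T^n(x)\in B\subset A$. So $y$ is a point of $B$ that returns to $A$ at time $n-m>0$, which contradicts the definition of $B$. Hence the sets are disjoint.

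By invariance of $\mu$ each preimage has the same measure, $\mu(T^{-n}(B))=\mu(B)$. Using countable additivity on the disjoint family,
\[
\sum_{n\ge 0}\mu(B)=\mu\Big(\bigsqcup_{n\geq 0} T^{-n}(B)\Big)\le \mu(X)<\infty,
\]
which forces $\mu(B)=0$. Therefore $\mu$-almost every $x\in A$ satisfies $T^n(x)\in A$ for some $n>0$.

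The main obstacle is the disjointness argument, specifically shifting by $T^m$ so that the earlier visit becomes a point of $B$ itself. It is also worth noting where each hypothesis enters: finiteness of $\mu$ is what makes the final bound contradictory, and $\mu(A)>0$ is needed only for the statement to be non-vacuous. If one also wants infinitely many returns, the same argument applied to $T^k$ gives a null set $B_k$ for each $k\ge1$. A point outside $\bigcup_k B_k$ then returns along an increasing sequence of times.
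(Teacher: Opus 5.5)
Your argument is correct and complete. It is the standard wandering-set proof: shifting by $T^m$ makes the preimages $T^{-n}(B)$ pairwise disjoint, and invariance together with $\mu(X)<\infty$ then forces $\mu(B)=0$. Your infinitely-many-returns extension via $T^k$ is also sound.

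The paper gives no proof of this statement, only a citation to Poincar\'e, so there is nothing to compare against. The finiteness hypothesis you flag is implicit in the paper's setting, since it works with invariant probability measures. Your translation example on $\mathbb{R}$ correctly shows that this hypothesis cannot be dropped.
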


The same system $(X,\mathcal{B},T)$ might have several different invariant measures, which detect very different aspects of its dynamics. For instance one can consider a diffeomorphism $f$ of a Riemmanian manifold $M$, that satisfies $\det(\text{Jac}(f))=1$ and that has a periodic point $x$ of period $p$. Both the measures
\begin{equation}
    \mu = \text{vol}_M \quad \text{ and } \quad \nu=\sum_{i=0}^p \delta_{f^i(x)}
\end{equation}
are invariant (together with their linear combinations).

An invariant probability measure is ergodic if it is "dynamically indecomposible".

\begin{definition}
    Let $(X,\mathcal{B},\mu,T)$ be a measure preserving system. A measurable set $A\subset X$ is invariant if $T^{-1}(A)=A$. The system is ergodic if there are no non-trivial invariant sets; i.e. every invariant set has measure $0$ or $1$.
\end{definition}

\begin{remark}
    The following are equivalent conditions to ergodicity:
    \begin{enumerate}
        \item If $T^{-1}(A)=A$ mod $\mu$ then $\mu(A)=0$ or $1$;
        \item Every $f\in L^p(\mu)$ with $f\circ T=f$ a.e.  is constant a.e. (for any $1\leq p \leq \infty$).
    \end{enumerate}
\end{remark}

The ergodic measures are in fact the building blocks of all invariant measures, as we now state precisely. Let us denote the family of all invariant probability measure of $(X,\mathcal{B},T)$ by $P_T(X)$. We notice that $P_T(X)$ is a convex set.

\begin{theorem}
    The ergodic invariant measures are the extreme points of $P_T(X)$.\newline
    Moreover, if $\mu, \nu$ are ergodic measures, then either $\mu= \nu$ or $\mu\perp\nu$.
\end{theorem}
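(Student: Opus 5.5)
The plan is to prove the two claims in order, using the equivalent characterization of ergodicity by invariant sets modulo $\mu$ recorded in the preceding remark.

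\textbf{Ergodic $\Rightarrow$ extreme.} Let $\mu$ be ergodic and suppose $\mu = t\nu_1 + (1-t)\nu_2$ with $\nu_i\in P_T(X)$ and $t\in(0,1)$. Then $\nu_1\le \mu/t$, so $\nu_1\ll\mu$ and the Radon--Nikodym derivative $h=d\nu_1/d\mu\in L^1(\mu)$ exists. The goal is to show that $h$ is $T$-invariant mod $\mu$; ergodicity then forces $h$ to be constant, hence $h\equiv 1$ and $\nu_1=\mu$. The cleanest route avoids differentiating compositions. For $c\in\mathbb{R}$ set $E=\{h<c\}$ and $F=T^{-1}E$. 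Then
\begin{equation*}
\int_{E\setminus F} h\,d\mu + \int_{E\cap F} h\,d\mu = \nu_1(E)=\nu_1(F)= \int_{F\setminus E} h\,d\mu + \int_{E\cap F} h\,d\mu .
\end{equation*}
Since $\mu(E\setminus F)=\mu(F\setminus E)$ by invariance of $\mu$, and $h<c$ on $E\setminus F$ while $h\ge c$ on $F\setminus E$, the two remaining integrals can only agree if $\mu(E\setminus F)=0$. Thus each $E$ is invariant mod $\mu$, so $\mu(E)\in\{0,1\}$ by the remark, and $h$ is a.e. constant. I expect this step, showing that the Radon--Nikodym derivative is invariant, to be the main technical point; the sublevel-set trick is the decisive idea.

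\textbf{Extreme $\Rightarrow$ ergodic.} If $\mu$ is not ergodic, pick an invariant $A$ with $0<\mu(A)<1$. Then
\begin{equation*}
\mu = \mu(A)\,\mu(\cdot\mid A) + \mu(A^c)\,\mu(\cdot\mid A^c).
\end{equation*}
Both conditional measures are invariant, because $T^{-1}(B)\cap A = T^{-1}(B\cap A)$ when $T^{-1}A=A$. They are also distinct, since one gives $A$ mass $1$ and the other mass $0$. Hence $\mu$ is not extreme.

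\textbf{Mutual singularity.} Let $\mu\neq\nu$ be ergodic. Consider $\lambda=\tfrac12(\mu+\nu)\in P_T(X)$ and $h=d\mu/d\lambda$, and run the same sublevel-set argument with $\lambda$ in place of $\mu$; this shows each $\{h<c\}$ is $T$-invariant mod $\lambda$. Now take $A=\{h>0\}$ and $B=\{h=0\}$. Invariance mod $\lambda$ passes to $\mu$ and $\nu$ because both are dominated by $2\lambda$, so ergodicity gives $\nu(A)\in\{0,1\}$. By construction $\mu(B)=0$, so $\mu(A)=1$.

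If $\nu(A)=0$, then $\nu$ is concentrated on $B$ and $\mu\perp\nu$. If instead $\nu(A)=1$, refine the argument. The sets $\{h<c\}$ are invariant mod both $\mu$ and $\nu$, so by ergodicity each has $\mu$-measure and $\nu$-measure in $\{0,1\}$. Hence $h$ is $\mu$-a.e. equal to a constant $c_1$ and $\nu$-a.e. equal to a constant $c_2$. Because $h=d\mu/d\lambda$, we have $h\le 2$, and $\nu=(2-h)\lambda$. Three cases:
\begin{enumerate}
\item If $c_1\neq c_2$, the level sets $\{h=c_1\}$ and $\{h=c_2\}$ are disjoint and separate $\mu$ from $\nu$, giving $\mu\perp\nu$.
\item If $c_1=c_2=c$, then $h=c$ $\lambda$-a.e. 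Integrating gives $c=1$, hence $\mu=\lambda=\nu$, a contradiction.
\end{enumerate}
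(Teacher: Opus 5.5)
The paper states this theorem as background and gives no proof of it, so there is no argument of the authors to compare against. Judged on its own, your proof is correct and complete.

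Your first two parts follow the standard textbook argument. The comparison of $\int_{E\setminus T^{-1}E}h\,d\mu$ with $\int_{T^{-1}E\setminus E}h\,d\mu$ uses only preimages and the identity $\mu(E\setminus T^{-1}E)=\mu(T^{-1}E\setminus E)$, so it is valid for non-invertible $T$. You close it correctly with the mod-$\mu$ characterization of ergodicity from the preceding remark. Conditioning on a strictly invariant set for the converse is likewise standard.

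You genuinely depart from the usual route only in the singularity claim. The common proof uses the Birkhoff theorem that the paper states right after this result. Choose $B$ with $\mu(B)\neq\nu(B)$. The averages $\frac1N\sum_{n=0}^{N-1}\chi_B\circ T^n$ converge to $\mu(B)$ on a set of full $\mu$-measure and to $\nu(B)$ on a set of full $\nu$-measure, and these two sets are disjoint. That takes two lines but relies on the pointwise ergodic theorem. You instead rerun the sublevel-set lemma for $h=d\mu/d\lambda$ with $\lambda=\frac12(\mu+\nu)$, and transfer invariance mod $\lambda$ to $\mu$ and $\nu$ because both are dominated by $2\lambda$. This is longer but purely measure-theoretic. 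It also makes both the extremality statement and the singularity statement rest on one elementary lemma.

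Two small presentation points:
\begin{itemize}
\item The preliminary split according to $\nu(A)\in\{0,1\}$ is unnecessary, since your refined argument alone covers the case $\nu(A)=0$ (there $c_2=0\neq c_1$). If you keep the split, say explicitly that $\{h=0\}=\bigcap_n\{h<1/n\}$ is invariant mod $\lambda$, being a countable intersection of such sets.
\item You announce three cases but list only two.
\end{itemize}
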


We now connect the definition of ergodicity to the intuition we first described.\newline
Given a probability space $(X,\mathcal{I},\mu)$ and a function $f\in L^1(\mu)$ we denote by $E[f|\mathcal{I}]$ its conditional expectation. In particular, if $f$ is invariant $E[f|\mathcal{I}]=f$ and if the system is ergodic $E[f|\mathcal{I}]=\int_fd\mu$. We recall that if $f\in L^2(\mu)$, then $E[f|\mathcal{I}]$ coincides with the orthogonal projection of $f$ to the subspace of $\mu$-a.e. $T$-invariant functions.

\begin{theorem}[Ergodic theorem] Let $(X,\mathcal{B},\mu,T)$ be a measure-preserving system, let $\mathcal{I}$ denote the $\sigma$-algebra of invariant sets. Then for any $f\in L^2(\mu)$,
    \begin{equation}
    \frac{1}{N} \sum_{n=0}^{N-1}T^nf \rightarrow E[f|\mathcal{I}] \quad\text{ in }L^2(\mu),
    \end{equation}
and for any $f\in L^1(\mu)$,
    \begin{equation}
    \frac{1}{N} \sum_{n=0}^{N-1}T^n f \rightarrow E[f|\mathcal{I}] \quad\text{ a.e. and in }L^1(\mu).
    \end{equation}
\end{theorem}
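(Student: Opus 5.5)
For the $L^2$ statement I will use the von Neumann argument on the Koopman operator $Uf=f\circ T$. Since $T$ preserves $\mu$, $U$ is an isometry of $L^2(\mu)$. I will decompose
\[
L^2(\mu)=\ker(U-I)\oplus\overline{\mathrm{Im}(U-I)} .
\]
\begin{enumerate}
    \item \emph{Orthogonality.} If $\langle g,Uh-h\rangle=0$ for all $h$, take $h=g$. This gives $\langle g,Ug\rangle=\|g\|^2$, so $\|Ug-g\|^2=2\|g\|^2-2\mathrm{Re}\langle Ug,g\rangle=0$, using $\|Ug\|=\|g\|$. Hence $\mathrm{Im}(U-I)^\perp=\ker(U-I)$.
    \item \emph{Invariant part.} For $f\in\ker(U-I)$ the averages are constantly equal to $f$.
    \item \emph{Coboundaries.} For $f=Uh-h$ the sum telescopes, so the average is $(U^Nh-h)/N$, with norm at most $2\|h\|/N\to0$.
    \item \emph{Closure.} The averaging operators have norm at most $1$, so convergence to $0$ extends from $\mathrm{Im}(U-I)$ to its closure by an $\varepsilon/2$ argument.
    \item \emph{Identifying the limit.} The limit is therefore the orthogonal projection onto the $U$-invariant functions, which the text has identified with $E[f|\mathcal I]$.
\end{enumerate}

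For the pointwise statement with $f\in L^1$, I will prove the maximal ergodic theorem. Let $S_n=\sum_{k<n}f\circ T^k$, $M_N=\max_{0\le n\le N}S_n$ with $S_0=0$, and $E=\{\sup_n S_n>0\}$. The claim is
\[
\int_{E}f\,d\mu\ge0 .
\]
The Garsia proof goes as follows. For $1\le n\le N$, $S_n\le f+M_N\circ T$. On $\{M_N>0\}$ this gives $M_N\le f+M_N\circ T$. Integrate over $\{M_N>0\}$ and use $M_N\ge0$ together with invariance of $\mu$ to get $\int_{\{M_N>0\}}f\ge0$. Then let $N\to\infty$ by dominated convergence.

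Now set $\bar f=\limsup \frac1N S_N$ and $\underline f=\liminf\frac1NS_N$; both are $T$-invariant. For rationals $a<b$, let $D=\{\underline f<a<b<\bar f\}$, which is an invariant set. Apply the maximal inequality to $(f-b)\mathbbm 1_D$ and to $(a-f)\mathbbm 1_D$. This gives $\int_D f\ge b\mu(D)$ and $\int_D f\le a\mu(D)$, so $\mu(D)=0$. Hence $\frac1NS_N$ converges a.e.

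To get $L^1$ convergence, first treat bounded $f$: the averages converge a.e. and are uniformly bounded, so they converge in $L^1$ by dominated convergence. For general $f$, approximate in $L^1$ by bounded $g$. The averages are contractions on $L^1$ and $E[\cdot|\mathcal I]$ is an $L^1$ contraction, so a $3\varepsilon$ argument gives $L^1$ convergence of $\frac1NS_N$.

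It remains to identify the limit. Since it is invariant, it is $\mathcal I$-measurable. For $A\in\mathcal I$ we have $\int_A \frac1NS_N=\int_A f$. Passing to the $L^1$ limit shows the limit equals $E[f|\mathcal I]$.

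The main obstacle is the maximal inequality, specifically the bound $M_N\le f+M_N\circ T$ on $\{M_N>0\}$. One has to check carefully that restricting the integral to $\{M_N>0\}$, where $M_N\circ T\ge0$, is what makes the inequality go in the right direction. The remaining steps are routine density and approximation arguments.
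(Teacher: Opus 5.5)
The paper gives no proof of this theorem. It quotes it as a classical result, attributing the $L^2$ part to von Neumann and the pointwise part to Birkhoff, so there is no internal argument to compare against. Your proposal is the standard, correct proof:
\begin{enumerate}
\item the splitting $L^2(\mu)=\ker(U-I)\oplus\overline{\mathrm{Im}(U-I)}$ for the mean theorem;
\item Garsia's proof of the maximal ergodic inequality, followed by the rational-pair argument on the invariant sets $D$, for a.e.\ convergence;
\item a density argument for $L^1$.
\end{enumerate}
It works for non-invertible $T$, since you only use that $U$ is an isometry. The only input you take from the paper is the fact, recalled just before the statement, that $E[f|\mathcal I]$ is the orthogonal projection onto the a.e.\ invariant functions. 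Alternatively, you could identify the $L^2$ limit with the a.e.\ limit along a subsequence once the pointwise part is done.

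Three small bookkeeping points:
\begin{enumerate}
\item Your step 1 only proves $\mathrm{Im}(U-I)^\perp\subset\ker(U-I)$. The reverse inclusion is what makes the limit the orthogonal projection onto $\ker(U-I)$. It follows from $\langle g,Uh\rangle=\langle Ug,Uh\rangle=\langle g,h\rangle$ when $Ug=g$.
\item In the $L^1$ step, the $3\varepsilon$ argument that uses $E[\cdot|\mathcal I]$ being a contraction presupposes that the limit is already identified as $E[g|\mathcal I]$ for bounded $g$. So run your identification paragraph for bounded $g$ first; dominated convergence already gives the needed $L^1$ convergence there. Alternatively, show that the averages $\frac1N S_N$ are Cauchy in $L^1$ and match the $L^1$ limit with the a.e.\ limit along a subsequence. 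Either way, this also shows the a.e.\ limit is finite a.e., which you did not state explicitly.
\item The conclusion $b\mu(D)\le\int_D f\le a\mu(D)\Rightarrow\mu(D)=0$, and the dominated convergence for bounded $f$, both use $\mu(X)<\infty$. This holds in the paper's probability setting.
\end{enumerate}
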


The first part of the statement is known as Mean Ergodic Theorem, due to Von Neumann \cite{neumann1932proof}, while the second one is the Pointwise Ergodic Theorem, due to Birkohoff \cite{birkhoff1931proof}. They state that whenever a measure is invariant, its typical points' orbits have a stationary long time statistical behaviour. Moreover, if the system is ergodic, this behaviour is the same for a.e. point and the average of observables over time (i.e. along orbits) coincides with the ones over space (i.e. with respect to the invariant measure $\mu$). Indeed, in view of our previous remarks on $E[f|\mathcal{I}]$, we have the following carachterization of ergodicity, that also recovers Boltzmann original formulation.

\begin{corollary}
    Let $(X,\mathcal{B},\mu,T)$ be a dynamical system. Let $\mathcal{F}\subset L^2(\mu)\cap L^1(\mu)$ be a dense set of functions. Then the system is ergodic if and only if the limit of the Birkhoff averages is constant a.e. for all $f\in\mathcal{F}$, i.e.
    \begin{equation}
    \frac{1}{N} \sum_{n=0}^{N-1}T^n f \rightarrow \int_Xfd\mu \quad\text{ a.e.}.
    \end{equation}
\end{corollary}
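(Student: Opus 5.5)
Both directions come from the Ergodic theorem above together with the description of $E[f|\mathcal{I}]$ as the orthogonal projection onto the $T$-invariant functions. We read the corollary as: for every $f\in\mathcal{F}$ the Birkhoff averages converge a.e. to the constant $\int_X f\,d\mu$. (If one only requires the limit to be some constant, that constant is automatically $\int_X f\,d\mu$, since $E[f|\mathcal{I}]$ has the same integral as $f$.)

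\emph{Ergodic $\Rightarrow$ convergence.} Suppose the system is ergodic. Then every $\mathcal{I}$-measurable function is a.e. constant. Indeed, its sublevel sets are invariant, so each has measure $0$ or $1$. Hence $E[f|\mathcal{I}]$ is a.e. constant, and since conditional expectation preserves integrals, it equals $\int_X f\,d\mu$. The pointwise part of the Ergodic theorem then gives
\[
\frac{1}{N}\sum_{n=0}^{N-1}T^nf\to\int_X f\,d\mu \quad\text{a.e.}
\]
for every $f\in L^1(\mu)$, and in particular for every $f\in\mathcal{F}$.

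\emph{Convergence $\Rightarrow$ ergodic.} We argue in three steps.

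First, we identify $E[f|\mathcal{I}]$ for $f\in\mathcal{F}$. By the Ergodic theorem, the averages of $f$ converge a.e. to $E[f|\mathcal{I}]$. By hypothesis they also converge a.e. to $c_f=\int_X f\,d\mu$. Limits a.e. are unique, so $E[f|\mathcal{I}]=c_f$. In Hilbert-space terms, let $P$ be the orthogonal projection onto the closed subspace $H_{\mathrm{inv}}\subset L^2(\mu)$ of $T$-invariant functions. Then $Pf=\langle f,1\rangle\,1$ for all $f$ in the dense set $\mathcal{F}$.

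Second, we extend this to all of $L^2(\mu)$. Both $P$ and $Q\colon f\mapsto\langle f,1\rangle 1$ are bounded operators on $L^2(\mu)$, and they agree on a dense set. Therefore $P=Q$ on all of $L^2(\mu)$.

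Third, we conclude. Let $A$ be an invariant set. Then $\mathbf 1_A\in H_{\mathrm{inv}}$, so
\[
\mathbf 1_A=P\mathbf 1_A=\mu(A)\,\mathbf 1 \quad\text{a.e.}
\]
A function taking only the values $0$ and $1$ can equal the constant $\mu(A)$ a.e. only if $\mu(A)\in\{0,1\}$. Hence the system is ergodic.

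\emph{Main obstacle.} The only delicate point is the density step. Density must be taken in the $L^2$ norm, because that is where $P$ is continuous. This is why the hypothesis asks for $\mathcal{F}\subset L^2\cap L^1$: the $L^1$ condition is needed to apply the pointwise theorem to $f$, and the $L^2$ condition makes the projection argument available. The identification $E[f|\mathcal{I}]=Pf$ for $f\in L^2$ was recalled just before the Ergodic theorem, so no further work is needed there.
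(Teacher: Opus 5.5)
Your proof is correct and is essentially the argument the paper has in mind: the paper gives no separate proof and simply derives the corollary from its remarks that $E[f|\mathcal{I}]=\int_X f\,d\mu$ under ergodicity and that $E[f|\mathcal{I}]$ is the orthogonal projection onto invariant functions, and your density-and-continuity step followed by the test on $\mathbf{1}_A$ makes this rigorous. One small overstatement: density need not be taken in the $L^2$ norm, since $E[\cdot|\mathcal{I}]$ is also an $L^1$ contraction and $L^1$-density of $\mathcal{F}$ would serve equally well, but this does not affect your argument.
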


\begin{remark}
    Whenever $T$ is an invertible map one can consider the backwards Birkhoff averages. Since the $\sigma$-algebra of invariant sets is the same for $T$ or $T^{-1}$, we have that, for $f\in L^1(\mu)$,
    \begin{equation}
        \frac{1}{N} \sum_{n=0}^{N-1}T^n f(x) = \frac{1}{N} \sum_{n=0}^{N-1}T^{-n} f(x) \quad \text{ for a.e. }x.
    \end{equation}
    
\end{remark}

\section{Cocycles and Lyapunov Exponents}

Let $(X,T)$ be a dynamical system. A cocycle over $(X,T)$ is the data of a vector bundle $V$ over $X$ and a linear bundle map $A:V\rightarrow V$ that lifts $T$. In other words, $A$ is defined by linear maps $A_x:V_x\rightarrow V_{T(x)}$ for every $x\in X$, giving the following diagram.
\[ \begin{tikzcd}
V \arrow{r}{A} \arrow[swap]{d}{\pi} & V \arrow{d}{\pi} \\%
X \arrow{r}{T}& X
\end{tikzcd}
\]

If $V$ is a trivial bundle, i.e. $V=A\times\mathbb{R}^d$, we can identify $V$ with a map $V:A\rightarrow \text{Mat}(d)$, or to some more specific group of matrices. When possible, this description is typically preferred.\newline

The underlying dynamics may be just measurable, continuous or differentiable. This flexibility allows to treat very different systems under the cocycle formalisms. We describe some examples. For more details one can see \cite{viana2014lectures}.

\paragraph*{Derivative Cocycle} Let $M$ be a smooth manifold and $f:M\rightarrow M$. Denoting by $TM$ the tangent space of $M$ and by $Df$ the differential of $f$ we have that $Df:TM\rightarrow TM$ is a cocycle over $(M,f)$.

\paragraph*{Random Matrices Product} We consider a probability space $(\Sigma,\nu)$. We define $\Omega=\Sigma^{\mathbb{Z}}$, $\mu=\nu^{\otimes \mathbb{Z}}$ and we let $T$ be the shift map on $\Omega$. $(\Omega,T)$ is a measure preserving dynamical system. A map $a:\Sigma\rightarrow\text{GL}(d)$ defines the cocycle $A:\Omega\rightarrow\text{GL}(d)$ by the formula $A(\omega)=a(\omega_0)$. The iteration $A^n$ produces the random product of $n$ matrices indipendently identically distributed on $\text{GL}(d)$ according to $a_*\nu$, in fact
\begin{equation}
    A^n({\omega})=a(\omega_0)a(\omega_1) \cdots   a(\omega_{n-1}).
\end{equation}

\paragraph*{Schr\"odinger Cocycle} We consider the Schr\"odinger operator $H_x$ on $\ell^2(\mathbb{Z})$ with potential $V_x$ defined by the dynamical system $(X,T)$ and the observable $v:X\rightarrow\mathbb{R}$. Given $E\in\mathbb{C}$ one can consider the eigenvector equation
\begin{equation}\label{ev eq}
    Eu(n) = u(n-1)+V_x(n)u(n)+u(n+1)
\end{equation}
Its formal solutions must satisfy
\begin{equation}
    \begin{pmatrix}
        u(n+1)\\
        u(n)\\
    \end{pmatrix}=
    \begin{pmatrix}
        E - V(T^n(x)) & -1\\
        1& 0\\
    \end{pmatrix}
    \begin{pmatrix}
        u(n)\\
        u(n-1)\\
    \end{pmatrix}.
\end{equation}
Therefore the cocycle $A:X\rightarrow SL(2,\mathbb{R})$ over $(X,T)$, defined by
\begin{equation}
    A(x) = \begin{pmatrix}
        E - V(x) & -1\\
        1& 0\\
    \end{pmatrix},
\end{equation}
describes the solutions of \ref{ev eq}, since $(u(n+1),u(n))^T=A^n(x)(u(1),u(0))^T$ for any $n\in\mathbb{Z}$.\newline

In the following we assume the fibers of $V$ to be endowed with a norm $\|\cdot\|$, that may vary over $X$ with measurable, continuous of smooth regularity.\newline
We are interested in the asymptoptic behaviour of the iterates of the $A$ over the fibers of $V$, specifically on the growth of $\|A^Nv\|$ for $v\in V_x$ for a point $x\in X$.\newline
In the concrete, among many examples, this gives information about the hyperbolicity of the diffeomorphism $f$ in the case of the derivative cocycle and the existence of integrable solutions to \ref{ev eq} in the case of the Schr\"odinger Cocycle.\newline

The growth of $\|A^Nv\|$ need not have a definite exponential rate in general. However, if the base system $(X,T)$ preserves a measure $\mu$, as another manifestation of the ergodic theorem, that is true for a $\mu$-generic point $x\in X$ and every $v\in V_x$. We also have a good description on the dependance in $v$. We remark the earlier fundational of Furstenberg and Kensten \cite{furstenberg1960products} regarding the growth of $\|A^N\|$.

\begin{theorem}[Oseledets Multiplicative Ergodic Theorem, \cite{oceledec1968multiplicative}]
Suppose $(V,A)$ is a cocycle over a probability measure
preserving system $(X,\mathcal{B},\mu,T)$. Suppose that
\begin{equation}
\int_X \log \|A_x\| d\mu(x) <+ \infty,
\end{equation}
Then, for $\mu$-a.e. $x\in X$,  there exist $A$-invariant and measurable $k(x)>0$, real numbers $\lambda_1(x) > \lambda_2(x) >\dots> \lambda_{k(x)}(x)$ and subspaces $V_x= V_1(x)\supset \dots \supset V_{k(x)}(x)\supset V_{k(x)+1}(x)=\{0\}$, such that:
\begin{equation}
    \lim_{N\rightarrow+\infty} \frac 1N \log\|A^Nv\| = \lambda^i(x) \quad \text{ for any } v\in V_i(x)\setminus V_{i+1}(x).
\end{equation}
\end{theorem}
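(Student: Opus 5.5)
We will prove the statement via Kingman's subadditive ergodic theorem, applied to the exterior powers of the cocycle, and obtain the filtration from a singular-value analysis of $A^N_x$. We use a measurable trivialization of $V$ (on each piece where $\dim V_x=d$ is constant) to reduce to a matrix cocycle $A:X\to \text{Mat}(d)$ with $\log^+\|A\|\in L^1(\mu)$. The plan has three steps.

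\textbf{Step 1: exponents from subadditivity.} For $1\le k\le d$, set $f^{(k)}_N(x)=\log\|\Lambda^k A^N(x)\|$. This sequence is subadditive along $T$, since $f^{(k)}_{N+M}\le f^{(k)}_N+f^{(k)}_M\circ T^N$. Its positive part is integrable because $\|\Lambda^k A\|\le\|A\|^k$. Kingman's theorem, which is the subadditive analogue of the Ergodic theorem stated above, then gives $T$-invariant limits $\frac1N f^{(k)}_N(x)\to \Lambda_k(x)\in[-\infty,\infty)$ almost everywhere. Writing $\sigma_1(N)\ge\dots\ge\sigma_d(N)$ for the singular values of $A^N_x$, we have $\|\Lambda^kA^N\|=\sigma_1\cdots\sigma_k$. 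Hence $\frac1N\log\sigma_k(N)\to\chi_k:=\Lambda_k-\Lambda_{k-1}$. The distinct values among the $\chi_k$ are the $\lambda_i(x)$, and $k(x)$ is their number. Invariance of both follows from invariance of the $\Lambda_k$.

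\textbf{Step 2: construction of the subspaces.} Let $U_N^{(i)}(x)$ be the span of the singular directions of $A^N_x$ (eigenvectors of $(A^N_x)^*A^N_x$) with singular values $\le e^{N(\lambda_i+\epsilon)}$, that is, those carrying exponents $\le\lambda_i$. We will show that $U_N^{(i)}$ converges as $N\to\infty$ in the Grassmannian, and define $V_i(x)$ as the limit. The key estimate is on the angle between $U_N^{(i)}$ and $U_{N+1}^{(i)}$. Since $A^{N+1}=A_{T^Nx}A^N$ and $\frac1N\log^+\|A_{T^Nx}\|\to0$ almost everywhere (Borel--Cantelli from integrability), this angle is bounded by $e^{-N(\lambda_{i-1}-\lambda_i-2\epsilon)}$. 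That bound is summable, so the subspaces form a Cauchy sequence. This step is the main obstacle: the perturbation argument for singular subspaces has to be carried out carefully, exponentially separating the directions of consecutive gaps, and it gets more delicate when some exponents equal $-\infty$ or $A$ is non-invertible. We handle these by treating the kernel directions separately.

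\textbf{Step 3: growth rates and invariance.} For $v\in V_i\setminus V_{i+1}$, decompose $v$ along the singular directions of $A^N_x$. By the summable convergence, the component of $v$ outside $U^{(i)}_N$ is at most $e^{-N(\lambda_{i-1}-\lambda_i-3\epsilon)}\|v\|$, so $\|A^Nv\|\le e^{N(\lambda_i+4\epsilon)}\|v\|$. For the reverse inequality, the component of $v$ orthogonal to $U^{(i+1)}_N$ stays bounded below for large $N$, because $v\notin V_{i+1}=\lim U^{(i+1)}_N$. This gives $\|A^Nv\|\ge c\,e^{N(\lambda_i-\epsilon)}$. Letting $\epsilon\to0$ yields the limit. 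Finally, the relation $A_xV_i(x)\subset V_i(Tx)$ follows from the growth characterization of $V_i(x)$, namely the set of $v$ with $\limsup\frac1N\log\|A^Nv\|\le\lambda_i$, together with the cocycle identity. Measurability of $x\mapsto V_i(x)$ follows because it is a pointwise limit of measurable maps.
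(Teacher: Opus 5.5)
The paper gives no proof of this statement. It is quoted in the preliminaries as a classical result with a citation to Oseledets, so there is no internal argument to compare with. Your outline is the standard Raghunathan--Ruelle singular-value proof. Step 1, the Cauchy property of $U^{(i)}_N$ in Step 2, and the lower bound $\|A^Nv\|\ge c\,e^{N(\lambda_i-\epsilon)}$ for $v\notin V_{i+1}$ are all fine. The last one only uses that $A^N_x$ maps complementary spans of singular directions to orthogonal subspaces.

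\textbf{The gap is the upper bound in Step 3.} It becomes a real gap as soon as there are three or more distinct exponents.

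\emph{Why your bound is too weak.} The component $u'$ of $v\in V_i$ orthogonal to $U^{(i)}_N$ lies in the span of \emph{all} the faster singular blocks $\lambda_1,\dots,\lambda_{i-1}$ of $A^N_x$. So $A^N_x$ can stretch it by $\sigma_1(N)\approx e^{N\lambda_1}$, not just by $e^{N\lambda_{i-1}}$. Your bound $\|u'\|\lesssim e^{-N(\lambda_{i-1}-\lambda_i-3\epsilon)}$ therefore only gives $\|A^Nu'\|\lesssim e^{N(\lambda_i+\lambda_1-\lambda_{i-1}+C\epsilon)}$. That is the desired estimate only when $i\le 2$. For example, with exponents $10>1>0$ and $v\in V_3$, it gives $\|A^Nv\|\lesssim e^{(9+C\epsilon)N}$ instead of $e^{C\epsilon N}$.

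\emph{What is needed.} You need a graded estimate: for each $j<i$, the component of $v$ along the $\lambda_j$-block of singular directions of $A^N_x$ must be $\lesssim e^{-N(\lambda_j-\lambda_i-\delta)}$. Telescoping your consecutive-angle bounds cannot produce this. The increments $U^{(i)}_m\to U^{(i)}_{m+1}$ have size $e^{-m(\lambda_{i-1}-\lambda_i)}$, and nothing in your argument controls how they sit relative to the $\lambda_j$-block at the earlier time $N$.

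\emph{The missing ingredient.} This is the key lemma of Raghunathan's proof. Write $P^{(m)}_t$ for the orthogonal projection onto the $\lambda_t$-block of singular directions of $A^m_x$. One proves, uniformly in $m\ge N$ for $N$ large,
\[
\|P^{(m)}_tP^{(N)}_j\|\le C\,e^{-N(|\lambda_t-\lambda_j|-\delta)} .
\]
The proof is by induction on $m$, in three moves:
\begin{itemize}
\item insert $\sum_s P^{(m)}_s$ between the two projections;
\item use one-step bounds of the same graded form for $\|P^{(m+1)}_tP^{(m)}_s\|$;
\item use $|\lambda_t-\lambda_j|\le|\lambda_t-\lambda_s|+|\lambda_s-\lambda_j|$, so that the error terms sum as a geometric series.
\end{itemize}
Summing over $t\ge i$ and letting $m\to\infty$ then gives the graded estimate for $v\in V_i$.

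Your proof of $A_xV_i(x)\subset V_i(Tx)$ goes through the growth characterization of $V_i$, so it depends on this step as well. The real obstacle is therefore in Step 3, not in the convergence argument of Step 2 that you flagged.
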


The numbers $\lambda_1(x) > \lambda_2(x) >\dots> \lambda_{k(x)}(x)$ take the name of Lyapunov exponents, and $\dim(V^i_x)-\dim(V_x^{i+1})$ are their multiplicities.

\begin{remark}
    When the base system $(X,\mathcal{B},\mu,T)$ is ergodic, the Lyapunov exponents and their multiplicities do not depend on $x$.
\end{remark}

Whenever the map $A:V\rightarrow V$ is invertible (and hence $T$ as well), we have the following strengthening to Oseledets theorem.

\begin{theorem}[Oseledets]
Suppose $(V,A)$ is an invertible cocycle over an invertible probability measure
preserving system $(X,\mathcal{B},\mu,T)$. Suppose that
\begin{equation}
\int_X \log \|A_x\| d\mu(x) <+ \infty,
\end{equation}
Then, for $\mu$-a.e. $x\in X$,  there exist $A$-invariant and measurable $k(x)>0$, real numbers $\lambda_1(x) > \lambda_2(x) >\dots> \lambda_{k(x)}(x)$ and subspaces $E_1(x)\oplus \dots \oplus E_{k(x)}(x)=V_x$, such that:
\begin{equation}
    \lim_{N\rightarrow\pm\infty} \frac 1N \log\|A^Nv\| = \lambda^i(x) \quad \text{ for any } v\in E_i(x)\setminus\{0\}.
\end{equation}
\end{theorem}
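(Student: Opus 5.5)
The plan is to deduce the invertible version from the one-sided Multiplicative Ergodic Theorem stated just before, applied twice: once to $(A,T)$ and once to the inverse cocycle $(A^{-1},T^{-1})$. The two resulting filtrations are then intersected to produce the splitting.

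\textbf{Integrability.} The previous theorem needs $\int_X \log\|A_x\|\,d\mu<\infty$ for both cocycles. I will assume the implicit hypothesis that $\log^+\|A_x^{-1}\|$ is also integrable, as is standard for invertible cocycles; this is automatic for bounded $SL(2,\mathbb{R})$ or derivative cocycles.

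\textbf{The two filtrations.} Applying the one-sided theorem to $A$ over $T$ gives, for a.e. $x$, the forward filtration
$$V_x=V_1(x)\supset\dots\supset V_k(x)\supset\{0\}$$
with exponents $\lambda_1>\dots>\lambda_k$. The inverse cocycle $A^{-1}_x=(A_{T^{-1}x})^{-1}:V_x\to V_{T^{-1}x}$ lives over $T^{-1}$. Applying the theorem to it gives a backward filtration
$$V_x=W_k(x)\supset\dots\supset W_1(x)\supset\{0\},$$
ordered so that $\frac1N\log\|A^{-N}v\|\to-\lambda_i$ for $v\in W_i\setminus W_{i-1}$. Two facts must be checked here. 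First, the backward exponents are exactly $-\lambda_i$ with the same multiplicities. Second, $\dim W_i=\dim V_x-\dim V_{i+1}$. Both follow from comparing exterior powers: $\frac1N\log|\det A^N|_{\text{restricted}}|$ gives sums of top exponents, and the sum of all exponents of $A^{-1}$ is minus that of $A$, by the ergodic theorem applied to $\log|\det A_x|$. I would prove the equality of spectra via volume growth in $\wedge^j V$: the top-$j$ volume growth of $A$ equals minus the bottom-$j$ volume growth of $A^{-1}$. I would then define
$$E_i(x)=V_i(x)\cap W_i(x).$$

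\textbf{Transversality, the main obstacle.} The key step is to show that $V_{i+1}(x)\cap W_i(x)=\{0\}$. Then dimension counting gives $V_x=V_{i+1}\oplus W_i$, hence $\bigoplus_i E_i=V_x$. Pointwise this can fail, since forward and backward filtrations at a single point need not be transverse, so a measure-theoretic argument is needed. My first attempt would be a Poincaré-recurrence argument, using the Poincaré recurrence theorem above. Suppose $0\ne v\in V_{i+1}(x)\cap W_i(x)$ on a set of positive measure. Restrict to a set $K$ of positive measure on which the convergence rates and the angles (subexponential tempering) are uniformly controlled. Then $\|A^{N}v\|\le e^{N(\lambda_{i+1}+\epsilon)}$ and $\|A^{-N}v\|\le e^{-N(\lambda_i-\epsilon)}$. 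Taking returns $T^{N}x\in K$ and pushing the vector $A^Nv$ back, the exterior-power volume growth forces a contradiction with the determinant identity: restricted to $W_i$, the determinant of $A^N$ grows like $e^{N(\lambda_1d_1+\dots+\lambda_id_i)}$, while containing a direction growing only at rate $\lambda_{i+1}$ lowers the total by a linear amount. Alternatively, I could invoke Raghunathan's singular-value route: $\frac1{2N}\log(A^{N*}A^N)$ converges, with eigenspaces converging exponentially fast, which gives transversality directly.

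\textbf{Two-sided limit.} Finally, for $v\in E_i\setminus\{0\}$ the forward limit is $\lambda_i$ because $v\in V_i\setminus V_{i+1}$, by transversality. The backward limit, $\frac1N\log\|A^Nv\|\to\lambda_i$ as $N\to-\infty$, holds because $v\in W_i\setminus W_{i-1}$. Measurability and invariance $A_xE_i(x)=E_i(Tx)$ are inherited from the two filtrations.
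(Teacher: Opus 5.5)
The thesis does not prove this statement. It is quoted in the preliminaries as background, attributed to Oseledets, so there is no proof in the paper to compare yours with.

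Judged on its own, your route is the standard one: apply the one-sided theorem to $(A,T)$ and to $(A^{-1},T^{-1})$, match the spectra through singular values, and set $E_i=V_i\cap W_i$. The bookkeeping after transversality is correct. Note that for the backward limit you also use $E_i\cap W_{i-1}\subset V_i\cap W_{i-1}=\{0\}$, i.e.\ transversality at index $i-1$. Your extra hypothesis $\log^+\|A_x^{-1}\|\in L^1(\mu)$ is in fact necessary, not just convenient. For a scalar cocycle $A_x=a(x)$ with $\log^+|a|\in L^1$ but $\int\log|a|\,d\mu=-\infty$, the exponent is $-\infty$ on a set of positive measure, so the statement as printed is missing this hypothesis.

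The step that is not yet an argument is transversality, which is the real content of the theorem.

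The bounds $\|A^Nv\|\le e^{N(\lambda_{i+1}+\epsilon)}$ and $\|A^{-N}v\|\le e^{-N(\lambda_i-\epsilon)}$ are not contradictory by themselves. For the non-stationary sequence with $A_n=\mathrm{diag}(e^{\lambda_1},e^{\lambda_2})$ for $n\ge0$ and $A_n=\mathrm{diag}(e^{\lambda_2},e^{\lambda_1})$ for $n<0$, the vector $e_2$ satisfies both. So the invariance of $\mu$ must enter in a specific way, and your sketch does not say how recurrence to a tempered set $K$ achieves this.

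The volume count you end with is the right mechanism. However, it rests on the unjustified claim that $|\det A^N_x|_{W_i(x)}|$ grows at the forward rate $\Sigma_i:=\sum_{j\le i}m_j\lambda_j$ (with $m_j$ the multiplicities), even though $W_i$ is defined by backward behaviour. The missing input is Birkhoff's theorem for the additive cocycle $\phi_i(x)=\log|\det(A_x|_{W_i(x)})|$.
\begin{itemize}
\item $\phi_i$ is integrable, since $|\phi_i|\le D_i\max\{\log^+\|A_x\|,\log^+\|A_x^{-1}\|\}$ with $D_i=\dim W_i$.
\item Its backward averages converge to $\Sigma_i$, by the one-sided theorem applied to the restriction of $(A^{-1},T^{-1})$ to $W_i$, whose exponents are $-\lambda_1,\dots,-\lambda_i$ with multiplicities $m_1,\dots,m_i$.
\item Forward and backward Birkhoff averages agree a.e., so the forward rate is also $\Sigma_i$.
\end{itemize}
Now suppose $0\neq v\in V_{i+1}(x)\cap W_i(x)$, and let $v,u_2,\dots,u_{D_i}$ be a basis of $W_i(x)$. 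Then
\begin{equation*}
\bigl|\det A^N_x|_{W_i(x)}\bigr|\,\|v\wedge u_2\wedge\dots\wedge u_{D_i}\|\le\|A^Nv\|\,\bigl\|{\textstyle\bigwedge}^{D_i-1}A^N_x\bigr\|\,\|u_2\wedge\dots\wedge u_{D_i}\|.
\end{equation*}
The top $D_i-1$ exponents sum to $\Sigma_i-\lambda_i$. Taking $\frac1N\log$ therefore yields $\Sigma_i\le\lambda_{i+1}+\Sigma_i-\lambda_i$, a contradiction. This argument works pointwise on a full-measure set, so the tempering, the angle control and the recurrence can all be dropped.

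Your Raghunathan alternative does not give transversality for free either. It reads $V_{i+1}(x)$ off $A^N_x$ and $W_i(x)$ off $A^{-N}_x=(A^N_{T^{-N}x})^{-1}$, which are different matrices, so a volume comparison of the kind above is still needed.
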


\begin{definition}
    A cocycle $(V,A)$ over a measure preserving system $(X,\mathcal{B},\mu,T)$ is said to be non uniformly hyperbolic if, for $\mu$-a.e. $x$, no Lyapunov exponent is $0$.
\end{definition}

\section{Smooth Dynamics and Toral Automorphisms}

\subsection{Linear Automorphisms of the Torus}

We define the $N$-dimensional Torus as the quotient $\mathbb{T}^N=\mathbb{R}^N/\mathbb{Z}^N$ and we denote by $\pi$ the covering map from $\mathbb{R}^N$. Given an element of $SL(N,\mathbb{Z})$, by composition with $\pi$, it induces a group homomorphism $A:\mathbb{R}^N\rightarrow\mathbb{R}^N/\mathbb{Z}^N$. Since its kernel coincides with $\mathbb{Z}^N$, the induced map $A:\mathbb{R}^N/\mathbb{Z}^N\rightarrow \mathbb{R}^N/\mathbb{Z}^N$ is a well-defined isomorphism of the Torus.
\[ \begin{tikzcd}
\mathbb{R}^N \arrow{r}{A} \arrow[swap]{d}{\pi} & \mathbb{R}^N \arrow{d}{\pi} \\%
\mathbb{T}^N \arrow{r}{A}& \mathbb{T}^N
\end{tikzcd}
\]

\begin{remark}
    Since the Lebesgue measure on $\mathbb{R}^N$ is invariant under the action of $\mathbb{Z}^N$ it descends to the base space $\mathbb{T}^N$. The induced measure is, in fact, a probability measure.\newline
    Moreover $A$ preserves the induced Lebesgue measure on $\mathbb{T}^N$.
\end{remark}

\begin{theorem}[Halmos, \cite{halmos1943automorphisms}]
    $A$ is ergodic if and only if no eigenvalue of $A$ is a root of unity.
\end{theorem}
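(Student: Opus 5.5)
The plan is to prove Halmos' criterion by Fourier analysis on $L^2(\mathbb{T}^N)$. By the Remark on equivalent conditions to ergodicity, $A$ is ergodic if and only if every $f\in L^2(\mathbb{T}^N)$ with $f\circ A=f$ a.e. is constant. We expand $f=\sum_{k\in\mathbb{Z}^N}\hat f(k)e_k$ with $e_k(x)=e^{2\pi i\langle k,x\rangle}$. Since $e_k\circ A=e_{A^Tk}$, the invariance $f\circ A=f$ becomes the coefficient identity $\hat f(A^Tk)=\hat f(k)$ for all $k$, after reindexing by the bijection $A^T$ of $\mathbb{Z}^N$. Hence $|\hat f|$ is constant along orbits of $A^T$ on $\mathbb{Z}^N$. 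By Parseval, $\sum_k|\hat f(k)|^2<\infty$, so $\hat f$ must vanish on every infinite orbit. The conclusion is that $A$ is ergodic if and only if $A^T$ has no finite orbit in $\mathbb{Z}^N\setminus\{0\}$.

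\emph{Sufficiency.} Suppose $A^T$ has a periodic point $k\neq0$, say $(A^T)^pk=k$. Then $A^T$, which has the same eigenvalues as $A$, has eigenvalue $1$ for the power $p$. So some eigenvalue $\lambda$ of $A$ satisfies $\lambda^p=1$, that is, it is a root of unity. Contrapositively, if no eigenvalue is a root of unity, all nonzero orbits are infinite, every invariant $f$ has only $\hat f(0)\neq0$, so $f$ is constant and $A$ is ergodic.

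\emph{Necessity.} Conversely, suppose some eigenvalue $\lambda$ satisfies $\lambda^p=1$. We want a nonzero integer vector fixed by $(A^T)^p$. The matrix $B=(A^T)^p-I$ has integer entries and is singular, because $1$ is an eigenvalue of $(A^T)^p$. Its kernel is therefore a nontrivial subspace defined over $\mathbb{Q}$: Gaussian elimination over $\mathbb{Q}$ produces a rational kernel vector, and clearing denominators gives $k\in\mathbb{Z}^N\setminus\{0\}$ with $(A^T)^pk=k$. Then
\begin{equation*}
f=\sum_{j=0}^{p-1}e_{(A^T)^jk}
\end{equation*}
is $A$-invariant. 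It is nonconstant, since its Fourier coefficients are supported on a finite set not containing $0$, and it is nonzero. So $A$ is not ergodic.

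The main obstacle is small and lies in the rationality step of the necessity direction: we need the fixed vector to be \emph{integer}, not merely complex, and this uses that $\ker B$ is defined over $\mathbb{Q}$. Everything else is bookkeeping: the identity $e_k\circ A=e_{A^Tk}$, and the reindexing of the Fourier series, which is legitimate because $A^T\in SL(N,\mathbb{Z})$ permutes $\mathbb{Z}^N$.
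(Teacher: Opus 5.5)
Your proof is correct and follows the paper's Fourier-series argument essentially verbatim: invariance forces $\hat f$ to be constant on orbits of $A^T$ (the paper uses $(A^T)^{-1}$, which has the same orbits), square-summability kills the infinite orbits, and a finite nonzero orbit produces a nonconstant invariant function. Your rational-kernel argument, which produces an integer vector fixed by $(A^T)^p$, explicitly supplies the step the paper compresses into ``this translates to $B$ (and equivalently $A$) not having any root of unity as eigenvalue.''
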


\begin{proof}
    We consider an $A$-invariant function $f\in L^2(\mathbb{T}^N)$.\newline
    The Fourier series $\hat f\in \ell^2(\mathbb{Z}^N)$ satisfies
    \begin{equation}
        f(x) = \sum_{n\in\mathbb{Z}^N} \hat f(n) e^{2\pi i x\cdot n}.
    \end{equation}
    We also have that
    \begin{align}
        f(Ax) & = \sum_{n\in\mathbb{Z}^N} \hat f(n) e^{2\pi i Ax\cdot n}\\
        &= \sum_{n\in\mathbb{Z}^N} \hat f(n) e^{2\pi i x\cdot A^Tn}\\
        &= \sum_{n\in\mathbb{Z}^N} \hat f((A^T)^{-1}n) e^{2\pi i x\cdot n}
    \end{align}
    Equating $f(x)$ and $f(Ax)$ and calling $B=(A^T)^{-1}$, we have that $\hat f$ must be constant along the orbits $(B^kn)_{k\in\mathbb{Z}}$ for every $n\in\mathbb{Z}^N$. If all such orbits are infinite, $\hat f(n)$ must vanish away from 0 not to contradict its square-integrability. On the other hand, a finite non-trivial orbit, if set as support for $\hat f$, generates a non-constant invariant function $f\in L^2(\mathbb{T}^N)$.\newline
    In conclusion, $A$ is ergodic if and only if the equation $B^kn=n$ has no positive solutions $(k,n)$. This translates to $B$ (and equivalently $A$) not having any root of unity as eigenvalue.
\end{proof}

The the Harmonic Analysis approach is, although very effective, not robust. In the study of $\mathcal{C}^k$ perturbations of $A$, geometrical methods have been more successful.\newline

We recall that differential of $A$ is constant and coincides with $A$ itself. We have the $A$-invariant decomposition
\begin{equation}
    \mathbb{R}^N = E^s \oplus E^c \oplus E^u,
\end{equation}
where $E^s$, $E^c$ and $E^u$ are the sums of the generalized eigenspaces of $A$ corresponding to the eigenvalues of modulus less than 1, exactly $1$ and more than 1, respectively.\newline

The characteristic polynomial $p_A(x)$ of $A$ belongs to $\mathbb{Z}[x]$. As a first consequences, the generalized eigenvalues corresponding to $E^c$ must be paired by complex conjugacy, implying that $\dim(E^c)$ is even.  Moreover, if all root of $p_A(x)$ are unitary, they must be roots of unity. This implies, that if $A$ is ergodic $E^s$ and $E^u$ are always non trivial (recall that $\det(A)=1$), providing some degree of hyperbolicity.\newline

Depending on whether $E^c$ is trivial or not, $A$ is uniformly or partially hyperbolic. We introduce the general theory of such maps in the following subsections, with particular attention to the stability of the properties in question.

\subsection{Uniform Hyperbolicity}

Let $M$ be a Riemmanian $\mathcal{C}^k$ closed manifold and $f:M\rightarrow M$ a $\mathcal{C}^k$ diffeomorphism.

\begin{definition}
    An $f$-invariant closed set $\Lambda\subset M$ is hyperbolic if there exist numbers $\lambda<1$, $C>0$ and a $Df$-invariant splitting $T_{\Lambda}M=E^s\oplus E^u$ into measurable bundles, such that, for any $x\in\Lambda$,
    \begin{align}
        \|Df_x^n v\|&\leq C\lambda^n \|v\| \quad \text{ for every } v\in E^s_x,\\
        \|Df_x^{-n} v\|&\leq C\lambda^n \|v\| \quad \text{ for every } v\in E^u_x.
    \end{align}
\end{definition}
The regularity for the bundles $E^s$ and $E^u$ automatically upgrades to continuous.\newline
The famous Smale's horseshoe gives an example of hyperbolic set.

\begin{theorem}[Hadamard, Perron, \cite{hadamard1902classe},\cite{perron1929stabilitat}]
    Let $\Lambda\subset M$ be an hyperbolic set for a $\mathcal{C}^1$ diffeomorphism $f:M\rightarrow M$. For every $x\in\Lambda$, the sets
    \begin{gather}
        W^s(x)=\{y\in M : \lim_{n\rightarrow +\infty} d(f^n(x),f^n(y)) \rightarrow0\},\\
        W^u(x)=\{y\in M : \lim_{n\rightarrow +\infty} d(f^{-n}(x),f^{-n}(y)) \rightarrow0\}.
    \end{gather}
    are injectively immersed submanifolds, which are as smooth as $f$ and depend continuously in $x$. Moreover:
    \begin{equation}
        T_xW^*(x)=E^*(x)\quad \text{ and } \quad f(W^*(x))=W^*(f(x)), \quad \text{ for }*=s,u.
    \end{equation}
\end{theorem}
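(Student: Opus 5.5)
The plan is the classical Hadamard--Perron route. First build local stable and unstable manifolds at each $x\in\Lambda$, either by the graph transform or by Perron's fixed-point method on sequences. Then globalize them by iteration, and finally prove regularity and continuity. By symmetry (replace $f$ by $f^{-1}$) it suffices to treat $W^u$.

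\textbf{Step 1: adapted metric and local charts.} I first replace the Riemannian metric on $T_\Lambda M$ by an adapted (Mather) metric, e.g. $\|v\|'=\sum_{n=0}^{K}\lambda'^{-n}\|Df^n v\|$ on $E^s$ and the analogous sum with $Df^{-n}$ on $E^u$, with $\lambda<\lambda'<1$. In this metric $C=1$ and the contraction and expansion rates are $\lambda'$. Using exponential charts of a uniform size $r$ around the points $f^n(x)$, I write $f$ locally as $F_n=Df_{f^n x}+R_n$. Here $R_n(0)=0$ and, after shrinking $r$, $\mathrm{Lip}(R_n)\le\varepsilon$ uniformly in $x\in\Lambda$ and $n$; this uses compactness of $\Lambda$ and uniform continuity of $Df$. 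Continuity of the splitting (as recalled before the statement) makes all constants uniform.

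\textbf{Step 2: graph transform.} I consider the space $\mathcal G$ of sequences $(\phi_n)_{n\in\mathbb Z}$, where each $\phi_n:E^u_{f^nx}(r)\to E^s_{f^nx}(r)$ is $1$-Lipschitz with $\phi_n(0)=0$, equipped with the sup distance. The graph transform $\Gamma$ sends $\mathrm{graph}\,\phi_{n-1}$ to $F_{n-1}(\mathrm{graph}\,\phi_{n-1})$, cut back to size $r$. For $\varepsilon$ small, the cone estimates show that this image is again a $1$-Lipschitz graph over $E^u$. The key inequality is that $\Gamma$ contracts distance by a factor of about $(\lambda'+\varepsilon)/(\lambda'^{-1}-\varepsilon)<1$. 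The unique fixed point gives local unstable manifolds $W^u_{loc}(f^nx)=\mathrm{graph}\,\phi_n$ with $f^{-1}(W^u_{loc}(f^{n}x))\subset W^u_{loc}(f^{n-1}x)$. Backward contraction along the graph then gives the characterization
\begin{equation*}
W^u_{loc}(x)=\{y: d(f^{-n}x,f^{-n}y)\le r \ \forall n\ge0\},
\end{equation*}
and on this set the distances $d(f^{-n}x,f^{-n}y)$ decay exponentially. Applying the same fixed-point argument to a $C^1$-dependent family of graphs, and using the invariant cone of tangent planes, shows that $\phi_0$ is $C^1$ with $D\phi_0(0)=0$, so $T_xW^u_{loc}(x)=E^u(x)$. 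The $C^k$ regularity follows by the fibre-contraction theorem applied inductively to the $k$-jets. This works because the $k$-jet bundle map is still fibre-contracting, since $\lambda'^{k}\cdot\lambda'<1$ in the relevant domination comparison.

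\textbf{Step 3: globalization and continuity.} I set $W^u(x)=\bigcup_{n\ge0}f^n\big(W^u_{loc}(f^{-n}x)\big)$. This is an increasing union of embedded disks and hence an injectively immersed $C^k$ manifold. It coincides with the set in the statement: if $d(f^{-n}x,f^{-n}y)\to0$, then for large $m$ the point $f^{-m}y$ lies in $W^u_{loc}(f^{-m}x)$ by the characterization in Step 2. Invariance $f(W^u(x))=W^u(fx)$ is immediate from the definition. Continuous dependence on $x$ comes from the fact that the contraction $\Gamma$ depends continuously on the base point, together with the uniform contraction rate, which makes the fixed point depend continuously on $x$.

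The main obstacle is Step 2's regularity upgrade: closedness of $C^1$ (and $C^k$) graphs under $\Gamma$ together with the fibre-contraction argument. I would handle it via the Hirsch--Pugh--Shub fibre contraction theorem rather than by direct estimates.
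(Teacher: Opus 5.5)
The paper does not prove this statement. It is recalled in the preliminaries as the classical Hadamard--Perron theorem, with citations and no argument, so there is no proof in the text to compare yours with. On its own terms, your outline is the standard graph-transform proof. You use an adapted metric, the Lipschitz graph transform along the orbit, and fibre contraction for the $C^1$ and $C^k$ upgrades, with the correct jet rate $\lambda'^{k+1}<1$. You then globalize by forward iteration. This architecture is sound. One step, however, is not justified by the reason you give, and there are a few smaller points.

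\textbf{The missing inclusion.} The step in question is the characterization $W^u_{loc}(x)=\{y: d(f^{-n}x,f^{-n}y)\le r \ \forall n\ge 0\}$.
\begin{itemize}
\item Backward contraction along the graph only gives the inclusion $\subseteq$, together with exponential decay.
\item The inclusion $\supseteq$ is exactly what you invoke in Step 3 to identify $\bigcup_n f^n(W^u_{loc}(f^{-n}x))$ with the dynamically defined $W^u(x)$. It needs a separate transverse estimate.
\item Let $\delta_n$ be the size of the (purely $E^s$) displacement from $f^{-n}y$ to the point of the graph $\phi_{-n}$ with the same $E^u$-coordinate.
\item Apply $F$ once, then re-project vertically onto the next graph, which is Lipschitz. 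This gives $\delta_{n-1}\le(\lambda'+O(\varepsilon))\,\delta_n$.
\item Hence $\delta_0\le 2r\,(\lambda'+O(\varepsilon))^n$ for every $n$, so $\delta_0=0$ and $y$ lies on the graph.
\end{itemize}
Perron's sequence-space formulation would give this uniqueness for free.

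\textbf{Smaller points.}
\begin{itemize}
\item In the plain sup metric you chose, $\Gamma$ contracts only by a factor of about $\lambda'+O(\varepsilon)$. Already for a linear model, a small bump in $\psi-\phi$ near $|u|=\lambda' r/2$ shows this is sharp. The quotient $(\lambda'+O(\varepsilon))/(\lambda'^{-1}-O(\varepsilon))$ is instead the improvement of the Lipschitz constant (cone invariance), or the contraction factor in the weighted metric $\sup_{u\ne 0}|\phi(u)-\psi(u)|/|u|$. This is harmless, since any factor below $1$ suffices.
\item The space $\mathcal{G}$ itself depends on $x$, so "$\Gamma$ depends continuously on the base point" needs interpreting. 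Argue through approximants instead. The graph $(\Gamma^N 0)_0$ depends only on the orbit segment $f^{-N}x,\dots,x$ and on $E^u$ along it, hence continuously on $x$. It converges to the fixed point at a rate independent of $x$. The same applies to the derivative and jet approximants from the fibre contraction.
\item For the increasing union in Step 3 to be an injectively immersed manifold, note that each disk lies in the interior of the next. This follows from the overflowing property: $F$ maps a graph over $E^u(r)$ to a graph over $E^u((\lambda'^{-1}-O(\varepsilon))r)$.
\end{itemize}
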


We refer to $W^s(x)$ and $W^u(x)$ as the stable and unstable manifolds of $x$.

 \begin{definition}
     If $\Lambda=M$ we say that $f$ is Anosov, or uniformly hyperbolic.
 \end{definition}

\begin{example}
    Any toral automorphism $A:\mathbb{T}^N\rightarrow\mathbb{T}^N$ with no unitary eigenvalue is an Anosov diffeomorphism. Among these we find the famous Arnold's cat map, i.e. the toral automorphism induced by the matrix
    \begin{equation}
        A=\begin{pmatrix}
            2 & 1\\
            1 & 1
        \end{pmatrix}.
    \end{equation}
\end{example}

\begin{theorem}
    The set of Anosov diffeomorphisms is open in $\mathcal{C}^1(M)$.
\end{theorem}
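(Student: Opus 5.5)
The plan is to use the cone-field criterion for hyperbolicity. Since the hyperbolic splitting $T_xM=E^s_x\oplus E^u_x$ of an Anosov diffeomorphism $f$ is continuous on the compact manifold $M$, I will first replace the Riemannian metric by an adapted (Mather) metric. In this metric the constant $C$ in the definition of hyperbolic set can be taken equal to $1$, so that $\|Df_xv\|\le\lambda\|v\|$ on $E^s$ and $\|Df_x^{-1}v\|\le\lambda\|v\|$ on $E^u$, with $\lambda<1$. The metric is obtained by averaging, e.g. $\|v\|'=\sum_{n=0}^{n_0-1}\lambda_1^{-n}\|Df^n v\|$ for $v\in E^s$ (and symmetrically on $E^u$) with $\lambda<\lambda_1<1$; this metric is only continuous, which suffices. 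With respect to it, the unstable cone field $\mathcal{C}^u_x(\alpha)=\{v=v^s+v^u:\|v^s\|\le\alpha\|v^u\|\}$ is strictly mapped into itself by $Df_x$, namely $Df_x\mathcal{C}^u_x(\alpha)\subset\mathcal{C}^u_{f(x)}(\lambda^2\alpha)$, and vectors in it are expanded by a factor at least $\lambda^{-1}(1-\alpha)/(1+\alpha)>1$ for small $\alpha$. The symmetric statements hold for the stable cones under $Df^{-1}$.

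Next I use the robustness of these properties. Both the strict invariance $Dg_x\mathcal{C}^u_x(\alpha)\subset\mathcal{C}^u_{g(x)}(\alpha')$, with $\alpha'<\alpha$, and the uniform expansion of cone vectors are open conditions in $Dg$ and $g$. Since the cone field is continuous and $M$ is compact, there exists $\varepsilon>0$ such that every $g$ with $d_{\mathcal{C}^1}(f,g)<\varepsilon$ satisfies $Dg_x\mathcal{C}^u_x\subset\operatorname{int}\mathcal{C}^u_{g(x)}$ and $\|Dg_xv\|\ge\mu\|v\|$ for $v\in\mathcal{C}^u_x$, with some fixed $\mu>1$. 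The analogous conditions hold for $Dg^{-1}$ on the stable cones. The main technical point is to control that $\mathcal{C}^u_{g(x)}$ is compared with $\mathcal{C}^u_{f(x)}$; this is handled by uniform continuity of the splitting, since $d(g(x),f(x))<\varepsilon$.

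Finally I reconstruct the splitting for $g$ by setting $E^u_g(x)=\bigcap_{n\ge0}Dg^n\,\mathcal{C}^u_{g^{-n}(x)}$ and $E^s_g(x)=\bigcap_{n\ge0}Dg^{-n}\,\mathcal{C}^s_{g^{n}(x)}$. I expect this step to be the main obstacle: showing that these intersections are linear subspaces of the correct dimensions. The approach is a graph-transform argument. Subspaces of dimension $\dim E^u$ lying in the cone are graphs of linear maps $E^u\to E^s$ of norm at most $\alpha$. The action of $Dg$ on such graphs is a contraction in the space of these maps, because it contracts along $E^s$ relative to $E^u$. The nested images therefore shrink to a single graph, i.e. to a subspace. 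Invariance of $E^u_g$ and $E^s_g$ follows from their definition, and transversality follows since the cones intersect only in $0$. The uniform expansion on cone vectors then gives the exponential estimates with $C=1$ in the adapted metric, and hence with some constant $C$ in the original metric. This shows that $g$ is Anosov, so the set of Anosov diffeomorphisms is $\mathcal{C}^1$-open.
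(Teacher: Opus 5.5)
The paper does not prove this statement. It appears in the preliminaries of Chapter~1 as a classical background fact with no argument, so there is no proof in the paper to compare yours with. Your cone-field argument is the standard proof and it is correct; a few points need tightening.

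First, the adapted metric. With $\|v\|'=\sum_{n=0}^{n_0-1}\lambda_1^{-n}\|Df^nv\|$ on $E^s$, you get $\|Dfv\|'\le\lambda_1\|v\|'$ once $C(\lambda/\lambda_1)^{n_0}\le 1$. So the contraction rate in the adapted metric is $\lambda_1$, not $\lambda$, and the cone inclusion reads $Df_x\mathcal{C}^u_x(\alpha)\subset\mathcal{C}^u_{f(x)}(\lambda_1^2\alpha)$. This is cosmetic.

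Second, the sentence ``the nested images shrink to a single graph'' is where the argument really happens, and it relies on an observation you did not state. The cone $\mathcal{C}^u_y(\alpha)$ is exactly the union of the graphs of the linear maps $L:E^u_y\to E^s_y$ with $\|L\|\le\alpha$. Indeed, given $v$ with $v^u\neq 0$, take a rank-one $L$ with $Lv^u=v^s$ and $\|L\|=\|v^s\|/\|v^u\|$, using Hahn--Banach if the adapted norm is not Euclidean.

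Consequently $Dg^n\mathcal{C}^u_{g^{-n}(x)}(\alpha)$ is the union of the graphs of $\Gamma^n(L)$ over $\|L\|\le\alpha$. Here $\Gamma(L)=(AL+B)(CL+D)^{-1}$ is the graph transform, written in the block form of $Dg$ with respect to $f$'s splittings at $y$ and $g(y)$.

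\begin{itemize}
\item Cone invariance means $\Gamma$ maps the $\alpha$-ball at $y$ into the $\alpha$-ball at $g(y)$, so these families of maps are nested compact sets.
\item The Lipschitz constant of $\Gamma$ on the $\alpha$-ball is close to $\lambda_1^2<1$. This holds because the diagonal blocks of $Dg$ are close to those of $Df$ and the off-diagonal blocks $B,C$ are small, by $\mathcal{C}^1$-closeness and uniform continuity of the splitting. Hence the diameters of the families tend to $0$.
\item The intersection of the cones is therefore the graph of the single limit map, a subspace of dimension $\dim E^u$.
\end{itemize}

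The same nesting $Dg\,\mathcal{C}^u_{g^{-1}(y)}\subset\mathcal{C}^u_y$ is what lets you drop the $n=0$ term when checking $Dg_xE^u_g(x)=E^u_g(g(x))$.

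Finally, you implicitly use two facts. One is that $\mathcal{C}^1$-small perturbations of a diffeomorphism of a compact manifold are diffeomorphisms. The other, needed for the stable cones, is that $(Dg_z)^{-1}$ is uniformly close to $(Df_z)^{-1}$. Both are immediate on compact $M$.
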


For $f$ Anosov, the stable and unstable manifolds exist for every point, producing two transverse $Df$-invariant foliations, $\mathcal{F}^s$ and $\mathcal{F}^u$ that integrate $E^s$ and $E^u$, respectively.

\begin{remark}
    If $f$ is a $\mathcal{C}^{1+\alpha}$ Anosov diffeomorphism, then the bundles $E^s$ and $E^u$ are automatically H\"older-continuous.
\end{remark} 

Given a set $V\subset M$, we denote by $W_V^*(x)$ the connected component of $W^*(x)\cap V$ containing $x$. We say that $V$ has a product structure if, for any $x,y\in V$ the intersection beween $W_V^s(x)\cap W_V^u(y)$ contains exactly one point. If $f$ is Anosov, any point $x\in M$ has an open neighbourhood $V(x)$ with a product structure.\newline
Given $V$ with a product structure, for any $x,y\in V$ we can define the stable holonomy $h^s:W_V^u(x)\rightarrow W_V^u(y)$ and the unstable one $h^u:W_V^s(x)\rightarrow W_V^s(y)$, respectively, by
\begin{equation}
    h^s(z) = W^s_V(z)\cap W^u_V(y) \quad \text{ and } \quad h^u(z) = W^u_V(z)\cap W^s_V(y).
\end{equation}

\begin{remark}
    The holonomies $h^s$ and $h^u$ are H\"older-continuous homeomorphisms.
\end{remark}

\subsection{The Hopf Argument}

\begin{theorem}[Anosov, \cite{ansov1969geodesic}]
    Let $M$ be a smooth closed Riemmanian manifold. Any $\mathcal{C}^{1+\alpha}$ Anosov diffeomorphism $f: M\rightarrow M$ that preserves a smooth measure is ergodic.
\end{theorem}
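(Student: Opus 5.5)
The plan is the classical Hopf argument: show that Birkhoff averages of continuous functions are constant along stable and unstable leaves, and then use local product structure together with absolute continuity to propagate this to constancy almost everywhere. By the Corollary above characterizing ergodicity, it suffices to prove that for every continuous $\varphi:M\rightarrow\mathbb{R}$ (a dense class in $L^1\cap L^2$) the forward averages $\varphi^+(x)=\lim \frac1N\sum_{n<N}\varphi(f^n x)$ agree almost everywhere with a constant.

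First, let $\varphi^+$ and $\varphi^-$ denote the forward and backward averages of $\varphi$. By the Ergodic theorem and the Remark on backward averages, both limits exist and coincide on a full-measure set $G$. If $y\in W^s(x)$, then $d(f^nx,f^ny)\to0$, so uniform continuity of $\varphi$ gives $\varphi^+(x)=\varphi^+(y)$ whenever one side exists. Hence $\varphi^+$ is constant on whole stable leaves. In the same way $\varphi^-$ is constant on whole unstable leaves. Next, fix $x$ and a neighbourhood $V$ with product structure. Consider the set of $z\in V$ for which $z\in G$ and, in addition, $W^u_V(z)\cap G$ has full leaf measure. We want this set to have full measure in $V$. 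Suppose it does. Take two such points $z,z'$. Then $\varphi^-$ is constant on $W^u_V(z)$ and equals $\varphi^+$ at a.e. point of that leaf. The stable holonomy sends a.e. point of $W^u_V(z)$ into $G$ and onto $W^u_V(z')$. Through these points we get $\varphi^-(z)=\varphi^+(w)=\varphi^+(h^s(w))=\varphi^-(z')$. So $\varphi^+$ is a.e. constant on $V$.

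Since $M$ is connected and covered by such neighbourhoods, overlapping open sets carry the same constant. This gives $\varphi^+$ constant a.e. on $M$, and hence ergodicity.

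The main obstacle is the measure-theoretic step: a full-measure set must meet almost every unstable leaf in full leaf measure (Fubini along $\mathcal{F}^u$), and the stable holonomy $h^s$ between unstable plaques must send leaf-null sets to leaf-null sets. The Remark above only gives that holonomies are Hölder homeomorphisms, which does not suffice. Here I would invoke, and sketch, the absolute continuity of the stable and unstable foliations for $\mathcal{C}^{1+\alpha}$ Anosov diffeomorphisms. This is where the $\alpha$ enters, through Hölder continuity of $E^s,E^u$ and bounded distortion of $\det Df^n|_{E^u}$ along orbits. Concretely, one writes the Jacobian of $h^s$ as the convergent infinite product $\prod_{n\ge0}\det Df|_{E^u}(f^n w)/\det Df|_{E^u}(f^n h^s w)$. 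Convergence comes from exponential contraction along $W^s$ combined with Hölder continuity. This yields absolute continuity of the holonomies and hence the Fubini-type property.
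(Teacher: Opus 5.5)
Your proposal is correct and is essentially the paper's argument. Both use the Hopf argument on a neighbourhood with product structure, with absolute continuity of the invariant foliations supplying the Fubini-type property and the absolute continuity of the holonomy, and connectedness of $M$ to pass from local to global constancy. The differences are cosmetic. You work on unstable plaques with the stable holonomy $h^s$, so you show $\varphi^-$ is locally constant, whereas the paper uses stable plaques $W^s_U$ with the unstable holonomy $h^u$. You also sketch absolute continuity via the infinite-product Jacobian formula, while the paper simply quotes it as a theorem.
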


\begin{proof}
    The proof is based on the Hopf argument \cite{MR1464}. Fix a continuous function $\phi:M\rightarrow\mathbb{R}$. We consider its forward and backward Birkhoff averages
    \begin{equation}
        \phi^+(x) = \frac 1N \sum_{n=0}^{N-1} \phi(f^n(x)) \quad \text{ and } \quad \phi^-(x) = \frac 1N \sum_{n=0}^{N-1} \phi(f^{-n}(x)),
    \end{equation}
    which are well defined and coincide on a full measure set $M'\subset M$.\newline
    Our goal is to prove that they are a.e. constant.\newline
    We start by noticing that, if $x'\in W^s(s)$, since $d(f^n(x'),f^n(x))\rightarrow0$, one has that $\phi^+(x')=\phi^+(x)$, whenever one of the two is defined. In conclusion, $\phi^+$ is constant along the leaves of $\mathcal{F}^s$, and $\phi^-$ is constant along the ones of $\mathcal{F}^u$, provided they are defined for at least one point of those leaves. By connectedness of $M$, is enough to show that $\phi^{\pm}$ are a.e. constant on any open set $U$ with product structure.\newline
    One could try to argue as follows. Take $x,y\in U\cap M'$ and consider $z=W^s_U(x)\cap W^u_U(y)$. If we had that $z\in M'$, we could conclude that
    \begin{equation}
        \phi^-(x)=\phi^+(x)=\phi^+(z)=\phi^-(z)=\phi^-(y)=\phi^+(y),
    \end{equation}
    as one wished. But there is no gurantee, a priori, that $z$ actually belongs to $M'$, and neither that this happens with high probability, even though $M'$ has full measure in $U$. This issue is related with the fact the the invariant foliations are only H\"older regular, and is not enogh to allow disintegration along them. The next theorem fixes the problem.

    \begin{theorem}
        Suppose $f:M\rightarrow M$ is a $\mathcal{C}^{1+\alpha}$ Anosov diffeomorphism.
        Then the invariant foliations $\mathcal{F}^s$ and $\mathcal{F}^u$ are absolutely continuous.
    \end{theorem}
    
        In our particular situation this translates to:
        \begin{itemize}
            \item the holonomy $h^u$ is an absolutely continuous maps for any $x,y\in U$;
            \item if $N\subset M$ has $\text{vol}_M(N)=0$, then $\text{vol}_{W^s(x)}(N\cap W^s_U(x))=0 $ for a.e. $x\in U$.
        \end{itemize}
    
    The second part of the statement ensures that, for a full measure set $U'\subset U$, every $x\in U'$, $M'$ has full measure in $W^s_U(x).$ We take $x,y\in U'$ and show that $\phi^+(x)=\phi^{+}(y)$. Given a set $B$, we call $B_x=B\cap W^s_U(x)$ its $W^s(x)$ section in $U$. We then have that
    \begin{equation}
        h^u\big(M'_x\big) \cap M'_y \neq \emptyset
    \end{equation}
    since $M'_x$ and $M'_y$ have  full measure in $W^s_U(x)$ and $W^s_U(y)$ (since $x,y\in U'$) and $h^s$ is absolutely continuous. Then we can find $z\in h^u\big(M'_x\big) \cap M'_y$ and call $w=(h^u)^{-1}(z)$. Finally, we have that $w\in W^s(x)$, $w\in M'$, $z\in W^u(w)$, $z\in M'$ and $y\in W^s(z)$, implying that
    \begin{equation}
        \phi^+(x)=\phi^+(w)=\phi^-(w)=\phi^-(z)=\phi^+(z)=\phi^+(y).
    \end{equation}

\end{proof}
    
\begin{corollary}
    All Anosov diffeomorphisms $f:M\rightarrow M$ are stably ergodic in the $\mathcal{C}^{1+\alpha}$ topology.
\end{corollary}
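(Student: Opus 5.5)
The corollary follows by combining two facts already stated: Anosov diffeomorphisms form an open set in $\mathcal{C}^1(M)$, and Anosov's theorem says every $\mathcal{C}^{1+\alpha}$ Anosov diffeomorphism preserving a smooth measure is ergodic. Stable ergodicity is meant in the usual sense. Fix a $\mathcal{C}^{1+\alpha}$ Anosov diffeomorphism $f$ preserving a smooth measure $\mu$ (for instance the volume). We must find a neighbourhood $\mathcal{U}$ of $f$ in the $\mathcal{C}^{1+\alpha}$ topology such that every $g\in\mathcal{U}$ preserving $\mu$ is ergodic with respect to $\mu$.

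First, the $\mathcal{C}^{1+\alpha}$ topology is finer than the $\mathcal{C}^1$ topology. So the $\mathcal{C}^1$-open set of Anosov diffeomorphisms, intersected with $\mathcal{C}^{1+\alpha}$ diffeomorphisms, contains a $\mathcal{C}^{1+\alpha}$-neighbourhood $\mathcal{U}$ of $f$. Every $g\in\mathcal{U}$ is then a $\mathcal{C}^{1+\alpha}$ Anosov diffeomorphism. If $g$ also preserves the smooth measure $\mu$, Anosov's theorem applies directly, and $g$ is ergodic.

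If one wants the argument self-contained rather than quoting Anosov's theorem, one reruns the Hopf argument for $g$. The stable and unstable manifolds of $g$ exist by the Hadamard--Perron theorem. They give foliations $\mathcal{F}^s_g,\mathcal{F}^u_g$ together with neighbourhoods carrying a product structure. The forward and backward Birkhoff averages of a continuous observable are constant along $\mathcal{F}^s_g$ and $\mathcal{F}^u_g$ respectively. Absolute continuity of the foliations, which is where the $\mathcal{C}^{1+\alpha}$ regularity is used, then shows these averages are a.e. constant on each product neighbourhood, and hence on all of $M$ since $M$ is connected.

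The only step needing care is the openness statement. Its proof rests on the cone-field criterion: $f$ has $Df$-invariant cone families in which vectors are uniformly expanded or contracted. These persist under $\mathcal{C}^1$-small perturbations and yield the hyperbolic splitting for $g$. Since this is stated earlier, I would simply invoke it. There is no real obstacle beyond checking that the Hölder exponent $\alpha$ is kept within the neighbourhood, and this holds by definition of the $\mathcal{C}^{1+\alpha}$ topology.
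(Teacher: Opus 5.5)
Your proposal is correct and is the argument the paper intends; the corollary is stated there without a separate proof. The $\mathcal{C}^1$-openness of the Anosov condition gives a $\mathcal{C}^{1+\alpha}$-neighbourhood consisting of Anosov maps, and Anosov's theorem (the Hopf argument with absolute continuity of the foliations) makes every volume-preserving map in that neighbourhood ergodic.
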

    
\begin{corollary}
    Any toral automorphism $A:\mathbb{T}^N\rightarrow\mathbb{T}^N$ with no unitary eigenvalue is stably ergodic in the $\mathcal{C}^{1+\alpha}$ topology.
\end{corollary}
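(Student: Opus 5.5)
The plan is to reduce this corollary to the two facts stated just before it: the openness of the set of Anosov diffeomorphisms in $\mathcal{C}^1(M)$, and Anosov's theorem that a $\mathcal{C}^{1+\alpha}$ Anosov diffeomorphism preserving a smooth measure is ergodic (equivalently, the preceding corollary on stable ergodicity of Anosov maps). Here stable ergodicity is understood among volume-preserving perturbations: every $\mathcal{C}^{1+\alpha}$ diffeomorphism $g$ close enough to $A$ that preserves the Lebesgue measure is ergodic.

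\textbf{Step 1: $A$ is Anosov.} Since $A$ has no eigenvalue of modulus one, the splitting $\mathbb{R}^N = E^s \oplus E^u$ into sums of generalized eigenspaces has trivial center. Because $DA\equiv A$ is constant, this splitting is $DA$-invariant and constant in $x$, so it descends to a splitting of $T\mathbb{T}^N$. Let $\mu<1$ bound the moduli of the eigenvalues on $E^s$ and of the eigenvalues of $A^{-1}$ on $E^u$. Choose any $\lambda\in(\mu,1)$. Standard linear algebra (Jordan form, or Gelfand's spectral radius formula) gives a constant $C>0$ with $\|A^n|_{E^s}\|\le C\lambda^n$ and $\|A^{-n}|_{E^u}\|\le C\lambda^n$. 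This is exactly the hyperbolicity definition with $\Lambda=\mathbb{T}^N$; it is the content of the Example above.

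\textbf{Step 2: openness.} Since Anosov diffeomorphisms form a $\mathcal{C}^1$-open set, there is a $\mathcal{C}^1$-neighbourhood $\mathcal{U}$ of $A$ consisting of Anosov diffeomorphisms. The $\mathcal{C}^{1+\alpha}$ topology is finer than the $\mathcal{C}^1$ topology, so $\mathcal{U}\cap \mathrm{Diff}^{1+\alpha}(\mathbb{T}^N)$ is a $\mathcal{C}^{1+\alpha}$-neighbourhood of $A$.

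\textbf{Step 3: ergodicity of perturbations.} Take any $g$ in this neighbourhood that preserves Lebesgue measure. Then $g$ is a $\mathcal{C}^{1+\alpha}$ Anosov diffeomorphism of the closed manifold $\mathbb{T}^N$ preserving a smooth measure. By Anosov's theorem, $g$ is ergodic, which is the claim.

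There is no real obstacle here. The only points needing care are the mild ones: checking that the eigenvalue bound yields uniform constants $C,\lambda$ despite possible Jordan blocks (absorbed into $C$ by taking $\lambda$ strictly above the spectral radius), and stating explicitly that perturbations are taken within the volume-preserving class, as in the previous corollary.
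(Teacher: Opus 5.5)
Your proof is correct and follows the paper's route. The paper states this corollary without a separate proof: it follows at once from the preceding corollary (openness of Anosov diffeomorphisms combined with Anosov's ergodicity theorem for $\mathcal{C}^{1+\alpha}$ volume-preserving Anosov maps), together with the earlier Example that a toral automorphism with no eigenvalue of modulus one is Anosov, which is exactly your Steps 1--3.
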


\subsection{Partial Hyperbolicity and Accessibility}

Let $M$ be a closed Riemmanian manifold.

\begin{definition}
    We say that a diffeomorphism $f:M\rightarrow M$ is partially hyperbolic if there exists a $Df$-invariant splitting
\begin{equation*}
    TM = E_f^s \oplus E_f^c\oplus E_f^u,
\end{equation*}
where $E_f^s$ and $E_f^u$ are non trivial bundles, and for $*=s,c,u$ there exist continuous functions $\mu_*,\lambda_*:M\rightarrow\mathbb{R}_+$ such that
\begin{equation*}
    \mu_*(x) < \frac{\|D_xf(v^*)\|}{\|v\|} < \lambda_*(x), \quad \text {for every } v^*\in E_f^*(x)\setminus\{0\},
\end{equation*}
and
\begin{equation*}
    \lambda_s(x) < \min\{1,\mu_c(x)\}, \quad \quad \max\{1,\lambda_c(x)\}<\mu_u(x).
\end{equation*}
\end{definition}

In other words, $E^s_f$ and $E^u_f$ are hyperbolic bundles.In fact, we have that\newline $\lambda=\max_{x\in M}\max\{\lambda_s(x),1/\mu_u(x)\}<1$ is a constant of uniform hyperbolicity.\newline
The vectors of $E^c_f$ are contracted or expanded less than the ones of $E^s_f$ and $E^u_f$, but, in general the control, is only pointwise (i.e. not by $\lambda$).

\begin{example}
    Any toral automorphism $A:\mathbb{T}^N\rightarrow\mathbb{T}^N$ with unitary eigenvalue is a partially hyperbolic diffeomorphism.
\end{example}

\begin{theorem}
    The set of partially hyperbolic diffeomorphisms is open in $\mathcal{C}^1(M)$.
\end{theorem}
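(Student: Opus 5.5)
The plan is to prove $\mathcal{C}^1$-openness through an equivalent characterization of partial hyperbolicity by invariant cone fields, since cones are robust under small perturbations of the derivative. The splitting itself is not robust, because it varies only continuously and is determined dynamically.

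First I would reduce to uniform constants. By compactness of $M$ and continuity of $\mu_*,\lambda_*$, I would pass to an iterate $f^m$ and an adapted metric. Then there are constants $0<\lambda_s<\mu_c\le\lambda_c<\mu_u$, with $\lambda_s<1<\mu_u$, such that
\begin{align*}
\|Df^m|_{E^s}\|&<\lambda_s, & m(Df^m|_{E^c})&>\mu_c,\\
\|Df^m|_{E^c}\|&<\lambda_c, & m(Df^m|_{E^u})&>\mu_u,
\end{align*}
where $m(\cdot)$ denotes the conorm. Strictly speaking, pointwise domination gives this after iteration, or directly when one uses a Gourmelon-type adapted metric. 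Partial hyperbolicity of an iterate together with openness for $f$ suffices, because $g\mapsto g^m$ is $\mathcal{C}^1$-continuous.

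Next, I would extend the continuous bundles $E^{cu}=E^c\oplus E^u$ and $E^{s}$ to nearby continuous (not necessarily invariant) cone fields. For small $a>0$, set
\[
C^{u}_a(x)=\{v=v_s+v_{cu}:\|v_s\|\le a\|v_{cu}\|\}.
\]
Define $C^{u}_a$ analogously for $E^u$ versus $E^{cs}$, and define the backward cones similarly. Domination gives $Df^m_x C_a(x)\subset \operatorname{int} C_{a'}(f^m x)$ with $a'<a$, together with uniform expansion or contraction estimates on the cones. These are strict, open conditions involving finitely many derivatives at compact scale. Hence, for $g$ $\mathcal{C}^1$-close to $f$, one has $Dg^m$ close to $Df^m$ and $g^m(x)$ close to $f^m(x)$. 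Using uniform continuity of the cone fields, the same strict inclusions and estimates hold for $g$.

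Finally, I would reconstruct the invariant splitting for $g$ via the cone criterion. I would define
\[
E^{u}_g(x)=\bigcap_{n\ge0} Dg^{mn}\, C^u_a(g^{-mn}x),
\]
and $E^s_g$ similarly with backward iterates. Contraction of the cones in the projective metric shows that these intersections are subspaces of the correct dimension, varying continuously. The same construction yields $E^{cs}_g$ and $E^{cu}_g$, and then $E^c_g=E^{cs}_g\cap E^{cu}_g$. The growth estimates transfer to give the required rates.

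I expect the main obstacle to be this last step: proving that the nested intersections are genuine subspaces of dimension $\dim E^u$ (respectively $\dim E^{cu}$), rather than smaller cones. The approach I would try first is the standard graph-transform or Hilbert-metric argument. One shows that $Dg^m$ contracts the space of $\dim E^u$-planes inside the cone by a uniform factor, and then applies the contraction mapping principle fiberwise. This yields existence, uniqueness, invariance and continuity of the bundle at once.
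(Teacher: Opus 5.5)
\textbf{No proof in the paper; your Steps 2--3 are fine.} The paper states this result as standard background and gives no proof, so there is no argument of the paper to compare with. Your cone-field strategy in the second and third steps is the standard proof. That includes your graph-transform treatment of the dimension of the limiting bundles, and it is sound in outline.

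\textbf{The gap is your first step.} The paper's definition is \emph{pointwise} partial hyperbolicity; the torus chapter reserves the name \emph{absolutely} partially hyperbolic for constant $\mu_*,\lambda_*$. A pointwise partially hyperbolic map cannot in general be made absolutely partially hyperbolic by passing to an iterate or changing the metric.

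Suppose $f$ has fixed points $p,q$ such that $D_pf|_{E^c(p)}$ has an eigenvalue of modulus $1/3$ and $D_qf|_{E^s(q)}$ has an eigenvalue of modulus $1/2$. This is compatible with pointwise domination, which only compares rates at the same point. Think of a skew product over an Anosov base whose fixed points have different stable eigenvalues, with the fiber map contracting strongly at one of them. In any metric, $\|L\|$ is at least the spectral radius of $L$, and the conorm $m(L)=\|L^{-1}\|^{-1}$ is at most the smallest eigenvalue modulus. Hence for every $m$
\[
\sup_x\|Df^m|_{E^s(x)}\|\ge 2^{-m}>3^{-m}\ge \inf_x m(Df^m|_{E^c(x)}),
\]
and no constants $\lambda_s<\mu_c$ exist. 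A Gourmelon adapted metric only gives one-step domination at each point with a uniform ratio, not constants independent of the point. Read literally, your argument proves openness only around absolutely partially hyperbolic maps.

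\textbf{The repair.} It is cheap, because the cone argument never uses absolute constants. By compactness and continuity,
\[
\theta=\max_{x\in M}\max\Big\{\lambda_s(x),\ \frac{1}{\mu_u(x)},\ \frac{\lambda_s(x)}{\mu_c(x)},\ \frac{\lambda_c(x)}{\mu_u(x)}\Big\}<1,
\]
and this is all you need, since cone invariance only compares the action of $Df$ on two bundles at the same point.

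No iterate is needed either. Use the continuous norm $|v|=\|v_s\|+\|v_c\|+\|v_u\|$ built from the splitting of $f$, as the paper does for $A$. With it one has $|D_xf\,v_{cu}|\ge\mu_c(x)|v_{cu}|$. This is not automatic for the Riemannian norm, since $E^c$ and $E^u$ need not be orthogonal. So $\{|v_s|\le a|v_{cu}|\}$ is mapped into the cone of aperture $\theta a$ in one step, and likewise for the other three cones. These strict inclusions persist under $\mathcal{C}^1$-small perturbations.

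\textbf{A further point the one-step version removes.} If you only prove that $g^m$ is partially hyperbolic, you still need two things. First, $Dg$-invariance of the splitting, by uniqueness of dominated splittings of given dimensions. Second, the paper's one-step inequalities for $g$ in the original metric, which partial hyperbolicity of $g^m$ alone does not give. In one step this is direct. Shrinking the neighbourhood puts each $E^*_g$ in an arbitrarily thin cone around $E^*_f$, with $Dg$ uniformly close to $Df$. So the one-step rates of $g$ on $E^*_g$ are uniformly close to those of $f$ on $E^*_f$, which satisfy the required strict inequalities with a uniform gap.
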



We always have that $E^s_f$ and $E^u_f$ are uniquely integrable to invariant H\"older continuous foliations $\mathcal{F}_f^s$ and $\mathcal{F}_f^u$, whose leaves are as smooth as $f$. These foliations are not transverse, because of the lack of dimensions caused by the existence of the center.\newline
The distributions $E^c_f$, $E_f^{cs}=E^s_f\oplus E^c_f$ and $E_f^{cu}=E^u_f\oplus E^c_f$ are not always uniquely integrable. If they are, we say that $f$ is dynamically coherent and we denote by $\mathcal{F}_f^c$, $\mathcal{F}_f^{cs}$ and $\mathcal{F}_f^{cu}$ the corresponding tangent foliations. Dynamical coherence is a $\mathcal{C}^1$-stable property, under some additional hypotesis on the uniformity of the partial hyperbolicity.\newline
We denote by $W_f^*(x)$ the leaf of $\mathcal{F}_f^*$ containing $x$, if existing, for $*=s,c,u,cs,cu$.\newline

Unlike the uniformly hyperbolic case, not every volume-preserving partially hyperbolic diffeomorphism is ergodic, since the center behavior may produce invariance.

\begin{example}\label{non ergodic PH}
    The map $T:\mathbb{T}^3\rightarrow\mathbb{T}^3$, defined by $T=A\times\text{id}$, where $A:\mathbb{T}^2\rightarrow\mathbb{T}^2$ is the Arnold's cat map and $\text{id}:\mathbb{T}^1\rightarrow\mathbb{T}^1$ is the identity, is volume preserving but not ergodic.
\end{example}

The first example of an ergodic partially hyperbolic diffeomorphism which is not uniformly hyperbolic was given by Grayson, Pugh and Shub, who exploited an additional property of such system, called accessibility.

\begin{theorem}[Grayson-Pugh-Shub, \cite{MR1298715}]
    The time-one map of the geodesic flow on the unit tangent bundle of a surface of constant negative curvature is ergodic, and stably
ergodic in the $\mathcal{C}_{\text{vol}}^2$-topology.
\end{theorem}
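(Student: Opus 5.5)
The Grayson--Pugh--Shub theorem has two parts: ergodicity of the time-one map $g=\phi_1$ of the geodesic flow on $T^1S$, with $S=\Gamma\backslash\mathbb{H}^2$, and stability of ergodicity under $\mathcal{C}^2$ volume-preserving perturbations. I plan to identify $T^1S$ with $\Gamma\backslash PSL(2,\mathbb{R})$. Then $g$ is right multiplication by $a=\mathrm{diag}(e^{1/2},e^{-1/2})$. It is partially hyperbolic with one-dimensional center (the flow direction), and its stable and unstable foliations are the orbits of the horocyclic subgroups $N^-$ and $N^+$.

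Ergodicity of $g$ itself then follows from Howe--Moore or Mautner. An $L^2$ $a$-invariant function is $N^\pm$-invariant, hence $PSL(2,\mathbb{R})$-invariant, hence constant. I would rather prove it geometrically, so that the same argument survives perturbation.

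\textbf{Step 1: accessibility.} I will show that any two points can be joined by an $su$-path, a concatenation of stable and unstable leaf segments. For $g$ this is Lie-theoretic: $N^+$ and $N^-$ generate $PSL(2,\mathbb{R})$, since their Lie algebras generate $\mathfrak{sl}_2$ via the bracket $[e,f]=h$. Concretely, a four-legged $su$-quadrilateral based at $x$ returns to the center leaf at a point displaced from $x$ by a nonzero amount along the flow direction. This displacement is controlled by the bracket and is positive, so accessibility is robust.

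\textbf{Step 2: stability of accessibility.} For $f$ $\mathcal{C}^1$-close to $g$, the map $f$ is still partially hyperbolic. It is also dynamically coherent, since $g$ is center bunched and its center foliation is smooth. The foliations $\mathcal{F}^s_f$ and $\mathcal{F}^u_f$ depend continuously on $f$ on compact pieces. The quadrilateral with nonzero center displacement therefore persists, with displacement of both signs available. By connectedness of center leaves and compactness of $T^1S$, this gives that every accessibility class is open, hence all of $T^1S$.

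\textbf{Step 3: Hopf argument with juliennes.} This is the main obstacle. I take $\phi$ continuous and the set $M'$ of points where $\phi^+=\phi^-$. As in the proof of Anosov's theorem, $\phi^+$ is $s$-saturated and $\phi^-$ is $u$-saturated. The argument for Anosov maps breaks down here because $\mathcal{F}^s_f$ and $\mathcal{F}^u_f$ are not transverse. The plan is to show that the set of Lebesgue density points of any $s$-saturated (mod $0$) set is $u$-saturated, and symmetrically, using Pugh--Shub juliennes. These are the sets obtained by pushing back small $cu$-balls, which are shaped by $f^{-n}$ along $s$, $c$ and $u$ at rates dictated by the center bunching. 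Replacing Euclidean balls with juliennes, I need to prove three things:
\begin{enumerate}
\item juliennes form a density basis, which is where the $\mathcal{C}^2$ regularity (bounded distortion) is used;
\item unstable holonomy maps juliennes into juliennes of comparable size;
\item $\mathcal{F}^u_f$ is absolutely continuous, via the absolute continuity theorem stated in the proof of Anosov's theorem, extended to the partially hyperbolic setting.
\end{enumerate}

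It then follows that the set $\{\phi^+>c\}$ is essentially $su$-saturated. By accessibility, a measurable essentially $su$-saturated set has measure $0$ or $1$. Hence $\phi^+$ is constant a.e., which gives ergodicity of $f$ and so stable ergodicity of $g$. I expect the julienne density estimates, particularly the center-bunching control needed for holonomy invariance of density points, to be the hardest part.
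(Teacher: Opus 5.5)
The paper gives no proof of this statement. It is quoted as background from Grayson--Pugh--Shub, with only the remark that the argument rests on accessibility. Your outline is the standard Grayson--Pugh--Shub / Pugh--Shub route: stable accessibility via $su$-quadrilaterals with nonzero center displacement, followed by a julienne version of the Hopf argument. Within the paper's own framework you could shortcut Step 3 entirely. A one-dimensional center is automatically center bunched, so Step 2 combined with the Burns--Wilkinson theorem quoted in the paper already gives ergodicity of every $\mathcal{C}^2$ volume-preserving $f$ that is $\mathcal{C}^1$-close to $g$. Step 3 is essentially a re-derivation of that theorem.

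\textbf{The key lemma of Step 3 is misstated.} You propose to show that the Lebesgue density points of any essentially $s$-saturated set form a $u$-saturated set. The tools you list, unstable holonomy acting on juliennes and absolute continuity of $\mathcal{F}^u_f$, cannot prove this.
\begin{itemize}
\item Density of $X$ at $x$ and at a point $x'\in W^u(x)$ at fixed distance concern shrinking neighbourhoods that eventually become disjoint. They can only be linked if some holonomy carries $X$ near $x$ onto $X$ near $x'$.
\item The holonomy $h^u\colon W^{cs}(x)\to W^{cs}(x')$ does this only when $X$ is (essentially) $u$-saturated, not merely $s$-saturated.
\item As stated, the lemma together with its symmetric version would force every measurable $s$-saturated set to be essentially $su$-saturated, hence trivial by accessibility. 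That amounts to ergodicity of $\mathcal{F}^s_f$ for every perturbation, which is far more than you need and is not what the julienne machinery gives.
\end{itemize}
The correct statement keeps the same letter in hypothesis and conclusion. If $X$ is essentially $u$-saturated, then Lebesgue density of $X$ at $x$ is read off from density of $X\cap W^{cs}(x)$ in $cs$-juliennes. The holonomy $h^u$ maps these juliennes to comparable ones, which is where center bunching enters. By absolute continuity it carries $X\cap W^{cs}(x)$ onto $X\cap W^{cs}(x')$ mod $0$. Hence the density points of $X$ form a $u$-saturated set. The symmetric statement holds for $s$, using $cu$-juliennes and $h^s$. Your set $X=\{\phi^+>c\}$ agrees a.e.\ with $\{\phi^->c\}$, so it is essentially both $s$- and $u$-saturated. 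Its density set is therefore $su$-saturated and equal to $X$ mod $0$, and your conclusion goes through once the lemma is restated this way.

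Two smaller inaccuracies:
\begin{itemize}
\item Dynamical coherence of $f$ comes from Hirsch--Pugh--Shub, not from center bunching. The center foliation of $g$ is smooth and normally hyperbolic, hence plaque expansive and structurally stable.
\item The center displacement of the quadrilateral is, up to normalization, $\log(1+st)$. It is nonzero with the sign of $st$, not always positive. The availability of both signs is exactly what the intermediate-value argument in Step 2 uses.
\end{itemize}
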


We say that a curve $\gamma:I\rightarrow M$ is an $su$-path, if it is piecewise contained in the leaves of $\mathcal{F}^s$ and $\mathcal{F}^u$. A point $y$ is said to be $su$-accessible from $x$, if there exists a $su$-path joining the two and we define the accessibility class of $x$ as $AC(x)=\{y\in M: y \text{ is accessible from } x\}$. We say that $f$ is accessible if $AC(x)=M$ for every $x\in M$.\newline
Euristically, accessibility should imply ergodicty, by allowing one to replicate the Hopf argument. However, to make this work, high regularity of the invariant foliations or some additional geometrical assumptions are required, given the potentially more complex geometry of the $su$-paths taken into consideration.

Pugh and Shub conjectured that ergodicity is generic among partially hyperbolic diffeomorphisms, proposing accessibility as the cornerstone of this phenomenon.

\begin{conjecture}[Pugh-Shub, \cite{MR1449765}]
    An open and dense subset of the class of the $\mathcal{C}^r$ volume-preserving partially hyperbolic diffeomorphisms are accessible. 
\end{conjecture}

\begin{conjecture}[Pugh-Shub, \cite{MR1449765}]
    Every $\mathcal{C}^2$ accessible volume-preserving partially hyperbolic diffeomorphism is ergodic.
\end{conjecture}

The first conjecture has been proved only in the $\mathcal{C}^1$ topology by Dolgopyatt and Wilkinson \cite{MR2039999} (or when the center is one dimensional).\newline
The second one holds, under a mild additiona assumption called center bunching, and a weakened version of the accessibility property, called essential accessibility.

\begin{theorem}[Burns, Wilkinson, \cite{burns2010ergodicity}]
    Let $f:M\rightarrow M$ be a $\mathcal{C}^2$ volume-preserving, partially hyperbolic and center bunched diffeomorphism on a closed Riemmanian manifold $M$. If $f$ is essentially accessible, then $f$ is ergodic.
\end{theorem}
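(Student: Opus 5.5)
The plan follows the Hopf argument from the proof of Anosov's theorem above. The new difficulty is that $W^s$ and $W^u$ are no longer transverse, so an $su$-path replaces the single transverse intersection. Fix a continuous $\phi$ and set $\phi^\pm$ to be its forward and backward Birkhoff limits. These exist and agree on a full-measure set $M'$. Exactly as before, $\phi^+$ is constant along $W^s$-leaves and $\phi^-$ along $W^u$-leaves. The goal is to show that $\phi^+$ is a.e. constant. By the Corollary characterising ergodicity through a dense set of observables, this suffices.

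The first step is to pass to a measure-theoretic version of saturation. Call a measurable set \emph{$s$-saturated} (resp. \emph{$u$-saturated}) if it is a union of whole $W^s$ (resp. $W^u$) leaves mod $0$. Then a level set $\{\phi^+ < c\}$ is essentially $s$- and $u$-saturated. It is only essentially so because $\phi^+$ and $\phi^-$ agree only on $M'$. Essential accessibility means precisely that every measurable set which is both $s$- and $u$-saturated has measure $0$ or $1$. So the whole task reduces to upgrading \emph{essentially} bi-saturated to \emph{genuinely} bi-saturated, or at least to showing that such sets are trivial.

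For the upgrade I would use Lebesgue density points, defined with respect to a family of \emph{juliennes} rather than round balls, following Pugh--Shub and Burns--Wilkinson. The juliennes are built by iterating small $cu$-discs, $s$-holonomy images and $W^c$-pieces under $f^{\pm n}$. Center bunching enters here: it makes the stable and unstable holonomies between center-stable (resp. center-unstable) leaves Lipschitz-like at the relevant scales, i.e. $C^1$ along the center in a weak sense. This keeps the juliennes comparable under holonomy. Absolute continuity of $\mathcal{F}^s$ and $\mathcal{F}^u$ (the analogue of the theorem quoted inside the Anosov proof, valid for $C^2$ partially hyperbolic maps) then gives that julienne density points coincide with Lebesgue density points. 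It also gives that the set of julienne density points of an essentially $s$-saturated set $X$ is genuinely $s$-saturated.

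Concretely, the argument would proceed as follows. Take $X$ essentially $s$- and $u$-saturated and let $D(X)$ be its set of (julienne, equivalently Lebesgue) density points. By Lebesgue density, $D(X)=X$ mod $0$. Show that $D(X)$ is $s$-saturated by moving density along $W^s$ with the $s$-holonomy inside $cs$-leaves, using absolute continuity and the bounded distortion of juliennes. The same argument gives $u$-saturation. Hence $D(X)$ is bi-saturated, so by essential accessibility $\mathrm{vol}(X)\in\{0,1\}$, and $\phi^+$ is a.e. constant.

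I expect the main obstacle to be the step showing that density points propagate along stable leaves. That requires the delicate julienne geometry: proving that $h^s$ maps juliennes at $x$ into slightly larger juliennes at $h^s(x)$ with comparable measure. This estimate is exactly where both $C^2$ regularity (distortion control) and center bunching (controlling the shape of the center in the iterated discs) are used.
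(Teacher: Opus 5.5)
The thesis does not prove this statement. It is quoted from \cite{burns2010ergodicity} and used as a black box, so your outline can only be compared with that paper, whose strategy it follows: the Hopf argument, reduction to showing that a set which is essentially $s$-saturated and essentially $u$-saturated agrees mod $0$ with a genuinely $su$-saturated set, and julienne density points. There is, however, a concrete gap relative to the hypotheses as stated. Dynamical coherence is not assumed, yet your juliennes and holonomy estimates are built from $W^c$-pieces, $cu$-leaves and $s$-holonomy inside $cs$-leaves. As the thesis itself recalls, $E^c$, $E^{cs}$, $E^{cu}$ need not be uniquely integrable. In general these foliations therefore do not exist, and the sets whose geometry you want to control are not defined.

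The missing ingredient is the \emph{fake foliations} of Burns and Wilkinson. On a ball of uniform radius around each point $p$ one constructs foliations $\widehat{W}^{c}_p$, $\widehat{W}^{cs}_p$, $\widehat{W}^{cu}_p$, with fake stable and unstable subfoliations. They are almost tangent to the corresponding bundles and locally invariant, in the sense that $f$ maps small leaves of the fake foliations at $p$ into leaves of those at $f(p)$. The juliennes are defined through these, and $\mathcal{C}^2$ regularity and center bunching enter through estimates on holonomies between fake leaves. Without this device your sketch only covers the dynamically coherent case. Incidentally, that case is all the thesis needs, since the perturbations of $A$ it considers are dynamically coherent.

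A smaller point concerns density points. Julienne density and Lebesgue density do not coincide for arbitrary measurable sets, because juliennes are much more eccentric than balls. The equivalence holds for essentially $s$-saturated sets using $cu$-juliennes and, symmetrically, for essentially $u$-saturated sets using $cs$-juliennes. So two julienne families are needed, and the common set $D(X)$ must be the set of Lebesgue density points: it is $s$-saturated via the first family and $u$-saturated via the second. Your phrase ``the same argument gives $u$-saturation'' hides this.

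Beyond that, the step you yourself call the main obstacle is asserted rather than proved. It requires that $h^s$ carry $cu$-juliennes at $x$ into $cu$-juliennes at $x'\in W^s(x)$ with a bounded shift of index in both directions. It also requires a bounded Jacobian and uniformly bounded measure ratios between juliennes of indices $n$ and $n+k$. That estimate, together with the density-point equivalence, is essentially the entire content of the cited theorem. What you have written is therefore an accurate roadmap of \cite{burns2010ergodicity} rather than a proof.
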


Here, a set $E\subset M$ is said $su$-saturated if, for every $x\in E$, we have that $AC(x)\subset E$ and $f$ is said essentially accessible if every measuable $su$-saturated $E\subset M$ has either full or zero volume.\newline

The missmatch between the regularity assumptions of the just mentioned results leaves Pugh and Shub's program widely open.\newline

\begin{example}
    A linear automorphism of the torus $A:\mathbb{T}^N\rightarrow\mathbb{T}^N$, with non-trivial center, is not accessible, since the invariant foliations are linear, but it is essentially accessible.
\end{example}

Accessibility is a $\mathcal{C}^1$-stable property for center of dimension one \cite{DIDIER_2003} and two \cite{MR4142463}, and it is conjectured to be in any dimension. On the oether hand, essential accessibility might not be stable, making particularly hard to establish stable ergodicity for essentially accessible, but not accessible, systems, such as Toral automorphisms.\newline
This was achieved by Rodriguez Hertz \cite{Hertz2005} for a certain class of such maps.

\begin{theorem}[Rodriguez Hertz, \cite{Hertz2005}]
    Any pseudo-Anosov linear automorphism $A:\mathbb{T}^N\rightarrow\mathbb{T}^N$ with $\dim(E^c)=2$ is stably ergodic in the $\mathcal{C}^{22}_{\text{vol}}$ topology.
\end{theorem}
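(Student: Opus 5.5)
The plan is to reduce stable ergodicity to essential accessibility and then establish essential accessibility for every $g$ in a small $\mathcal{C}^{22}_{\text{vol}}$-neighbourhood of $A$, using the pseudo-Anosov hypothesis and the arithmetic of the center eigenvalues.

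\textbf{Reduction to essential accessibility.} Let $g$ be $\mathcal{C}^{22}$-close to $A$ and volume preserving. Then $g$ is partially hyperbolic, since this is an open condition. On $E^c_A$ the map $A$ acts as a rotation, because the two center eigenvalues $e^{\pm 2\pi i\theta}$ are simple. Hence $A$ is center bunched with room to spare, and so is $g$. By normal hyperbolicity of the linear center foliation (Hirsch–Pugh–Shub), $g$ is dynamically coherent, and its center leaves are $\mathcal{C}^1$-close to affine planes. The theorem of Burns–Wilkinson stated above then reduces the claim to showing that $g$ is essentially accessible. I will in fact aim for the dichotomy: either $g$ is accessible, or $g$ carries a rigid invariant structure that can be ruled out.

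\textbf{Accessibility classes inside center leaves.} Consider the set $\Gamma(g)$ of points whose accessibility class is not open. It is compact, $g$-invariant and $su$-saturated. If $\Gamma(g)=\emptyset$, every class is open, and connectedness of $\mathbb{T}^N$ gives accessibility. If $\Gamma(g)=\mathbb{T}^N$ fails but $\Gamma(g)\neq\emptyset$, I use the pseudo-Anosov hypothesis. Irreducibility of $p_A$ means $A$ has no rational invariant subspaces, so the linear stable and unstable foliations are minimal. I expect this minimality to persist for $g$ in the form ``every $su$-saturated compact set is everything''. The remaining case is therefore the one where all classes are non-open.

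In that case, since the center is two-dimensional, each class meets a center leaf in a set of topological dimension at most one. Following the structure theory for $su$-laminations, I would show that the classes form a codimension-one lamination, and in fact a foliation, inside the $cs$/$cu$ picture. Its traces on center leaves are curves. These curves form a one-dimensional foliation of each center plane that is invariant under $g$ and under the $su$-holonomies.

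\textbf{Main obstacle: the KAM step.} Quotienting each center leaf along these curves gives a one-dimensional transversal. On it, the dynamics of $g$, composed with suitable $su$-holonomy loops, induces circle diffeomorphisms. These are $\mathcal{C}^{k}$-close to the rotation by $\theta$ for large $k$, because $g$ is $\mathcal{C}^{22}$-close to $A$ and the holonomies are smooth along leaves thanks to bunching. Since $e^{2\pi i\theta}$ is an algebraic number that is not a root of unity, $\theta$ satisfies a Diophantine condition of some finite type. This is where the number of derivatives enters: the loss of regularity in the holonomies, plus the Diophantine exponent, must leave enough smoothness for a KAM / Herman-type linearization theorem to apply with a smooth conjugacy. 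This is the step I expect to be hardest. The bookkeeping of derivatives, which should produce the threshold $22$, and making the induced maps genuinely commute, which requires controlling the whole holonomy group rather than a single map, are the delicate points.

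\textbf{Conclusion.} Once the induced center dynamics is smoothly linearizable, the invariant curves assemble into a smooth $g$-invariant distribution containing $E^s_g\oplus E^u_g$ with integrable structure. Pushing it back to $A$ by continuity would produce an $A$-invariant rational-type object, namely a compact invariant sub-foliation transverse to the center. Its existence contradicts irreducibility of $p_A$, i.e. the pseudo-Anosov hypothesis. Hence the all-non-open case is impossible, and $g$ is accessible. Together with the first step, this shows that $g$ is ergodic.
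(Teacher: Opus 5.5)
Your proof has a genuine gap: its endgame cannot work. You conclude that every $g$ near $A$ is accessible. But $A$ itself lies in every neighbourhood of $A$, and it is not accessible: its accessibility classes are the dense, codimension-two immersed planes $\pi(x+E^s\oplus E^u)$. So the case ``all classes non-open'' is not contradictory. ``Pushing back to $A$'' only recovers the $su$-foliation of $A$, which exists; irreducibility of $p_A$ makes its leaves dense, not nonexistent. In that case the goal is \emph{essential} accessibility, and what is missing is the correct case split.

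\textbf{Constant dimension.} First show that all classes have the same dimension. They are immersed manifolds by the structure theory for two-dimensional centre, and the codimension is upper semicontinuous. The sublevel sets $\{\text{codim}\,AC\le k\}$ are open, $su$-saturated and invariant, so minimality makes each of them empty or everything. Your set $\Gamma(g)$ only separates open from non-open classes, which is not enough.

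\textbf{Codimension one.} This case dies with no KAM at all. $AC(0)\cap W^c(0)$ would be a $g$-invariant curve through the fixed point $0$, giving a $D_0g$-invariant line in $T_0W^c(0)$. This is impossible for $g$ $\mathcal{C}^1$-close to $A$, because the centre eigenvalues of $A$ are non-real.

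\textbf{Codimension two.} This is the case of $A$ itself. Here $E^s_g\oplus E^u_g$ is integrable, so the $su$-holonomies $T_n$ of $W^c(0)$ satisfy $T_n\circ T_m=T_{n+m}$. They form a $\mathbb{Z}^N$-action by diffeomorphisms $\mathcal{C}^r$-close to the translations $z\mapsto z+n^c$. Diophantine estimates for suitable $n^c$, coming from the arithmetic of $A$, allow a KAM simultaneous linearization; this is where $\mathcal{C}^{22}$ enters. The result is a $\mathcal{C}^1$ conjugacy between the $su$-foliations of $g$ and $A$, which transfers essential accessibility from $A$ to $g$, and Burns--Wilkinson concludes. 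So the small divisors are the translation vectors $n^c$, not the rotation angle $\theta$. The circle diffeomorphisms with rotation number $\theta$ in your plan do not arise.

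\textbf{Minimality.} Separately, the minimality statement you only ``expect'' is the core of the theorem and the step where the pseudo-Anosov hypothesis is essential. Minimality of the linear foliations of $A$ does not pass to $g$ by continuity; it needs a real argument:
\begin{enumerate}
\item Lift an open, $su$-saturated, invariant set to $\mathbb{R}^N$ and take a connected component $U$.
\item A volume argument on large $su$-saturated neighbourhoods produces $n\in\mathbb{Z}^N\setminus\{0\}$ with $U+n=U$.
\item Recurrence (volume preservation) gives $F^l(U)+h=U$. Combined with $F^l(U+n)=F^l(U)+A^ln$, this gives invariance under every $A^{ls}n$.
\item Since $n$ is cyclic for every power of $A$, these translations span a full-rank lattice.
\item $U$ is simply connected. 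A loop in $U$ built from lattice translates winds around $W^u(W^s(y))$ for $y\in E^c(x)$, $x\in U$. If $U$ missed that set, the loop would be non-contractible in its complement, contradicting simple connectedness.
\item Hence $U$ is $E^c$-saturated, and being $su$-saturated it equals $\mathbb{R}^N$.
\end{enumerate}
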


The pseudo-Anosov assumption is an algebraic condition imposed on the carachterisitic polynomial of the toral automorphism.\newline
The proof is based on the fact that essential accessibility is actually stable for these specific maps. The first step to establish essential accessibility is to investigate the structure of the accessibility classes.

\begin{conjecture}[Rodriguez-Hertz, Rodriguez-Hertz, Ures, \cite{zbMATH05287021}]
    The accessibility classes are topological manifolds that vary semi-continuously, as well as their dimensions. Under center bunching, they are indeed smooth manifolds.
\end{conjecture}

The conjecture holds true in the specific setting we are concerning ourselves with. More generally, a positive answer was given (except for the semi-continuity of the dimensions) for dynamical coherence diffeomorphisms with two-dimensional center \cite{rodriguez2017structure}.

The next step of the proof is to show that all accessibility classes must have the same dimension, for any $\mathcal{C}^1$-small perturbation of $A$. Then, one shows that codimension one classes cannot occur. Finally, either one has the accessibility property, or it is possible to $\mathcal{C}^1$-conjugate the accessibility classes to the ones of $A$, by exploiting KAM theory. This transfers the essential accessibility of $A$ to the perturbed map.\newline

We highlight that KAM theory usually provides the persistence of some invariant regions under perturbations  (e.g. invariant tori for integrable systems), which, in several contexts, represents a robust obstruction to ergodicity. Rodriguez-Hertz remarkably used it, in the setting of partially hyperbolic diffeomorphisms, to show that the persistence of some structure may instead lead to ergodicity in other settings.\newline

Finally, we remark that, in general, a diffeomorphism might simultaneously have accessibility classes of different dimensions.

\begin{example}
    Consider the map $T:\mathbb{T}^3\rightarrow\mathbb{T}^3$ from example \ref{non ergodic PH}. We have that $AC(0)=\mathbb{T}^2\times\{0\}$. Let us fix $x\in\mathbb{T}^3$ with $x\neq0$. One can show that a generic perturbation $T+\Phi$, with $\Phi$ supported away from $\mathbb{T}^2\times\{0\}$, breaks the joint integrability of $\mathcal{F}^s$ and $\mathcal{F}^u$ near $x$. This ensures that $AC(x)$ is open, since the center is one dimensional. On the other hand, the perturbation does not affect the orbit of any point in $\mathbb{T}^2\times\{0\}$, hence preserving the invariant foliations of those points. In conclusion we still have that $AC(0)=\mathbb{T}^2\times\{0\}$ has dimension 2. 
\end{example}






\section{Spectral Theory of Schr\"odinger Operators}

\subsection{Discrete Schr\"odinger Operators}

Let $\mathcal{G}=(\mathcal{V},\mathcal{E})$ be a graph, either finite or infinite, endowed with shortest path distance $\text{d}_{\mathcal{G}}$. We fix $\lambda>0$ and consider a potential $V:\mathcal{V}\rightarrow[-\lambda,\lambda]$. We define the discrete Schr\"odinger Operator $H_{\mathcal{G},V}:l^2(\mathcal{V})\rightarrow l^2(\mathcal{V})$ by
\begin{equation*}
    \left(H_{\mathcal{G},V}\phi\right)(v) =  \sum_{v'\sim v} [\phi(v')-\phi(v)] + V(v)\phi(v),
\end{equation*}
where $\phi\in l^2(\mathcal{V})$ and $v\sim v'$ means that $v$ and $v'$ are neighbours.\newline
When the potential is bounded by $\lambda$ and the degree of every vertex of $\mathcal{G}$ is bounded by $k$, $H_{\mathcal{G},V}$ is a bounded self-adjoint operator and $\sigma(H_{\mathcal{G},V})\subset[-\lambda-2k,\lambda+2k]$.\newline

A discrete Schr\"odinger operator can be written as $H=\Delta + V$, where $\Delta$ is the combinatorial Laplacian and $V$ is a diagonal operator, corresponding to the potential. Considering a Riemmanian manifold $(M,g)$, one can define the continuous Schr\"odinger Operator $H:L^2(M,g)\rightarrow L^2(M,g)$ again by $H=\Delta+V$, where, this time, $\Delta$ is the Beltrami-Laplace operator and $V$ is a multiplication operator that corresponds to a potential $V:M\rightarrow\mathbb{R}$. The continuous Schr\"odinger Operator is typically a non-bounded operator.

\begin{remark}
    A Schr\"odinger Operator $H$ with identically zero potential is precisely the combinatorial Laplacian $\Delta$ on the graph $\mathcal{G}$.
\end{remark}

When $\mathcal{G}$ is the Cayley graph of a group $G$, the potential $V$ may be dynamically defined. More specifically, let $K$ be a set, $\Phi:K\rightarrow[-\lambda,\lambda]$ be a function and let $(f^{\bold{g}})_{\bold{g}\in G}$ a $G$-action on $K$. Often the space $K$ is endowed with a topology or a measure and $\Phi$ and $(f^{\bold{g})}$ have the compatible regularity. For every $x\in K$, we define the Schr\"odinger operator $H_x$, corresponding to the potential $V_x$, obtained by sampling $\Phi$ along the $G$-orbit of $x$, i.e.
\begin{equation}
    V_x(\bold g) = \Phi(f^{\bold g}(x)).
\end{equation}
The main examples are given by $G=\mathbb{Z}$ or $G=\mathbb{Z}^d$.\newline
If $K$ is endowed with a measure $\mu$ and the action of $G$ is ergodic, we obtain a family of Ergodic Schr\"odinger operators $(H_x)_{x\in K}$. Operators of an ergodic family tend to show common regular behaviours for $\mu$-a.e. $x\in K$, as the next theorem showcases.

\begin{theorem}[\cite{pastur1980spectral}]
    Let $(H_x)_{x\in K}$ be a family of Ergodic Schr\"odinger Operators over the Ergodic system $(K,\mu,T)$. Then exists a set $\Sigma\subset\mathbb{R}$ such that
    \begin{equation}
        \sigma(H_x)=\Sigma \quad\quad \text{ for }\mu\text{-a.e. }x\in K.
    \end{equation}
\end{theorem}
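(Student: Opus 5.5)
The idea is to show that the spectrum of $H_x$ is determined by the projection-valued spectral measures, and that these measures depend on $x$ only through quantities that are invariant under the dynamics. Ergodicity then forces those quantities to be almost surely constant. I assume the standard covariance setup: the action $(f^{\bold g})$ is measure preserving and ergodic, $\Phi$ is measurable, and we work on $G=\mathbb{Z}$ (the $\mathbb{Z}^d$ case is identical). Since $V_{f^{\bold h}(x)}(\bold g)=\Phi(f^{\bold g}f^{\bold h}(x))=V_x(\bold g+\bold h)$, we have $H_{T x}=U H_x U^{-1}$, where $U$ is the unitary shift on $\ell^2(\mathbb{Z})$. This covariance is the key structural input.

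First, fix an interval $I=(a,b)$ with rational endpoints and let $P_x(I)=\chi_I(H_x)$ be the spectral projection. Consider the function
\[
F_I(x)=\operatorname{tr}\big(P_x(I)\big)\in[0,\infty].
\]
Covariance gives $P_{Tx}(I)=UP_x(I)U^{-1}$, so $F_I(Tx)=F_I(x)$. Measurability of $x\mapsto F_I(x)$ follows because $x\mapsto\langle\delta_n,g(H_x)\delta_n\rangle$ is measurable for continuous $g$: $H_x$ depends measurably (indeed boundedly and weakly continuously in $V_x$) on $x$. One then passes to $\chi_I$ by monotone limits of continuous functions. By ergodicity (the equivalent characterization in the Remark, applied to $\min(F_I,M)$ for each $M$), $F_I$ is almost surely equal to a constant $c_I\in[0,\infty]$.

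Next, take the countable family of rational intervals and intersect the full-measure sets to obtain a full-measure set $K'$ on which $F_I(x)=c_I$ for every rational $I$ simultaneously. Define
\[
\Sigma=\{E\in\mathbb{R}:\ c_I\neq0 \text{ for every rational } I\ni E\}.
\]
For $x\in K'$ we have $E\in\sigma(H_x)$ if and only if $P_x(I)\neq0$ for every open interval $I$ containing $E$. Since $P_x(I)\neq0$ holds exactly when $\operatorname{tr}P_x(I)>0$, and rational intervals form a neighbourhood basis, this says $\sigma(H_x)=\Sigma$ for all $x\in K'$.

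The main obstacle is the measurability step, which must be handled carefully. I would establish it through the weak continuity of $V\mapsto g(H_V)$ for polynomial $g$, then extend to continuous $g$ by Weierstrass approximation on $[-\lambda-2k,\lambda+2k]$, and finally to indicator functions by monotone convergence. A second, minor subtlety is that the trace may be infinite. This is harmless, since ergodicity applies to extended-valued invariant functions via truncation, and only the dichotomy between zero and positive trace is used.
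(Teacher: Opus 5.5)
The paper gives no proof of this theorem; it is quoted from Pastur, so there is no internal argument to compare yours against. Your proof is the standard one and it is correct as written. Its ingredients are:
\begin{itemize}
\item the covariance relation $H_{Tx}=UH_xU^{-1}$;
\item $T$-invariance and measurability of $x\mapsto\operatorname{tr}\chi_I(H_x)$ for rational intervals $I$, with measurability obtained via polynomials, then continuous functions, then monotone limits;
\item ergodicity, giving a full-measure set on which these traces are simultaneously constant;
\item the identification of $\sigma(H_x)$ with the support of the projection-valued spectral measure, using that a positive operator has zero trace only if it vanishes.
\end{itemize}
One small simplification: you can skip the truncation argument for infinite traces. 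Apply ergodicity directly to the invariant measurable set $\{x:\chi_I(H_x)\neq 0\}$, since only the dichotomy between zero and nonzero projection is used.
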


Moreover, Ergodic Schr\"odinger Operators provide a common framework for several different models arising naturally and separately from physics.

\begin{example}[Random potential] Let $P$ be a probability distribution on $[-\lambda,\lambda]$. Now consider a probability space $(\Sigma,\nu)$ and a random variable $X:\Sigma\rightarrow[-\lambda,\lambda]$ with law $P$. We set $\Omega=\Sigma^{\mathbb{Z}}$, and $\mu=P^{\otimes\mathbb{Z}}$ and the shift map $T:\Omega\rightarrow \Omega$ that gives an ergodic $\mathbb{Z}$-action on $\Omega$. Considering the sampling function $\Phi(\omega)=X(\omega_0)$, for $\omega\in\Omega$, we define the operators $H_{\omega}:\ell^2(\mathbb{Z})\rightarrow\ell^2(\mathbb{Z})$, corresponding to the potential
\begin{equation}
    V_{\omega}(n) = \Phi(T^n\omega)=X(\omega_n), \quad \text{ for } n\in\mathbb{Z}.
\end{equation}
The values of the potential are indipendently randomly distributed according to $P$. This is the celebrated Anderson Model \cite{anderson1958absence}, that arises in the study of material conductivity.
\end{example}

\begin{example}[Quasi-periodic potential]
    We consider $\mathbb{S}^1=[0,1]/_\sim$ and the irrational rotation $f:\mathbb{S}^1\rightarrow \mathbb{S}^1$ given by $f(x)=x+\alpha$, for $\alpha\in\mathbb{R}\setminus\mathbb{Q}$, which provides an ergodic $\mathbb{Z}$-action on $\mathbb{S}^1$. Finally, we fix a continuous $\Phi:\mathbb{S}^1\rightarrow[-\lambda,\lambda]$ as sampling function. Then, for $x\in\mathbb{S}^1$, we obtain the operators $H_x:\ell^2(\mathbb{Z})\rightarrow\ell^2(\mathbb{Z})$, corresponding to the potential
    \begin{equation}
        V_x(n) = \Phi(T^nx) = \Phi(x + n\alpha).
    \end{equation}
    They take the name of quasi-periodic Schr\"odinger operators, which comes from the study of quasi-crystals. With the specific choice of $\Phi(x)=2\lambda\cos(2\pi x))$, one recovers the Almost Mathieu Operator.
\end{example} 

Given a graph $\mathcal{G}=(\mathcal{V},\mathcal{E})$, a subset of vertices $Y\subset\mathcal{V}$ induces a subgraph $\mathcal{G}_Y\subset\mathcal{G}$. Its vertices are the elements of $Y$ and two of them are connected by an edge if they are connected by an edge of $\mathcal{G}$. Given a Shr\"odinger operator $H_{\mathcal{G},V}$ on $\mathcal{G}$ with potential $V$, $Y$ induces also a truncated Schr\"odinger operator $H_{\mathcal{G}_Y,V|_Y}$ on $\mathcal{G}_Y$ with potential $V|_{Y}$. Finite truncation of infinite graphs play a fundamental role in the interplay between the theory of infinite and finite dimensional Schr\"odinger operators. A prime example is given by the box Schr\"odinger operators, defined on the subgraph of $\mathbb{Z}^d$ induced by the set $[N]^d=\{0,\dots,N-1\}^d$.\newline

Finally we mention the closely related family of finite range Schr\"odinger operators. Given a graph $\mathcal{G}=(\mathcal{V},\mathcal{E})$, a potential $V:\mathcal{V}\rightarrow[-\lambda,\lambda]$ and a range $r\in\mathbb{Z}^+$, the corresponding $r$-range Schr\"odinger operator $H_{\mathcal{G},V,r}:\ell^2(\mathcal{V})\rightarrow\ell^2(\mathcal{V})$ is defined by

\begin{equation}
	\left(H_{\mathcal{G},V,r}\phi\right)(v) =  \sum_{\text{d}_{\mathcal{G}}(v',v)\leq r} [\phi(v')-\phi(v)] + V(v)\phi(v).
\end{equation}

These can be seen as regular Schr\"odinger operators, provided that the underlying graph $\mathcal{G}$ is replaced with the graph $\mathcal{G}_r$, which has the same vertices as $\mathcal{G}$ and where two vertices are connected by an edge if their distance is less than $r$ in $\mathcal{G}$.

\subsection{Quantum Dynamics and Spectral Measures}

Given an Hilbert space $\mathcal{H}$ and a bounded self-adjoint operator $H:\mathcal{H}\rightarrow\mathcal{H}$ the asociated Schr\"odinger equation is given by
\begin{equation}
	\frac d{dt} \phi(t) = -iH\phi(t), \quad \text{ for }t\in\mathbb{R},
\end{equation}
where $\phi$ is a function from $\mathbb{R}$ to $\mathcal{H}$, and goes by the name of wave function. We are interested in long-time qualitative behaviour of $\phi(t)$, in particular in determining whether its bulk remains trapped in a compact region (dynamical localization) or some escape of mass occurs (dynamical delocalization). The solutions to the Schr\"odinger Equation are given by 
\begin{equation}
    \phi(t)=e^{-itH}\phi(0)
\end{equation}
and their asymptotical properties depend on the spectral type of $H$.

\begin{theorem}
	Let $\mathcal{H}$ be an Hilbert space and $H:\mathcal{H}\rightarrow\mathcal{H}$ be a bounded self-adjoint linear operator. For any $\phi\in\mathcal{H}$ there exists a unique measure $\sigma_{\phi}$, supported on $\sigma(H)$ such that, for every continuous $f:\sigma(H)\rightarrow\mathbb{R}$,
	\begin{equation}
	\langle \phi, f(H)\phi \rangle = \int_{\sigma(H)}f(E)d\sigma_{\phi}(E).
	\end{equation}
The measure $\mu_{\phi}$ is said to be the spectral measure of $H$ corresponding to $\phi$.
\end{theorem}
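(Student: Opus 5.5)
The plan is to derive the statement from the Riesz--Markov representation theorem, applied to the positive linear functional $f\mapsto\langle\phi,f(H)\phi\rangle$ on $C(\sigma(H))$. Here $f(H)$ is defined by the continuous functional calculus for the bounded self-adjoint operator $H$.

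First I set up the functional calculus on polynomials, putting $p(H)=\sum a_kH^k$. The key estimate is $\|p(H)\|=\sup_{E\in\sigma(H)}|p(E)|$ for real polynomials $p$. To prove it, I use the spectral mapping theorem $\sigma(p(H))=p(\sigma(H))$, which follows by factoring $p(z)-\mu=c\prod(z-z_j)$ and noting that a product of commuting operators is invertible exactly when each factor is. I then combine this with the fact that for a self-adjoint operator the norm equals the spectral radius, since $\|A^{2^n}\|=\|A\|^{2^n}$ and Gelfand's formula applies. Because $\sigma(H)\subset\mathbb{R}$ is compact, Weierstrass density lets the map $p\mapsto p(H)$ extend uniquely to an isometric $*$-homomorphism $C(\sigma(H))\to B(\mathcal{H})$. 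Multiplicativity and self-adjointness of $f(H)$ for real $f$ pass to the limit.

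Next I check positivity. If $f\ge0$ on $\sigma(H)$, write $f=g^2$ with $g=\sqrt f$ continuous and real. Then $\langle\phi,f(H)\phi\rangle=\langle\phi,g(H)^2\phi\rangle=\|g(H)\phi\|^2\ge0$. So $\Lambda(f)=\langle\phi,f(H)\phi\rangle$ is a positive linear functional on $C(\sigma(H))$, bounded by $\|\phi\|^2\|f\|_\infty$. The Riesz--Markov theorem on the compact Hausdorff space $\sigma(H)$ then gives a unique finite positive regular Borel measure $\sigma_\phi$ on $\sigma(H)$ with $\Lambda(f)=\int f\,d\sigma_\phi$, and its total mass is $\|\phi\|^2$.

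For uniqueness, any two measures on $\sigma(H)$ that represent $\Lambda$ agree on all continuous functions, so they coincide by the uniqueness part of Riesz--Markov; equivalently, finite regular Borel measures on a compact metric space are determined by their integrals of continuous functions. The step I expect to need the most care is the norm identity $\|p(H)\|=\sup_{\sigma(H)}|p|$, since it is what makes the functional calculus well defined and continuous. Once that is in place, the rest is a formal application of Riesz--Markov.
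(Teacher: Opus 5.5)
The paper does not prove this statement; it is quoted in the preliminaries as standard background, so there is no argument of the paper's to compare yours with. Your proposal is correct and is the standard functional-calculus proof. The following steps are all sound:
the spectral mapping step;
the identity $\|A\|=r(A)$ for self-adjoint $A$, via $\|A^{2}\|=\|A\|^{2}$ and Gelfand's formula (over $\mathbb{C}$, or after complexifying);
the use of the norm identity to make $p\mapsto p(H)$ well defined on restrictions to $\sigma(H)$, and hence isometrically extendable;
positivity via $f=g^{2}$.
On uniqueness you can say a little more than you do. Since $\sigma(H)$ is a compact metric space, every finite Borel measure on it is regular, so $\sigma_\phi$ is unique among all finite Borel measures supported on $\sigma(H)$, not only among regular ones.

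It is also worth seeing how your construction feeds the rest of the thesis, which uses only two consequences of this theorem. The first is the moment identity $\int x^{k}\,d\sigma_\phi(x)=\langle \phi,H^{k}\phi\rangle$. The comparison of truncated with infinite or periodic operators and the power-series bound on $\widehat{\sigma_\phi}$ rest on it, and it is exactly your polynomial step.

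The second is the resolvent identity: for $\operatorname{Im}E\neq0$, it expresses $\int (t-E)^{-1}\,d\sigma_\phi(t)$ as a diagonal matrix element of $(H-E)^{-1}$. It is used for the H\"older dependence of spectral measures on the potential, but it is not literally covered by the real-valued statement. It does follow from your calculus: apply it to the real and imaginary parts $u,v$ of $f(t)=(t-E)^{-1}$, then use multiplicativity together with $f(t)(t-E)=1$ to get $(u(H)+iv(H))(H-E)=I$.
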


We recall that any Borel measure $\sigma$ on $\mathbb{R}$ admits a unique Lebesgue decomposition
\begin{equation}
	\sigma=\sigma^{pp}+\sigma^{sc}+\sigma^{ac},
\end{equation}
where $\sigma^{pp}$ is supported on a countable set, $\sigma^{sc}$ is singular continuous and $\sigma^{ac}$ is absolutely continuous. We also define $\sigma^{s}=\sigma^{pp}+\sigma^{sc}$ and $\sigma^c=\sigma^{sc}+\sigma^{ac}$.\newline

We say that $H$ has pure point spectrum if $\sigma_{\phi}=\sigma_{\phi,pp}$ for every $\phi\in\mathcal{H}$, purely singular continuous spectrum if $\sigma_{\phi}=\sigma_{\phi,ac}$ for every $\phi\in\mathcal{H}$ and purely absolutely continuous spectrum if $\sigma_{\phi}=\sigma_{\phi,ac}$ for every $\phi\in\mathcal{H}$. If $H$ is pure point we have spectral localization, if $H$ is absolutely continuous we have spectral delocalization.\newline

For Schr\"odinger Operators on $\ell^2(\mathbb{Z})$, the RAGE theorem connects the notion of spectral localization (or delocalization) to phenomena of dynamical localization (or delocalization) of the solutions of the Schr\"odinger Equation. In the following, $\delta_n$ denotes the characteristic function of $\{n\}$.

\begin{theorem}[RAGE Theorem, \cite{ruelle1969remark}, \cite{amrein1974characterization}, \cite{enss1978asymptotic}]
    Suppose the operator $A:\ell^2(\mathbb{Z})\rightarrow\ell^2(\mathbb{Z})$ to be self-adjoint,
$\phi\in\ell^2(\mathbb{Z})$, and $\phi(t)$ to be a solution of the Schr\"odinger equation with initial data $\phi$.
\begin{enumerate}
    \item We have that $\sigma_{\phi}=\sigma_{\phi,pp}$ if and only if for every $\varepsilon>0$, there exists $N\in\mathbb{Z}^+$ such that
    \begin{equation}
        \sum_{|n|> N}|\langle\delta_n,\phi(t)\rangle|^2 < \varepsilon \quad \text{ for every }t\in\mathbb{R}
    \end{equation}.
    \item We have that $\sigma_{\phi}=\sigma_{\phi,c}$ if and only if for every $N\in\mathbb{Z}^+$,
    \begin{equation}
        \lim_{T\rightarrow\infty}\frac 1{2T}\int_{-T}^T \sum_{|n|\leq N}|\langle\delta_n,\phi(t)\rangle|^2dt = 0
    \end{equation}.
    \item If $\sigma_{\phi}=\sigma_{\phi,ac}$, then for every $N\in\mathbb{Z}^+$,
    \begin{equation}
        \lim_{|t|\rightarrow\infty} \sum_{|n|\leq N}|\langle\delta_n,\phi(t)\rangle|^2dt = 0
    \end{equation}.
\end{enumerate}
\end{theorem}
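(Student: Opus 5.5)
The plan is to reduce all three statements to properties of the spectral measure $\sigma_\phi$ by means of the spectral theorem. Work in the spectral representation: $e^{-itH}$ acts as multiplication by $e^{-itE}$ on the cyclic subspace generated by $\phi$. Write $P_N$ for the orthogonal projection onto $\mathrm{span}\{\delta_n: |n|\le N\}$. This projection has finite rank, hence it is compact.

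The basic identity is
\begin{equation*}
\sum_{|n|\leq N}|\langle\delta_n,\phi(t)\rangle|^2=\|P_N e^{-itH}\phi\|^2 .
\end{equation*}
Throughout I assume that the operator in the statement is $H$, the generator of the dynamics.

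\textbf{Item 3.} Every $\langle \delta_n, e^{-itH}\phi\rangle$ equals $\int e^{-itE}\,d\sigma_{\delta_n,\phi}(E)$, where $\sigma_{\delta_n,\phi}$ is the complex spectral measure. If $\sigma_\phi$ is absolutely continuous, then $\phi$ lies in the absolutely continuous subspace. Consequently each $\sigma_{\delta_n,\phi}$ is absolutely continuous as well, since $|\sigma_{\delta_n,\phi}|(B)\le \sigma_{\delta_n}(B)^{1/2}\sigma_\phi(B)^{1/2}$. The Riemann--Lebesgue lemma then gives $\langle\delta_n,\phi(t)\rangle\to0$ as $|t|\to\infty$. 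Summing over the finitely many $|n|\le N$ finishes this item.

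\textbf{Item 2.} This follows from Wiener's theorem. For any finite measure $\mu$,
\begin{equation*}
\frac1{2T}\int_{-T}^T|\hat\mu(t)|^2dt\to\sum_{E}\mu(\{E\})^2 .
\end{equation*}
Apply it to $\mu=\sigma_{\delta_n,\phi}$. The atoms of this measure are $\langle\delta_n,P_{\{E\}}\phi\rangle$, where $P_{\{E\}}$ is the eigenprojection. If $\sigma_\phi$ is continuous, then $P_{\{E\}}\phi=0$ for all $E$, so each time average tends to $0$. Conversely, suppose the averages vanish for all $N$. Then every $\langle\delta_n,P_{\{E\}}\phi\rangle$ is zero for all $n$ and $E$. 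Since the $\delta_n$ form a basis, $P_{\{E\}}\phi=0$ for every $E$, so $\sigma_\phi$ has no atoms.

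\textbf{Item 1, forward direction.} Suppose $\sigma_\phi$ is pure point. Then $\phi=\sum_j c_j\psi_j$ is an $\ell^2$-convergent sum of eigenvectors $H\psi_j=E_j\psi_j$. Truncate to a finite sum $\phi_K$ with $\|\phi-\phi_K\|<\sqrt{\varepsilon}/2$. Because the dynamics is unitary, $\|\phi(t)-e^{-itH}\phi_K\|$ is also small, uniformly in $t$. Now $e^{-itH}\phi_K=\sum_{j\le K}c_je^{-itE_j}\psi_j$ ranges over a compact set, namely the image of a torus. Its tails $\|(1-P_N)\cdot\|$ are therefore uniformly small for $N$ large. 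Combining the two estimates gives the bound uniformly in $t$.

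\textbf{Item 1, converse.} This is the main obstacle. Assume the tails are uniformly small. Decompose $\phi=\phi_{pp}+\phi_c$ using the spectral projections; the two pieces evolve orthogonally. By Item 2, the continuous part has
\begin{equation*}
\frac1{2T}\int_{-T}^T\|P_Ne^{-itH}\phi_c\|^2dt\to0 \quad\text{for each fixed } N.
\end{equation*}
I would use a Cauchy--Schwarz estimate on the cross terms with $\phi_{pp}$ to get a Ces\`aro average bound
\begin{equation*}
\frac1{2T}\int_{-T}^T\|P_N\phi(t)\|^2dt\le\|\phi_{pp}\|^2+o(1).
\end{equation*}
The tail hypothesis gives $\|P_N\phi(t)\|^2\ge\|\phi\|^2-\varepsilon$ for every $t$. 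Letting $T\to\infty$ and then $\varepsilon\to0$ yields $\|\phi_c\|=0$. The delicate part is controlling the cross term $2\,\mathrm{Re}\langle P_N\phi_{pp}(t),P_N\phi_c(t)\rangle$. Its time average vanishes because $P_N\phi_c(t)\to0$ in the Ces\`aro sense while $\|P_N\phi_{pp}(t)\|$ stays bounded; the Cauchy--Schwarz inequality on the time average handles this.
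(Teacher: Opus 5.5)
The paper gives no proof of this statement: RAGE is quoted in the preliminaries as background, with citations to Ruelle, Amrein--Georgescu and Enss. There is therefore no argument in the paper to compare yours against. Your proposal is a correct, self-contained proof, and it is the standard one.

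\textbf{Items 2 and 3.} For item 3 you show that each complex measure $\sigma_{\delta_n,\phi}$ is absolutely continuous, via the total-variation Cauchy--Schwarz bound, and then apply Riemann--Lebesgue; this is fine. For item 2 you apply Wiener's theorem to each $\sigma_{\delta_n,\phi}$ and identify its atoms as $\langle\delta_n,P_{\{E\}}\phi\rangle$. This proves both directions, and the converse correctly uses that the $\delta_n$ span $\ell^2(\mathbb{Z})$.

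\textbf{Item 1.} The forward direction is also correct: you truncate to finitely many eigenvectors, use unitarity, and use that $\{e^{-itH}\phi_K\}_t$ lies in a compact set, so its tails are uniformly small. In the converse you decompose $\phi=\phi_{pp}+\phi_c$ and bound the Ces\`aro average of $\|P_N\phi(t)\|^2$ by $\|\phi_{pp}\|^2+o(1)$. The cross term is handled by $\frac1{2T}\int_{-T}^T\|P_N\phi_c(t)\|\,dt\le\bigl(\frac1{2T}\int_{-T}^T\|P_N\phi_c(t)\|^2dt\bigr)^{1/2}\to0$. Comparing this with the pointwise lower bound $\|\phi\|^2-\varepsilon$, for $N=N(\varepsilon)$ fixed before letting $T\to\infty$, gives $\|\phi_c\|^2\le\varepsilon$ for every $\varepsilon$, hence $\phi_c=0$. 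So the step you flagged as the main obstacle is fully handled.

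\textbf{What the argument buys.} It uses only that $P_N$ has finite rank, so it is really the general RAGE theorem with a compact operator in place of $P_N$. It works for any orthonormal basis of any separable Hilbert space, not just $\ell^2(\mathbb{Z})$.

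\textbf{Two cosmetic points.} Since $\sigma_{\delta_n,\phi}$ is a complex measure, Wiener's theorem should read $\sum_E|\mu(\{E\})|^2$. You are also right to read the operator $A$ in the statement as the generator $H$ of the Schr\"odinger evolution.
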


\begin{example}
	 The combinatorial Laplacian $\Delta$ on $\mathbb{Z}^d$ has purely absolutely continuous spectrum.
\end{example}

\begin{theorem}[Anderson Localization, \cite{goldsheid1977random}, \cite{kunz1980spectre}]
	Let $\nu$ be an absolutely continuous probability measure on $[-\lambda,\lambda]$. Let $H:\ell^2(\mathbb{Z})\rightarrow\ell^2(\mathbb{Z})$ be a Schr\"odinger Operator on $\mathbb{Z}$ where the values of the potential are indipendently sampled from $[-\lambda,\lambda]$ according to $\nu$. Then $H$ is almost surely pure point.
\end{theorem}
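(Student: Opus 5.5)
The plan follows the classical random-matrix proof of Anderson localization on $\mathbb{Z}$ (Kunz--Souillard, and alternatively Goldsheid--Molchanov--Pastur), organised through the Schr\"odinger cocycle introduced earlier. The random potential makes this cocycle a product of i.i.d.\ random matrices
\[
A(\omega)=\begin{pmatrix} E-X(\omega_0) & -1\\ 1 & 0\end{pmatrix}\in SL(2,\mathbb{R})
\]
over the Bernoulli shift. The argument has three steps: positivity of the Lyapunov exponent, a spectral averaging reduction, and exponential decay of generalized eigenfunctions.

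\textbf{Step 1: positive Lyapunov exponent.} Since $\nu$ is absolutely continuous, its support is infinite. So for every $E$ the group generated by $\{A(\omega)\}$ is non-compact and strongly irreducible: no finite union of lines is invariant, because translations of the upper-left entry move any line. Furstenberg's theorem, the random analogue of the Oseledets theorem stated above, then gives $L(E)>0$. I would then upgrade this to uniform large deviation estimates for $\frac1n\log\|A^n(\omega)\|$, uniform for $E$ in compact intervals. The tools are continuity of $L$ in $E$, which comes from uniqueness of the stationary measure on $\mathbb{P}^1$, and a spectral gap for the associated transfer operator.

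\textbf{Step 2: spectral averaging.} By Schnol's theorem, spectrally almost every energy has a polynomially bounded generalized eigenfunction $u$. It therefore suffices to show that, almost surely, every polynomially bounded solution of the eigenvalue equation with $E\in\sigma(H_\omega)$ decays exponentially. Oseledets only yields this for a fixed $E$ and almost every $\omega$, and we need it for all $E$ in the spectrum at once, which is an uncountable set. To bridge this I would use rank-one perturbation theory. Vary $V(0)$ alone, keeping the other values fixed. The average over $V(0)$ of the spectral measures $\sigma_{\delta_0}$ is absolutely continuous with respect to Lebesgue measure (Kotani/Simon--Wolff spectral averaging), and $\nu$ has a density. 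Hence the Lebesgue-null set of energies where the Oseledets conclusion fails has zero spectral measure for almost every $\omega$. The same holds for $\delta_1$, and $\{\delta_0,\delta_1\}$ is cyclic for $H_\omega$ on $\mathbb{Z}$.

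\textbf{Step 3: decay of generalized eigenfunctions.} For energies that are good in this sense, Oseledets on both half-lines gives a contracting direction $V_2$ at $+\infty$ and at $-\infty$. A polynomially bounded $u$ must have initial vector $(u(1),u(0))$ in both contracting directions, since otherwise it grows like $e^{L|n|}$. Therefore $u$ decays at rate $e^{-L(E)|n|}$. Every spectral measure is then supported on eigenvalues with $\ell^2$ eigenvectors, which means $H_\omega$ has pure point spectrum.

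The main obstacle is Step 2, exchanging ``for all $E$, almost every $\omega$'' into ``almost every $\omega$, spectrally almost every $E$''. The delicate point is that spectral averaging must be applied to the random variable $V(0)$ while the Oseledets exceptional set depends on the whole potential, including $V(0)$. I would first try to handle this via Fubini on the product $\nu\times\text{Leb}$ after conditioning on $(V(n))_{n\neq0}$. The fallback is the Kunz--Souillard approach, which averages directly over the i.i.d.\ variables to bound $\mathbb{E}\sup_t|\langle\delta_n,e^{-itH}\delta_0\rangle|$ exponentially in $|n|$, and then concludes via the RAGE theorem above.
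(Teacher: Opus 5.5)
The paper does not prove this statement. It is quoted in the preliminaries from Goldsheid--Molchanov--Pastur and Kunz--Souillard, so there is no internal argument to compare with. Your main route is neither of those original proofs. It is the later spectral-averaging argument (Kotani, Simon--Wolff): Furstenberg and Oseledets at each fixed energy, Fubini, averaging over a single site, then Schnol. That route is sound, and it is the natural one when $\nu$ is only assumed absolutely continuous. As written, however, it has a gap exactly at the step you flag, and your diagnosis there is backwards. The exceptional energy set does \emph{not} depend on $V(0)$, and this independence is what makes the conditioning argument work.

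Here is the missing argument. Let $B_\omega$ be the set of $E$ for which the following fails: at $+\infty$ and at $-\infty$ there is a unique line of initial data $(u(1),u(0))$ with exponent $-L(E)$, and all other initial data have exponent $+L(E)$.

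\begin{enumerate}
\item The forward transfer matrices from $(u(1),u(0))$ to $(u(n+1),u(n))$ involve only $V(1),\dots,V(n)$. In the backward products, $V(0)$ enters only through the rightmost factor.
\item Changing $V(0)$ therefore multiplies every backward product on the right by one fixed invertible matrix $M$, independent of $m$. This moves the contracting line from $\ell$ to $M^{-1}\ell$ but changes no exponent. Hence $B_\omega=B(\omega')$ with $\omega'=(V(n))_{n\neq0}$.
\item Furstenberg and Oseledets at each fixed $E$, together with Fubini, give $\mathrm{Leb}(B(\omega'))=0$ for a.e. $\omega'$.
\item Let $t$ be the value at site $0$. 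The averaging identity $\int_{\mathbb{R}}\sigma_{(\omega',t),\delta_0}(\cdot)\,dt=\mathrm{Leb}(\cdot)$ then gives $\sigma_{(\omega',t),\delta_0}(B(\omega'))=0$ for Lebesgue-a.e.\ $t$. Since $\nu\ll\mathrm{Leb}$, this holds for $\nu$-a.e.\ $t$; this is the only place absolute continuity of $\nu$ is used.
\item Fubini in $(t,\omega')$ concludes for $\delta_0$, and the same argument at site $1$ handles $\delta_1$.
\end{enumerate}

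Without the independence in step 2, the set being measured would move with $t$ and the averaging identity would give nothing. So this observation, not a generic appeal to Fubini, is the missing idea.

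Some minor points.
\begin{itemize}
\item In Step 1 there is a cleaner justification. Write $A_v$ for the one-step matrix with potential value $v$. For $v_1\neq v_2$ in $\mathrm{supp}\,\nu$ one has $A_{v_1}A_{v_2}^{-1}=\left(\begin{smallmatrix}1&v_2-v_1\\0&1\end{smallmatrix}\right)$. This is a nontrivial unipotent with unbounded powers, and its only invariant line $\mathbb{R}e_1$ is moved by every $A_v$. That gives non-compactness and strong irreducibility at once, using only two support points.
\item The uniform large deviation estimates and the continuity of $L$ are never used in this route.
\item Once the independence above is in place, the Kunz--Souillard fallback is unnecessary.
\item Step 3, and the conclusion via Schnol's theorem and cyclicity of $\{\delta_0,\delta_1\}$, are fine.
\end{itemize}
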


The last two examples suggest that a material with a perfect crystalline structure is a conductor, and the presence of random impurities makes it an insulator. The first formulation of this phenomenon \cite{anderson1958absence} was worth the Nobel prize to Anderson in 1977.

\subsection{Eigenfunctions Delocalization and Quantum Ergodicity}

In some settings, like for continuous Schr\"odinger Operators on compact manifolds or discrete Schr\"odinger Operators on finite graphs, an escape of mass is trivially impossible, making a purely qualitative analysis of the Schr\"odinger Equation solutions unsatisfactory. At the same time, in such scenarios, the existence of a basis of eigenfunctions is often guaranteed. By studying the concetration or diffusion of such eigenfunctions, a finer analysis of the long-time behaviour of the wave function is possible.\newline

This is particularly well showcased in the case of manifolds. Let $(M,g)$ be a compact Riemannian manifold and call $\Delta$ the Laplace-Beltrami operator on $M$. Let $T^1M$ be the unit tangent bundle of $M$, endowed with the Liouville Measure, and let $g_t:T^1M\rightarrow T^1M$ be the geodesic on $M$. We also recall that $\{g_t\}$ is ergodic for all manifolds with negative curvature.

\begin{theorem}[Quantum Ergodic Theorem, \cite{alexander1974shnirel}, \cite{colin1985ergodicite}, \cite{zelditch1987uniform}]
Let $M$ be a closed Riemannian manifold for which the geodesic flow $\{g_t\}$ is ergodic. Let $(\phi_n)_n$ be an orthonormal basis of $L^2(M,g)$ of eigenfunctions of the Laplacian $\Delta$, corresponding to weakly increasing eigenvalues $(\lambda_n)_n$. Then, for any smooth $a\in\mathcal{C}^{\infty}(M)$,
\begin{equation}
\lim_{N\rightarrow\infty} \frac 1N \sum_{n=1}^N \left| \int_M a(x)|\phi_{n}(x)|^2dx - \int_M a(x)dx \right|^2 =0.
\end{equation}
\end{theorem}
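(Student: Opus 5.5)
We follow the classical semiclassical route of Shnirelman, Colin de Verdière and Zelditch. Two standard reductions come first. By density of smooth functions and boundedness of the terms, it suffices to treat a fixed smooth $a$. Subtracting the constant $\bar a=\int_M a\,dx$ (normalizing $\mathrm{vol}(M)=1$), we may also assume $\int_M a\,dx=0$. The key tool is pseudodifferential calculus. We pass from $a$ to the zeroth-order operator $\mathrm{Op}(a)$, which is just multiplication by $a$. More generally, for $b\in\mathcal{C}^\infty(T^1M)$ homogeneous of degree $0$ we consider $\mathrm{Op}(b)$. We also use Egorov's theorem: for fixed $t$,
\begin{equation*}
e^{it\sqrt{\Delta}}\,\mathrm{Op}(b)\,e^{-it\sqrt{\Delta}}=\mathrm{Op}(b\circ g_t)+R_t,
\end{equation*}
where $R_t$ has order $-1$.

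\textbf{Step 1 (local Weyl law).} We prove that
\begin{equation*}
\frac1N\sum_{n\le N}\langle \mathrm{Op}(b)\phi_n,\phi_n\rangle\to\int_{T^1M}b\,d\mu_L .
\end{equation*}
This follows from the Weyl law together with a trace asymptotic for $\mathrm{Op}(b)\mathbf 1_{[0,\lambda]}(\Delta)$, obtained via the heat or wave kernel parametrix. Applying it to $\mathrm{Op}(b)^*\mathrm{Op}(b)$ gives
\begin{equation*}
\limsup_N\frac1N\sum_{n\le N}\|\mathrm{Op}(b)\phi_n\|^2\le\int|b|^2\,d\mu_L .
\end{equation*}

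\textbf{Step 2 (time averaging).} Since $\phi_n$ is an eigenfunction, $\langle \mathrm{Op}(a)\phi_n,\phi_n\rangle=\langle e^{it\sqrt\Delta}\mathrm{Op}(a)e^{-it\sqrt\Delta}\phi_n,\phi_n\rangle$ for every $t$. Averaging over $t\in[0,T]$ and applying Egorov, we get
\begin{equation*}
\langle a\phi_n,\phi_n\rangle=\langle \mathrm{Op}(a_T)\phi_n,\phi_n\rangle+\langle K_T\phi_n,\phi_n\rangle,
\end{equation*}
where $a_T=\frac1T\int_0^T a\circ g_t\,dt$ and $K_T$ is compact, of order $-1$. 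By Cauchy--Schwarz, $|\langle \mathrm{Op}(a_T)\phi_n,\phi_n\rangle|^2\le\|\mathrm{Op}(a_T)\phi_n\|^2$. Averaging in $n$ and using Step 1, the contribution of the first term to the Cesàro variance is at most $\int|a_T|^2\,d\mu_L+o_N(1)$.

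\textbf{Step 3 (ergodicity).} Since $\langle K_T\phi_n,\phi_n\rangle\to0$ as $n\to\infty$ (the $\phi_n$ converge weakly to $0$), the $\limsup_N$ of the variance sum is at most $\int|a_T|^2\,d\mu_L$ for every $T$. By the mean ergodic theorem stated earlier, applied to the flow $g_t$, $a_T\to\int a\,d\mu_L=0$ in $L^2(\mu_L)$ because $g_t$ is ergodic. Letting $T\to\infty$ therefore gives the claim.

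The main obstacle is Step 1 together with Egorov's theorem. Both require genuine microlocal analysis: the symbol calculus, and the fact that the principal symbol of the conjugated operator is the transported symbol, since $\sqrt\Delta$ generates the geodesic flow on $T^1M$. The crucial and subtle point is passing from functions on $M$ to functions on phase space, because the ergodicity hypothesis lives on $T^1M$.
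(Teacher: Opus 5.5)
The paper does not prove this statement. It is quoted in the preliminaries as background, with references to Shnirelman, Colin de Verdi\`ere and Zelditch, so there is no internal argument to compare against. Your proposal is the standard proof from those references: local Weyl law, Egorov's theorem, time-averaging of the observable, Cauchy--Schwarz, and the mean ergodic theorem for the geodesic flow. It is correct, and it is legitimate to use the microlocal inputs as black boxes.

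A few small points of polish:

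\begin{enumerate}
\item The opening ``density'' reduction is vacuous, since $a$ is already assumed smooth.

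\item The mean ergodic theorem stated in the paper is for a single map, while you need it for the flow. Ergodicity of $\{g_t\}$ does not imply ergodicity of $g_1$, so you cannot simply apply the paper's version to $g_1$. Either invoke von Neumann's theorem for the unitary group $f\mapsto f\circ g_t$ directly, or apply the discrete version to $g_1$ and the function $\int_0^1 a\circ g_s\,ds$. In the second case, note that the resulting $L^2$-limit is invariant under every $g_t$, hence constant by ergodicity of the flow.

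\item That constant equals $\int_M a\,dx$ because the normalized Liouville measure on $T^1M$ projects to the normalized Riemannian volume. This is also why the statement tacitly needs $\mathrm{vol}(M)=1$, as you assumed.

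\item To discard the term $\langle K_T\phi_n,\phi_n\rangle$ without losing a constant factor, use Minkowski's inequality for the Ces\`aro $\ell^2$-means.

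\item In Step~1, passing from spectral cutoffs $\lambda_n\le\lambda$ to arbitrary $N$ uses that eigenvalue multiplicities are $o(N)$. This already follows from the leading-order Weyl law.
\end{enumerate}
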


This property of the family of the eigenfunctions goes by the name of quantum ergodicity, since, on average, the eigenfunctions tend to spread (in a weak sense) towards a uniform distribution.\newline

We now turn our attention to finite graph. Here, a tree-like structure mimics the hypotesis of negative curvature for Riemmanian manifolds. We begin with a result of weak delocalization of all eigenvectors. More precisely, we say that a graph $\mathcal{G}$ is $(q+1)$-regular if every vertex has degree $(q+1)$ and that a $(q+1)$-regular graph $\mathcal{G}$, with $n$ vertices, has $(c,\delta)$-few short loops if, for any $k< c\log(n)$ and any two vertices $v,w$,
\begin{equation}
	\#\{\text{paths of length $k$ in $\mathcal{G}$ from $v$ to $w$}\} \leq q^{k(\frac{1-\delta}2)}.
\end{equation}

\begin{theorem}[\cite{brooks2013non}]
	Let $\mathcal{G}$ be a $(q+1)$-regular graph with $(c,\delta)$-few short loops for some $c,\delta>0$. Fix $\varepsilon>0$. Then, for any unitary eigenfunction $\phi$ of the discrete laplacian on $\mathcal{G}$ and for any set of vertices $A$,
	\begin{equation}
		\sum_{v\in A} |\phi(v)|^2>\varepsilon \quad \text{ implies that } \quad |A| \geq |\mathcal{G}|^{\alpha},
	\end{equation}
 for $\alpha = \alpha(\varepsilon,c,\delta)>0$.
\end{theorem}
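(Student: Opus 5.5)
The plan is to prove the contrapositive through an $\ell^\infty$ bound. If $\phi$ is a unit eigenfunction with $\sum_{v\in A}|\phi(v)|^2>\varepsilon$, then
\[
\varepsilon<|A|\,\|\phi\|_\infty^2 .
\]
So it suffices to show $\|\phi\|_\infty^2\le |\mathcal{G}|^{-\alpha'}$ for some $\alpha'>0$, and then take $|A|\ge \varepsilon|\mathcal{G}|^{\alpha'}\ge|\mathcal{G}|^{\alpha}$ for $n$ large; small $n$ are absorbed into the constant.

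An $\ell^\infty$ bound of exactly this type is not quite true for all eigenfunctions, so the real target is the weaker, $A$-dependent statement. The tool is a propagation (wave-operator) argument. Since $\phi$ is an eigenfunction of the adjacency operator $\mathcal{A}$ with eigenvalue $\mu\in[-(q+1),q+1]$, we have $P(\mathcal{A})\phi=P(\mu)\phi$ for every polynomial $P$. Choose $P$ of degree $k\sim c\log n$, built from the non-backtracking (spherical-function) polynomials $\Phi_j$ of the $(q+1)$-regular tree. These satisfy $\Phi_j(\mathcal{A})(v,w)=\#\{\text{non-backtracking paths of length } j \text{ from } v \text{ to } w\}$. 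Normalize so that $|P(\mu)|$ is bounded below uniformly in $\mu$. This is the delicate choice: for $\mu$ in the tempered spectrum $[-2\sqrt q,2\sqrt q]$, the functions $\Phi_j(\mu)$ oscillate with size $\asymp q^{j/2}$. A suitable average of $|\Phi_j(\mu)|^2$ over $j$, or a Fejér-type kernel, gives a lower bound of order $k$. For $\mu$ outside this range, $\Phi_j(\mu)$ grows even faster, which only helps.

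Next, estimate the mass near $A$. Write
\[
\varepsilon<\langle 1_A\phi,\phi\rangle=|P(\mu)|^{-2}\langle 1_A P(\mathcal{A})\phi,P(\mathcal{A})\phi\rangle,
\]
and bound this using the operator norm of $1_A P(\mathcal{A})$ via a Schur test. Its matrix entries count non-backtracking paths of length $\le k$ from $v\in A$, normalized by $q^{-j/2}$. The few-short-loops hypothesis says that between any two vertices there are at most $q^{j(1-\delta)/2}$ paths of length $j<c\log n$. Each normalized entry is therefore at most $q^{-j\delta/2}$, while the row sums are $O(1)$, since there are at most $q^j$ endpoints. Combining the Schur bound $\|1_AP\|^2\le(\max\text{row sum})(\max\text{column sum restricted to }A)$ gives
\[
\varepsilon\lesssim \frac{|A|\,q^{-k\delta/2}\cdot\text{poly}(k)}{k}\quad\text{or similar.}
\]
With $k=c\log n$ this yields $|A|\gtrsim \varepsilon\, n^{c\delta\log q/2}/\text{poly}\log n$, and hence $\alpha$ depends only on $\varepsilon,c,\delta$ (and $q$).

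The main obstacle is the Schur-test bookkeeping. The column-sum bound must exploit that only $|A|$ rows contribute, and one must make sure the loop-count hypothesis controls the overlaps of path families started from different points of $A$. If the direct Schur bound is too lossy, I would instead use $\|1_AP(\mathcal{A})\|^2=\|1_AP(\mathcal{A})P(\mathcal{A})^*1_A\|\le \operatorname{tr}$-free Schur on the $A\times A$ kernel of $P(\mathcal{A})^2$. Its diagonal counts closed paths of length $\le 2k$, controlled by the hypothesis, and its off-diagonal terms over $A$ are bounded by $|A|\,q^{-k\delta}$.
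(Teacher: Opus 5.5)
The thesis only quotes this result from Brooks--Lindenstrauss and gives no proof of its own. Your argument therefore has to stand on its own, and its central estimate is false.

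\textbf{The estimate fails already for a single vertex.} For $A=\{v\}$ you need $\|1_{\{v\}}P(\mathcal{A})\|^2=\sum_w|P(v,w)|^2\lesssim q^{-k\delta/2}\,\mathrm{poly}(k)$.
\begin{itemize}
\item Every vertex starts $(q+1)q^{j-1}$ non-backtracking paths of length $j$. So the row $w\mapsto q^{-j/2}\Phi_j(\mathcal{A})(v,w)$ sums to $(q+1)q^{j/2-1}$: row sums are $\asymp q^{j/2}$, not $O(1)$.
\item By Cauchy--Schwarz over its at most $(q+1)q^{j-1}$ endpoints, this row has squared $\ell^2$-norm at least $(q+1)/q$. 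The smallness of the individual entries is exactly cancelled by the number of endpoints.
\item Take girth larger than $2k$, which the hypothesis allows. Then the spheres for different $j$ are disjoint, so with $P=\sum_{j\le k}c_j q^{-j/2}\Phi_j(\mathcal{A})$ you get $\|1_{\{v\}}P(\mathcal{A})\|^2\asymp\sum_j|c_j|^2$.
\item On the other side, $|P(\mu)|^2\le\sum_j|c_j|^2\sum_{j\le k}|q^{-j/2}\Phi_j(\mu)|^2$.
\end{itemize}
So for tempered $\mu$ your method gives at best $|\phi(v)|^2\lesssim\bigl(\sum_{j\le k}|q^{-j/2}\Phi_j(\mu)|^2\bigr)^{-1}$. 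That is a negative power of $k\asymp\log n$ (the familiar logarithmic sup-norm bound), never a power of $n$.

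\textbf{The $TT^*$ fallback fails for the same reason.}
\begin{itemize}
\item The diagonal of the $A\times A$ kernel of $PP^*$ is this same quantity $\asymp\sum_j|c_j|^2$. It comes from the closed walks $v\to w\to v$ that retrace the same path, which the loop hypothesis does not see.
\item The off-diagonal entries at $v,v'\in A$ have size about $q^{-d(v,v')/2}$ relative to the diagonal. They depend on $d(v,v')$, not on $k$, so they are not $O(q^{-k\delta})$. For a clustered set such as a ball their sum grows with $|A|$.
\end{itemize}

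\textbf{Your conclusion cannot hold.} An exponent independent of $\varepsilon$, applied to $A=\{v\}$, gives $\|\phi\|_\infty^2\lesssim n^{-c\delta\log q/2}\,\mathrm{poly}\log n$. That is exactly the polynomial $\ell^\infty$ bound you yourself say is unavailable.

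Here is a heuristic counterexample. Take an eigenfunction that, on a tree-like ball of radius $R\asymp c\log n$, agrees with a radial spherical function for a tempered eigenvalue. It spreads its mass roughly evenly over the $R$ spheres. So the ball of radius $\asymp\varepsilon R$, which has only $n^{O(\varepsilon)}$ vertices, already carries mass $>\varepsilon$. To my knowledge, Ganguly--Srivastava have constructed high-girth graphs whose eigenvectors localize in essentially this way. Hence $\alpha(\varepsilon)$ must tend to $0$ with $\varepsilon$; in Brooks--Lindenstrauss it is of order $\varepsilon^2$.

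\textbf{What is missing} is a mechanism in which $|A|$ enters only through $\log|A|$, measured against the propagation length $k\asymp\log n$.
\begin{itemize}
\item Split off the short-range part of $PP^*$, at distances up to a constant times $\log_q|A|$. It must be controlled by $\ell^2$ bounds inherited from the tree, and after normalisation it costs roughly $\log|A|/k$.
\item On the long-range remainder, the loop-counting bound on the entries beats the factor $|A|$ coming from $\|1_A\phi\|_1^2\le|A|\,\|1_A\phi\|_2^2$.
\end{itemize}
Brooks--Lindenstrauss package this as a hypercontractivity estimate for suitably normalised averaging operators. The outcome bounds $\varepsilon$ by a power of $\log|A|/k$, hence $|A|\ge n^{\alpha(\varepsilon)}$ with $\alpha(\varepsilon)\to0$ as $\varepsilon\to0$.
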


We conclude mentioning a result of quantum ergodicity in the same setting. Given a $(q+1)$-regular graph $\mathcal{G}$ and calling $A_{\mathcal{G}}$ its adjacency matrix, we say that $\mathcal{G}$ is an expander graph if  there exists $\beta>0$ such that $\sigma\big(\frac{A_{\mathcal{G}}}{q+1}\big)$ is contained in $[-1+\beta, 1 -\beta]\cup \{1\}$. The $(q+1)$-regular tree is the only infinite $(q+1)$-regular graph with no loops. Anantharaman and Le Masson \cite{MR3322309} proved that, any sequence of $(q+1)$-regular expander graphs converging in the Benjamini-Schramm sense to the $(q+1)$-regular tree, has quantum ergodicity, when we restric our attention to the eigenfunctions of the combinatorial Laplacian corresponding to the tempered part of the spectrum. 

\subsection{The Integrated Density of States}

In this and the following section we deal with an arbitrary family of Ergodic Schr\"odinger Operators $(H_x)_{x\in K}$ on $\ell^2(\mathbb{Z})$, over a dynamical system $(K,\mu,T)$. The content of this section is mainly due to \cite{benderskiui1970spectrum}, \cite{pastur1980spectral}, \cite{avron1983almost} and is nicely exposed in \cite{damanik2022one}. For $n\in\mathbb{Z}$, we denote by $\sigma_{x,n}$ the spectral measure of $H_x$ corresponding to $\delta_n\in\ell^2(\mathbb{Z})$.

\begin{definition}
    The Density of States Measure (DOSM) is the measure $\sigma$ on $\mathbb{R}$, defined by
    \begin{equation}
        \sigma = \int_K \sigma_{x,n} d\mu(x), \quad \text{for any fixed }n\in\mathbb{Z}.
    \end{equation}
    The Integrated Density of States (IDS) is the function $\mathcal{N}:\mathbb{R}\rightarrow[0,1]$, defined by
    \begin{equation}
        \mathcal{N}(E)=\sigma((-\infty,E]).
    \end{equation}
\end{definition}

As an instance of Birkhoff Ergodic Theorem, we have the following equivalent definition of the Density of States Measure.

\begin{lemma}\label{lemma avrg over sites}
    The Density of States Measure is the measure $\sigma$ on $\mathbb{R}$, defined by
    \begin{equation}\label{average over all sites}
        \sigma = \lim_{N\rightarrow+\infty}\frac 1N \sum_{n=0}^{N-1} \sigma_{x,n}, \quad \text{for } \mu\text{-a.e. }x\in{K},
    \end{equation}
    where the limit is intended in the weak-* topology.
\end{lemma}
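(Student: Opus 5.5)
We will test the weak-* convergence against a countable dense family of continuous functions on a fixed compact interval and apply Birkhoff's theorem to each one. The key identity is the covariance of the family. Writing $U$ for the shift on $\ell^2(\mathbb{Z})$, $(U\phi)(m)=\phi(m+1)$, we have $V_{Tx}(m)=V_x(m+1)$, so $H_{Tx}=UH_xU^{*}$. Since $U^{*}\delta_0=\delta_n$ after $n$ iterations, this gives
\begin{equation*}
\sigma_{x,n}=\sigma_{T^nx,0}\quad\text{for every }n\in\mathbb{Z},\ x\in K.
\end{equation*}
To see this, note that for continuous $f$ we have $\langle \delta_0, f(H_{T^nx})\delta_0\rangle=\langle \delta_0, U^n f(H_x)U^{-n}\delta_0\rangle=\langle\delta_n,f(H_x)\delta_n\rangle$, and uniqueness in the spectral theorem stated above then gives equality of the measures. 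The same identity shows at once that $\int_K\sigma_{x,n}\,d\mu$ does not depend on $n$, because $T$ preserves $\mu$. So the definition of the DOSM is consistent, and it suffices to use $n=0$.

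Next we fix a continuous $f$ on the interval $I=[-\lambda-2k,\lambda+2k]$; here $k=2$ for $\mathbb{Z}$, or more simply we take the uniform bound $\|H_x\|\le \lambda+4$. All the measures $\sigma_{x,n}$ are probability measures supported in $I$. Define $g_f(x)=\int f\,d\sigma_{x,0}=\langle\delta_0,f(H_x)\delta_0\rangle$. This function is bounded by $\|f\|_\infty$. It is measurable in $x$: for polynomials $f$ this is clear, because the entries of $H_x$ are measurable in $x$ via $\Phi$, and for general continuous $f$ it follows by Weierstrass approximation. Applying the covariance identity,
\begin{equation*}
\int f\,d\Big(\frac1N\sum_{n=0}^{N-1}\sigma_{x,n}\Big)=\frac1N\sum_{n=0}^{N-1}g_f(T^nx)\longrightarrow\int_K g_f\,d\mu=\int f\,d\sigma
\end{equation*}
for $\mu$-a.e. $x$. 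The convergence is the pointwise ergodic theorem; ergodicity makes $E[g_f|\mathcal{I}]$ equal to the constant $\int g_f\,d\mu$. The last equality is Fubini applied to the definition of $\sigma$.

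The only genuine subtlety is that the exceptional null set depends on $f$. We handle this by choosing a countable family $\{f_j\}$ dense in $C(I)$ with the sup norm, for instance polynomials with rational coefficients. We intersect the countably many full-measure sets to get a single full-measure set $K'$. For $x\in K'$ and arbitrary $f\in C(I)$, we pick $f_j$ with $\|f-f_j\|_\infty<\varepsilon$. Since all the averaged measures and $\sigma$ are probability measures, the errors are at most $2\varepsilon$ uniformly in $N$. Hence $\frac1N\sum_{n=0}^{N-1}\sigma_{x,n}\to\sigma$ weak-* for every $x\in K'$. Because every measure involved is supported on the compact set $I$, testing against $C(I)$ is equivalent to testing against bounded continuous functions on $\mathbb{R}$, which completes the plan.
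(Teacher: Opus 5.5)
Your proposal is correct and follows the paper's own argument: the paper's proof is exactly the covariance identity $\sigma_{x,n}=\sigma_{T^n x,0}$ followed by Birkhoff's ergodic theorem. Your version simply fills in what the paper leaves implicit, namely the derivation of that identity via the shift, the measurability of $x\mapsto\int f\,d\sigma_{x,0}$, and the countable dense family of test functions that removes the dependence of the null set on $f$.
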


\begin{proof}
We have
	\begin{equation}
	\frac 1N \sum_{n=0}^{N-1} \sigma_{x,n} = \frac 1N \sum_{n=0}^{N-1} \sigma_{T^n(x),0},
	\end{equation}
	and Birkhoff Ergodic Theorem let us conclude for a.e. $x$.
\end{proof}

For $N\in\mathbb{Z}^+$, we denote by $H_{x,N}:\ell^2([N])\rightarrow \ell^2([N])$ the operator obtained by truncating $H_x$ on $[N]=\{0,\dots,N-1\}$. We let $E_{1,N,x}\leq \dots \leq E_{N,N,x}$ be its eigenvalues. We have one more alternative definition of the DOSM and the IDS. Here, $\delta_x$ is the Dirac measure concentrated at $x$. We distinguish between the two meanings of our notation $\delta$, depending on whether it stands for a measure or an element of $\ell^2(\mathbb{Z})$.

\begin{lemma}
    The Density of States Measure coincides with
    \begin{equation}\label{average of EV}
        \sigma = \lim_{N\rightarrow+\infty}\frac 1N \sum_{i=0}^{N-1} \delta_{E_{i,N,x}}, \quad \text{for } \mu\text{-a.e. }x\in{K},
    \end{equation}
    where the limit is intended in the weak-* topology.\newline
    The Integrated Density of States coincides with
    \begin{equation}
        \mathcal{N}(E)= \lim_{N\rightarrow+\infty} \frac{\#\{i:E_{i,N,x}\leq E)\}}N, \quad \text{for } \mu\text{-a.e. }x\in{K}.
    \end{equation}
\end{lemma}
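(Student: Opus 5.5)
Since both the DOSM and the approximating measures are supported in a fixed compact interval $I=[-\lambda-4,\lambda+4]$, weak-* convergence follows once we check convergence of moments $\int E^k$ for every $k\geq 0$, because polynomials are dense in $\mathcal{C}(I)$. The plan is to compare the empirical eigenvalue measure of the truncation $H_{x,N}$ with the site-averaged spectral measures of Lemma \ref{lemma avrg over sites}, and to show that the two differ only by a boundary effect of size $O(k/N)$.

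Fix $k$. The moment of the eigenvalue measure is a trace:
\begin{equation*}
\frac 1N\sum_{i} E_{i,N,x}^k=\frac1N\operatorname{tr}(H_{x,N}^k)=\frac1N\sum_{n=0}^{N-1}\langle\delta_n,H_{x,N}^k\delta_n\rangle .
\end{equation*}
On the other side, the $k$-th moment of $\frac1N\sum_{n=0}^{N-1}\sigma_{x,n}$ is
\begin{equation*}
\frac1N\sum_{n=0}^{N-1}\langle\delta_n,H_x^k\delta_n\rangle .
\end{equation*}
Expand $\langle\delta_n,H^k\delta_n\rangle$ as a sum over nearest-neighbour paths of length $k$ from $n$ back to $n$. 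If $k\le n\le N-1-k$, every such path stays in $[N]$, and the two inner products coincide. Hence at most $2k$ indices contribute to the difference. Each of these terms is bounded by $\|H\|^k\le C^k$ with $C=\lambda+4$. The two moments therefore differ by at most $2kC^k/N\to0$.

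By Lemma \ref{lemma avrg over sites}, for $\mu$-a.e. $x$ the second moment sequence converges to $\int E^k\,d\sigma$. We take the intersection of the full-measure sets over the countably many $k$. The reason Lemma \ref{lemma avrg over sites} gives a single null set valid for all test functions is that it applies Birkhoff's theorem to the countable family of functions $x\mapsto\langle\delta_0,H_x^k\delta_0\rangle$. This gives weak-* convergence of $\frac1N\sum_i\delta_{E_{i,N,x}}$ to $\sigma$ a.e.

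For the IDS we pass from weak-* convergence to convergence of distribution functions, and this is the main obstacle, since that passage holds only at continuity points of $\mathcal{N}$. So we must show that $\sigma$ has no atoms, i.e.\ that $\mathcal{N}$ is continuous. Fix $E$ and consider the eigenprojection $P_x$ of $H_x$ at $E$. Then $\sigma(\{E\})=\lim\frac1N\sum_{n<N}\langle\delta_n,P_x\delta_n\rangle$ a.e., by Lemma \ref{lemma avrg over sites} applied via Birkhoff's theorem to the function $x\mapsto\langle\delta_0,P_x\delta_0\rangle$. The eigenspace of $E$ for the one-dimensional second-order difference equation (\ref{ev eq}) is at most $2$-dimensional, so $\operatorname{tr}P_x\le 2$. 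This forces the averages to be $O(1/N)$, and hence $\sigma(\{E\})=0$. With $\mathcal{N}$ continuous, the Portmanteau theorem gives convergence of $\frac1N\#\{i:E_{i,N,x}\le E\}$ at every $E$. To obtain a single null set for all $E$, we use rational $E$ together with monotonicity and continuity of $\mathcal{N}$.
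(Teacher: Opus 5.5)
Your proof is correct. For the DOSM part it is the paper's argument: the trace identity $\frac1N\sum_i\delta_{E_{i,N,x}}=\frac1N\sum_n\sigma_{x,n,N}$, equality of $k$-th moments at sites farther than $k$ from the boundary, an $O(kC^k/N)$ error from the remaining sites, and then Lemma~\ref{lemma avrg over sites}. Your choice of a single null set uniform over the countably many $k$ is a detail the paper leaves implicit.

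The real difference is the IDS statement. The paper's proof stops at weak-* convergence and never justifies passing to the distribution functions. You correctly observe that this needs $\sigma$ to be atomless. You then supply the standard argument: $\operatorname{tr}P_x\le 2$, because the formal solution space of the second-order recursion is two-dimensional. This makes your proof complete where the paper's is not.

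The paper only obtains continuity of $\mathcal{N}$ later, as log-H\"older continuity from the Thouless formula. Using that route here would not be circular, since the Thouless proof uses only the weak-* part of this lemma, tested against $\log|E-\cdot|$ with $E\notin\mathbb{R}$. Still, your argument is more elementary and self-contained.

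Two small simplifications are available.
\begin{itemize}
\item You do not need Birkhoff's theorem for the atom. Invariance of $\mu$ and covariance alone give $\sigma(\{E\})=\int\frac1N\sum_{n<N}\langle\delta_n,P_x\delta_n\rangle\,d\mu(x)\le 2/N$. This also shows your appeal to Lemma~\ref{lemma avrg over sites} there is really a direct use of invariance, since weak-* convergence does not control atoms.
\item Once $\mathcal{N}$ is continuous everywhere, the full-measure set from the first part already gives convergence at every $E$. The reduction to rational $E$ is therefore unnecessary.
\end{itemize}
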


\begin{proof}
    We only need to show the compatibility between expression \ref{average over all sites} and \ref{average of EV}.\newline
    For $n\in[N]$, we call by $\sigma_{x,n,N}$ the spectral measure of $H_{x,N}$ with respect to $\delta_n\in\ell^2([N])$. The two measures in question coincide for the truncated opeartor $H_{x,N}$, i.e.
    \begin{equation}\label{truncated IDS}
        \frac 1N\sum_{n=0}^{N-1} \sigma_{x,n,N} = \frac 1N\sum_{i=0}^{N-1} \delta_{E_{i,N,x}}.
    \end{equation}
    In fact, testing both against a bounded measurable $g:\mathbb{R}\rightarrow\mathbb{R}$, one gets $\frac 1N\text{tr}(g(H_{x,N})$. We denote by $\sigma_{x,N}$ the measure defined by expression \ref{truncated IDS}. We want to conclude by the moments method. Given a Borel measure $\nu$ on $\mathbb{R}$, we denote its $k$-th moment by $m_k(\nu)=\int x^kd\nu(x)$. If $\text{d}(n,\partial[N])>k$, then
\begin{equation}
	m_k(\sigma_{x,n,N}) = \langle \delta_n, H_{x,N}^k\delta_n \rangle = \langle \delta_n, H_{x}^k\delta_n \rangle = m_k(\sigma_{x,n}),
\end{equation}
since $H_{x,N}^k$ and $H_{x,N}^k$ have range $k$ and do not see any boundary effect. Averaging over $n\in[N]$, we get
\begin{equation}
	\left|m_k\left( \frac 1N\sum_{n=0}^{N-1} \sigma_{x,n,N} \right) - m_k\left( \frac 1N\sum_{n=0}^{N-1} \sigma_{x,n} \right)\right| \leq C\cdot  \frac{2k}N.
\end{equation}
The cocnclusion follows since, by lemma \ref{lemma avrg over sites}, for a.e. $x$,
\begin{equation}
	\lim_{N\rightarrow +\infty} m_k\left( \frac 1N\sum_{n=0}^{N-1} \sigma_{x,n,N} \right) = m_k(\sigma).
\end{equation}
\end{proof}

\begin{remark}
In the non-ergodic setting the first definition of DOSM and IDS is not available. One could use the second one, however the limit is not guaranteed to exists. Bourgain and Klein \cite{bourgain2013bounds} proposed a Density of States Outer Measure for deterministic Schr\"odinger Operators on $\mathbb{Z}^d$, which is always well-defined. For finite graphs one can simply take expression \ref{truncated IDS} as the definition.
\end{remark}

\subsection{The Thouless Formula}
For $E\in\mathbb{C}$, we denote by $L(E)$ the largest Lyapunov exponent of the Schr\"odinger Cocycle $A_E:K\rightarrow SL(2,\mathbb{C})$, over $(K,\mu,T)$. $L(E)$ does not depend on $x$ a.e., beacuse of the ergodicity of $(K,\mu,T)$, and one can verify that $L(E)$ is sub-harmonic in $E$.

\begin{theorem}[Thouless Formula, \cite{thouless1972relation}, \cite{avron1983almost}]
	For every $E\in\mathbb{C}$,
	\begin{equation}\label{Thouless formula}
		L(E) = \int \log|E-E'| d\sigma(E').
	\end{equation}
\end{theorem}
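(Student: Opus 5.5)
The plan is to prove the Thouless formula first for $E\in\mathbb{C}\setminus\mathbb{R}$ via finite-volume approximation, and then to pass to real $E$ using subharmonicity.

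\textbf{Finite-volume identity.} Fix $x$ in the full-measure set where the preceding lemma gives
\begin{equation*}
\frac1N\sum_i\delta_{E_{i,N,x}}\to\sigma .
\end{equation*}
The entry $(A_E^N(x))_{11}$ of the transfer matrix solves the eigenvalue recursion with initial data $(u(0),u(-1))=(1,0)$. It is therefore a monic polynomial in $E$ of degree $N$ whose roots are exactly the eigenvalues of the truncation $H_{x,N}$, namely the Dirichlet determinant $\det(E-H_{x,N})$. Hence
\begin{equation*}
\frac1N\log\big|\det(E-H_{x,N})\big| = \int\log|E-E'|\,d\sigma_{x,N}(E').
\end{equation*}
For $\operatorname{Im}E\neq0$ the function $E'\mapsto\log|E-E'|$ is continuous and bounded on the uniformly bounded support $[-\lambda-2k,\lambda+2k]$. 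Weak-* convergence then sends the right-hand side to $\int\log|E-E'|\,d\sigma(E')$.

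\textbf{Identifying the left-hand side with $L(E)$.} For $E\notin\mathbb{R}$ the cocycle $A_E$ is uniformly hyperbolic: it admits an invariant cone field, with Weyl-type unstable direction given by the $m$-function solution. So for a.e. $x$,
\begin{equation*}
\frac1N\log\|A^N_E(x)\|\to L(E),
\end{equation*}
and the matrix entries grow at the same rate as the norm. Concretely, the $(1,1)$ entry is $\langle e_1,A^N e_1\rangle$, and $e_1$ lies strictly inside the expanding cone, uniformly in $N$. This gives
\begin{equation*}
\frac1N\log\big|(A^N_E)_{11}\big|-\frac1N\log\|A_E^N\|\to0 .
\end{equation*}
Alternatively, one can use the Green-function ratio $\det(E-H_{N-1})/\det(E-H_N)=G_N(N,N)$, which is bounded by $1/|\operatorname{Im}E|$, together with Furstenberg--Kesten via the Oseledets theorem. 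Combining the two steps proves the formula off the real axis.

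\textbf{Extension to real $E$.} Both sides are subharmonic in $E$. The right-hand side is the logarithmic potential of $\sigma$, and $L$ is subharmonic as remarked above. Two subharmonic functions agreeing off a set of zero planar measure (here $\mathbb{R}$) coincide everywhere, because a subharmonic function equals the limit of its averages over small discs. This extends the identity to all $E\in\mathbb{C}$.

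\textbf{Main obstacle.} The main difficulty is the second step: showing that the single matrix entry (the determinant) has the same exponential growth as the norm, almost surely and not merely in expectation. For this I would first try the cone-field argument, since it is uniform for $\operatorname{Im}E\neq0$.
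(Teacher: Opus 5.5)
Your proposal is correct. Its skeleton is the same as the paper's: the Dirichlet determinant identity for $\operatorname{Im}E\neq0$, then weak-$*$ convergence of $\sigma_{x,N}$, then extension to $E\in\mathbb{R}$ because two subharmonic functions agreeing a.e. in the plane coincide. The two proofs diverge only in how you pass from the single entry $(A^N_E)_{11}$ to $\|A^N_E\|$.

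The paper uses no dynamics for that step. Each of the four entries of $A^N_E(x)$ is, up to sign, the Dirichlet determinant of a truncation: $H_{x,N}$, $H_{x,N-1}$, $H_{Tx,N-1}$, $H_{Tx,N-2}$. All of these have the same limiting density of states for a.e. $x$, so $|(A^N_E)_{11}|\le\|A^N_E\|\le 2\max_{i,j}|(A^N_E)_{ij}|$ squeezes $L(E)$. This determinant mechanism is also what the paper reuses in its last section for $\bigwedge^r T_n(E)$ on propagating graphs, where an $SL(2)$ half-plane argument is not directly available.

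Your cone argument instead uses that $z\mapsto E-V-1/z$ maps the upper half-plane into itself strictly, uniformly in $V\in[-\lambda,\lambda]$. This gives $|(A^N_E)_{11}|\asymp\|A^N_E\|$ with constants independent of $x$ and $N$. Two small corrections are needed:
\begin{itemize}
\item $e_1$ corresponds to $z=\infty$, which lies on the boundary of the invariant half-plane, not strictly inside it. Apply the argument to $A_E(x)e_1$ instead, which sits at $z=E-V(x)$.
\item You also need the image direction to stay away from $z=0$, the kernel of $\langle e_1,\cdot\rangle$. The same invariance provides this.
\end{itemize}

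Your Green-function alternative is the tightest of the three routes. Cramer's rule together with $\|(H-E)^{-1}\|\le 1/|\operatorname{Im}E|$ gives, deterministically,
\begin{itemize}
\item $|(A^N_E)_{12}|,\ |(A^N_E)_{21}|\le |(A^N_E)_{11}|/|\operatorname{Im}E|$, and
\item $|(A^N_E)_{22}|\le |(A^N_E)_{11}|/|\operatorname{Im}E|^2$.
\end{itemize}
This replaces the paper's three additional density-of-states limits with an exact inequality. Only Furstenberg--Kesten is then needed to identify the growth of $\|A^N_E\|$ with $L(E)$.
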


\begin{proof}
    We present the proof from \cite{MR705040}. We first prove the formula for $E\in\mathbb{C}\setminus\mathbb{R}$. Then one can conlude for $E\in\mathbb{R}$ by sub-harmonicity of both sides. Fixing $x\in K$, one can check by induction that the entries $(A_E^N(x))_{ij}$ are monic polynomials in $E$ of degree $N+2-i-j$, for $N\geq2$ and $i,j\in\{1,2\}$. The proof is based on the fact that we can determine exactly their zeros. In fact, recalling that any formal solution $u$ to the eigenfunction equation $H_xu=Eu$ satisfies
    \begin{equation}
    \begin{pmatrix}
        u(N+1)\\
        u(N)\\
    \end{pmatrix}=
    A_E^N(x)
    \begin{pmatrix}
        u(1)\\
        u(0)\\
    \end{pmatrix},
\end{equation}
we see that, for example $(A_E^N(x))_{11}$ vanishes if and only if there exist a non trivial eigenfunction $u$, corresponding to $E$, with $u(0)=u(N+1)=0$. This is equivalent to $E$ being an eigenvalue of the truncated operator $H_{x,N}$. Since the spectrum of $H_{x,N}$ is simple, its $N$ eigenvalues are exactly the zeros of $(A_E^N(x))_{11}$. Thus
\begin{equation}
    (A_E^N(x))_{11} = \prod_{i=1}^N(E-E_{i,N,x}),
\end{equation}
and
\begin{align}
    \frac1N\log \left|(A_E^N(x))_{11}\right| &= \frac1N \sum_{i+1}^N \log|E-E_{i,N,x}|\\
    &= \int \log|E-E'| d\sigma_{x,N}(E')\\
    &\xrightarrow{N\rightarrow\infty} \int \log|E-E'| d\sigma(E').
\end{align}
One can prove the same exponential rate of growth for all entries $(A_E^N(x))_{ij}$ by comparing to suitable truncated operator (i.e. $H_{x,N-1}$, $H_{Tx,N-1}$ and $H_{Tx,N-2}$), concluding the proof.
\end{proof}

\begin{corollary}
The Lyapunov Exponent $L(E)$ is harmonic in $E$.
\end{corollary}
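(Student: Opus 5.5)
The plan is to read the corollary off the Thouless Formula \eqref{Thouless formula}, with one caveat about where harmonicity can hold. The statement must be understood on the complement of the support of the Density of States Measure: on $\mathbb{C}\setminus\operatorname{supp}\sigma$, and in particular on $\mathbb{C}\setminus\mathbb{R}$. On $\operatorname{supp}\sigma\subset\mathbb{R}$ the function $L$ is only subharmonic, and its distributional Laplacian there is $2\pi\sigma$. So I will prove harmonicity on $\mathbb{C}\setminus\operatorname{supp}\sigma$ and record the Riesz-measure identity as a complement.

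Two facts about $\sigma$ are needed first. It is a probability measure, being an average of spectral measures of unit vectors $\delta_n$. It is supported in a compact subset of $\mathbb{R}$, since each $\sigma_{x,n}$ lives on $\sigma(H_x)\subset[-\lambda-2k,\lambda+2k]$ with $k=2$ for $\mathbb{Z}$.

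Now fix an open disk $D$ with $\overline{D}\cap\operatorname{supp}\sigma=\emptyset$. For each $E'\in\operatorname{supp}\sigma$, the map $E\mapsto\log|E-E'|=\operatorname{Re}\log(E-E')$ is harmonic on a neighbourhood of $\overline{D}$. Moreover $|E-E'|$ is bounded above and below by positive constants uniformly for $E\in\overline{D}$ and $E'\in\operatorname{supp}\sigma$. Hence the integrand $\log|E-E'|$ is jointly continuous and uniformly bounded on $\overline{D}\times\operatorname{supp}\sigma$.

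To get harmonicity of $L$, I would check the mean value property. For a circle $\partial B(E_0,r)\subset D$, Fubini is justified by the uniform bound. It gives
\begin{equation*}
\frac1{2\pi}\int_0^{2\pi}L(E_0+re^{i\theta})\,d\theta=\int\Big(\frac1{2\pi}\int_0^{2\pi}\log|E_0+re^{i\theta}-E'|\,d\theta\Big)d\sigma(E')=\int\log|E_0-E'|\,d\sigma(E')=L(E_0).
\end{equation*}
Continuity of $L$ on $D$ follows from dominated convergence. A continuous function with the mean value property is harmonic. Alternatively, one can differentiate under the integral sign twice, which the uniform bounds on the derivatives of $\log|E-E'|$ allow.

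For the complement on $\mathbb{R}$, I would pair $L$ with a test function $\varphi\in\mathcal{C}^\infty_c(\mathbb{C})$ and apply Fubini. The integrand is locally integrable in $E$ uniformly in $E'$, because $\sigma$ has compact support. The classical identity $\Delta\log|\cdot-E'|=2\pi\delta_{E'}$ then yields $\Delta L=2\pi\sigma$ in the sense of distributions. This shows $L$ is subharmonic everywhere and harmonic exactly off $\operatorname{supp}\sigma$.

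The only real obstacle is justifying the interchange of integrals near the real axis. Away from $\operatorname{supp}\sigma$ it is trivial. On the support it needs the local integrability of $\log|E-E'|$ with respect to area measure, uniformly in $E'$ in a compact set, which is elementary.
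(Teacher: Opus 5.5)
Your argument is correct and is the intended one. The paper gives no separate proof: the corollary is read off from the Thouless formula, using harmonicity of $E\mapsto\log|E-E'|$ away from $E'$ and integration against the compactly supported probability measure $\sigma$, which is exactly what your mean-value computation does.

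Your caveat is also right, and it is the only sense in which the paper's unqualified statement can hold. Since $\Delta L=2\pi\sigma$ in the sense of distributions, $L$ is harmonic precisely on $\mathbb{C}\setminus\operatorname{supp}\sigma$ (in particular on $\mathbb{C}\setminus\mathbb{R}$) and only subharmonic across the support of $\sigma$.
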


Since the Schr\"odinger Cocycle is $SL(2,\mathbb{C})$-valued, $L(E)$ must be non-negative. By the Thouless formula, this implies that $\sigma$ cannot be too concentared. This reasoning provides a uniform modulus of continuity for the IDS.

\begin{corollary}[\cite{MR705040}]
The IDS is $\log$-Holder continuous, meaning that there exists a constant $C>0$, such that, for every $x,y\in\mathbb{R}$, with $|x-y|<1/ 2$, we have that
\begin{equation}
	|\mathcal{N}(x) - \mathcal{N}(y)| < C \cdot \frac 1{\log(1/|x-y|)}.
\end{equation}
\end{corollary}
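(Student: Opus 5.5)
We will obtain the modulus of continuity by using the Thouless formula \eqref{Thouless formula} at a real energy, with two inputs: the nonnegativity of $L$ and an a priori upper bound on $L$. Since $\mathcal{N}$ is monotone, it suffices to bound $\sigma([E_0-\delta,E_0+\delta])$ for $\delta=|x-y|<1/2$ and $E_0=(x+y)/2$. Then $|\mathcal{N}(x)-\mathcal{N}(y)|\le \sigma([E_0-\delta,E_0+\delta])$.

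First we record the two bounds. Since the cocycle takes values in $SL(2,\mathbb{C})$, we have $\|A_E^N(x)\|\ge 1$, so $L(E)\ge 0$ for every $E$. Since the potential is bounded by $\lambda$, $\|A_E(x)\|\le |E|+\lambda+2$, so $L(E)\le \log(|E|+\lambda+2)$. Moreover $\sigma$ is a probability measure supported in a fixed compact interval $[-R,R]$ with $R=\lambda+2$, coming from the bound on $\sigma(H_x)$.

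Next we evaluate the Thouless formula at $E=E_0$, with $E_0$ in a neighbourhood of the support; if $E_0$ is far from $[-R,R]$, the $\sigma$-mass of the interval is zero and there is nothing to prove. Write $I=[E_0-\delta,E_0+\delta]$ and split
\begin{equation*}
0\le L(E_0)=\int_I \log|E_0-E'|\,d\sigma(E')+\int_{\mathbb{R}\setminus I}\log|E_0-E'|\,d\sigma(E').
\end{equation*}
On $I$ the integrand is at most $\log\delta<0$, so the first term is at most $\sigma(I)\log\delta$. On the complement, $|E_0-E'|\le 2R+1$, so the second term is at most $\log(2R+1)=:C_0$. Hence $\sigma(I)\log(1/\delta)\le C_0$, which gives $\sigma(I)\le C_0/\log(1/|x-y|)$. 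This is the claim with $C=C_0$, up to adjusting constants to obtain the strict inequality.

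The main obstacle is justifying the Thouless formula at real $E$, including the case where $E_0$ carries an atom or the integral diverges to $-\infty$. The proof above asserts the formula for every $E\in\mathbb{C}$, by subharmonicity of both sides, and takes $L(E_0)\ge0$. The right-hand side is therefore finite, and the splitting is legitimate: the integral over $I$ cannot be $-\infty$, since otherwise it would contradict $L(E_0)\ge 0$. If one prefers to avoid real energies entirely, the same argument works at $E=E_0+i\delta$. There $|E-E'|\le\sqrt2\,\delta$ on $I$ and $L\ge0$ still holds, which yields the same bound with a slightly different constant.
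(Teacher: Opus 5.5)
Your argument is correct and follows the paper's route. The paper obtains the corollary from the Thouless formula combined with $L\ge 0$, which comes from the cocycle being $SL(2,\mathbb{C})$-valued, and defers the details to Craig--Simon. In Chapter 3 it invokes exactly your scale-$\delta$ splitting, as Craig--Simon's Lemma~1.5 at non-real energies. Your variant at $E_0+i\delta$ is therefore the closest match, and it also avoids relying on the Thouless formula on the real axis.
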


\chapter{Advances on Stable Ergodicity of Toral Automorphisms}\label{Chapter - Torus}

\begin{center}
\large{F. Argentieri, A. Ulliana}
\end{center}

\subsection*{Abstract}
    We prove that all ergodic automorphisms of the $N$-dimensional torus with two dimensional center are stably ergodic. This includes all ergodic automorphisms in dimension $N\leq 5$ or $N=7$.
    This generalizes a previous result of Rodriguez-Hertz, that required an additional algebraic condition on the carachteristic polynomial of the linear automorphism. The core of the proof is a minimality criterion.

\section{Introduction}

Any element $A$ of $SL(N,\mathbb{Z})$ induces a linear automorphism of the torus $\mathbb{T}^N=\mathbb{R}^N/\mathbb{Z}^N$, that we will denote again by $A$. These maps have a remarkable richness of dynamical features and for this reason they have often served as playground for the early study of many fundamental aspects of dynamical systems. Among several examples we find: entropy \cite{adler1967entropy}, Markov partitions \cite{adler1970similarity}, the Bernoulli property \cite{katzenlson1971ergodic}, and partial hyperbolicity \cite{lind1982dynamical}.\newline
The question about their ergodicity has long been answered, since 1932 in \cite{halmos1943automorphisms} by Halmos, but the study of their statistical properties is still active today \cite{le1999limit}, \cite{dedecker2012empirical}. By Fourier analysis, it is easy to see that $A$ is ergodic with respect to the Lebesgue measure on $\mathbb{T}^N$ if and only if none of the eigenvalues of $A$ is a root of unity.\newline
This paper deals with stable ergodicity of linear automorphisms of the torus, aiming to advance towards answering the following question, posed by Hirsch, Pugh and Shub in 1977 \cite{hirsch1977invariant}.

\begin{question}\label{question}
    Are all ergodic linear automorphism of $\mathbb{T}^N$ also stably ergodic?
\end{question}

The stability has to be intended in $\mathcal{C}_{\text{vol}}^k(\mathbb{T}^N)$ for some $k$, ideally $1+\alpha$, where $\mathcal{C}_{\text{vol}}^k(\mathbb{T}^N)$ denotes the space of $\mathcal{C}^k$ volume preserving diffeomorphisms of $\mathbb{T}^N$.\newline
While stable ergodicity in the $\mathcal{C}^1$ category would be highly desirable, so far there are no known examples of maps with such property and all known mechanisms to prove ergodicity require higher regularity, even in the simplest settings \cite{MR2736152}.\newline

The characterization of stable ergodicity of diffeomorphisms on compact manifolds has been a central problem in smooth dynamics (see \cite{MR4198639} for an historical account) and it is strictly linked to the degree of hyperbolicity of the said diffeomorphisms. In our setting, $A$ is partially hyperbolic, in fact, one has the $A$-invariant splitting
\begin{equation*}
    \mathbb{R}^N=E^s\oplus E^c \oplus E^u,
\end{equation*}
where $E^s$, $E^c$ and $E^u$ are the sums of the generalized eigenspaces of $A$ corresponding to the eigenvalues of modulus less than 1, exactly $1$ and more than 1, respectively. $E^s$ and $E^u$ are, respectively, uniformly contracted and expanded
by $A$, while vectors in $E^c$ are neither contracted or expanded as strongly as the
vectors in $E^s$ nor in $E^u$. We also remark that, for algebraic reasons, $\dim(E^c)$ must be even.\newline

The first example of stably ergodic diffeomorphisms was given by the class of Anosov diffeomorphisms (i.e. uniformly hyperbolic). Uniform hyperbolicity is itself an $\mathcal{C}^1$-open condition and it implies ergodicity for any $\mathcal{C}^{1+\alpha}$ conservative diffemorphism, as proved by Anosov \cite{ansov1969geodesic} in 1969, exploiting the the Hopf argument from \cite{MR1464}. When $E^c$ is trivial, $A$ is uniformly hyperbolic, settling question \ref{question} in the affirmative in this case.\newline

Partial hyperbolicity is a $\mathcal{C}^1$-open condition, but it does not imply ergodicity alone. In 1994 in \cite{MR1298715}, the first example of non uniformly hyperbolic stably ergodic diffeomorphism was given, exploiting its partial hyperbolicity and an additional property, called accessibility, which, euristically, allows to replicate the Hopf argument. Soon afterwards, stable ergodicity was conjectured to be prevalent ($\mathcal{C}^r$-dense) among partilly hyperbolic diffeomorphisms by Pugh and Shub \cite{MR1449765}, proposing accessibility as a main tool.\newline
Indeed, Burns and Wilkinson \cite{burns2010ergodicity} proved that essential accessibility (a weakening of accessibility) implies ergodicity of $\mathcal{C}^2$ conservative diffeomorphisms under mild additional assumptions (center bunching). Moreover, accessibility was proved to be $\mathcal{C}^1$-stable when the center is low dimensional \cite{MR4142463}, \cite{DIDIER_2003}, and stable accessibility to be $\mathcal{C}^1$-dense \cite{MR2039999}. However, because of the missmatch between the $\mathcal{C}^{1+\alpha}$ requirement for ergodicity and the only $\mathcal{C}^1$-prevalence of accessibility this was not enough to complete Pugh and Shub program, which remains widely open today. The best results up to date exploit a different approach, based on blenders \cite{MR4198639}.\newline

Out of completeness, we mention that there are examples of non partially hyperbolic diffeomorphisms that are stably ergodic \cite{MR2085722}, but every stably ergodic diffeomorphism must have a weak form of hyperbolicity. In fact, the existence of a dominated splitting is a necessary condition for stable ergodicity \cite{MR2018925}.\newline

Coming back to our setting, when $E^c$ is non trivial, $A$ is partially hyperbolic but not accessible, preventing us from exploiting the afore mentioned results.
At least, the ergodicity of $A$ ensures the essential accessibility property, which, though, is not necessarily a stable property, posing a serious challenge in establish stable ergodicity results.\newline

A major breakthrogh was achieved by Rodriguez-Hertz \cite{Hertz2005}, who proved that $A$ is actually stably essentially accessible in the $\mathcal{C}^{22}$-topology under the assumptions that $\dim(E^c)=2$ and of $A$ being pseudo-Anosov (see section \ref{linear algebra} for the definition). The latter is an algebraic condition of the carachteristic polynomial of $A$, which controls the action of $A$ on $\mathbb{Z}^N$. This class of linear automorphisms includes all non Anosov ones in dimension 4, but, on the other hand, is empty in odd dimension.\newline

Our result removes any algebraic constraint on $A$.

\begin{theorem}\label{stable ergodicity}
    Any ergodic linear automorphism $A$ of the torus $\mathbb{T}^N$ with $\dim(E^c)=2$ is stably ergodic in $\mathcal{C}^{22}_{\text{vol}}$.
\end{theorem}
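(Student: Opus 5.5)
The plan follows Rodriguez-Hertz's scheme and isolates the single place where the pseudo-Anosov hypothesis was used, replacing it by a minimality criterion.

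Let $f$ be a $\mathcal{C}^{22}$ volume-preserving diffeomorphism, $\mathcal{C}^{22}$-close to $A$. Since $\dim E^c=2$ and $A$ acts isometrically (up to conjugacy) on $E^c$, the map $f$ has the following properties.
\begin{enumerate}
\item It is partially hyperbolic.
\item It is center bunched.
\item It is dynamically coherent.
\item It is leaf conjugate to $A$, with center leaves that are $\mathcal{C}^1$-close to the linear planes $\pi(x+E^c)$.
\end{enumerate}
By the theorem of Burns and Wilkinson it suffices to prove that $f$ is essentially accessible. By the structure theory for accessibility classes with two-dimensional center \cite{rodriguez2017structure}, each class $AC_f(x)$ is an injectively immersed topological manifold. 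Its intersection with a center leaf has dimension $0$, $1$ or $2$. If some class is open, then by the $\mathcal{C}^1$-openness of accessibility in low center dimension \cite{MR4142463} and standard arguments, the set of points with open classes is open, invariant and $su$-saturated. The next task is to rule out the non-open alternatives on a positive-measure set.

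The key steps, in order, are as follows.
\begin{enumerate}
\item Show that the set $\Gamma(f)$ of points whose accessibility class is not open is a compact, $f$-invariant, $su$-saturated set, laminated by the classes.
\item Show that if $\Gamma(f)\neq\emptyset$, then all classes in it have the same dimension. Project onto a center leaf via the leaf conjugacy. Codimension-zero classes inside the lamination are open. For codimension-one classes (curves in the center), use the isometric behaviour of $A$ on $E^c$ together with the irrational rotation it induces there: a compact invariant family of curves in the center would have to be preserved by a rotation of infinite order, and it would also have to be transverse to the dense linear $su$-directions. I will derive a contradiction from this, as in Rodriguez-Hertz.
\item This leaves the case where all classes in $\Gamma(f)$ have codimension two, i.e. are modeled on the $su$-leaves $\pi(x+E^s\oplus E^u)$ of $A$. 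Here I will use KAM theory, in the form of Herman/Rodriguez-Hertz for circle-like rotations in the center. The center dynamics of $f$ on the space of codimension-two classes is a small perturbation of the translation induced by $A$ on $\mathbb{T}^N/\overline{\pi(E^s\oplus E^u)}$, and the Diophantine rotation numbers of the $A$-eigenvalues on the unit circle (algebraic numbers) allow a $\mathcal{C}^1$ conjugacy. From this I want to conclude that the lamination is all of $\mathbb{T}^N$, $\mathcal{C}^1$-conjugate to the linear one. Absolute continuity then transfers the essential accessibility of $A$ (which is ergodic) to $f$.
\end{enumerate}

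The main obstacle is the final transfer in step 3. Without the pseudo-Anosov assumption, the linear foliation $\pi(E^s\oplus E^u)$ need not have dense leaves: its leaf closures can be proper subtori, because $E^s\oplus E^u$ may contain rational subspaces coming from a reducible characteristic polynomial. For this reason, essential accessibility of $A$ does not immediately give minimality of the $su$-lamination. My plan is to prove a minimality criterion: for an ergodic $A$, the orbit closures of the $E^s\oplus E^u$-action, combined with the center isometry, act minimally. Ergodicity means no eigenvalue is a root of unity, which forces the rational invariant subtori to carry hyperbolic parts in both $E^s$ and $E^u$. I would argue by Fourier/algebraic decomposition of $p_A$ into irreducible factors over $\mathbb{Q}$. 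The factor carrying the unit-modulus pair controls the center, and every other factor is hyperbolic with nontrivial expanding and contracting parts; on that hyperbolic part I would argue separately, as in the Anosov case. This reduces the problem to an irreducible-type setting where Rodriguez-Hertz's argument applies. I would then run the KAM step fiberwise over the resulting invariant subtori.
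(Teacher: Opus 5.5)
You are right that a minimality criterion is the crux, but you state it for the wrong map. For ergodic $A$ the linear $su$-foliation always has dense leaves. Suppose some nonzero $m\in\mathbb{Z}^N$ had $m\cdot v=0$ for all $v\in E^s\oplus E^u$. Then the rational span of $\{(A^T)^j m\}$ would be an $A^T$-invariant rational subspace of the two-dimensional annihilator of $E^s\oplus E^u$. All its eigenvalues would have modulus one, hence be roots of unity by Kronecker, contradicting ergodicity. (That $E^s\oplus E^u$ \emph{contains} rational subspaces, e.g.\ from a hyperbolic factor of $p_A$, is irrelevant; density fails only if it is \emph{contained} in a proper rational subspace.) So the quotient $\mathbb{T}^N/\overline{\pi(E^s\oplus E^u)}$ in your step 3 is a point, and minimality for $A$ is never the issue.

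What is needed is the statement for the perturbation: every nonempty open, $f$-invariant, $su$-saturated set is $\mathbb{T}^N$. Your steps 2 and 3 silently depend on it. The sets $\{x:\operatorname{codim}AC(x)\le j\}$ are open, invariant and $su$-saturated, and only this minimality makes each empty or everything. That gives constant dimension of the classes, and only then can codimension one be excluded at the fixed point $0$: $D_0F$ has no real eigenvector in a cone around $E^c$, so $AC(0)\cap W^c(0)$ cannot be an invariant curve. Your ``compact family of curves preserved by a rotation of infinite order'' does not replace this. The codimension-one part of $\Gamma(f)$ is only relatively open, may coexist with codimension-two classes, and nothing forces it to contain the fixed point where the invariant-line argument works. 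Likewise the KAM step needs joint integrability of $\mathcal{F}^s,\mathcal{F}^u$ everywhere, so that the $su$-loop holonomies $T_n$ form a genuine $\mathbb{Z}^N$-action on the center, close to translations by $n^c$. The Diophantine input is on these $n^c$ with $n\in X\cap\mathbb{Z}^N$, not on the rotation angle of $A|_{E^c}$.

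Your repair fails at a specific point. You split $p_A$ into irreducible factors, treat the hyperbolic part ``as in the Anosov case'', and run Rodriguez-Hertz and KAM fiberwise over invariant subtori. But that foliation by rational subtori is invariant under $A$ (or a power), not under $f$, its lift $F$, or any power of $F$. A $\mathcal{C}^{22}$-perturbation has no such fibration, as the paper stresses.

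The paper instead works on all of $\mathbb{R}^N$ and uses a rational subspace only as a source of translations:
\begin{enumerate}
\item It takes $k$ and a rational $A^k$-invariant $X\supset E^c$ on which $A^k$ is pseudo-Anosov.
\item For a component $U$ of the lift of an open invariant $su$-saturated set, a volume count on slices $W_\varepsilon(x)\cap(X+x)$ gives $n\in X\cap\mathbb{Z}^N$ with $U+n=U$.
\item Recurrence gives $F^l(U)+h=U$, hence $U+A^l n=U$. Cyclicity of $n$ under $A^{kl}$ upgrades this to invariance under a full-rank lattice $\Gamma$ of $X$.
\item $U$ is shown to be simply connected.
\item A closed curve in $U$ built from $\Gamma$-translates of finitely many paths winds around $W^u(W^s(y))\cong E^s\oplus E^u$ for each $y\in E^c(x)$, so $U$ meets it. Thus $U$ is $E^c$-saturated and hence equals $\mathbb{R}^N$.
\end{enumerate}
The idea you are missing is that invariance under a lattice whose center projections span $E^c$ suffices for this topological argument, and it can be obtained without any $f$-invariant splitting.
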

We remark that in any dimension $N\geq 6$ there exist linear automorphisms of $\mathbb{T}^N$ that have $\dim(E^c)=2$ but are not pseudo-Anosov. This is easily seen, thanks to the fact that any integer coefficient polynomial is the carachteristic polynomial of a suitable element of $SL(N,\mathbb{Z})$.\newline

In dimension $7$ the ergodicity of $A$ implies that $\dim(E^c)=0$ or $\dim(E^c)=2$ (see lemma \ref{dimension 7 polynomial}), giving us the following corollary directly.

\begin{corollary}\label{dimension 7}
    Any ergodic linear automorphism $A$ of $\mathbb{T}^7$ is stably ergodic in $\mathcal{C}^{22}_{\text{vol}}$.
\end{corollary}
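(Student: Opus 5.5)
The corollary follows from Theorem \ref{stable ergodicity} once we know that an ergodic $A\in SL(7,\mathbb{Z})$ has $\dim(E^c)\in\{0,2\}$. The plan is to establish this as a statement about the characteristic polynomial (this is the content of lemma \ref{dimension 7 polynomial}) and then split into two cases.

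\textbf{Step 1: bounding the number of unimodular roots.} Let $p_A\in\mathbb{Z}[x]$ be the characteristic polynomial, which is monic of degree $7$ with constant term $\pm1$. Factor it over $\mathbb{Q}$ into irreducible monic integer factors $q_j$. Ergodicity (Halmos) says no root of $p_A$ is a root of unity.
\begin{itemize}
\item \emph{A single factor's unimodular roots.} Suppose some $q=q_j$ has a root $\lambda$ with $|\lambda|=1$. Then $\lambda\neq\pm1$, so $\bar\lambda=\lambda^{-1}$ is also a root of $q$. By irreducibility, $q$ is then reciprocal, so $\deg q$ is even. If every root of $q$ were unimodular, Kronecker's theorem would make them roots of unity, which is impossible. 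Hence $q$ has a root off the unit circle, and by reciprocity it has one inside and one outside. So at least two roots of $q$ are non-unimodular, and $q$ has at most $\deg q-2$ unimodular roots.
\item \emph{Bounding the total.} Let $d$ be the sum of the degrees of the factors that have unimodular roots. Since each such degree is even and $d\leq7$, we get $d\leq6$.
\end{itemize}

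\textbf{Step 2: ruling out $\dim(E^c)\geq4$.} The count from Step 1 shows that $\dim E^c\geq4$ can only occur in two ways.
\begin{itemize}
\item \emph{One reciprocal factor of degree $6$ with four unimodular roots.} Here I use the standard fact that for an irreducible reciprocal polynomial with roots off the circle, the unimodular roots number at most $\deg q-2$. In degree $6$, writing $q(x)=x^3r(x+x^{-1})$ with $r$ cubic, four unimodular roots would mean $r$ has two real roots in $(-2,2)$ and one outside. That is possible a priori, so I need an extra argument.
\item \emph{Two reciprocal factors of degree $4$, each with two unimodular roots.} Their degrees sum to $8>7$, so this is impossible.
\end{itemize}
For the degree-$6$ case, the natural argument is a Galois-conjugacy argument. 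Let $\mu$ be the root of $q$ off the circle, so $q$ has the root pair $\mu,\mu^{-1}$, with $\mu$ real or the pair accompanied by its complex conjugates. Some Galois automorphism sends $\mu$ to a unimodular root $\lambda$. Then $\lambda+\lambda^{-1}=\lambda+\bar\lambda$ is real, so the cubic $r$ is irreducible over $\mathbb{Q}$ with all roots real. The configuration "two roots of $r$ in $(-2,2)$, one outside" is compatible with Salem-type behavior only if exactly one root is outside $[-2,2]$, which is exactly the case here. So this case is genuinely possible in degree $6$: it gives $\dim E^c=4$ with $\deg q=6$.

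However, $\deg p_A=7$ means there is a remaining linear factor, which must be $x\mp1$. Its root $\pm1$ is a root of unity, contradicting ergodicity. Therefore a degree-$6$ factor with four unimodular roots cannot occur.

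More generally, the linear factor argument handles every configuration. A degree-$6$ reciprocal factor always forces an accompanying linear factor $x\pm1$, contradicting ergodicity. It follows that any factor carrying unimodular roots has degree at most $4$. A degree-$4$ irreducible reciprocal factor with roots off the circle has exactly two unimodular roots, and two such factors cannot both fit into degree $7$. Hence $\dim E^c\leq2$, and since $\dim E^c$ is even, $\dim E^c\in\{0,2\}$.

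\textbf{Step 3: conclusion by cases.}
\begin{itemize}
\item If $\dim E^c=0$, then $A$ is Anosov, and stable ergodicity follows from the corollary to Anosov's theorem stated earlier. This holds in $\mathcal{C}^{1+\alpha}$, hence in $\mathcal{C}^{22}_{\text{vol}}$.
\item If $\dim E^c=2$, Theorem \ref{stable ergodicity} applies directly.
\end{itemize}

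The main obstacle is Step 2, specifically the claim that an ergodic integer polynomial cannot have a degree-$6$ reciprocal factor. The parity argument needs the constant term to be $\pm1$ together with reciprocity. Once that parity argument is secured, the rest of the proof is routine.
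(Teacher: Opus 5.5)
Your proof is correct and follows the paper's route: the paper's Lemma~\ref{dimension 7 polynomial} argues the same way. Any irreducible factor of $p_A$ with a unimodular root is reciprocal of even degree at least $4$, so only one such factor fits in degree $7$; a degree-$6$ factor would leave a linear factor $x\mp1$ whose root is a root of unity, so $\dim(E^c)\le 2$, and the corollary follows from Anosov's theorem or Theorem~\ref{stable ergodicity}. The Galois/Salem digression in your Step 2 is unnecessary, since the rational-root argument alone settles the degree-$6$ case, and that rational-root step is what your closing ``parity argument'' remark really refers to; also, a degree-$4$ factor has at most two unimodular roots, and exactly two only if it has any.
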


\begin{remark}
    The same conclusion holds in dimension $6$ and $9$, provided that no eigenvalue of $A$ is a Salem number.
\end{remark}

Finally, we want to comment about the remaining assumptions of our result.\newline
As in \cite{Hertz2005}, the high regularity of the perturbations is reuqired specifically to be able to exploit KAM theory (see \cite{MR538680} for an exposition) in the final step of the proof. There is no indication for this to be a necessary condition, but a new general strategy of proof would be required to reduce such regularity requirement.\newline
The condition that $\dim(E^c)=2$ is essential for understanding the structure of the accessibile components (called accessibility classes) of $f$ on $\mathbb{T}^N$. They are, in general, conjectured to be topological manifolds \cite{zbMATH05287021} (smooth manifolds under center bunching) and it is proved only for low dimensional center \cite{rodriguez2017structure}. This assumption is also used in the algebraic topological argument and to control the number of possible dimensions for the accessibility classes.

\subsection*{Acknowledgments}

The authors thank their mentor Artur Avila for drawing their attention to this topic, Federico Rodriguez Hertz for the valuable conversations, and Daniele Galli for the encouragements.
F.A. was supported by the SNSF mobility grant.


\section{Background and Organization}

\subsection{Partial Hyperbolicity}

We recall some general theory about partial hyperbolicity.\newline
Let $M$ be a Riemmanian manifold endowed with its volume measure and let $f:M\rightarrow M$ be a diffeomorphism. We say that $f$ is partially hyperbolic if there exists a $Df$-invariant splitting
\begin{equation*}
    TM = E_f^s \oplus E_f^c\oplus E_f^u,
\end{equation*}
where $E_f^s$ and $E_f^u$ are non trivial bundles, and for $*=s,c,u$ there exist continuous functions $\mu_*,\lambda_*:M\rightarrow\mathbb{R}_+$ such that
\begin{equation*}
    \mu_*(x) < \frac{\|D_xf(v^*)\|}{\|v\|} < \lambda_*(x), \quad \text {for every } v^*\in E_f^*(x)\setminus\{0\},
\end{equation*}
and
\begin{equation*}
    \lambda_s(x) < \min\{1,\mu_c(x)\}, \quad \quad \max\{1,\lambda_c(x)\}<\mu_u(x).
\end{equation*}
In other words, $E^s_f$ and $E^u_f$ are hyperbolic bundles, and the vectors of $E^c_f$ are contracted or expanded less than the ones of $E^s_f$ and $E^u_f$.\newline
If $\mu_*$ and $\lambda_*$ do not depend on $x$, for $*=s,c,u$, then we say that $f$ is absolutely partially hyperbolic \cite{brin1974partially}. In addition, we say that $f$ is center bunching if
\begin{equation*}
    \lambda_s(x) < \frac {\|D_xf(v)\|}{\|D_xf(w)\|} <\mu_u(x) \quad \text{for any } v,w\in E^c_f(x)\setminus\{0\}.
\end{equation*}

We always have that $E^s_f$ and $E^u_f$ are uniquely integrable to invariant H\"older continuous foliations $\mathcal{F}_f^s$ and $\mathcal{F}_f^u$. If the distributions $E^c_f$, $E_f^{cs}=E^s_f\oplus E^c_f$ and $E_f^{cu}=E^u_f\oplus E^c_f$ are uniquely integrable to foliations $\mathcal{F}_f^c$, $\mathcal{F}_f^{cs}$ and $\mathcal{F}_f^{cu}$, we say that $f$ is dynamically coherent. We denote by $W_f^*(x)$ the leaf of $\mathcal{F}_f^*$ that contains $x$, whenever this foliation exists and is unique, for $*=s,c,u,cs,cu$. Finally, dynamical coherence is a $\mathcal{C}^1$-stable property, provided that $f$ is absolutely partially hyperbolic (see \cite{hirsch1977invariant} or \cite{pesin2004lectures}).\newline

We say that a curve $\gamma:I\rightarrow M$ is an $su$-path, if it is piecewise contained in the leaves of $\mathcal{F}^s$ and $\mathcal{F}^u$. We say that the minimal number of such pieces is the number of legs of $\gamma$. A point $y$ is said to be $su$-accessible from $x$, if there exists a $su$-path joining the two and this is an quivalence relation. We denote the equivalence class of $x$ by $AC(x)=\{y\in M: y \text{ is accessible from } x\}$ and we refer to it as the accessibility class of $x$. A set $E\subset M$ is said $su$-saturated if, for every $x\in E$, we have that $AC(x)\subset E$. We say that $f$ is accessible if $AC(x)=M$ for every $x\in M$. Instead, $f$ is said essentially accessible (with respect to the volume) if every measuable $su$-saturated $E\subset M$ has either full or zero volume.\newline

As mentioned in the introduction essential accessibility is one of the main tools to prove ergodicity of partially hyperbolic systems with respect to the volume measure. We recall Burns-Wilkinson theorem \cite{burns2010ergodicity}.

\begin{theorem}
    Let $f:M\rightarrow M$ be a $\mathcal{C}^2$ volume-preserving, partially hyperbolic and center bunched diffeomorphism on a closed Riemmanian manifold $M$. If $f$ is essentially accessible, then $f$ is ergodic and in fact has the Kolmogorov property.
\end{theorem}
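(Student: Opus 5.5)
The plan is to run a refined Hopf argument in which Lebesgue density points are replaced by density points adapted to the stable and unstable foliations. Fix a continuous observable $\phi$ and its forward and backward Birkhoff averages $\phi^{\pm}$, which coincide on a full-volume set $M'$. As in the proof of Anosov's theorem above, $\phi^+$ is constant along leaves of $\mathcal{F}^s_f$ and $\phi^-$ along leaves of $\mathcal{F}^u_f$. For a rational $c$, the sets $\{\phi^+<c\}$ and $\{\phi^->c\}$ are therefore $s$-saturated and $u$-saturated respectively, but only modulo null sets. The goal is to produce, for each such set $X$, a bi-essentially saturated set that is \emph{genuinely} $su$-saturated. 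Essential accessibility then forces it to have volume $0$ or $1$, so $\phi^{\pm}$ is a.e. constant and $f$ is ergodic. For the Kolmogorov property, the same argument will be applied to the Pinsker $\sigma$-algebra, which is known to be contained in both the stable and the unstable Borel $\sigma$-algebras modulo null sets. Hence Pinsker sets are also bi-essentially saturated and must be trivial.

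The core step is to define, at each point $p$, a nested family of sets $J_n(p)$, the \emph{juliennes}, which approximate $f^{-n}$ of small $cu$-boxes around $f^n(p)$. Concretely, $J_n(p)$ will be built from a center-unstable julienne fibred by stable pieces of size comparable to $\|Df^n|_{E^s}\|$, over unstable pieces of size comparable to $\|Df^{-n}|_{E^u}\|^{-1}$. A point $p$ is a \emph{julienne density point} of $X$ if the proportion of $J_n(p)$ occupied by $X$ tends to $1$. I would prove three facts in order:
\begin{enumerate}
\item juliennes form a Vitali-type (engulfing) basis, so julienne density points coincide with Lebesgue density points a.e.;
\item the set of julienne density points of an essentially $s$-saturated set is genuinely $s$-saturated, and likewise for $u$;
\item bi-essential saturation of $X$ passes to its set of density points, which is then $su$-saturated.
\end{enumerate}

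Step 2 uses absolute continuity of $\mathcal{F}^s_f$ and $\mathcal{F}^u_f$ with bounded Jacobians, which holds because $f$ is $\mathcal{C}^2$. It also uses the fact that stable holonomies between nearby center-unstable manifolds map juliennes to sets comparable to juliennes, uniformly in $n$.

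I expect the main obstacle to be showing that the $s$- and $u$-holonomies act $\mathcal{C}^1$-like, or at least Lipschitz-like, on the center directions at the scale of juliennes. This is exactly where center bunching enters. The bunching inequality $\lambda_s<\|Df v\|/\|Df w\|<\mu_u$ on $E^c_f$ implies that stable and unstable holonomies between center(-unstable) leaves are uniformly Hölder with exponent close to $1$, and that they carry center-balls of radius $r$ to sets nested between center-balls of radii comparable to $r$. This holds without dynamical coherence, via the fake foliations of Burns–Wilkinson, which I would construct locally as invariant graph-transform foliations of a perturbed system agreeing with $f$ near the orbit. Once the quasi-conformality of the holonomies at julienne scale is established, steps 1 and 3 reduce to volume comparisons and a density-point argument akin to the Lebesgue differentiation theorem.
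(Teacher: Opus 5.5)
The thesis does not prove this statement; it quotes it from Burns--Wilkinson. Your reduction matches theirs: the Hopf argument, plus the fact that the Pinsker $\sigma$-algebra lies mod $0$ in both the $s$- and $u$-saturated $\sigma$-algebras. The gap is in how your steps (1)--(3) fit together.

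A Vitali/engulfing property only gives that julienne density points and Lebesgue density points of $X$ agree \emph{almost everywhere}. Combined with (2), it produces an $s$-saturated set $S$ (julienne density points of $X^s$ for $cu$-juliennes) and a $u$-saturated set $U$ (julienne density points of $X^u$ for $cs$-juliennes), with $S=X=U$ mod $0$. But $S$ and $U$ are different sets, and neither is known to be $su$-saturated. Essential accessibility only constrains genuinely $su$-saturated sets, so step (3) does not follow.

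What you need, and what Burns--Wilkinson prove, is a \emph{pointwise} bridge. Write $\mathrm{DP}$ for Lebesgue density points and $\mathrm{JDP}^{cu}$ for $cu$-julienne density points. The bridge says: if $X$ is essentially $u$-saturated, then a point lies in $\mathrm{DP}(X)$ if and only if it lies in $\mathrm{JDP}^{cu}(X)$, and symmetrically with $cs$-juliennes for essentially $s$-saturated sets. Since $X^s$ is essentially $u$-saturated, $\mathrm{DP}(X)=\mathrm{DP}(X^s)=\mathrm{JDP}^{cu}(X^s)$, which is $s$-saturated; symmetrically $\mathrm{DP}(X)$ is $u$-saturated. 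This cannot come from a covering lemma, because juliennes are exponentially eccentric and the pointwise equivalence fails for general sets. For instance, if the complement of $X$ is a thin cusp tangent to the center direction, $X$ has Lebesgue density $1$ at the vertex while its complement fills most of every small julienne there. Burns--Wilkinson instead use the essential $u$-saturation of $X$ and the absolute continuity of $\mathcal{F}^u_f$ to fatten the $cu$-juliennes in the unstable direction, where they are thin, into sets comparable to balls. If you meant one julienne family for both $s$ and $u$, you would need it to be, up to bounded index shifts, both an $s$-thickening of $cu$-juliennes and a $u$-thickening of $cs$-juliennes. Since $s$- and $u$-saturation do not commute, that is a further unproved claim.

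Your account of where center bunching enters also needs correcting. The point is not that holonomies carry center balls of every radius $r$ to sets pinched between balls of radius comparable to $r$. Holonomies between $cu$-leaves shear in the unstable direction, which is exactly why juliennes (long in the center, exponentially thin in $u$) replace balls. What center bunching supplies is the center scale of the juliennes. Because the stable rate is smaller than the ratio of the weakest to the strongest center rate, one can choose $\sigma<1$ such that $f^n$ of a center disc of radius $\sigma^n$ still contains a center disc much larger than the stable displacement $\sim\|Df^n|_{E^s}\|$, while its diameter tends to $0$. Quasi-conformality, $J^{cu}_{n+k}(h^s(p))\subset h^s(J^{cu}_n(p))\subset J^{cu}_{n-k}(h^s(p))$ for a fixed $k$, then follows by writing $h^s=f^{-n}\circ h^s_n\circ f^n$, where $h^s_n$ moves points by $O(\|Df^n|_{E^s}\|)$. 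Your juliennes have no independent center scale, so the choice where the hypothesis is actually used is missing from the plan. Also, the unstable size $\|Df^{-n}|_{E^u}\|^{-1}$ you propose is large rather than small.
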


\subsection{Lifts and Perturbations}

In the rest of the paper $f:\mathbb{T}^N\rightarrow \mathbb{T}^N$ will denote a perturbation of $A$ in $\mathcal{C}^k_{vol}(\mathbb{T}^N)$.\newline
First of all, we notice that $0\in\mathbb{T}^N$ is a fixed point for $A$. Given the ergodicity of $A$, none of its eigenvalues can be equal to 1, implying that this fixed point is stable under small $\mathcal{C}^1$ perturbations, i.e., up to a small translational change of variable, we can assume that $f(0)=0$.\newline

We will often work in $\mathbb{R}^N$, the universal cover of $\mathbb{T}^N$, denoting the projection to the torus by $\pi:\mathbb{R}^N\rightarrow\mathbb{T}^N$. We consider the unique lift $F:\mathbb{R}^N\rightarrow\mathbb{R}^N$ of $f$ that fixes $0$. Finally, we can write $F = A + G$ with $G$ being $\mathbb{Z}^N$-periodic.\newline

We recall the splitting $\mathbb{R}^N=E^s\oplus E^c \oplus E^u$, mentioned in the introduction. Given $v\in\mathbb{R}^N$, we denote the corresponding decomposition by $v = v^s + v^c + v^u$. We also set $E^{cs}=E^c\oplus E^s$, $E^{cu}=E^c\oplus E^u$, $E^{su}=E^s\oplus E^u$ and, analogously, $v^{cs}=v^s+ v^c$, $v^{cu}= v^c + v^u$, $v^{su}=v^s+ v^u$. Finally, we endow $E^s,E^u$ and $E^c$ with norms making $A|_{E^s}$ and
$A^{-1} |_{E^u}$ contractions and $A|_{E^c}$ an isometry. On $\mathbb{R}^N$ we set the norm $|v|=|v^u|+|v^c|+|v^s|$. 
In this way we have that $(E^s)^{\perp}=E^{cu}$, $(E^c)^{\perp}=E^{su}$, $(E^u)^{\perp}=E^{cs}$.\newline 

It is easy to check that the constant bundles $E^*_A(x)=E^*$ for $*=s,c,u$, provide an $A$-invariant splitting of $T\mathbb{T}^N$, which is absolutely partially hyperbolic. Being these distributions constant, $A$ is dynamically coherent. In view of the previous discussion, provided that $f$ is sufficiently $\mathcal{C}^1$-close to $A$, $f$ and $F$ are dynamically coherent partially hyperbolic diffeomorphisms.\newline

We denote by $W^*_f$ and $W^*_F$ their invariant manifolds, for $*=s,c,u,cs,cu$. Very often we will omit the index, since the environment manifold ($\mathbb{T}^N$ or $\mathbb{R}^N$) will distinguish between the two, and we will use a different notation for the invariant manifolds of $A$.\newline

\subsection{Plan of Proof and Structure of the Paper}

Here we describe the general strategy of proof of Theorem \ref {stable ergodicity}, which is similar to the one in \cite{Hertz2005}. In this setting, the accessibility classes of $f$ are injectively immersed manifolds \cite{rodriguez2017structure} and their dimensions vary semi-continuously (section \ref{proof of ergodicity}). It is an open problem to determine if this holds in general \cite{zbMATH05287021}.The first step in the proof is to prove that all accessibility classes of $f$ have the same dimension and in view of the semi-continuity, the following minimality criterion is all we need.

\begin{theorem}\label{minimality} Let $A$ be a linear automorphism of $\mathbb{T}^N$ with $\dim(E)^c=2$.\newline
    If $f$ is sufficiently close to $A$ in $\mathcal{C}_{\text{vol}}^1(\mathbb{T}^N)$, for every $E\subset\mathbb{T}^n$ $su$-saturated, $f$-invariant non-empty closed set, $E=\mathbb{T}^N$.
\end{theorem}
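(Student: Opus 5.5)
We lift to the universal cover and exploit the fact that $f$ is semiconjugate to $A$, which reduces minimality to a density statement for $A$-invariant objects. Let $\tilde E=\pi^{-1}(E)\subset\mathbb{R}^N$. This set is closed, $\mathbb{Z}^N$-invariant, $F$-invariant, and saturated by the leaves $W^s_F,W^u_F$.

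\textbf{Step 1: passing to center leaves.} By dynamical coherence and the plaque-expansivity and leaf-conjugacy theory of Hirsch--Pugh--Shub, for $f$ $\mathcal{C}^1$-close to $A$ there is a homeomorphism $h:\mathbb{R}^N\to\mathbb{R}^N$ with the following properties: it is at bounded distance from the identity, it commutes with $\mathbb{Z}^N$-translations, and it maps center leaves of $F$ to the affine planes $x+E^c$. Moreover, the $W^{cs}_F,W^{cu}_F$ leaves stay uniformly close to the planes $x+E^{cs}$, $x+E^{cu}$. Saturation then gives a quantitative conclusion: if $p\in\tilde E$, then $\tilde E$ meets every center leaf through points of $W^{s}_F(p)$ and $W^u_F(p)$. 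Consequently the projection $P(\tilde E)$ of $\tilde E$ to $\mathbb{R}^N/E^c\cong E^{su}$ (along $E^c$, modulo bounded error) is invariant under the full translation group $E^s\oplus E^u$. Up to closure, $P(\tilde E)$ is therefore everything. The remaining information sits in the center direction, so I would define the closed set $\Lambda=\{v\in E^c : p+v\in \tilde E \text{ up to the holonomy identification}\}$. More precisely, I would consider the center ``trace'' $C(p)=\tilde E\cap W^c_F(p)$ and study how it changes under $su$-holonomies and deck translations.

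\textbf{Step 2: the center holonomy group.} Composing $su$-holonomy with a deck translation $n\in\mathbb{Z}^N$ gives maps $W^c_F(p)\to W^c_F(p)$ which are close to translation by $n^c$, the $E^c$-component of $n$. The traces $C(p)$ are invariant under all of these maps. For $A$ itself the maps are exactly the translations by $\{n^c\}$, and ergodicity of $A$ (no roots of unity, irreducibility of the factor carrying $E^c$) implies that the projection of $\mathbb{Z}^N$ to $E^c$ is dense in the 2-dimensional plane $E^c$. This is where the absence of the pseudo-Anosov hypothesis must be handled. The center eigenvalues $e^{\pm i\theta}$ are non-roots of unity of modulus one, so $A|_{E^c}$ is an irrational rotation. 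Density of $\pi_c(\mathbb{Z}^N)$ follows because $E^c$ contains no rational subspace and the orthogonal rational constraints are trivial, using the Galois conjugates of the center eigenvalue.

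\textbf{Step 3: robustness under perturbation, the main obstacle.} The key point is that $C(p)$ is a nonempty closed set, invariant under a family of near-translations by a dense set of vectors, and also invariant under $F|_{W^c}$, which is close to a rotation. I would argue as follows. Pick finitely many $n_1,\dots,n_k$ whose center parts span $E^c$ with small generating gaps, say an $\varepsilon$-net of a ball after taking combinations. The corresponding holonomy maps are $\mathcal{C}^0$-close to these translations, uniformly in $p$ by $\mathbb{Z}^N$-periodicity. From any point of $C(p)$ one then reaches an $\varepsilon$-dense subset of a fixed-size disk in the center leaf. Iterating with $F$ preserves $\tilde E$. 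Because $A|_{E^c}$ is an isometry, the errors do not accumulate exponentially; one uses iterates of $F^{-1}$ and $F$ to move between scales, combined with the uniform contraction along $s$ and expansion along $u$. This gives that $C(p)$ is $\varepsilon$-dense in $W^c_F(p)$ for every $\varepsilon$, hence equal to it by closedness. The delicate part is uniformity: the $\varepsilon$-net has to be achieved with $k$ and the norms of the $n_i$ fixed before choosing the perturbation size. I expect to fix these first from the density in Step 2 and then require $f$ to be close enough to $A$.

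\textbf{Step 4: conclusion.} Once $\tilde E$ contains a full center leaf through each of its points, $su$-saturation combined with the local product structure of $W^s,W^c,W^u$ shows that $\tilde E$ is open. Since it is closed and nonempty, and $\mathbb{T}^N$ is connected, $E=\mathbb{T}^N$.
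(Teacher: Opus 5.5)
There is a genuine gap in Step 3, and it comes from the order of quantifiers. Steps 1, 2 and 4 are fine as far as they go. In particular, the density of $\{n^c:n\in\mathbb{Z}^N\}$ in $E^c$ is true: its closure is a closed subgroup of $E^c$, invariant under the irrational rotation $A|_{E^c}$, and not reduced to $\{0\}$.

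Step 3, however, carries the whole content of the theorem, and it does not work. Perturbative control of the center holonomies is available only for finitely many $n$ at a time:
\begin{itemize}
\item for $|n|\le R$, the maps $T_n$ are $\delta$-close to $x\mapsto x+n^c$, with $\delta\to0$ as $f\to A$ (Lemma \ref{Tn close to linear});
\item in general one only has $|T_n(x)-(x+n^c)|\le C\log^+|n|+C$ (Lemma \ref{Tn log}).
\end{itemize}
Errors of compositions add up. An $\varepsilon$-net of a disk built from words in fixed $n_1,\dots,n_k$ needs word length $m(\varepsilon)\to\infty$ as $\varepsilon\to0$. So for a fixed $f$ you only obtain that $C(p)$ is $\varepsilon_0(f)$-dense, with $\varepsilon_0(f)>0$ tending to $0$ only as $f\to A$. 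You choose the perturbation after $\varepsilon$, whereas the theorem needs one neighbourhood of $A$ that works for every $\varepsilon$.

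Your proposed remedy, moving between scales with $F^{\pm1}$, is not available. On center leaves $F$ is $\mathcal{C}^1$-close to the isometry $A|_{E^c}$, so it gives no controlled rescaling of center distances. The contraction and expansion along $E^s,E^u$ are transverse to the center. Conjugating by $F$ only replaces $\hat T_n$ by $\hat T_{An}$, with larger $|n|$.

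Closedness cannot upgrade coarse density to density either: a Denjoy homeomorphism can be $\mathcal{C}^0$-close to an irrational rotation and still have a Cantor minimal set. Note also that you never use that $f$ preserves volume.

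The missing ingredient is an exact, non-perturbative rigidity. The paper obtains it by working with the open complement. If $E\neq\mathbb{T}^N$, then $\mathbb{T}^N\setminus E$ lifts to a nonempty open set that is $su$-saturated, $F$-invariant and $\mathbb{Z}^N$-invariant. One takes a connected component $U$ of it, and openness is what makes the following statements exact:
\begin{itemize}
\item A ball in $U$ and a volume/pigeonhole argument on $W_\varepsilon(x)\cap(X+x)$ give $n\in\Lambda$ with $U+n=U$.
\item Poincar\'e recurrence gives $U=F^l(U)+h$, hence $U+A^ln=U$.
\item The pseudo-Anosov property of $A^k$ on the auxiliary rational subspace $X\supset E^c$ (Lemma \ref{Pseudo Anosov subspace}, which is where the algebraic hypothesis of Rodriguez-Hertz is removed) upgrades this to invariance under a full-rank lattice $\Gamma\subset X$.
\end{itemize}
Next, $U$ is shown to be simply connected. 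This is where the multiscale control you are missing actually appears: one lets $\varepsilon\to0$ with $|n_\varepsilon|\lesssim\varepsilon^{-2}$, and balances $\mathrm{Lip}(\hat T_n)\le C|n|^\beta$ (with $\beta\to0$) against the Diophantine bound $|n^c|\ge c'|n|^{-r}$. Finally, a loop in $U$ assembled from $\Gamma$-translates winds around $W^u(W^s(y))$, so simple connectivity forces $U$ to meet it. Hence $U$ is $E^c$-saturated, and therefore $U=\mathbb{R}^N$.
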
 
 
To conclude the proof we can now reason by cases. If the codimension of the classes is 0, they must be open and therefore $f$ is accessible by connectedness. Codimension 1 is ruled out by analyzing the fixed point of $f$ (namely $0$). Finally, if the codimension is 2, one can deduce the joint integrability of stable and unstable foliations. In this situation, exploiting some KAM theory, it is possible to $\mathcal{C}^1$-conjugate these foliation to those of $A$, deducing that $f$ is essentially accessible. Either way, ergodicity is finally obtained thanks to Burns-Wilkinson theorem, establishing Theorem \ref{stable ergodicity}.\newline

The novelty of our work stands in the generality of the minimality criterion (Theorem \ref{minimality}). Rodriguez-Hertz exploits a version which requires $A$ to be pseudo-Anosov, and it is the only part of his work where such assumption appears. The core of this paper is devoted to the proof of this theorem, which we sketch here.\newline

We will work in $\mathbb{R}^N$, considering $F:\mathbb{R}^N\rightarrow\mathbb{R}^N$, the lift of $f$, and $V=\pi^{-1}(E)$.\newline
The first step is to build a subspace $X\subset\mathbb{R}^N$, that contains $E^c$ and where a suitable power of $A$, let us say $A^k$, acts in a pseudo-Anosov way. This is section \ref{linear algebra}.\newline
We remark that this is not enough to reduce ourselves to Rodriguez-Hertz's argument. In fact, $X$ is typically not invariant under $F$, nor under $F^k$.\newline
We then examine a connected component $U$ of $V$, aiming to prove that $U=\mathbb{R}^N$.\newline
By a volume argument, valid for any $su$-satuared open set, we obtain the esistence of $n\in X\cap \mathbb{Z}^N$ such that $U+n=U$. By the algebraic properties of $A^k|_X$, we can extend the transaltional invariance of $U$ to a lattice $\Gamma$ of $X$ (subsection \ref{the structure of U}).\newline
It is also possible to show that $U$ is simply connected (appendix \ref{Simply connected proof}).\newline
By a topological argument, any simply connected set, which is invariant along directions that span $E^c$, must intersect every accessibility class (subsection \ref{main proof}). The $su$-saturation of $U$ let us conclude.\newline

We remark that theorem \ref{minimality}, apart from being the key of our argument, has a relevance on its own. The assumption that $\dim(E^c)=2$ is not restrictive in low dimension and small perturbation of linear automorphisms of the torus are commonly studied maps (see for example \cite{micena2021lyapunov}, \cite{gogolev2025smooth}, \cite{brown2026lyapunov}, \cite{MR4422214}, \cite{Adam_2017}).\newline

Finally, we describe the remaining sections of the paper. In section \ref{geometric preliminaries} we list some properties of the invariant manifolds and the holonomy maps they induce, which are peculiar to this setting. Section \ref{linear algebra} is dedicated to the algebraic preliminaries. In section \ref{Proof of the Minimality Criterion} we prove the minimality criterion (theorem \ref{minimality}) and finally, section \ref{proof of ergodicity} proves that the minimality criterion implies our main result (theorem \ref{stable ergodicity}).

\section{Geometric Preliminaries}\label{geometric preliminaries}

\subsection{Invariant Manifolds and Holonomies}\label{Invariant Manifolds and Holonomies}

In this subsection we collect some properties of the invariant manifolds and the associated holonomies, that are peculiar of our setting. These results are directly taken from \cite{Hertz2005}.\newline

In this setting, the invariant manifolds $W_F^*$ are graphs over the subspaces $E^*$. More precisely, there exist $g^*:\mathbb{R}^N\times E^*\rightarrow (E^{*})^{\perp}$ such that $W^*(x) = x+ \text{graph}(g^*(x,\cdot))$. Therefore we have that the parametrization $\sigma^{*}:\mathbb{R}^N\times E^{*} \rightarrow \mathbb{R}^N$ of the invariant foliation $\mathcal{F}^*$, given by $\sigma^{*}(x,v) = x + v + g^{*}(x,v)$. We also denote by $\sigma_x^*:E^*\rightarrow\mathbb{R}^N$ the parametrization of $W^*(x)$, defined by $\sigma_x^*(v)=\sigma^*(x,v)$. \newline

\begin{lemma}\label{invariant manifolds are Lip graphs}
There exist $\kappa=\kappa(f)$, with $\kappa(f)\rightarrow0$ as $f\rightarrow A$ in $\mathcal{C}^1$,  such that, for $*=s,c,u,cs,cu$,
\begin{equation*}
    |g^{*}(x,v)| \leq \kappa|v|, \quad\text{for every }v\in E^{*}.
\end{equation*}
Moreover, the parametrizations $\sigma_x^*$ are $(1+\kappa)$-biLipschitz continuous.
\end{lemma}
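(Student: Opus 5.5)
The plan is to obtain the invariant manifolds of $F=A+G$ as graphs by the standard Hadamard--Perron graph transform on the universal cover. Since $G$ is $\mathbb{Z}^N$-periodic and $f$ is $\mathcal{C}^1$-close to $A$, we have $\|DG\|_{\mathcal{C}^0}\le\varepsilon$ with $\varepsilon\to0$ as $f\to A$. This smallness is global on $\mathbb{R}^N$, by periodicity and compactness of $\mathbb{T}^N$. With respect to the adapted norm $|v|=|v^u|+|v^c|+|v^s|$, $A$ is an isometry on $E^c$, expands $E^u$ by at least some $\mu>1$, and contracts $E^s$ by at most some $\lambda<1$. So the splitting is absolutely partially hyperbolic with a definite spectral gap, and any Lipschitz perturbation of size $\varepsilon\ll\min(\mu-1,1-\lambda)$ falls within the scope of the global invariant-manifold theory of Hirsch--Pugh--Shub, already quoted in the paper for dynamical coherence.

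\textbf{Step 1 (cone fields).} For $*\in\{s,c,u,cs,cu\}$, consider the cones $\mathcal{C}^*_\kappa=\{v: |v^{\perp}|\le\kappa|v^*|\}$, where $v^\perp$ is the component in $(E^*)^\perp$. For instance $(E^{cs})^\perp=E^u$, and $E^{cs}$ is the complement to $E^u$ in the chosen splitting. I will check directly from the block structure of $A$ and $\|DG\|\le\varepsilon$ that $DF^{-1}$ maps $\mathcal{C}^{cs}_\kappa$ into itself, and $DF$ maps $\mathcal{C}^{cu}_\kappa$ into itself, as soon as $\kappa\ge C\varepsilon/(\mu-1)$ for a constant $C$ depending only on $A$. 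The same holds for the strong cones $\mathcal{C}^s,\mathcal{C}^u$, using the gap between $\lambda$ and $1$. This gives $\kappa(f)=O(\varepsilon)\to0$.

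\textbf{Step 2 (graph transform).} Fix $*=u$ (the case $s$ is symmetric via $F^{-1}$, and $cs,cu$ are analogous). Take the space of Lipschitz maps $h:E^u\to E^{cs}$ with $\mathrm{Lip}(h)\le\kappa$, and define $\Gamma h$ by $\mathrm{graph}(\Gamma h)=F(\mathrm{graph}\, h)$. The projection $v\mapsto (F(v+h(v)))^u$ is a perturbation of the expansion $A|_{E^u}$ by a $(1+\kappa)\varepsilon$-Lipschitz map. It is therefore a global homeomorphism of $E^u$, and $\Gamma$ is well defined and preserves the Lipschitz bound by Step 1. A contraction argument in a weighted $\mathcal{C}^0$ norm then gives unique invariant families of graphs, which must coincide with $W^*_F$. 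The central leaves are obtained as intersections $W^{cs}\cap W^{cu}$, which are graphs over $E^c$ of maps with Lipschitz constant $\le\kappa$ by transversality of the cones. This yields $W^*(x)=x+\mathrm{graph}(g^*(x,\cdot))$ with $g^*(x,0)=0$ and $\mathrm{Lip}(g^*(x,\cdot))\le\kappa$, hence $|g^*(x,v)|\le\kappa|v|$.

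\textbf{Step 3 (bi-Lipschitz).} Since $\sigma_x^*(v)-\sigma_x^*(w)=(v-w)+(g^*(x,v)-g^*(x,w))$ with the second term in the complementary summand, the chosen norm is additive on the splitting. Hence $|v-w|\le|\sigma^*_x(v)-\sigma^*_x(w)|\le(1+\kappa)|v-w|$, which is the bi-Lipschitz claim.

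\textbf{Main obstacle.} The delicate point is the globality on $\mathbb{R}^N$, especially for the central directions, where $A$ is only an isometry. Invariant manifolds for a partially hyperbolic map in general are only locally graphs. Here we need the full leaves to be global graphs with uniformly small slope and with uniqueness, which is what makes $W^c,W^{cs},W^{cu}$ well defined. I would handle this by exploiting that $F$ is a globally $\varepsilon$-Lipschitz perturbation of a linear map. This is the setting of the Hirsch--Pugh--Shub global theorem, possibly after using the absolute partial hyperbolicity to fit $\varepsilon$ inside the gap between the $E^c$ rates ($\approx1$) and $\lambda,\mu$. Because of this, no bump-function cut-off is needed, and the graph transform converges globally.
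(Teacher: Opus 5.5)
Your proposal is correct, and it is the standard Hirsch--Pugh--Shub cone-field and graph-transform argument for the globally $\mathcal{C}^1$-small perturbation $F=A+G$ of $A$ on $\mathbb{R}^N$. The paper itself gives no proof and simply imports this lemma from Rodriguez Hertz, where it rests on this kind of argument; the only minor point is that for $*=c$ the intersection $W^{cs}\cap W^{cu}$ gives slope $O(\kappa)$ (e.g.\ at most $2\kappa/(1-\kappa)$) rather than $\kappa$ itself, which is harmless after renaming $\kappa$, and you rightly prove a Lipschitz bound on $g^*(x,\cdot)$, which, unlike the pointwise bound $|g^*(x,v)|\le\kappa|v|$, is what the bi-Lipschitz claim actually needs.
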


\begin{lemma}\label{unique intersection}
    For any $x,y\in\mathbb{R}^N$
    \begin{equation*}
        \#W^s(x)\cap W^{cu}(y)=1, \quad\text{and}\quad \#W^u(x)\cap W^{cs}(y)=1.
    \end{equation*}
\end{lemma}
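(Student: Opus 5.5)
The idea is to use Lemma \ref{invariant manifolds are Lip graphs}: both sets in question are Lipschitz graphs with small constant over complementary subspaces, so each intersection reduces to a fixed point problem for a contraction. I treat the case $W^s(x)\cap W^{cu}(y)$; the case $W^u(x)\cap W^{cs}(y)$ is the same argument with the roles of $s$ and $u$ exchanged.

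First, I write the two manifolds as graphs,
\begin{equation*}
W^s(x)=\{x+v+g^s(x,v): v\in E^s\},\qquad W^{cu}(y)=\{y+w+g^{cu}(y,w): w\in E^{cu}\},
\end{equation*}
where $g^s(x,v)\in E^{cu}$ and $g^{cu}(y,w)\in E^s$. By Lemma \ref{invariant manifolds are Lip graphs}, $|g^*(x,\cdot)|\le\kappa|\cdot|$. I will also use that $g^*(x,\cdot)$ is $\kappa$-Lipschitz, which is the standard graph transform conclusion and is encoded in the $(1+\kappa)$-biLipschitz statement.

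Second, a point lies in the intersection exactly when there are $v\in E^s$ and $w\in E^{cu}$ with
\begin{equation*}
x+v+g^s(x,v)=y+w+g^{cu}(y,w).
\end{equation*}
I project this equation onto $E^s$ and onto $E^{cu}$ using the splitting $\mathbb{R}^N=E^s\oplus E^{cu}$. With $d=y-x$, this gives the system
\begin{equation*}
v=d^s+g^{cu}(y,w),\qquad w=-d^{cu}+g^s(x,v).
\end{equation*}
Solving the system is the same as finding a fixed point of the map
\begin{equation*}
\Phi(v,w)=\bigl(d^s+g^{cu}(y,w),\,-d^{cu}+g^s(x,v)\bigr)
\end{equation*}
on the complete space $E^s\times E^{cu}$, taken with the sum norm.

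Third, because the two graph maps are $\kappa$-Lipschitz, $\Phi$ is $\kappa$-Lipschitz as well. For $f$ close enough to $A$ in $\mathcal{C}^1$ we have $\kappa<1$, so $\Phi$ is a contraction. The Banach fixed point theorem then gives exactly one solution $(v,w)$. Since the parametrizations $\sigma^*_x$ are injective, distinct intersection points would give distinct fixed points, so the intersection consists of exactly one point.

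The step I expect to need the most care is the global Lipschitz control of $g^*$ in the variable $v$, as opposed to the pointwise bound $|g^*(x,v)|\le\kappa|v|$. If Lemma \ref{invariant manifolds are Lip graphs} is read only as that pointwise bound, I would obtain the Lipschitz property from the biLipschitz estimate on $\sigma^*_x$, or equivalently from the invariant cone fields for $DF$ near $A$. The choice of norm $|v|=|v^u|+|v^c|+|v^s|$ makes the projections onto $E^s$ and $E^{cu}$ norm-nonincreasing, so no extra constants appear in the contraction estimate.
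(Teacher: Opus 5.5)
Your proposal is correct. The step you flagged goes through: in the paper's additive norm, $|a+b|=|a|+|b|$ for $a\in E^*$ and $b\in (E^*)^{\perp}$, so $|\sigma^*_x(v)-\sigma^*_x(v')|=|v-v'|+|g^*(x,v)-g^*(x,v')|$, and the $(1+\kappa)$-biLipschitz bound gives exactly $\text{Lip}(g^*(x,\cdot))\le\kappa$. Your map is therefore a $\kappa$-contraction whose fixed points correspond bijectively to the intersection points. The paper gives no argument of its own here and imports the lemma from \cite{Hertz2005}; your contraction argument on the small-Lipschitz graphs of Lemma \ref{invariant manifolds are Lip graphs} is, as far as I can tell, the standard proof behind that citation.
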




For $x\in\mathbb{R}^N$ and $y\in W^{cu}(x)$ we have the holonomy $h^u_{xy}:W^c(x)\rightarrow W^c(y)$, defined by 
$$ h^u_{xy}(z) = W^u(z)\cap W^{cs}(y). $$
For $y\in W^{cs}(x)$ one defines $h^s_{xy}:W^c(x)\rightarrow W^c(y)$ by $h^s_{xy}(z)=W^s(z)\cap W^{cu}(y)$.\newline
If $\gamma$ is a $su$-path connecting $x$ and $y$ we can define $h^{\gamma}:W^c(x)\rightarrow W^c(y)$, by concatenating the the stable and unstable holonomies along the corresponding legs of the path.\newline
Given $x,y\in\mathbb{R}^N$ we have a way to define a preferential holonomy between $W^c(x)$ and $W^c(y)$. Exploiting the uniqueness of the intersection between $W^{s}$ and $W^{cu}$, we define $h^{su}_{x,y}=h^{s}_{x,z}\circ h^{u}_{z,y}$, where $z=W^s(x)\cap W^{cu}(y)$.\newline

Exploiting Lemma \ref{unique intersection} we can define
$\pi^s:\mathbb{R}^N\rightarrow W^{cu}(0)$, by $\pi^s(x)=W^s(x)\cap W^{cu}(0)$  and $\pi^u:\mathbb{R}^N\rightarrow W^{cs}(0)$, by $\pi^u(x)=W^u(x)\cap W^{cs}(0)$. Finally we set
\begin{equation*}
    \pi^{su}:\mathbb{R}^N\rightarrow W^c(0) \quad \text{ by } \quad \pi^{su}=\pi^s\circ\pi^u.
\end{equation*}
Notice that $\pi^{su}$ preserves the accessibility classes, i.e. $\pi^{su}(z)\in AC(z), \forall z\in\mathbb{R}^N$.\newline

Then we set $\hat{T}_n :W^c(0)\rightarrow W^c(0)$ by $\hat{T}_n=h^{su}_{n,0}\circ(id+n)$. Notice that, when projected to $\mathbb{T}^N$, these are holonomies along $su$-loops. To make some properties of this family of maps more evident it will be at times useful to read them in charts, hence defining $T_n:E^c\rightarrow E^c$ by $T_n=(\sigma_0^c)^{-1}\circ \tilde T _n \circ \sigma_0^c$.\newline
Notice that, a priori, it might hold that $T_n\circ T_m\neq T_{n+m}$.

\begin{lemma}\label{Holonomies Lip}
    There exists $\beta=\beta(f)$ with $\beta(f)\rightarrow 0$ as $f\rightarrow A$ in $\mathcal{C}^1$ and $C>0$, that depends only on the size of the $\mathcal{C}^1$ neighbourhood of $A$ such that the following holds.\newline
    For any $su$-path $\gamma$ with at most $K$ legs and length less than $L\geq1$,
    \begin{equation*}
        \text{Lip}(h^{\gamma})\leq C^KL^{K\beta}.
    \end{equation*}
    Moreover, for every $n\in\mathbb{Z}^N$ with $n\neq 0$,
    \begin{equation*}
        \text{Lip}(\hat T_n)\leq C|n|^{\beta}.
    \end{equation*}
\end{lemma}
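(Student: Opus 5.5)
The plan is to compare the $F$-holonomies with the linear holonomies of $A$, which are translations along $E^s$ and $E^u$ and hence act on $E^c$ by the identity in charts. We then propagate the derivative estimate along each leg of the path, using the fact that the $\mathcal{C}^1$-distance between $E^*_F$ and the constant bundles $E^*$ is small.

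First I treat a single leg, say an unstable holonomy $h^u_{xy}:W^c(x)\to W^c(y)$ with $d(x,y)\le L$. Write it as the limit of the holonomies of the linear splitting after iteration:
\begin{equation*}
h^u_{xy}=F^{n}\circ h^{u}_{F^{-n}x,F^{-n}y}\circ F^{-n},
\end{equation*}
and choose $n\approx \log L/\log \mu_u$, so that $d(F^{-n}x,F^{-n}y)\le 1$. On unit scale the holonomy between nearby center leaves inside a $cu$-leaf is $C$-Lipschitz, with $C$ depending only on the neighbourhood of $A$. This follows from Lemma \ref{invariant manifolds are Lip graphs}, since all leaves are $(1+\kappa)$-biLipschitz graphs with slopes $\le\kappa$, so the unit-scale holonomy is a small perturbation of the identity in charts.

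Conjugating by $F^{\pm n}$ costs at most the ratio of center expansion and contraction along the two orbits. Because $A|_{E^c}$ is an isometry and $f$ is $\mathcal{C}^1$-close to $A$, the norm $\|DF^{\pm1}|_{E^c_F}\|$ is at most $1+\epsilon$ with $\epsilon=\epsilon(f)\to0$. The conjugation therefore costs at most $(1+\epsilon)^{2n}\le L^{2\log(1+\epsilon)/\log\mu_u}$, so I set $\beta=2\log(1+\epsilon)/\log\mu_u$, which tends to $0$ as $f\to A$. The stable legs are handled the same way with forward iterates. Composing $K$ legs, each of length at most $L$, gives $\mathrm{Lip}(h^\gamma)\le C^KL^{K\beta}$.

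For $\hat T_n$, the translation $\mathrm{id}+n$ is an isometry from $W^c(0)$ to $W^c(n)$ by $\mathbb{Z}^N$-periodicity of $G$. The map $h^{su}_{n,0}$ is a two-legged path, passing through $z=W^s(n)\cap W^{cu}(0)$, whose leg lengths are $O(|n|)$ by the graph property and Lemma \ref{unique intersection}. Applying the first part with $K=2$ and $L\sim|n|$ gives $\mathrm{Lip}(\hat T_n)\le C|n|^{2\beta}$, and relabeling $\beta$ yields the claim.

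The main obstacle is making the unit-scale estimate and the center derivative bound uniform over the noncompact lift. This is handled by periodicity: $F-A$ is $\mathbb{Z}^N$-periodic, so all constants are uniform. A secondary issue is that the holonomy is only defined through the limit, so it is Lipschitz but not $\mathcal{C}^1$. I will therefore run the argument with Lipschitz constants of the maps rather than derivatives, estimating $d(h(p),h(q))$ for pairs $p,q$ and iterating them through $F^{-n}$ directly.
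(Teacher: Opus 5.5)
Your reduction is the standard route to this estimate. Note that the paper does not prove the lemma; it imports it from \cite{Hertz2005}. The steps are: conjugate one leg by $F^{\pm n}$ with $n\approx \log L/\log\mu_u$; pay $(1+\epsilon)^{2n}=L^{\beta}$ because $DF$ is almost isometric on the center; compose the $K$ legs; and treat $\hat T_n$ as the isometry $\mathrm{id}+n$ followed by a two-legged path of length $O(|n|)$.

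\textbf{The gap is the base case.} Your claim that the holonomy along a leg of length $\le 1$ is $C$-Lipschitz does not follow from Lemma \ref{invariant manifolds are Lip graphs}. That lemma controls each leaf separately, i.e.\ the dependence of $g^u(x,v)$ on $v$. It therefore only shows that a short holonomy is $C^0$-close to a translation. The Lipschitz constant of $h^u$ is governed by the dependence of $g^u(x,v)$ on the base point $x$, that is, by the transverse regularity of $\mathcal{F}^u$ inside $cu$-leaves. Invariant foliations of partially hyperbolic maps are in general only H\"older.

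Working with pairs of points instead of derivatives does not help. Run your own iteration with only this $C^0$ information: pull $p,q$ back until the unstable displacement is comparable to $d(p,q)$, then push forward. You get $d(h(p),h(q))\lesssim L^{\beta}d(p,q)^{1-\beta}$. This is a H\"older bound with exponent close to $1$, not a Lipschitz bound.

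\textbf{What is missing} is a genuine regularity theorem for holonomies between center leaves. If $f$ is $\mathcal{C}^2$, dynamically coherent and center bunched, then the stable and unstable holonomies between center leaves inside $cs$- and $cu$-leaves are $\mathcal{C}^1$ (Pugh--Shub--Wilkinson). These hypotheses hold for $f$ close to $A$, since $DF|_{E^c_F}$ is nearly isometric. The derivative is bounded uniformly on legs of length $\le 1$ by compactness of $\mathbb{T}^N$, i.e.\ $\mathbb{Z}^N$-periodicity of the lift. This is exactly where regularity of $f$ beyond $\mathcal{C}^1$ enters.

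Alternatively, you would have to reprove it. Write $h^u$ as a limit of maps $F^m\circ\pi_m\circ F^{-m}$, with $\pi_m$ smooth projections along a direction close to $E^u$. Then use bunching together with Lipschitz continuity of $DF$ to make the center derivatives converge.

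\textbf{A smaller point.} $\mathrm{Lip}(h^{\gamma})$ is taken over the whole, unbounded center leaf. The parallel legs issued from $z\in W^c(x)$ are not controlled by the length of $\gamma$: the slope bound of Lemma \ref{invariant manifolds are Lip graphs} only gives them length $\lesssim L+\kappa|z-x|$. Before choosing $n$, you need that the leaves of $F$ stay at uniformly bounded distance from the affine ones. This follows from boundedness of $F-A$ together with the uniform expansion on $E^u$, and it guarantees every parallel leg has length at most $L+C$.
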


\begin{lemma}\label{Tn log}
    There exists $C >0$ that only depends on the $\mathcal{C}^1$ size of the neighborhood of $A$ such that, for any $x\in E^c$ and $n\in\mathbb{Z}^N$,
    \begin{equation*}
        |T_n(x)-(x+n^c)| \leq C\log^+(|n|)+C.
    \end{equation*}
\end{lemma}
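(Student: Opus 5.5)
We need to bound $|T_n(x)-(x+n^c)|$ by $C\log^+|n|+C$. Recall $\hat T_n = h^{su}_{n,0}\circ(\mathrm{id}+n)$ on $W^c(0)$. By $\mathbb{Z}^N$-periodicity of $G$, $W^c(0)+n=W^c(n)$, so $\sigma_0^c(x)+n=\sigma^c_n(x)$ is the point of $W^c(n)$ over $x$. We then apply $h^u_{n,z}$ followed by $h^s_{z,0}$, where $z=W^s(n)\cap W^{cu}(0)$. The plan is to track the $E^c$-coordinate of the point through each leg and show that each leg moves it by $O(\log|n|)$. The total displacement $n$ has $E^c$-component $n^c$, which should be exactly absorbed by the translation, while the stable and unstable legs absorb $n^s,n^u$.

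\textbf{Step 1: a single holonomy leg of length $\ell$ shifts the center coordinate by at most $C\log^+\ell+C$.} Take $p\in W^c(a)$ and its image $q=W^u(p)\cap W^{cs}(b)$, with $b\in W^{cu}(a)$. By Lemma \ref{invariant manifolds are Lip graphs}, the unstable segment from $p$ to $q$ is a $\kappa$-Lipschitz graph over $E^u$. Its $E^c$-drift is therefore at most $\kappa$ times its length, which is linear, not logarithmic; this is the main obstacle.

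To get the logarithm, iterate backwards with $F^{-m}$. This contracts the unstable segment exponentially, by roughly $\lambda^m$, while $A$ acts isometrically on $E^c$ and $F$ only distorts center distances polynomially. Choose $m\approx \log\ell/\log\lambda^{-1}$ so that $F^{-m}$ of the segment has length $O(1)$. There the center drift is $O(1)$ by Lemma \ref{invariant manifolds are Lip graphs}.

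Then push forward by $F^m$. Since $F=A+G$ with $G$ bounded ($|G|\le c_0$), the center component satisfies $(F y)^c=A^c y^c+G^c(y)$. Hence the difference of center coordinates of two points changes by at most $2c_0$ per iterate, because $A|_{E^c}$ is an isometry. After $m$ iterates the center discrepancy is therefore $O(1)+2c_0 m = O(\log\ell)+O(1)$. Stable legs are handled symmetrically using forward iterates.

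This "isometric center plus bounded additive error per step" is the key mechanism, and it works precisely because $E^c$ is $2$-dimensional with eigenvalues of modulus one. One must make sure $A|_{E^c}$ is a genuine isometry in the chosen norm; this holds since the center eigenvalues are non-real and simple, by ergodicity.

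\textbf{Step 2: bound the leg lengths.} By Lemma \ref{unique intersection} and the graph property, $z=W^s(n)\cap W^{cu}(0)$ satisfies $|z-n|\le C|n|$ and $|z|\le C|n|$.

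Next consider the center points. The point $\sigma^c_n(x)$ lies within $(1+\kappa)|x|$ of $n$. However, $x$ itself is not bounded in terms of $n$, so the leg lengths from $\sigma_n^c(x)$ are not obviously $O(|n|)$.

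To handle this, use the linear structure: the lengths of the stable and unstable legs starting at $p\in W^c(n)$ are comparable to $|(p-n)^{s,u}|$ plus $|n|$. Since $p-n$ is nearly in $E^c$, with $su$-part at most $\kappa|x|$, the fix is to first reduce to bounded $x$. Alternatively, apply Step 1 directly to the quantity $D(p)=(h(p)-p)^c$, whose estimate via iteration depends only on the distance between the leaves $W^c(n)$ and $W^c(0)$ along $E^{su}$, which is $O(|n|)$, rather than on $|x|$.

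Concretely, estimate the center discrepancy between the endpoints $p$ and $h(p)$ by comparing with the base points. The $su$-components along the legs are controlled by $|n^{su}|$ up to Lipschitz errors proportional to the $su$-distance of $W^c$-leaves, which is $O(|n|)$ uniformly in $x$ by the graph structure.

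\textbf{Step 3: combine.} Summing the two legs gives
\[
(\hat T_n\sigma^c_0(x))^c=(\sigma_0^c(x)+n)^c+O(\log^+|n|)+O(1).
\]
Converting back through the chart $\sigma^c_0$, whose $E^c$-projection is the identity on $E^c$, yields $T_n(x)=x+n^c+O(\log^+|n|+1)$.
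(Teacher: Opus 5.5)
Your proof is correct and follows the standard argument behind this lemma, which the paper imports from Rodriguez-Hertz without giving a proof: you bound the holonomy legs by $O(|n|+1)$ uniformly in $x$, then obtain logarithmic center drift along a leg of length $\ell$ by iterating about $\log\ell$ times, using that $A|_{E^c}$ is an isometry and that $F^{\pm1}-A^{\pm1}$ is bounded. The one thing to tidy is Step~2: drop the claim that leg lengths are comparable to $|(p-n)^{su}|$ and the unjustified ``reduce to bounded $x$''; instead observe that $W^c(n)=W^c(0)+n$ and the $\kappa$-Lipschitz property of $g^c(0,\cdot)$ give $|p-\sigma^c_0(x+n^c)|\le|n^{su}|+\kappa|n^c|$, so transversality of $W^s(p)$ with the $\kappa$-Lipschitz graph $W^{cu}(0)\supset W^c(0)$, and then of $W^u$ with $W^{cs}(0)$, bounds both legs by a constant times $|n|$ independently of $x$ (note also that, since $z\in W^s(n)$, the first leg is the stable one, not the unstable one).
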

Here $\log^+(x)=\max\{0,\log(x)\}$.

\begin{lemma}\label{Tn close to linear}
If $f$ is $\mathcal{C}^r$, then $T_n$ is $\mathcal{C}^r$ for all $n\in\mathbb{Z}^N$, moreover, writing
$T_n(z) = z+ n^c + \phi_n(z)$
then for any $\varepsilon>0$ and $R>0$ there is a neighborhood of $A$ in the $\mathcal{C}^r$ topology such that if $f$
is in this neighborhood, then $\|\phi _n\|_{\mathcal{C}^r} <\varepsilon$ whenever $|n| \leq R$.
\end{lemma}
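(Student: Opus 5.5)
We want Lemma \ref{Tn close to linear}: $T_n$ is $\mathcal{C}^r$, and $\phi_n(z)=T_n(z)-z-n^c$ is $\mathcal{C}^r$-small uniformly for $|n|\le R$ once $f$ is $\mathcal{C}^r$-close to $A$. We first record what happens for $f=A$. There every invariant manifold is an affine subspace. Hence $h^{su}_{n,0}$ is the linear projection of $W^c(n)=n+E^c$ onto $E^c$ along $E^{su}$, so $T_n(z)=z+n^c$. The statement is therefore continuity of the map $f\mapsto T_n$ at $f=A$, in the $\mathcal{C}^r$ topology, uniformly over the finitely many $n$ with $|n|\le R$.

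\textbf{Step 1: regularity of the foliations along the relevant leaves.} By stable manifold theory for partially hyperbolic maps (Hirsch--Pugh--Shub), each leaf $W^*_F(x)$, $*=s,u,c,cs,cu$, is $\mathcal{C}^r$. For $f$ close to $A$ in $\mathcal{C}^r$, the graph maps $g^*(x,\cdot)$ are $\mathcal{C}^r$-close to $0$ on compact sets, uniformly in $x$; this refines Lemma \ref{invariant manifolds are Lip graphs}. Holonomies between transversals, however, are in general only Hölder. The key point is to restrict to the holonomies that actually occur. These are $h^s$ inside a single $cs$-leaf, and $h^u$ inside a single $cu$-leaf, between center leaves. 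By dynamical coherence, the $W^c$-leaves inside a $W^{cs}$-leaf are the images of one another under the stable holonomy. For $r$ large relative to the center bunching (which holds close to $A$, since $A|_{E^c}$ is an isometry), the stable foliation restricted to a $cs$-leaf is $\mathcal{C}^r$. This is the Pugh--Shub--Wilkinson / Burns--Wilkinson regularity of $s$-holonomies between center leaves under strong bunching. We invoke it with constants uniform for $f$ near $A$.

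\textbf{Step 2: write $T_n$ as a composition.} Put $z=W^s(n)\cap W^{cu}(0)$. Then
\[
T_n=(\sigma_0^c)^{-1}\circ h^u_{z,0}\circ h^s_{n,z}\circ(\mathrm{id}+n)\circ\sigma_0^c .
\]
Since $\pi\circ(\mathrm{id}+n)=\pi$ and $F$ commutes with integer translations, $W^c(n)=W^c(0)+n$. Each factor is $\mathcal{C}^r$ by Step 1. Each factor also depends continuously in $\mathcal{C}^r_{\mathrm{loc}}$ on $f$, and at $f=A$ equals the corresponding affine map. This handles compact pieces of $E^c$.

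\textbf{Step 3: uniformity on all of $E^c$, the main obstacle.} $E^c$ is noncompact, so we need $\|\phi_n\|_{\mathcal{C}^r}$ small globally, and the $\mathcal{C}^r$ norm of a holonomy on a compact set gives no direct control at infinity. We use $\mathbb{Z}^N$-periodicity. The configuration (point $w\in W^c(0)$, the $s$-leg from $w+n$, the $u$-leg back) translated by an integer vector $m$ is the same configuration based at $w+m$. Moreover the relevant legs have uniformly bounded length: by Lemma \ref{invariant manifolds are Lip graphs} and Lemma \ref{Tn log}, the $su$-displacement from $w+n$ to its image is at most $C(1+\kappa)|n^{su}|\le C'R$. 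Hence $\phi_n$ near $w$ is controlled by holonomies along $su$-paths of bounded length based at points of a fundamental domain, which is a compact family. The delicate part is that $W^c(0)$ is not translation invariant, so comparing $\phi_n$ at $w$ and at $w+m$ requires passing through $W^c(m)$ and an additional $su$-holonomy. To handle this, estimate $\mathcal{C}^r$ jets of local $s$- and $u$-holonomies between center plaques of uniformly bounded size. These are uniformly close to the linear ones over a compact parameter set, namely base points mod $\mathbb{Z}^N$ and leg lengths $\le C'R$. Then express $T_n$ locally as such a composition. This yields the bound on the $\mathcal{C}^r$ norm of $\phi_n$ at every point, with constants depending only on $R$ and the neighbourhood.
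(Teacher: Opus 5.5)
The thesis does not prove this lemma itself (it quotes it from Rodriguez Hertz), and your sketch is the standard argument behind that citation and is correct: for $f$ close to $A$, the nearly isometric center makes $f$ $r$-bunched, so the $s$- and $u$-holonomies between center leaves are $\mathcal{C}^r$ (Pugh--Shub--Wilkinson) and depend continuously on $f$, while uniformity on all of $E^c$ comes from the $\mathbb{Z}^N$-invariance of the lifted foliations, the compactness of $\mathbb{T}^N$, and the uniform bound on leg lengths when $|n|\le R$. Two small corrections: the $\mathcal{C}^r$-neighbourhood of $A$ must be chosen depending on $r$ so that the $r$-bunching hypothesis holds (rather than taking ``$r$ large relative to the bunching''), and $F$ does not commute with integer translations (it satisfies $F(x+n)=F(x)+An$), so what actually gives $W^c(n)=W^c(0)+n$ is that the foliations of $F$ are lifts of those of $f$.
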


\subsection{Saturation}\label{Su-saturation}

Here we prove futher results about the invariant foliations, in particular how they jointly saturate the environment space.\newline

We set $E^*(x)=x+E^*$. Moreover, for $S\subset\mathbb{R}^n$, we define $E^*(S)=\bigcup_{x\in S} E^*(x)$.\newline
We also define define $W_r^{*}(x)=\sigma_x^*(B_{E^*}(0,r))$ and, similarly, we set $W^*(S)=\bigcup_{x\in S} W^*(x)$ and $W_r^*(S)=\bigcup_{x\in S} W_r^*(x)$.\newline

We define $\Phi_x(V):\mathbb{R}^N\rightarrow \mathbb{R}^N$ by $\Phi_x(v) = \sigma^u(v^u,\sigma^s(v^s,\sigma^c(v^c,x)))$.

\begin{lemma}\label{Phi close to identity}
    There exist $\kappa=\kappa(f)$ with $\kappa\rightarrow0$ when $f\rightarrow A$ in $\mathcal{C}^1$ such that
    $$|\Phi_x(v)-(x+v)|\leq\kappa|v|.$$
    Moreover, whenever $f$ is sufficiently $\mathcal{C}^1$-close to $A$, $\Phi_x$ is proper.
\end{lemma}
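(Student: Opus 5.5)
Note that $\Phi_x(v)$ is obtained from $x$ by three successive moves: along the center leaf by $v^c$, then the stable leaf by $v^s$, then the unstable leaf by $v^u$. The plan is to control the error of each move with Lemma \ref{invariant manifolds are Lip graphs}, add the errors with the triangle inequality, and then get properness from the linear lower bound this yields.

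Put $y_1=\sigma^c(x,v^c)$, $y_2=\sigma^s(y_1,v^s)$ and $y_3=\sigma^u(y_2,v^u)=\Phi_x(v)$. By the graph description,
\[
y_1=x+v^c+g^c(x,v^c),\qquad y_2=y_1+v^s+g^s(y_1,v^s),\qquad y_3=y_2+v^u+g^u(y_2,v^u).
\]
Summing these gives
\[
\Phi_x(v)-(x+v)=g^c(x,v^c)+g^s(y_1,v^s)+g^u(y_2,v^u).
\]
Lemma \ref{invariant manifolds are Lip graphs} gives $|g^*(y,w)|\le\kappa|w|$ uniformly in the base point $y$. Hence the norm of the right-hand side is at most $\kappa(|v^c|+|v^s|+|v^u|)=\kappa|v|$, using the chosen norm $|v|=|v^u|+|v^c|+|v^s|$. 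The only point to check is that the constant in Lemma \ref{invariant manifolds are Lip graphs} is indeed uniform in the base point. This holds because $F=A+G$ with $G$ being $\mathbb{Z}^N$-periodic, so every estimate descends from the compact torus. The $\kappa$ of the present statement is the same one, possibly after a harmless relabelling, so it still tends to $0$ as $f\to A$ in $\mathcal{C}^1$.

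For properness, the estimate gives
\[
|\Phi_x(v)-x|\ge(1-\kappa)|v|.
\]
Take $f$ close enough to $A$ that $\kappa<1/2$. Then the preimage under $\Phi_x$ of a bounded set $B$ lies in the ball of radius $2(\sup_{b\in B}|b-x|)$. Also $\Phi_x$ is continuous, as a composition of the continuous parametrizations $\sigma^*$ of continuous foliations. Therefore preimages of compact sets are closed and bounded, hence compact, so $\Phi_x$ is proper.

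There is no real obstacle here. The only mildly delicate point is ensuring that the bound of Lemma \ref{invariant manifolds are Lip graphs} holds uniformly over base points on all of $\mathbb{R}^N$ rather than locally. This follows from the periodicity of $G$, and equivalently from the uniform global graph property established in \cite{Hertz2005}.
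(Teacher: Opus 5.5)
Your proof is correct and follows the paper's argument: the same three-step telescoping bound via Lemma \ref{invariant manifolds are Lip graphs} with the norm $|v|=|v^u|+|v^c|+|v^s|$, then properness from $|\Phi_x(v)-x|\geq(1-\kappa)|v|$. That inequality is the correct form of the one the paper writes as $|\Phi_x(v)|\geq(1-\kappa)|v-x|$, and your remarks on the continuity of $\Phi_x$ and the uniformity of $\kappa$ in the base point fill in details the paper leaves implicit.
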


\begin{proof}
    We have, thanks to lemma \ref{invariant manifolds are Lip graphs}, that
    \begin{align*}
        |\Phi_x(v)-(x+v)|=&|\sigma^u(v^u,\sigma^s(v^s,\sigma^c(v^c,x)))-(x+v)|\\
        \leq&|\sigma^u(v^u,\sigma^s(v^s,\sigma^c(v^c,x))) - (\sigma^s(v^s,\sigma^c(v^c,x)) + v^u)|\\
        &\quad+|\sigma^s(v^s,\sigma^c(v^c,x))-(\sigma^c(v^c,x)+v^s)| +|\sigma^c(v^c,x)-(x+v^c)|\\
        \leq& \kappa|v^u| + \kappa|v^s| + \kappa|v^c|\\
        =& \kappa|v|,
    \end{align*}
    where $\kappa$ is, of course, the same as in Lemma lemma \ref{invariant manifolds are Lip graphs}.\newline
    Finally, we have that $|\Phi_x(v)|\geq (1-\kappa)|v-x|$, hence $\Phi_x$ is proper, whenever $\kappa<1$.
\end{proof}

\begin{lemma}\label{proper 1}
    If $f$ is sufficiently close to $A$ in the $\mathcal{C}^1$ topology we have that $$W_r^u(W_r^s(W_r^c(x)))\supset B(x,r/2).$$ 
    Moreover $\Phi_x$ is surjective.
\end{lemma}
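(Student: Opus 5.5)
Both claims follow from Lemma \ref{Phi close to identity} and a degree (Brouwer fixed point) argument. Observe first that $W_r^u(W_r^s(W_r^c(x)))$ is exactly the image under $\Phi_x$ of the "box" $Q_r=\{v: |v^c|<r,\,|v^s|<r,\,|v^u|<r\}$. Indeed, $\Phi_x(v)=\sigma^u(v^u,\sigma^s(v^s,\sigma^c(v^c,x)))$ first moves inside $W^c_r(x)$ by $v^c$, then along the stable leaf by $v^s$, then along the unstable leaf by $v^u$. So it suffices to show $\Phi_x(Q_r)\supset B(x,r/2)$. Since the norm is $|v|=|v^u|+|v^c|+|v^s|$, the box contains the ball $B_{E}(0,r)$ of radius $r$ centred at the origin.

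Fix a target $y\in B(x,r/2)$ and look for $v\in \overline{B(0,r)}$ with $\Phi_x(v)=y$. Rewrite this as a fixed-point equation:
\[
v=\Psi(v):=y-x-\bigl(\Phi_x(v)-(x+v)\bigr).
\]
The map $\Psi$ is continuous, since the foliations depend continuously on their base points, so $\sigma^*$ is continuous. By Lemma \ref{Phi close to identity},
\[
|\Psi(v)|\le |y-x|+\kappa|v|< r/2+\kappa r\le r
\]
whenever $|v|\le r$ and $\kappa\le 1/2$. Hence $\Psi$ maps the closed ball $\overline{B(0,r)}$ into itself. This ball is a compact convex set, being a ball for a norm on $\mathbb{R}^N$, so Brouwer's theorem gives a fixed point $v$. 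Then $\Phi_x(v)=y$, and $|v^*|\le|v|\le r$ for each $*$. Taking $\kappa<1/2$ strictly gives $|v|<r$ and hence membership in the open $r$-plaques. This proves the inclusion.

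Surjectivity is immediate: for any $y$, choose $r>2|y-x|$ and apply the inclusion. Alternatively, one can use properness. $\Phi_x$ is proper by Lemma \ref{Phi close to identity}, and it is homotopic to $v\mapsto x+v$ through the proper maps $x+v+t(\Phi_x(v)-x-v)$. Each of these satisfies $|\cdot - x|\ge(1-\kappa)|v|$, so it is proper; hence $\Phi_x$ has degree one and is onto.

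The only point requiring care is the continuity of $\Phi_x$, which is needed for Brouwer. This follows from the continuous dependence of the leaves $W^*(z)$ on $z$ and the graph description $\sigma^*(z,v)=z+v+g^*(z,v)$ with $g^*$ continuous. There is also the bookkeeping of which norm the plaques $W^*_r$ use, but this is harmless with the chosen sum norm.
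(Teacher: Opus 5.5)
Your argument is correct and is essentially the paper's. Both proofs reduce the inclusion to $\Phi_x(B(0,r))\supset B(x,r/2)$ using only the continuity of $\Phi_x$ and the estimate $|\Phi_x(v)-(x+v)|\le\kappa|v|$ with $\kappa<1/2$, and both obtain surjectivity by taking $r$ large. The only difference is the topological tool: you apply Brouwer's fixed-point theorem to $\Psi(v)=v-(\Phi_x(v)-y)$ on the closed $r$-ball, whereas the paper uses homotopy invariance of the degree along the straight-line homotopy from $\Phi_x$ to $v\mapsto x+v$. The two are interchangeable here, and your identification of $W_r^u(W_r^s(W_r^c(x)))$ with the image under $\Phi_x$ of the box $\{v : |v^c|,|v^s|,|v^u|<r\}$ spells out a step the paper leaves implicit.
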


\begin{proof}
    Suppose that $f$ is sufficiently $\mathcal{C}^1$-close to $A$ to ensure that $\kappa$ (from lemma \ref{invariant manifolds are Lip graphs} and \ref{proper 1}) is less than $1/2$. We will prove that $\Phi_x(B(0,r))\supset B(x,r/2)$ and both conclusions will follow directly from this claim.\newline

    We define $H:\overline{B(0,r)}\times[0,1]\rightarrow\mathbb{R}^N$ by $H(v,t) = (1-t)\Phi_x + t(x+v)$. This is the linear interpolation homotopy between $H_0=\Phi_x$ and $H_1=Id+x$.\newline
    For $v\in\partial B(0,r)$, lemma \ref{proper 1} guarantees that $\Phi(x)\in B(x+v,r/2)$, and, by convexity, it follows that $H_t(v)\in B(x+v,r/2)$ for every $t$. Therefore, $H_t(\partial B(0,r))\cap B(x,r/2)=\emptyset$ for every $t$ and, consequently, for every $y\in B(x,r/2)$
    \begin{equation*}
        \deg(\Phi_x,B(0,r),y) = \deg(Id+x,B(0,r),y) = 1.
    \end{equation*}
    This implies that $y\in \Phi_x(B(0,r))$, which gives the conclusion.

\end{proof}

\begin{lemma}\label{homomorphism}
    If $f$ is sufficiently close to $A$ in the $\mathcal{C}^1$ topology, $\Phi_x$ is an homeomorphism and
    \begin{equation*}
        |\Phi^{-1}_x(v)-(v-x)|\leq\frac{\kappa}{1-\kappa}|v-x|.
    \end{equation*}
\end{lemma}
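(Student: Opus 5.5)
We need to show that $\Phi_x$ is injective; surjectivity is already given by Lemma \ref{proper 1}, and continuity is immediate from the continuity of the parametrizations $\sigma^*$. Once we have a continuous bijection, properness (Lemma \ref{Phi close to identity}) upgrades it to a homeomorphism. A proper continuous map into $\mathbb{R}^N$ is closed, so its inverse is continuous. Alternatively, one can invoke invariance of domain.

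For injectivity we use the uniqueness of intersections (Lemma \ref{unique intersection}) and peel off the three layers one at a time. Suppose $\Phi_x(v)=\Phi_x(w)=y$.
\begin{enumerate}
\item Set $p_v=\sigma^c(v^c,x)$ and $p_w=\sigma^c(w^c,x)$. Both lie in $W^c(x)\subset W^{cs}(x)$.
\item Next, $q_v=\sigma^s(v^s,p_v)$ lies in $W^s(p_v)\subset W^{cs}(x)$, and $y=\sigma^u(v^u,q_v)\in W^u(q_v)$. Hence $q_v$ is a point of $W^u(y)\cap W^{cs}(x)$, using that leaves of a foliation are equivalence classes and that $\mathcal{F}^s$ subfoliates $\mathcal{F}^{cs}$ by dynamical coherence. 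The same holds for $q_w$.
\item By Lemma \ref{unique intersection} this intersection is a single point, so $q_v=q_w$. Since $\sigma^u_{q}$ is injective, being a graph parametrization (Lemma \ref{invariant manifolds are Lip graphs}), we get $v^u=w^u$.
\item Now $q:=q_v$ lies in $W^s(p_v)\cap W^s(p_w)$, so $p_w\in W^s(p_v)$. Both $p_v$ and $p_w$ lie in $W^c(x)\subset W^{cu}(p_v)$. Thus $p_w\in W^s(p_v)\cap W^{cu}(p_v)=\{p_v\}$, again by Lemma \ref{unique intersection}.
\item Injectivity of $\sigma^s_{p}$ and $\sigma^c_x$ then gives $v^s=w^s$ and $v^c=w^c$, so $v=w$.
\end{enumerate}

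For the quantitative estimate, write $u=\Phi_x^{-1}(y)$, so that $\Phi_x(u)=y$. Lemma \ref{Phi close to identity} gives $|y-(x+u)|\le\kappa|u|$. Hence $|u|\le |y-x|+\kappa|u|$, which yields $|u|\le |y-x|/(1-\kappa)$. Then
\[
|\Phi_x^{-1}(y)-(y-x)|=|u-(y-x)|\le\kappa|u|\le\frac{\kappa}{1-\kappa}|y-x|,
\]
which is the claimed bound with $y$ in place of $v$.

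The only delicate point is step 2: we need to know that the leaves of $\mathcal{F}^s$ and $\mathcal{F}^c$ through points of $W^{cs}(x)$ stay inside $W^{cs}(x)$. This follows from dynamical coherence for $f$ close to $A$, which the paper already assumes.
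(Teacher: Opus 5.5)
Your proof is correct and follows the paper's argument. Injectivity comes from pinning down the intermediate points through the unique intersections $W^u\cap W^{cs}$ and $W^s\cap W^{cu}$ (Lemma \ref{unique intersection}). Surjectivity comes from Lemma \ref{proper 1}, the estimate is the same rearrangement of Lemma \ref{Phi close to identity}, and continuity of the inverse follows from properness of a continuous bijection. Your remark that dynamical coherence is what places $W^s(p_v)$ inside $W^{cs}(x)$ makes explicit a step the paper uses silently.
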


\begin{proof}
    We require that $f$ is in a $\mathcal{C}^1$ neighbourhood of $A$ where $\kappa<1$.\newline

    We start by proving that $\Phi_x$ is injective.\newline
    Suppose that $\sigma^u(v_i^u,\sigma^s(v_i^s,\sigma^c(v_i^c,x)))=z$ for some values of $v_i^*\in E^*$, for $i=1,2$ and $*=s,u,c$. Since 
    $\sigma^s(v_i^s,\sigma^c(v_i^c,x))\in W^u(z)\cap W^{cs}(x)$, by lemma \ref{unique intersection}, it must be the same point for $i=1,2$. We denote this point by $w$. Now, $\sigma^c(v_i^c,x)\in W^s(w)\cap W^{cu}x$, hence it must be again a uniquely determined point $y$. Finally, the injectivity of $\sigma_x^c$, $\sigma_y^s$, $\sigma_z^u$ inplies that $v_1^*=v_2^*$ for $*=c,s,u$, proving the claim.\newline

    This and Lemma \ref{proper 1} prove that $\Phi_x$ is bijective and therefore $\Phi_x^{-1}$ is well-defined.\newline
    We now concern ourselves with the final estimate, leaving the continuity of $\Phi_x^{-1}$ to the end of the proof. From lemma \ref{Phi close to identity}, as long as $\kappa<1$, we deduce that $|\Phi_x(v)-x|>(1-k)|v|$. Chaning this with lemma \ref{Phi close to identity} itself, we get the following.

    \begin{align*}
        |\Phi_x(v) - (v+x)| < \kappa |v| &< \frac {\kappa}{1-\kappa} |\Phi_x(v) - x|,\\
        |\Phi_x^{-1}(w) - (w-x)| &< \frac{\kappa}{1-\kappa}|w-x|,
    \end{align*}
    where the second line follows from the first one with the substitution $v=\Phi_x^{-1}(w)$.\newline

    Finally, recalling that $\Phi_x:\mathbb{R}^N\rightarrow\mathbb{R}^N$ is bijective, continuos and proper, the continuity of $\Phi_x^{-1}$ follows from the next general topology fact.
\end{proof}

\begin{lemma}
Let $X$, $Y$ be Hausdorff topological spaces, with $Y$ admitting a locally finite compact cover. Let $f:X\rightarrow Y$ a continuous proper bijective function.\newline
Then $f$ is an homeomorphism.
\end{lemma}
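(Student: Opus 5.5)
It is enough to show that $f^{-1}:Y\rightarrow X$ is continuous. Equivalently, $f$ should be a closed map: for every closed $C\subset X$, the image $f(C)$ is closed in $Y$. Indeed, $(f^{-1})^{-1}(C)=f(C)$, so closedness of $f$ is exactly continuity of $f^{-1}$. The plan is to show that $f(C)$ meets each member of a suitable cover in a closed set, and then use local finiteness to glue these pieces together.

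Let $\{K_i\}_{i\in I}$ be the locally finite cover of $Y$ by compact sets. Since $f$ is proper, each $f^{-1}(K_i)$ is compact in $X$. Then $C\cap f^{-1}(K_i)$ is a closed subset of a compact set, hence compact. Its image under the continuous map $f$ is
\begin{equation*}
    f\big(C\cap f^{-1}(K_i)\big)=f(C)\cap K_i,
\end{equation*}
which is therefore compact in $Y$, and so closed because $Y$ is Hausdorff. Since the $K_i$ cover $Y$, this gives
\begin{equation*}
    f(C)=\bigcup_{i\in I} \big(f(C)\cap K_i\big).
\end{equation*}

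The key step is the gluing. A locally finite union of closed sets is closed. To see this, take $y\notin f(C)$ and choose an open neighbourhood $W$ of $y$ that meets only finitely many $K_i$, say $K_{i_1},\dots,K_{i_m}$. Then $W\setminus\bigcup_{j=1}^m \big(f(C)\cap K_{i_j}\big)$ is open, since we remove finitely many closed sets. It contains $y$, and it is disjoint from $f(C)$, because every point of $f(C)\cap W$ lies in some $f(C)\cap K_{i_j}$. Hence $f(C)$ is closed, and $f$ is a homeomorphism.

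The only delicate point is the reading of ``locally finite compact cover''. We use it in the sense that every point of $Y$ has a neighbourhood meeting only finitely many members of the cover. In the application $Y=\mathbb{R}^N$, and we may take the cover by unit cubes with integer corners, so the hypothesis clearly holds. Injectivity of $f$ is used only so that $f^{-1}$ exists as a map. Continuity of $f$ is used to get compactness of the images $f(C)\cap K_i$.
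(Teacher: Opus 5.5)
Your argument is correct and is essentially the paper's: the paper observes that $f|_{f^{-1}(K_i)}$ is a closed map (compact domain, Hausdorff target), deduces that $f^{-1}|_{K_i}$ is continuous, and then glues using the pasting lemma for a locally finite closed cover. Your identity $f(C)\cap K_i=f(C\cap f^{-1}(K_i))$ together with your explicit proof that a locally finite union of closed sets is closed carries out exactly these two steps, with the gluing made self-contained.
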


\begin{proof}
    Let $(K_i)_i$ the locally finite compact cover of $Y$. Since $f$ is proper, $f^{-1}(K_i)$ is compact and therefore $f|_{f^{-1}(K_i)}$ is closed. This, together with $f^{-1}(K_i)$ being closed, implies that $f^{-1}|_{K_i}$ is continuous. We can finally conclude that $f^{-1}$ is continuous, since it is continuous when restricted to all elements of a locally finite closed cover.\newline
\end{proof}

Defining $\Psi_x(v)=\sigma(v^u,\sigma(v^s,x))+v^c$, we have that Lemma \ref{Phi close to identity} holds for $\Psi_x$ and, arguing as in lemma \ref{proper 1}, we get the following.

\begin{lemma}\label{proper 2}
    If $f$ is sufficiently close to $A$ in the $\mathcal{C}^1$ topology, then $$E^c(W^u_r(W^s_r(x)))\supset B(x,r/2)$$
\end{lemma}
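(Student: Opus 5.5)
The plan is to follow the proof of Lemma \ref{proper 1}, replacing $\Phi_x$ with $\Psi_x(v)=\sigma^u(v^u,\sigma^s(v^s,x))+v^c$. First I will establish the analogue of Lemma \ref{Phi close to identity} for $\Psi_x$. Write
\begin{equation*}
|\Psi_x(v)-(x+v)| \leq |\sigma^u(v^u,\sigma^s(v^s,x))-(\sigma^s(v^s,x)+v^u)| + |\sigma^s(v^s,x)-(x+v^s)|.
\end{equation*}
By Lemma \ref{invariant manifolds are Lip graphs}, each term is bounded by $\kappa|v^u|$ and $\kappa|v^s|$ respectively, so $|\Psi_x(v)-(x+v)|\leq\kappa|v|$. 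The center coordinate is added linearly, so it contributes no error.

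Second, I will take $f$ so $\mathcal{C}^1$-close to $A$ that $\kappa<1/2$, and consider the straight-line homotopy
\begin{equation*}
H_t(v)=(1-t)\Psi_x(v)+t(x+v), \quad v\in\overline{B(0,r)}.
\end{equation*}
For $|v|=r$ we have $|\Psi_x(v)-(x+v)|\leq\kappa r<r/2$. Since the ball $B(x+v,r/2)$ is convex and contains both endpoints, $H_t(v)\in B(x+v,r/2)$ for every $t$. Moreover $|(x+v)-x|=r$, so $H_t(v)\notin \overline{B(x,r/2)}$. Hence no boundary point is ever mapped into $B(x,r/2)$. Homotopy invariance of the Brouwer degree then gives, for every $y\in B(x,r/2)$,
\begin{equation*}
\deg(\Psi_x,B(0,r),y)=\deg(\mathrm{Id}+x,B(0,r),y)=1,
\end{equation*}
and therefore $y=\Psi_x(v)$ for some $v\in B(0,r)$.

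Finally, I will read off the set inclusion. From $|v|<r$ and the norm $|v|=|v^u|+|v^c|+|v^s|$, each of $|v^u|,|v^s|$ is less than $r$. So $\sigma^s(v^s,x)\in W^s_r(x)$ and $\sigma^u(v^u,\sigma^s(v^s,x))\in W^u_r(W^s_r(x))$, and adding $v^c$ places $y$ in $E^c(W^u_r(W^s_r(x)))$.

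I expect the only delicate point to be verifying that the boundary image avoids $B(x,r/2)$ along the whole homotopy. That is exactly where $\kappa<1/2$ is needed. The continuity of $\Psi_x$, which the degree argument requires, follows from the continuity of the parametrizations $\sigma^s,\sigma^u$.
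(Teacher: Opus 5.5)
Your proposal is correct and matches the paper's argument: the paper defines $\Psi_x(v)=\sigma(v^u,\sigma(v^s,x))+v^c$, observes that the bound $|\Psi_x(v)-(x+v)|\leq\kappa|v|$ of Lemma \ref{Phi close to identity} carries over, and then repeats the linear-homotopy/degree argument of Lemma \ref{proper 1}. Your triangle-inequality check that $H_t(\partial B(0,r))$ stays outside $\overline{B(x,r/2)}$, and your final use of the sum norm to get $|v^s|,|v^u|<r$, make explicit two steps that the paper leaves implicit.
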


\section{Algebraic Preliminaries}\label{linear algebra}

In this section we develop all needed algebraic tools. In particular we prove the fundamental Lemma \ref{Pseudo Anosov subspace}, about the action of $A$ on $\mathbb{Z}^N$. Secondly, we classify the possible dimensions of $E^c$ in low dimensions.\newline
Finally we recall Diophantine estimates for vectors in $E^c$, that will be crucial in the conclusion of the proof of stable ergodicity, that exploits some KAM theorems.

\subsection{Characteristic Polynomial Properties}

In this section $V$ will denote a real finite dimesional vector space and $L:V\rightarrow V$ a linear map. Furthermore, we will assume the existence of a maximal rank discrete subgroup $S$ of $V$, which is also $L$-invariant.\newline

\begin{remark}\label{change of basis}
    There always exists a linear isomorphism between $(V,S)$ and $(\mathbb{R}^d,\mathbb{Z}^d)$, where $d=\dim(V)=\text{rank}(S)$. This isomorphism is always bi-Lipschiz, for any two norms on $V$ and $\mathbb{R}^d$.
\end{remark}

In view of this remark, in a suitable basis, $L$ can be represented by a matrix in $SL(d,\mathbb{Z})$. In particular, its characteristic polynomial has always integer coefficients.\newline

We say that a vector $v\in V$ is cyclic for $L$ if $\text{span}_{\mathbb{R}}\{v, Lv, \dots,L^{d-1} v\} = V$.

\begin{lemma}
    The characteristic polynomial $p_L$ of $L$ is irreducible in $\mathbb{Z}[x]$ if and only if every non-zero element of $S$ is cyclic for $L$.
\end{lemma}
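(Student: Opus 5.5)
The plan is to pass through the rational structure. Fix a basis identifying $(V,S)$ with $(\mathbb{R}^d,\mathbb{Z}^d)$ as in Remark \ref{change of basis}, so that $L$ is an integer matrix and $p_L\in\mathbb{Z}[x]$ is monic of degree $d$. For $v\in S\setminus\{0\}$, define $W_v=\text{span}_{\mathbb{R}}\{v,Lv,\dots,L^{d-1}v\}$. By Cayley--Hamilton, $W_v$ is the smallest $L$-invariant subspace containing $v$. Since $v$ and $L$ are rational, $W_v$ is spanned by rational vectors. Hence $W_v\cap S$ is a lattice in $W_v$, and $L$ restricted to $W_v$ is given by a rational matrix. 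It is in fact integral, but rationality is all we need.

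For the implication ``$p_L$ irreducible $\Rightarrow$ every nonzero element of $S$ is cyclic'', suppose $v\in S\setminus\{0\}$ is not cyclic. Then $W_v$ is a proper nonzero $L$-invariant subspace defined over $\mathbb{Q}$. Choose a rational basis of $W_v$ and extend it to a rational basis of $\mathbb{Q}^d$. In this basis $L$ is block upper triangular with rational blocks, so $p_L=p_{L|_{W_v}}\cdot q$ with both factors monic in $\mathbb{Q}[x]$ and of positive degree. By Gauss's lemma, a monic integer polynomial that factors in $\mathbb{Q}[x]$ into monic factors of positive degree also factors in $\mathbb{Z}[x]$. This contradicts irreducibility.

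For the converse, suppose $p_L=gh$ is a nontrivial factorization in $\mathbb{Z}[x]$. Since $p_L$ is monic, we may take $g,h$ monic of positive degree. Consider $K=\ker g(L)\subset\mathbb{Q}^d$, a rational $L$-invariant subspace. We must show $K\neq 0$. Over $\mathbb{C}$, any root $\lambda$ of $g$ is an eigenvalue of $L$, so $g(L)$ has a nontrivial complex kernel, namely an eigenvector for $\lambda$. Therefore $g(L)$ is singular. Since $g(L)$ is a rational matrix, its rational kernel is also nonzero. Pick $w\in K\cap\mathbb{Q}^d$, $w\neq 0$, and clear denominators to get $v\in S\setminus\{0\}$ with $g(L)v=0$. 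Then the minimal polynomial of $v$ relative to $L$ divides $g$, which has degree less than $d$. Hence $v,Lv,\dots,L^{d-1}v$ are linearly dependent and span a space of dimension at most $\deg g<d$, so $v$ is not cyclic.

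The only delicate point is the rationality bookkeeping: ensuring that $W_v$ and $\ker g(L)$ are rational subspaces, so that the factorization, or the lattice vector, can be taken over $\mathbb{Z}$. This is handled by Gauss's lemma together with the fact that kernels of rational matrices have rational bases. I expect no real obstacle beyond stating these facts cleanly. Note also that $S$ being a full-rank $L$-invariant lattice is exactly what guarantees $p_L\in\mathbb{Z}[x]$ and makes the statement meaningful.
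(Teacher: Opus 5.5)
Your proof is correct. The converse direction is essentially the paper's. The paper also takes a nontrivial factor $q$ of $p_L$ and notes that $\ker q(L)\subset\mathbb{Q}^d$ is a nonzero $L$-invariant rational subspace whose integer vectors cannot be cyclic. You additionally justify why this kernel is nonzero, which the paper leaves implicit.

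For the forward implication you take a genuinely different route. You argue that a non-cyclic $v$ spans a proper rational $L$-invariant subspace $W_v$. Block-triangularizing then gives $p_L=p_{L|_{W_v}}\cdot q$ over $\mathbb{Q}$, and Gauss's lemma lifts this to a factorization in $\mathbb{Z}[x]$.

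The paper instead works inside the field $\mathbb{Q}[x]/(p_L)$. A non-cyclic integer vector $n$ is annihilated by some nonzero polynomial $q$ of degree $<d$. Such a $q$ is invertible modulo $p_L$, so $r(L)q(L)=\mathrm{id}$ by Cayley--Hamilton, which contradicts $q(L)n=0$.

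The paper's argument is shorter and slightly stronger. When $p_L$ is irreducible, $q(L)$ is invertible for every nonzero rational $q$ of degree $<d$, so no nonzero vector whatsoever is killed by such a polynomial.

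Your argument makes the dictionary \emph{factorizations of $p_L$ in $\mathbb{Z}[x]$} $\leftrightarrow$ \emph{proper rational $L$-invariant subspaces} explicit in both directions. It also identifies the factor concretely as the characteristic polynomial of $L$ on the cyclic subspace of $v$.

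Both proofs use Gauss's lemma somewhere. The paper uses it silently when it passes from irreducibility over $\mathbb{Z}$ to irreducibility over $\mathbb{Q}$.
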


\begin{proof}\label{cyclic iff irreducible}
    Thanks to remark \ref{change of basis}, we can assume that $(V,S)=(\mathbb{R}^d,\mathbb{Z}^d)$.\newline
    We prove the first implication.\newline
    For the sake of contradiction, suppose there exists $n\in\mathbb{Z}^d\setminus\{0\}$ and $k<d$ such that we can write $\sum_{i=0}^k m_iA^in=0$, for a non-zero choice of integer coefficients $m_i$.\newline 
    Set $q(x)=\sum_{i=0}^k m_ix^i$. Since $p_A(x)$ is irreducible over $\mathbb{Z}$, it is over $\mathbb{Q}$ and therefore $\mathbb{Q}[x]/(p_A(x))$ is a field. Since $q(x)$ is a non-zero element, there exists $r(x)\in\mathbb{Q}[x]/(p_A)$ such that $r(x)q(x)=1$. Evaluating this expression in $A$, we get $r(A)q(A)=\text{id} + k(A)p_A(A)=\text{id}$ for some $k(x)\in\mathbb{Q}(x)$. Since $p_A(A)$, $q(A)$ has to be an invertible matrix, contraddicting $\sum_{i=0}^k m_iA^in=0$ for $n\neq 0$.\newline
    For the reverse implication, suppose that $P_L$ has a non trivial irreducible factor $q$ in $\mathbb{Z}[x]$. Then, working in $\mathbb{Q}^d$, $\ker{q(L)}$ is a non-trivial $L$-invariant $\mathbb{Q}$-subspace, that contains integer vectors which cannot be cyclic.
\end{proof}

\begin{definition}
    We say that $L$ is Pseudo-Anosov on $(V,L)$ if one of the following three equivalent conditions holds:
    \begin{enumerate}
        \item Every non-zero $s\in S$ is cyclic for $L^k$, for every $k>0$;
        \item The characteristic polynomial $p_{L^k}$ of $L^k$ is irreducible in $\mathbb{Z}[x]$, for every $k>0$;
        \item The characteristic polynomial $p_{L}$ of $L$ is irreducible in $\mathbb{Z}[x]$, and it is not a polynomial in $x^m$ for any $m>1$.
    \end{enumerate}
\end{definition}

The first two conditions are equivalent because of Lemma \ref{cyclic iff irreducible} and are the only two we are going to work with in this paper. The third was included to facilitate comparison with \cite{Hertz2005}, as that is their original definition. The equivalence between that and the second condition is, in fact, proved in \cite{Hertz2005} (Lemma A.9).

\begin{lemma}\label{Pseudo Anosov subspace}
    Let $A\in SL(N,\mathbb{Z})$ have no root of unity as eigenvalue and $\dim(E^c)=2$. Then there exists $k\in\mathbb{N}$, and $X$ real subspace of $\mathbb{R}^N$, such that:
    \begin{itemize}
        \item $E^c\subset X$;
        \item $X$ is $A^k$-invariant;
        \item $\Lambda=\mathbb{Z}^d\cap X$ has full rank in $X$;
        \item $A^k$ is Pseudo-Anosov on $(X,\Lambda)$.
    \end{itemize}
\end{lemma}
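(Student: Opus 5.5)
The plan is to take $X$ to be the rational invariant subspace attached to the irreducible factor of the characteristic polynomial whose roots include the central eigenvalues, after passing to a power that makes everything split as much as it ever will.

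\textbf{Step 1: choose the power $k$.} Factor $p_A$ over $\mathbb{Q}$ into irreducible factors. For each $k$, the polynomial $p_{A^k}$ has roots $\lambda^k$, where $\lambda$ ranges over the roots of $p_A$. Its number of distinct irreducible factors over $\mathbb{Q}$, counted with multiplicity, is bounded by $N$. It is also nondecreasing along divisibility chains $k \mid k'$: a factorization of $p_{A^k}$ induces one of $p_{A^{k'}}$, since $q(x^{k'/k})$ is a rational polynomial vanishing on the relevant roots. So I pick $k$ maximizing this number. Then for every $m$, every irreducible factor $q$ of $p_{A^k}$ has $q_m$, the minimal polynomial of the $m$-th powers of its roots, irreducible of the same degree as $q$. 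Otherwise $p_{A^{km}}$ would have strictly more factors.

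\textbf{Step 2: pick the factor containing the center.} Let $\lambda, \bar\lambda$ be the two central eigenvalues. Since $\dim E^c = 2$, these are the only roots of modulus one, and they occur with multiplicity one. Let $q$ be the irreducible factor of $p_{A^k}$ over $\mathbb{Q}$ having $\lambda^k$ as a root; since $q$ has real coefficients, $\bar\lambda^k$ is also a root. Define $X = \ker q(A^k) \subset \mathbb{R}^N$. Then:
\begin{itemize}
\item $X$ is $A^k$-invariant.
\item $X$ is defined over $\mathbb{Q}$, so $\Lambda = \mathbb{Z}^N \cap X$ has full rank in $X$.
\item $X$ contains the generalized eigenspaces for $\lambda^k$ and $\bar\lambda^k$. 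Because $\lambda$ is not a root of unity, $\lambda^k \neq \bar\lambda^k$ is not forced, but in either case $E^c \subset X$.
\end{itemize}

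\textbf{Step 3: handle multiplicity.} A subtlety is that $q$ may divide $p_{A^k}$ with multiplicity greater than one, so that $p_{A^k|_X} = q^r$, which is not irreducible. I rule this out using the center. Since $\lambda^k$ is a root of multiplicity one of $p_{A^k}$ (as $\dim E^c = 2$), $q$ appears exactly once, and $p_{A^k|_X} = q$. This needs care: distinct eigenvalues of $A$ could share the same $k$-th power. If some $\mu \ne \lambda$ with $|\mu| \neq 1$ had $\mu^k = \lambda^k$, then $|\mu| = 1$, a contradiction. If $\mu = \bar\lambda$ with $\bar\lambda^k = \lambda^k$, the dimension count still gives multiplicity at most two, with the corresponding eigenspace two-dimensional. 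In that case I handle it by noting that $\lambda^k$ is then real and of modulus one, hence $\pm 1$, which contradicts the assumption that $\lambda$ is not a root of unity. So $\lambda^k$ is simple, and $q$ divides $p_{A^k}$ exactly once.

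\textbf{Step 4: the Pseudo-Anosov property.} Condition (2) of the definition holds for $A^k$ on $(X, \Lambda)$: for every $m$, $p_{(A^k|_X)^m} = q_m$ is irreducible by the choice of $k$ in Step 1. The only genuine obstacle is the stabilization argument of Step 1. I will make it precise with the following fact: if $q$ is irreducible and $q_m$, the polynomial whose roots are the $m$-th powers of the roots of $q$, is reducible, then $p_{A^{km}}$ has strictly more irreducible factors than $p_{A^k}$. Combined with the bound $N$ on the number of factors, this gives the maximizing $k$.
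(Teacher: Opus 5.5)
Your argument is correct, but it reaches the stabilization by a different mechanism than the paper.

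\textbf{The paper's route.} The paper only tracks the central factor. It lets $p_k$ be the irreducible factor of $p_{A^k}$ containing the unitary roots and chooses $k$ minimizing $d_k=\deg p_k$. It then proves $X_{kl}=X_k$ for all $l$ geometrically. The rational space $X_{kl}$ has no nontrivial rational $A^{kl}$-invariant subspaces. The intersection $X_k\cap X_{kl}$ is nontrivial, because both real spans contain $E^c$, and it is $A^{kl}$-invariant. So it must equal $X_{kl}$, and minimality of $d_k$ forces equality of the two spaces.

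\textbf{Your route.} You stabilize \emph{every} factor at once. You maximize the total number of irreducible factors of $p_{A^k}$ counted with multiplicity, and use the identity $p_{A^{km}}=\prod_i (q_i)_m$. This is purely arithmetic and avoids the invariant-subspace and intersection step. It also gives the stronger-than-needed conclusion that all factors of $p_{A^k}$ remain irreducible under further powers.

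\textbf{What each buys.} The paper's argument is more economical, since it controls only the factor that matters. Yours has the merit of checking explicitly that $\lambda^k$ is a simple root of $p_{A^k}$ (your Step 3). The paper only asserts this when it says the central factor has multiplicity one.

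\textbf{Cosmetic fixes.}
\begin{itemize}
\item In Step 1, monotonicity along $k\mid k'$ should be justified by $p_{A^{km}}(x)=\prod_i (q_i)_m(x)$, where $(q_i)_m(x)=\prod_{q_i(\mu)=0}(x-\mu^m)\in\mathbb{Z}[x]$. Your polynomial $q(x^{k'/k})$ vanishes at $m$-th roots of the roots of $q$, not at their $m$-th powers.
\item The count must be with multiplicity, not of distinct factors. For example, $(x^4-4x^2+1)_2=(x^2-4x+1)^2$ is reducible but still has only one distinct irreducible factor.
\item In Step 2, $\lambda^k\neq\bar\lambda^k$ \emph{is} forced, as you yourself show in Step 3.
\end{itemize}
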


\begin{proof}
    Let $p_k(x)$ be the irreducible factor over $\mathbb{Q}$ of the charecteristic polynomial $p_{A^k}(x)$ of $A^k$ that has both unitary roots of $p_{A^k}(x)$. (If it has one it has to have the other one, since it has real coefficients and the two roots are complex conjugates).\newline
    Let us set $d_k=\deg(p_k(x))$.\newline
    We now work in $\mathbb{Q}^N$. Let us set $X_k=\ker(p_k(A^k))$. $X_k$ is an $A^k$-invariant space and the minimal polynomial of $A^k|_{X_k}$ is $p_k(x)$. It has to be the charcacteristic polynomial as well since it must divide $p_{A^k}(x)$ and $p_k(x)$ is an irreducible factor of multiplicity one (since its complex unitary roots have multiplicity one). It follows that $A^k|_{X_k}$ has dimension equal to $d_k$ and irreducible characteristic polynomial over $\mathbb{Z}$. In view of the previous lemma, every non-zero vector of $X_k$ is then cyclic with respect to $A^k$ (considering a suitable integer multiple). This implies that $X_k\cap\mathbb{Z}^N$ has full rank and that $X_k$ has no non-trivial $A^k$-invariant subspaces (and we stress that we are talking about $\mathbb{Q}$-subspaces).\newline
    Set $\tilde{X_k}=\text{span}_{\mathbb{R}}(X_k)$. We have $\tilde{X_k}=\ker(p_k(A^k))$, this time seen as an linear endomorphism of $\mathbb{R}^d$. Let us denote by $q_k(x)$ the real coefficient polynomial that has as roots the unitary roots of $p_{A^k}(x)$. Since $q_k(x)|p_k(x)$, we have that $E^c=\ker(q_k(A^k))\subset \ker(p_k(A^k))=\tilde{X_k}$.\newline
    Since $d_k$ are natural numbers we can pick $k$ such that $d_k$ is minimized. We fix such $k$ for the rest of the proof and set $X=\tilde{X_k}$.\newline
    We claim that $X_{kl}=X_k$ for any $l\geq 1$.\newline
    Since $\tilde{X_k}$ and $\tilde{X_{kl}}$ cointain $E^c$ their intersection is non trivial and hence $V=X_k\cap X_{kl}$ is as well. Since $X_k$ is $A^k$ invariant, $V$ is an $A^{kl}$ invariant non-trivial subspace of $X_{kl}$. Hence $V=X_{kl}$, implying $X_{kl}\subset X_k$, but since $d_k$ was minimal $X_{kl}=X_k$.\newline
    Since $X=X_{kl}$ we have that any non-zero integer vector of $X$ is $A^{kl}$-cyclic as well, hence $A^k$ is Pseudo-Anosov on $X$.\newline
\end{proof}

\begin{lemma}
    Let $p(x)$ be an irreducible monic polynomial in $\mathbb{Z}[x]$ with constant term 1. Suppose that $p(x)$ has a unitary root that is not a root of unity. Then, $\deg(p(x))$ is even and, in particular, $p(x)$ is a reciprocal polynomial. Furthermore, $p(x)$ has at least a root of modulus less then 1 and one of modulus greater than 1.
\end{lemma}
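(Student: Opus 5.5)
The plan is to use the unitary root $\lambda$ to produce a symmetry of the root set. Let $\lambda$ be a root of $p$ with $|\lambda|=1$ that is not a root of unity. Since $p$ has real coefficients, $\bar\lambda$ is also a root, and $\bar\lambda = 1/\lambda$ because $|\lambda|=1$.

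First I would show that $p$ is reciprocal. Write $d=\deg p$ and let $p^*(x)=x^d p(1/x)$ be the reciprocal polynomial. Since the constant term of $p$ is $1$ and $p$ is monic, $p^*$ is again a monic integer polynomial of degree $d$. It is irreducible, because a factorization of $p^*$ would give one of $p$ by reversing coefficients, and the degrees are preserved since the constant terms are nonzero. Moreover $p^*(\lambda)=\lambda^d p(\bar\lambda)=0$. Thus $p$ and $p^*$ are both monic irreducible polynomials over $\mathbb{Q}$ with the common root $\lambda$, so both equal the minimal polynomial of $\lambda$, and $p=p^*$. Consequently the root multiset is invariant under $z\mapsto 1/z$.

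Next I would obtain the parity of the degree. The only possible fixed points of the involution $z\mapsto 1/z$ are $\pm1$. Neither is a root, since $p$ is irreducible of degree $>1$: it has the non-real root $\lambda$, because a real root of modulus $1$ would be $\pm1$, a root of unity. Also $p$ is separable, being irreducible over $\mathbb{Q}$. Hence the roots split into pairs $\{z,1/z\}$ with $z\ne 1/z$, and $d$ is even. I read the phrase ``in particular'' as this logical dependence, with reciprocity driving the evenness.

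Finally I would show that there are roots of modulus less than $1$ and greater than $1$. Suppose instead that all roots have modulus $\le 1$. The product of the roots is $\pm$ the constant term, so it has modulus $1$, and therefore all roots have modulus exactly $1$. By Kronecker's theorem, a monic integer polynomial all of whose roots lie on the unit circle has only roots of unity as roots, which contradicts the choice of $\lambda$. (The paper already used this fact in the preliminaries.) For a self-contained argument, consider the polynomials $\prod_i (x-z_i^m)$ for $m\ge1$. They have integer coefficients, by symmetric functions, and their coefficients are bounded, so there are finitely many of them. Pigeonhole then gives $z^{m}=z^{m'}$ up to permutation, forcing roots of unity. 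So some root has modulus $\ne1$. By reciprocity its inverse is also a root, which yields one root inside and one outside the unit circle.

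The only real obstacle is Kronecker's theorem, which I would either cite or prove by the sketch above. Everything else is the minimal-polynomial uniqueness argument.
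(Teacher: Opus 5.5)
Your proposal is correct and follows the paper's own argument: reciprocity from uniqueness of the minimal polynomial of the unitary root, even degree from pairing the distinct roots under $z\mapsto 1/z$ (since $\pm1$ are not roots), and Kronecker's theorem for the final claim. Your additions are fine. These are the explicit justification that the unitary root is non-real, and the step that a root off the unit circle yields one root on each side by reciprocity, which the paper leaves implicit. Your optional Kronecker sketch is cleanest if phrased as follows: all $z_i^m$ are roots of finitely many integer polynomials, so $\lambda^a=\lambda^b$ for some $a<b$.
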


\begin{proof}
    This is essentially lemma A.3 in \cite{Hertz2005}, we present an alternative proof.\newline
    Let $\xi$ be the non-real unitary complex root of $p(x)$. Then $1/\xi=\bar{\xi}$ is a root of $p(x)$. Setting $n=\deg(p(x))$, we have that $x^np(1/x)$ is a monic irreducible polynomial in $\mathbb{Z}[x]$ that vanishes in $1/\xi$. By uniqueness of the carachteristic polynomial we must have that $p(x)=x^np(1/x)$, meaning that it is reciprocal. Therefore, the roots of $p(x)$, all distinct beacuse it is irreducible, comes in pairs of inverses, since $\pm1$ are not roots of $p(x)$. This implies that $\deg(p(x))$ is even.\newline
    Finally if all roots of a monic integer coefficients polynomial are unitary they must be roots of unity (Kroenecker's theorem, see \cite{Greiter1978}).\newline
\end{proof}

\begin{lemma}
    We have that $\text{dim(X)}$ must be even and $\dim(X)\geq 4$.
\end{lemma}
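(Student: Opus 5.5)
Here $X$ is the subspace produced by Lemma \ref{Pseudo Anosov subspace}, and its dimension equals $d_k=\deg(p_k)$, where $p_k$ is the irreducible factor over $\mathbb{Q}$ of $p_{A^k}$ containing the two unitary roots. The plan is to apply the preceding lemma on irreducible polynomials to $p_k$, and then exclude $\deg(p_k)=2$ separately.

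\textbf{Applying the preceding lemma.} I first check its hypotheses for $p_k$.
\begin{enumerate}
\item $p_k$ is monic with integer coefficients. It is a monic irreducible factor over $\mathbb{Q}$ of the monic integer polynomial $p_{A^k}$, so Gauss's lemma puts it in $\mathbb{Z}[x]$.
\item Its constant term is $\pm1$. Since $\det A^k=1$, the constant term of $p_{A^k}$ is $\pm1$, and $p_k$ divides $p_{A^k}$ in $\mathbb{Z}[x]$.
\item It has a unitary root that is not a root of unity. Its roots are eigenvalues of $A^k$, i.e.\ $k$-th powers of eigenvalues of $A$. If a unitary root $\xi^k$ of $p_k$ were a root of unity, then $\xi$ would be one too, contradicting the ergodicity assumption that no eigenvalue of $A$ is a root of unity.
\end{enumerate}
The lemma's statement says ``constant term 1''. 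If the constant term is $-1$, I would rerun its proof: reciprocity still gives $p(x)=\pm x^np(1/x)$, and the roots still pair as $\lambda,1/\lambda$ with $\lambda\neq\pm1$ (since $\pm1$ are roots of unity). Hence $\deg(p_k)=\dim X$ is even, and $p_k$ has a root of modulus $<1$ and a root of modulus $>1$.

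\textbf{Lower bound.} $p_k$ contains the conjugate pair of unitary roots $\xi^k,\bar\xi^k$, which are distinct because $\xi^k$ is not real (not $\pm1$). It also has at least one root of modulus $\neq1$. So $\deg p_k\geq 3$, and evenness gives $\dim X\geq 4$. Alternatively, if $\deg p_k=2$, then $p_k=(x-\xi^k)(x-\bar\xi^k)$ would have all roots unitary, and Kronecker's theorem would force $\xi^k$ to be a root of unity, a contradiction.

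\textbf{Main obstacle.} The only delicate point is verifying the hypotheses of the preceding lemma, mainly the integrality and constant term of $p_k$ and the non-root-of-unity condition passing from $A$ to $A^k$. Once those are checked, the conclusion follows directly.
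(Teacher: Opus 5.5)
Your proof is correct and follows the paper's route: apply the preceding lemma to the irreducible characteristic polynomial $p_k=p_{A^k|_X}$ to get that $\dim X$ is even, then exclude degree $2$ using the two unitary roots together with a non-unitary root (equivalently, Kronecker). Your extra checks fill in details the paper leaves implicit: integrality via Gauss's lemma, the possible constant term $-1$ (which you handle by rerunning the reciprocity argument), and the passage of the non-root-of-unity condition from $A$ to $A^k$.
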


\begin{proof}
    By construction of $X$, the carachteristic polynomial $p_{A^k|_X}(x)$ of $A^k|_X$ is irreducible over $\mathbb{Z}$. Since $\dim(X)=\deg(p_{A^k|_X}(x))$, it must be even because of the previous lemma, applied to $p_{A^k|_X}(x)$. The same lemma rules out dimension 2, because $p_{A^k|_X}(x)$ has two unitary roots.
\end{proof}

\begin{lemma}\label{dimension 7 polynomial}
    Let $A\in SL(7,\mathbb{Z})$ have no roots of unity as eigenvalues.\newline
    Then, either $\dim(E^c)=0$ or $\dim(E^c)=2$.
\end{lemma}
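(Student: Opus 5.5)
The plan is to factor the characteristic polynomial $p_A(x)\in\mathbb{Z}[x]$, which has degree $7$, into monic irreducible factors over $\mathbb{Z}$, and to see how unitary roots can be distributed among them. The roots of $p_A$ of modulus one are exactly the eigenvalues spanning $E^c$, counted with multiplicity. Since $\det A=1$, every irreducible factor $q$ has constant term $\pm1$: the product of the constant terms is $\pm1$ and each is an integer. By hypothesis no root of $p_A$ is a root of unity, so no factor is cyclotomic. In particular $\pm1$ are not roots, and every unitary root is non-real.

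Let $q$ be an irreducible factor having a unitary root. The previous lemma shows that $q$ is reciprocal, of even degree, and has a root of modulus $>1$ and one of modulus $<1$. That lemma assumes constant term $1$; for a reciprocal polynomial of even degree the constant term is the product of pairs $\{\xi,1/\xi\}$, which is automatically $1$, so the argument applies. Hence $\deg q\geq 4$: degree $2$ is excluded because a reciprocal quadratic with a non-real unitary root has both roots unitary, and then by Kronecker's theorem they would be roots of unity.

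Next I bound the number of such factors. Since $\deg p_A=7<8$, at most one irreducible factor (counted with multiplicity) can carry unitary roots. A repeated factor $q^2$ would already have degree $\geq 8$, so this factor $q$ appears with multiplicity one. Thus $\dim E^c$ equals the number of unitary roots of a single $q$ with $\deg q\in\{4,6\}$, or $\dim E^c=0$ if no such factor exists.

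It remains to show that such a $q$ has exactly two unitary roots. The roots of $q$ come in inverse pairs and in conjugate pairs. Since $q$ has some root off the unit circle, say $\alpha$ with $|\alpha|>1$, the number of non-unitary roots is at least $2$: namely $\alpha$ and $1/\alpha$. If $\alpha$ is real this is exactly $2$; otherwise it is $4$, since $\bar\alpha,1/\bar\alpha$ are also roots. The unitary roots come in conjugate pairs, so there are at least $2$ of them. For $\deg q=4$ this forces exactly $2$ unitary roots. For $\deg q=6$ the unitary roots number $2$ or $4$, and the case of $4$ is the main obstacle: $q$ would then have $4$ unitary roots together with a real pair $\alpha,1/\alpha$, i.e. $q$ would be the minimal polynomial of a Salem number. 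This is consistent with the remark in the paper that Salem numbers are the exception in dimensions $6$ and $9$.

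In dimension $7$, the complementary factor of $p_A$ has degree $1$ and constant term $\pm1$, so it is $x\mp1$, contradicting the absence of roots of unity among the eigenvalues. Hence $\deg q=6$ is impossible in dimension $7$. The analogous argument shows that $\deg q=4$ leaves a factor of degree $3$, which is fine, and this degree-$4$ $q$ has exactly two unitary roots. Therefore $\dim E^c\in\{0,2\}$.
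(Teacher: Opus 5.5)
Your proof is correct and follows the same route as the paper. A factor carrying unitary roots is irreducible of even degree at least $4$, so at most one such factor fits in degree $7$. Degree $6$ is excluded because the complementary linear factor would give the eigenvalue $\pm1$, and a degree-$4$ factor then has exactly two unitary roots.

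One small wrinkle: you justify the constant-term hypothesis of the previous lemma using that lemma's own conclusion, which is circular. Instead, note that its proof works verbatim when the constant term is $\pm1$, since $q(0)\,x^{n}q(1/x)$ is then a monic irreducible integer polynomial vanishing at $\bar\xi$.
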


\begin{proof}
    We have that $\dim(E^c)$ equals the number of unitary roots of the carachteristic polynomial $p_A(x)$ of $x$. Let $q(x)$ be an irreducible factor of $p_A(x)$ in $\mathbb{Z}[x]$ that has a unitary root. Therefore, $\deg(q(x))$ is even and at least 4. This proves that $p_A(x)$ can have at most one such factor, hence $q(x)$ has all the unitary roots of $p_A(x)$. If $\deg(q(x))=6$, then $p_A(x)$ would have a rational root, which is impossible. The degree of $q(x)$ must be 4, and consequentely it has at most 2 unitary roots.
    \newline
\end{proof}

\subsection{Diophantine Estimates}

Up to a bi-Lipschitz change of coordinates we can assume that $(X,\Gamma)$ from the previous lemma is $(\mathbb{R}^d,\mathbb{Z}^d)$ for a suitable $d$. Therefore, many results about Psuedo-Anosov linear maps of $\mathbb{R}^N$ from \cite{Hertz2005} can be directly transposed to our setting. Here we also exploit the fact that the following statements are invariant if one swaps $A$ with $A^k$, where $k$ comes from Lemma \ref{Pseudo Anosov subspace}.\newline

\begin{lemma}\label{Diphantine center}
    There exists a constant $c'=c'(A)>0$, such that, calling $r=\frac{\text{dim}X}2$,
    \begin{equation*}
        |n^c| \geq\frac {c'}{|n|^{r}} \quad\quad \text{for any }n\in\Lambda, n\neq0.
    \end{equation*}
\end{lemma}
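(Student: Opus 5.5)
We argue by a Liouville-type (norm) estimate, working entirely inside $X$ with the lattice $\Lambda$. Write $B=A^k|_X$, with $k$ as in Lemma \ref{Pseudo Anosov subspace}. By that lemma, the characteristic polynomial $p$ of $B$ is irreducible over $\mathbb{Z}$ and has degree $d=\dim X=2r$. Its roots are therefore distinct; call them $\xi_1,\bar\xi_1$ (the two unitary ones) and $\xi_3,\dots,\xi_d$.

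Since the roots are distinct, $B$ is diagonalizable over $\mathbb{C}$. For each $i$ pick a left eigenvector $w_i$ of $B$ on $X$ (equivalently, the $i$-th coordinate functional in an eigenbasis). Under the identification $(X,\Lambda)\cong(\mathbb{R}^d,\mathbb{Z}^d)$ of Remark \ref{change of basis}, the matrix of $B$ is integral with irreducible characteristic polynomial. Hence we may take the $w_i$ to be Galois conjugates: choose $w_1$ with entries in $K=\mathbb{Q}(\xi_1)$, and let $w_i=\tau_i(w_1)$, where $\tau_1,\dots,\tau_d$ are the embeddings of $K$ into $\mathbb{C}$.

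For $n\in\Lambda\setminus\{0\}$, set $\alpha=w_1(n)\in K$. Then $w_i(n)=\tau_i(\alpha)$, and $\alpha\neq 0$: otherwise every conjugate would vanish, forcing $n=0$. After clearing a fixed denominator $D$ depending only on $A$, $D\alpha$ is a nonzero algebraic integer. Its norm $\prod_i \tau_i(D\alpha)$ is therefore a nonzero integer, so
\begin{equation*}
\prod_{i=1}^d |w_i(n)|\ \ge\ D^{-d}.
\end{equation*}

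Now relate the factors to $n^c$ and $|n|$. The center component $n^c$ lies in $E^c=\ker(w_3)\cap\dots\cap\ker(w_d)$, the span of the eigendirections for $\xi_1,\bar\xi_1$. Consequently $|w_1(n)|=|w_1(n^c)|\le C|n^c|$, and the same bound holds for $w_2=\bar w_1$. For the remaining factors we use the trivial bound $|w_i(n)|\le C|n|$, $i\ge3$. Combining with the norm inequality,
\begin{equation*}
D^{-d}\ \le\ C^d\,|n^c|^2\,|n|^{d-2},
\end{equation*}
which gives $|n^c|\ge c'|n|^{-(d-2)/2}=c'|n|^{-(r-1)}$. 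Since $|n|\ge c_0>0$ on $\Lambda\setminus\{0\}$, this bound is even stronger than the claimed $c'|n|^{-r}$, after adjusting $c'$.

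The main point needing care is the Galois-conjugate structure of the left eigenvectors. This is the standard fact that for an integral matrix with irreducible characteristic polynomial, the eigenvector for $\xi_1$ can be chosen in $K^d$, and applying $\tau_i$ entrywise gives an eigenvector for $\tau_i(\xi_1)$. One also has to keep track of the constants introduced by the bi-Lipschitz change of coordinates of Remark \ref{change of basis} and by the choice of norm on $\mathbb{R}^N$. Alternatively, the statement follows by citing the corresponding lemma of \cite{Hertz2005}, which applies here because $B$ is pseudo-Anosov on $(X,\Lambda)$ and the estimate is unchanged when $A$ is replaced by $A^k$.
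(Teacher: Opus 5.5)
Your argument is correct, but it follows a different route from the paper. The paper gives no argument of its own. It cites Lemma A.10 of \cite{Hertz2005} with $\delta=1/2$, after transporting $(X,\Lambda)$ to $(\mathbb{R}^d,\mathbb{Z}^d)$ via Remark \ref{change of basis} and noting that the splitting, and hence $n^c$, is the same for $A$ and $A^k$.

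You instead prove the Diophantine estimate directly by a norm argument in $K=\mathbb{Q}(\xi_1)$. Galois-conjugate left eigenvectors turn $\prod_i|w_i(n)|$ into $|N_{K/\mathbb{Q}}(\alpha)|$. Two of the factors are then controlled by $|n^c|$ and the remaining $d-2$ by $|n|$.

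What your route buys:
\begin{itemize}
\item It is self-contained.
\item It uses only irreducibility of the characteristic polynomial of the single power $A^k|_X$, not the full pseudo-Anosov property.
\item It gives the exponent $r-1$, which is sharp: Minkowski's theorem applied to the bodies $\{x\in X: |x^{su}|\le R,\ |x^c|\le\epsilon\}$ yields infinitely many $n\in\Lambda$ with $|n^c|\lesssim|n|^{-(r-1)}$.
\end{itemize}
The stated exponent $r$, which is what the Appendix uses through the choice $s=2r+1$, follows from yours because $|n|$ is bounded below on $\Lambda\setminus\{0\}$. The citation is shorter, but it relies on the transfer to Hertz's pseudo-Anosov setting.

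One step needs a different justification. The identity $w_1(n)=w_1(n^c)$ does not follow from $n^c\in\bigcap_{i\ge3}\ker w_i$. What you need is that $n^{su}=n-n^c$ lies in $X\cap E^{su}$; this holds because the spectral projections are polynomials in $A^k$ and so preserve $X$. The complexification of $X\cap E^{su}$ is spanned by the eigendirections of $\xi_3,\dots,\xi_d$, so it is annihilated by $w_1$ and $w_2=\bar w_1$.
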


\begin{proof}
    This is lemma A.10 in \cite{Hertz2005} with $\delta=1/2$.\newline
\end{proof}

We now fix a $\mathbb{Z}$-basis of $\Lambda$, namely $\{e_1,\dots, e_{\dim(X)}\}$.\newline
We also consider the linear trasformation $R:E^c\rightarrow \mathbb{R}^2$ defined by $R(e_1^c)=(1,0)$ and $R(e_2^c)=(0,1)$. The next two lemmas are Lemma A.11 and Lemma A.12 in \cite{Hertz2005}.

\begin{lemma}\label{Diphantine condition 1}
    If $\dim(X)\geq6$ there exists $n\in\Lambda$ such that if we call $R(n^c)=\alpha$, then for any $\delta>0$, there exists a constant $c'>0$, such that
    \begin{equation*}
        ||| k\alpha||| \geq \frac c{k^{2+\delta}}, \quad\quad \forall k\in\mathbb{Z},k\neq0.
    \end{equation*}
\end{lemma}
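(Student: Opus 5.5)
This is Lemma A.11 in Rodriguez-Hertz's work. I would prove it by finding a single lattice vector whose image $\alpha=R(n^c)\in\mathbb{R}^2$ has algebraic coordinates, and then applying Schmidt's subspace theorem.

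\emph{Setup.} After the bi-Lipschitz identification $(X,\Lambda)\cong(\mathbb{R}^d,\mathbb{Z}^d)$, where $d=\dim X\ge 6$, the map $A^k|_X$ becomes an integer matrix $B$. Its characteristic polynomial $p$ is irreducible, and $p$ has exactly one pair of unitary roots $\xi,\bar\xi$.

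\emph{Coordinates of $\alpha$.} Let $w$ be a left eigenvector of $B$ for $\xi$, normalised so that its entries lie in $\mathbb{Q}(\xi)$. Then the center projection of $x\in X$ is determined by the complex number $w\cdot x$. The real and imaginary parts of $w\cdot x$, taken with respect to the basis $e_1^c,e_2^c$ through $R$, are linear forms in $x$ with coefficients in the real algebraic field $K=\mathbb{Q}(\xi+\bar\xi,\ \xi\bar\xi=1,\dots)$. Consequently $R(n^c)=(\alpha_1,\alpha_2)$, where each $\alpha_i$ is a ratio of $\overline{\mathbb{Q}}$-linear forms evaluated at integer vectors. In particular $\alpha_1,\alpha_2$ are real algebraic numbers.

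\emph{Choice of $n$.} I choose $n$ (for instance $n=Be_1$ or a small integer combination of basis vectors) so that $1,\alpha_1,\alpha_2$ are linearly independent over $\mathbb{Q}$. Independence fails only if there is a nonzero integer vector $(a,b,c)$ with $a+b\alpha_1+c\alpha_2=0$. Pulled back to the lattice, such a relation says that $(n^c)$ lies on a rational line. By cyclicity (the pseudo-Anosov property), this would force $E^c$, or a rational subspace related to it, to be $B$-invariant and of dimension at most $3<d$. That contradicts irreducibility, because $d\ge6$ excludes small rational invariant configurations. Here $d\ge 6$ enters; for $d=4$ the analogous vector could be degenerate.

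\emph{Diophantine bound.} With $1,\alpha_1,\alpha_2$ algebraic and $\mathbb{Q}$-linearly independent, Schmidt's subspace theorem (simultaneous approximation form) gives, for every $\delta>0$, a constant $c>0$ such that $\max_i\|k\alpha_i\|\ge c\,k^{-1/2-\delta}$. This is even stronger than $c/k^{2+\delta}$, which is what the statement needs. If one wants to avoid Schmidt, Liouville-type bounds via the norm in $K$ give an exponent depending on $[K:\mathbb{Q}]$ and suffice only when $d$ is small. So I would invoke Schmidt, since the stated exponent $2+\delta$ is uniform in $d$.

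\emph{Main obstacle.} The hardest step is justifying the linear independence of $1,\alpha_1,\alpha_2$ from the algebraic structure. I would argue it via the Galois action: the relation $a+b\alpha_1+c\alpha_2=0$ is a $\mathbb{Q}$-linear relation on $B$-orbit data. Applying Galois conjugates yields a nontrivial rational $B$-invariant subspace of dimension less than $d$, contradicting irreducibility of $p$.
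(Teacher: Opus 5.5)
\textbf{Gap: the $\mathbb{Q}$-independence of $1,\alpha_1,\alpha_2$ is not established.} The overall route is the natural one: coordinates of $\alpha=R(n^c)$ are real algebraic, choose $n$ so that $1,\alpha_1,\alpha_2$ are $\mathbb{Q}$-linearly independent, then apply Schmidt. The paper itself gives no argument and only quotes Lemma A.11 of Rodriguez-Hertz. But the step you flag as the main obstacle is not proved, and the argument you sketch cannot be right, because it never genuinely uses $d\ge 6$. For $d=4$ your reasoning (a rational relation produces a rational $B$-invariant subspace of dimension at most $3<d$, contradicting irreducibility) goes through verbatim. Yet for $d=4$ the numbers $1,\alpha_1,\alpha_2$ are dependent for \emph{every} $n\in\Lambda$. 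Independence is a property of a suitably chosen $n$, not a consequence of irreducibility for a given one. For example, $n=e_1$ gives $\alpha=(1,0)$. Your suggested $n=Be_1$ gives $\alpha=(0,1)$ if the arbitrary basis happens to satisfy $e_2=Be_1$, as it does for a companion matrix.

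\textbf{The missing idea is to identify the field exactly.} Let $w$ be a left $\xi$-eigenvector of $B$ with entries in $\mathbb{Q}(\xi)$, and $v$ the right one with $w\cdot v=1$. Set $\phi(x)=w\cdot x$ and $\omega_i=\phi(e_i)$. Since $x^c=2\,\mathrm{Re}(\phi(x)v)$ and $z\mapsto 2\,\mathrm{Re}(zv)$ is an $\mathbb{R}$-isomorphism $\mathbb{C}\to E^c$, the equation $R(n^c)=(s,t)$ means exactly $\phi(n)=s\omega_1+t\omega_2$.
\begin{itemize}
\item Complex conjugation preserves $\mathbb{Q}(\xi)$, since $\bar\xi=\xi^{-1}$. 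Its fixed field is $K=\mathbb{Q}(\theta)$ with $\theta=\xi+\xi^{-1}$, and $[K:\mathbb{Q}]=d/2$.
\item Because $\omega_1,\omega_2$ are $\mathbb{R}$-independent, they form a $K$-basis of the $2$-dimensional $K$-space $\mathbb{Q}(\xi)$. Hence $(s,t)\in K^2$ are precisely the $K$-coordinates of $\phi(n)$.
\item Irreducibility of $p$ makes $\phi\colon\mathbb{Q}^d\to\mathbb{Q}(\xi)$ a $\mathbb{Q}$-isomorphism. So for every $(s,t)\in K^2$ there are $n\in\Lambda$ and $q\in\mathbb{N}$ with $R(n^c)=(qs,qt)$.
\end{itemize}
Now take $(s,t)=(\theta,\theta^2)$. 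Then $1,\alpha_1,\alpha_2$ are independent precisely because $\deg\theta=d/2\ge 3$. This is where $\dim X\ge 6$ enters. It also explains the failure for $\dim X=4$: there $K$ is quadratic.

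\textbf{On the Diophantine step.} You are right that the inequality as literally printed ($k\in\mathbb{Z}$) is much weaker than Schmidt's simultaneous bound. In that reading, however, $\dim X\ge 6$ would be superfluous: for $\dim X=4$ an irrational coordinate lies in a real quadratic field, so $|||k\alpha|||\ge c/|k|$. That would contradict the paper's remark that the lemma fails in dimension $4$. The intended statement, matching the companion lemma for $\dim X=4$ and the small divisors of a translation of $\mathbb{T}^2$ in the KAM step, is $|||k_1\alpha_1+k_2\alpha_2|||\ge c\,|k|^{-2-\delta}$ for $k\in\mathbb{Z}^2\setminus\{0\}$. For this you need the linear-form version of Schmidt's theorem, whose exponent $2+\delta$ is sharp by Dirichlet. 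There the $\mathbb{Q}$-independence of $1,\alpha_1,\alpha_2$ is genuinely necessary, not merely convenient.
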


Unfortunately this lemma is not true in dimension 4, hence, in that case, we will have to rely on the following alternative one.

\begin{lemma}\label{Diphantine condition 2}
    If $\dim(X)=4$ there exist $n_1,n_2\in\Lambda$ such that if we call $R(n_1^c)=\alpha_1$ and $R(n_2^c)=\alpha_2$, then there exists a constant $c'>0$, such that
    \begin{equation*}
        \max_{i=1,2}|||k\cdot\alpha_i||| \geq \frac c{k^2}, \quad\quad \forall k\in\mathbb{Z}^2,k\neq0.
    \end{equation*}
\end{lemma}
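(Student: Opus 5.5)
The statement is Lemma A.12 of \cite{Hertz2005}, transported to $(X,\Lambda)$ by the bi-Lipschitz identification of Remark \ref{change of basis}. The plan is to re-derive it by a duality argument, using the arithmetic of the degree-four irreducible polynomial $p=p_{A^k|_X}$. By the preceding lemmas, $p$ is reciprocal and irreducible. Its roots are $\xi,\bar\xi$ with $|\xi|=1$, together with real $\lambda,\lambda^{-1}$ with $|\lambda|>1$. Inside $X$ this gives a splitting $X=E^c\oplus E^s_X\oplus E^u_X$, where $E^s_X$ and $E^u_X$ are lines.

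\textbf{Choice of vectors.} I choose $n_1=e_3$ and $n_2=e_4$, so $\alpha_i=R(e_{i+2}^c)$. I assume $e_1^c,e_2^c$ is a basis of $E^c$, as the definition of $R$ already requires. Fix $k\in\mathbb{Z}^2\setminus\{0\}$ and integers $p_1,p_2$ with $|k\cdot\alpha_i-p_i|=|||k\cdot\alpha_i|||$. Let $\varepsilon$ denote the maximum of these two distances.

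Consider the real functional $\chi(v)=k\cdot R(v^c)$ on $X$. It annihilates $E^s_X\oplus E^u_X$, and its values on the basis are $(k_1,k_2,k\cdot\alpha_1,k\cdot\alpha_2)$. Let $\psi\in\Lambda^*$ be the integer functional with values $(k_1,k_2,p_1,p_2)$ on $e_1,\dots,e_4$. Then $\|\psi-\chi\|\le C\varepsilon$, and $\|\psi\|\le C|k|$ because $|p_i|\le C|k|+1$. Moreover $\psi\neq0$, since $k\neq0$.

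\textbf{Key estimate.} Decompose $\psi$ in the dual splitting $X^*=(E^c)^*\oplus (E^s_X)^*\oplus(E^u_X)^*$ for $B=(A^k|_X)^{-T}$. The center part of $\psi$ is $\chi$, because $\chi$ annihilates $E^s_X\oplus E^u_X$. Hence the $su$-part satisfies $|\psi^{su}|\le C\varepsilon$, while $|\psi^c|\le C|k|$.

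Now $B$ has irreducible characteristic polynomial $x^4p(1/x)=p(x)$ on the lattice $\Lambda^*$. So the four coordinates of $\psi$ in an eigenbasis of $B$ are Galois conjugates of a single nonzero element $\theta$ of the number field $\mathbb{Q}(\lambda)$. After clearing a fixed denominator depending only on $A$, the product of these coordinates is a nonzero rational integer. This gives the norm inequality
\begin{equation*}
|\psi^s|\,|\psi^u|\,|\psi^c|^2\ge c_0>0 ,
\end{equation*}
which is the same mechanism behind Lemma \ref{Diphantine center}. Combining it with the bounds above,
\begin{equation*}
C^2\varepsilon^2\ge|\psi^s|\,|\psi^u|\ge \frac{c_0}{C^2|k|^2},
\end{equation*}
so $\varepsilon\ge c/|k|$. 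This is at least as strong as the claimed $c/|k|^2$ for $|k|\ge1$.

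\textbf{Main obstacle.} The delicate step is justifying the norm inequality uniformly over all $\psi\in\Lambda^*\setminus\{0\}$. I would set up an explicit isomorphism of $(X^*,\Lambda^*)$ with a fractional ideal of $\mathbb{Q}(\lambda)$ carrying multiplication by the root, so that eigen-coordinates become embeddings. I would then check that the norms $|\cdot|$ on $E^c$, $E^s$ and $E^u$ are comparable to the absolute values of those embeddings.

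If this bookkeeping proves troublesome, I fall back to citing Lemma A.12 of \cite{Hertz2005} directly. In that case I verify only that its hypotheses are met: $(X,\Lambda)\cong(\mathbb{R}^4,\mathbb{Z}^4)$, $A^k|_X$ is pseudo-Anosov, and the two-dimensional center of $A^k|_X$ equals $E^c$ by Lemma \ref{Pseudo Anosov subspace}. I also check that $R$ and the vectors $n_i^c$ transform compatibly under the bi-Lipschitz change of coordinates. Such a change only alters constants, and the conclusion is unchanged when $A$ is replaced by $A^k$.
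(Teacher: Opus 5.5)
Your argument is correct, but it is not what the paper does. The paper gives no proof here: it invokes Lemma A.12 of \cite{Hertz2005} after identifying $(X,\Lambda)$ with $(\mathbb{R}^4,\mathbb{Z}^4)$ and replacing $A$ by $A^k$, which is exactly your fallback.

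Your main route passes to the dual lattice instead. The integer functional $\psi=(k_1,k_2,p_1,p_2)$ lies within $O(\varepsilon)$ of $\chi\in\mathrm{Ann}(E^s\oplus E^u)$. Hence its stable and unstable components are $O(\varepsilon)$ and its center component is $O(|k|)$. The norm inequality $|\psi^s|\,|\psi^u|\,|\psi^c|^2\ge c_0$ on $\Lambda^*$ then forces $\varepsilon\ge c/|k|$.

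One sentence needs correcting. The center component of $\psi$ is not $\chi$; it is $\chi$ plus the center component of $\psi-\chi$. What you actually use, and what is true, is $\psi^{su}=(\psi-\chi)^{su}$, since $\chi$ has no $su$-part. This gives $|\psi^{su}|\le C\|\psi-\chi\|\le C\varepsilon$, and separately $|\psi^c|\le C\|\psi\|\le C|k|$. The conclusion is unaffected.

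The step you flag as the main obstacle is routine. Choose left eigenvectors $u_j=\sigma_j(u_1)$ with entries in $\mathbb{Z}[\mu_1]$ and right eigenvectors $w_j=\sigma_j(w_1)$. The eigen-coordinates of $\psi$ are then $\sigma_j(\langle u_1,\psi\rangle)/\sigma_j(\langle u_1,w_1\rangle)$. The norm of the nonzero algebraic integer $\langle u_1,\psi\rangle$ is a nonzero rational integer. Comparability of $|\psi^c|$ with the modulus of the complex eigen-coordinate is just equivalence of norms on the real two-dimensional center. So the fallback is not needed.

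Compared with the citation, your route buys three things:
\begin{enumerate}
\item It is self-contained.
\item It uses only irreducibility of $p_{A^k|_X}$ and the signature of its roots (two real, one complex pair). You therefore need not check that Hertz's basis and normalization of $R$ match the paper's.
\item It gives exponent $1$ instead of $2$. This is optimal by Dirichlet's theorem for two linear forms in two integer variables, so $(\alpha_1,\alpha_2)$ is in fact badly approximable.
\end{enumerate}
The citation is shorter and already suffices for the KAM step.
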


\section{Proof of the Minimality Criterion}\label{Proof of the Minimality Criterion}

We focus on a connected component $U$ of $V$, with the goal of showing that $U=\mathbb{R}^N$.

\subsection{The structure of U}\label{the structure of U}

Our first goal is to prove that $U$ is invariant by a discrete subgroup of directions that spans $E^c$ and that $U$ is simply connected. Here we follow ideas from \cite{Hertz2005}.\newline

Given $\varepsilon>0$ we set $L(\varepsilon)=\varepsilon^{-2}$, then define $W_{\varepsilon}(x)=W^s_{\varepsilon}(W^u_{L(\varepsilon)+\varepsilon}(W^s_{L(\varepsilon)}(W^c_{\varepsilon}(x))))$.

\begin{lemma}\label{infinite volume}
    Provided that $f$ is sufficiently $\mathcal{C}^1$-close to $A$ the following holds.\newline
    For every $x\in\mathbb{R}^N$, we have that $\text{Vol}(W_{\varepsilon}(x)\cap(X+x))\rightarrow +\infty$ when $\varepsilon\rightarrow0$.
\end{lemma}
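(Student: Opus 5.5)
We want to show that the set $W_\varepsilon(x)$, which is built from $su$-legs of lengths of order $\varepsilon^{-2}$ and thickened by $\varepsilon$ in the center and stable directions, meets the affine subspace $X+x$ in a set whose $\dim(X)$-dimensional volume blows up. The plan is to compare $W_\varepsilon(x)$ with its linear model and find a product region inside $(X+x)\cap W_\varepsilon(x)$ whose volume we can bound from below explicitly.

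\textbf{Linear model.} For $A$ itself the set is the box
$$
x+B_{E^s}(0,L+2\varepsilon)\oplus B_{E^u}(0,L+\varepsilon)\oplus B_{E^c}(0,\varepsilon)
$$
(up to norm conventions). Since $E^c\subset X$, and $X$ is $A^k$-invariant, $X$ splits as $X=X^s\oplus E^c\oplus X^u$ with $X^*=X\cap E^*$. By the final lemma of the algebraic section, $\dim X\ge 4$ and the characteristic polynomial of $A^k|_X$ is reciprocal and irreducible. So both $X^s$ and $X^u$ are nontrivial, with $\dim X^s=\dim X^u=r-1\ge 1$, where $r=\dim(X)/2$. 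The intersection of the box with $X+x$ then has volume of order
$$
\varepsilon^{2}L^{2(r-1)}=\varepsilon^{2-4(r-1)}\ge \varepsilon^{-2}\rightarrow\infty .
$$
This exponent count is the heart of the matter: the long $s$ and $u$ legs must gain enough volume inside $X$ to beat the thin center, and $\dim X^s,\dim X^u\ge1$ is exactly what makes this work.

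\textbf{Perturbed case.} Fix $x$. For each $v^c\in B_{E^c}(0,\varepsilon)$, $v^s\in X^s$ with $|v^s|\le L/2$, and $v^u\in X^u$ with $|v^u|\le L/2$, I want to exhibit a point of $W_\varepsilon(x)\cap(X+x)$ close to $x+v^c+v^s+v^u$. I would do this with a degree argument modelled on Lemma \ref{proper 1} and Lemma \ref{homomorphism}.

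1. Consider $\Theta(a,b,c,d)=\sigma^s\bigl(d,\sigma^u(c,\sigma^s(b,\sigma^c(a,x)))\bigr)$, with $a\in B_{E^c}(0,\varepsilon)$, $b\in B_{E^s}(0,L)$, $c\in B_{E^u}(0,L+\varepsilon)$, $d\in B_{E^s}(0,\varepsilon)$.
2. By Lemma \ref{invariant manifolds are Lip graphs}, $|\Theta(a,b,c,d)-(x+a+b+c+d)|\le\kappa(|a|+|b|+|c|+|d|)\le 3\kappa L$.
3. This error is huge compared with $\varepsilon$, so the naive lower bound does not survive. The fix is to use the freedom in $d$ and the unrestricted complementary components. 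We do not need the center thickness $\varepsilon$ to be preserved pointwise. It suffices to show that $\Theta$ restricted to a suitable submanifold hits $X+x$ in a set whose volume, by a coarea/degree estimate, is at least $c\,\varepsilon^2L^{2(r-1)}$.

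\textbf{Main obstacle.} This drift is where I expect the real difficulty. Here is what I would try first. Use Lemma \ref{proper 2} and Lemma \ref{homomorphism} to straighten the foliations: $\Phi_x$ is a homeomorphism within $\kappa/(1-\kappa)$ of a translation, and $W_\varepsilon(x)$ contains $\Phi$-images of product sets. Then project the slab along $(X)^{\perp}$-type complementary directions, i.e.\ along $E^s\ominus X^s$ and $E^u\ominus X^u$. Because the $s$-legs and $u$-legs have full length $L$ in every direction, including the complementary ones, the graph deviation of size $\kappa L$ can be absorbed as long as $\kappa<1/4$.

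Concretely, for each target point $y\in x+X$ with $y^c\in B(0,\varepsilon/2)$ and $|y^s|,|y^u|\le L/2$, I would solve $\Theta(a,b,c,d)=y$ using the homotopy-degree argument of Lemma \ref{proper 1}. The center component is the tight one, so I would first use the last short stable leg $d$ together with the long legs to correct the center error. Specifically, I would rely on the fact that center drift produced by holonomy along legs of length $L$ is only $O(\log L)$ (Lemma \ref{Tn log}-type control) rather than $\kappa L$, since the center components are governed by holonomies as in Lemma \ref{Holonomies Lip}.

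If the drift is only logarithmic, the covered set still contains a set of volume $\gtrsim \varepsilon^{2}L^{2(r-1)}$ in $X+x$ after shrinking the center range appropriately, or, failing that, a fixed-size center disc. Either way the volume tends to $+\infty$.
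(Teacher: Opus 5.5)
Your linear model and exponent count are correct ($X=X^s\oplus E^c\oplus X^u$ with $\dim X^s=\dim X^u=r-1\ge 1$). The perturbed case, however, has a genuine gap exactly at the step you flag.

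You try to solve $\Theta(a,b,c,d)=y$ for all $y\in x+X$ with $y^c\in B(0,\varepsilon/2)$, i.e.\ to show that the slice contains a neighbourhood of the linear box. A degree argument as in Lemma~\ref{proper 1} needs the boundary displacement to be smaller than the target radius in every direction, and in the center direction it is not. The endpoint of a two-legged $su$-path of length $\sim L$ is displaced in $E^c$ by $O(\log L)$. That bound is true: it follows from $F=A+G$ with $G$ bounded and $A|_{E^c}$ an isometry. But $\log L$ is still $\gg\varepsilon$ for small $\varepsilon$. So there is no reason for $W_\varepsilon(x)\cap(X+x)$ to meet the fixed window $\{|y^c-x^c|<\varepsilon\}$ in large volume, and neither ``shrinking the center range'' nor ``a fixed-size center disc'' repairs this.

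There is also a second loss you do not address. The $su$-holonomies are only $C L^{2\beta}$-bi-Lipschitz (Lemma~\ref{Holonomies Lip}), so a transported center disc is only guaranteed to have radius $\sim\varepsilon L^{-2\beta}$. Your bound $c\,\varepsilon^2L^{2(r-1)}$ is therefore unjustified. What makes the lemma work is smallness of $\beta$, not $\kappa<1/4$.

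The missing idea is to stop prescribing the center coordinate and to count volume fiberwise over $X/E^c$; this is the paper's argument.
\begin{itemize}
\item Take a $2\delta$-separated set $\{x_i\}$ in the ball of radius $L/2$ in $(X/E^c)+x$, with $\delta\sim\varepsilon L^{-2\beta}$. It has $\gtrsim (L/\delta)^{\dim X-2}$ points.
\item Lemma~\ref{proper 2} is your $su$-degree argument with the center left free. It gives $y_i\in E^c(x_i)\cap W^u_L(W^s_L(x))$.
\item Since $E^c\subset X$, each $y_i$ lies in $X+x$ wherever the drift puts its center coordinate. The $y_i$ stay $2\delta$-separated because they project to the $x_i$.
\item Lemma~\ref{Holonomies Lip} places $W^c_\delta(y_i)$ inside the holonomy image of $W^c_\varepsilon(x)$. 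The short final legs together with Lemma~\ref{proper 1} then give a ball of radius comparable to $\delta$ about $y_i$ inside $W_\varepsilon(x)$.
\end{itemize}
These balls are disjoint, so the volume is $\gtrsim \delta^2L^{\dim X-2}=\varepsilon^{6+8\beta-2\dim X}$. This diverges because $\dim X\ge 4$ and $\beta<1/4$. In this argument the size of the center drift, even $\kappa L$, plays no role at all.
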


\begin{proof}
    The space $X/E^c$ inherits naturally a distance from $X$, and so does $X/E^c+x$. We set $2R=L(\varepsilon)$ and consider the ball $B=B(x,R)$ in $X/E^c+x$. Setting $2\delta=\varepsilon L(\varepsilon)^{-2\beta}$, we can find a $2\delta$-separated subset $\{x_1,\dots,x_N\}$ of $B$, where $N\geq C (R/\delta)^{\text{dim}(X)-2}$, where $C$ depends only on $\text{dim}(X)$.\newline
    By lemma \ref{proper 2}, there exist points $y_1,\dots,y_N$, such that, for every $i$, $y_i\in E^c(x_i)$ and there exists a 2-pieces $su$-curve $\gamma_i$, that joins $x_i$ and $y_i$ and satisfies $\ell(\gamma_i)<L(\varepsilon)$.\newline
    We notice that $\{y_1,\dots, y_N\}$ is a $2\delta$-separated set in $X$.\newline
    In addition, we have that $y_i \in W^u_{L(\varepsilon)}(W^s_{L(\varepsilon)}(x))$. Lemma \ref{Holonomies Lip} implies that $W^c_{\delta}(y_i)\subset W^u_{L(\varepsilon)}(W^s_{L(\varepsilon)}(W^c_{\varepsilon}(x)))$, therefore $W^u_{\delta}(W^s_{\delta}(W^c_{\delta}(y_i)))\subset W_{\varepsilon}(x)$. Now, by lemma \ref{proper 1}, we get that $B(y_i,\delta)\subset W_{\varepsilon}(x)$. Since the balls $B(y_i,\delta)$ are disjoint we get that
    \begin{align*}
        \text{Vol}(W_{\varepsilon}(x)\cap(X+x)) &\geq \sum_{i=1}^N \text{Vol}(B_{X+x}(y_i,\delta)\\
        & = C \delta^{\text{dim}(X)}\frac{L(\varepsilon)^{\text{dim}(X)-2}}{\delta^{\text{dim}(X)-2}}\\
        &= C\varepsilon^{2-2(\text{dim}(X)-2-4\beta)}.
    \end{align*}
    If $\beta<1/4$ (i.e. $f$ is sufficinetly close to $A$), the exponent is negative and this concludes the proof.

\end{proof}

\begin{lemma}\label{n epsilon}
    For every $\varepsilon>0$ small enough and $x\in\mathbb{R}^N$, there exists $n=n_{\varepsilon}\in \Lambda\subset X$ such that $W_{\varepsilon}(x)\cap(W_{\varepsilon}(x)+n_{\varepsilon})\neq\emptyset$. Moreover, if $B(x,\varepsilon)\subset U$, then  $U+n_{\varepsilon}=U$.\newline
    Finally, $|n_{\varepsilon}|\leq 5(1+\kappa) L(\varepsilon)$
\end{lemma}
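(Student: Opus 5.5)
Three parts, handled in order: a pigeonhole argument that produces $n_\varepsilon$, a saturation argument that gives $U+n_\varepsilon=U$, and the length bound.

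\textbf{Existence of $n_\varepsilon$.} I would argue by contradiction, using Lemma \ref{infinite volume}. Suppose that for every $n\in\Lambda\setminus\{0\}$ the translates $W_\varepsilon(x)$ and $W_\varepsilon(x)+n$ are disjoint. Then the projection $X+x\to (X+x)/\Lambda$, which is a torus of dimension $\dim X$ because $\Lambda$ has full rank in $X$, is injective on $W_\varepsilon(x)\cap(X+x)$. This slice is a measurable set, since $W_\varepsilon(x)$ is the continuous image of a $\sigma$-compact set. So its $\dim(X)$-dimensional volume would be bounded by $\mathrm{covol}(\Lambda)$. That contradicts Lemma \ref{infinite volume} once $\varepsilon$ is small. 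Hence there is $n_\varepsilon\in\Lambda\setminus\{0\}$ with $W_\varepsilon(x)\cap(W_\varepsilon(x)+n_\varepsilon)\neq\emptyset$.

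\textbf{Translation invariance of $U$.} Next I would show that $W_\varepsilon(x)\subset U$ whenever $B(x,\varepsilon)\subset U$. By Lemma \ref{invariant manifolds are Lip graphs}, $W^c_\varepsilon(x)\subset B(x,(1+\kappa)\varepsilon)$. This is a minor issue: I would either shrink the centre radius slightly or use a ball of radius $2\varepsilon$, since only $B(x,\varepsilon)\subset U$ with an absorbed constant is really needed. After that, $W_\varepsilon(x)$ is obtained from $W^c_\varepsilon(x)$ by successive $s$, $u$, $s$ saturations. Each saturation stays in $V=\pi^{-1}(E)$ because $E$ is $su$-saturated, and $\pi$ maps $F$-leaves to $f$-leaves. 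Each saturation also stays in the connected component $U$, since it is a union of connected leaves passing through points of $U$. So $W_\varepsilon(x)$ is a connected subset of $V$ meeting $U$, and therefore $W_\varepsilon(x)\subset U$. Since $V$ is $\mathbb{Z}^N$-periodic, $U+n_\varepsilon$ is also a connected component of $V$. It meets $U$ at a point of $W_\varepsilon(x)\cap(W_\varepsilon(x)+n_\varepsilon)$, so $U+n_\varepsilon=U$.

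\textbf{The bound on $|n_\varepsilon|$.} Any point of $W_\varepsilon(x)$ is reached from $x$ by a centre leg of parameter length at most $\varepsilon$, then an $s$-leg, a $u$-leg and an $s$-leg of parameter lengths at most $L$, $L+\varepsilon$ and $\varepsilon$. By Lemma \ref{invariant manifolds are Lip graphs} the total displacement is at most $(1+\kappa)(2L+3\varepsilon)\le\frac52(1+\kappa)L$ for $\varepsilon$ small. If $y\in W_\varepsilon(x)$ and $y-n_\varepsilon\in W_\varepsilon(x)$, then $|n_\varepsilon|\le 2\cdot\frac52(1+\kappa)L=5(1+\kappa)L(\varepsilon)$.

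The main obstacle is the first step. The injectivity argument must be made correctly on the slice $X+x$, with $\Lambda$ acting cocompactly on it, and measurability of the slice must be checked. Lemma \ref{infinite volume} is exactly what supplies the contradiction.
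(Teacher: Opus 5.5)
Your proposal is correct and follows the paper's proof: the same volume/injectivity contradiction with Lemma~\ref{infinite volume} on the slice $X+x$, the same connected-component argument giving $U+n_\varepsilon=U$, and the same bound $|n_\varepsilon|\le \mathrm{diam}\,W_\varepsilon(x)$. Your additions fill in steps the paper leaves implicit: measurability of the slice, the check that $W_\varepsilon(x)\subset U$ via $su$-saturation of $V$, and the observation that $W^c_\varepsilon(x)$ only lies in $B(x,(1+\kappa)\varepsilon)$, so the hypothesis needs a slightly larger ball.
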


\begin{proof}
    Consider $\varepsilon$ small enough to ensure that $\text{Vol}(W_{\varepsilon}(x)\cap(X+x))>\text{Vol}(X/\Lambda)$, which is finite since $\Lambda$ has full rank. We now consider the projection $\pi:(X+x)\rightarrow X/\Lambda$. If the fist part of the statement did not hold,we would then have that $\pi|_{W_{\varepsilon}(x)\cap(X+x)}$ is injective, but this would contraddict the fact that $\pi$ is a local isometry and that $\text{Vol}(W_{\varepsilon}(x)\cap(X+x))>\text{Vol}(X/\Lambda)$.\newline
    Now we notice that $U+n_{\varepsilon}$ is a connected component of $V$ (since $V$ is $\mathbb{Z}^n$ invariant) and that the first part ensures that $U+n_{\varepsilon}\cap U\neq \emptyset$, giving that $U+n_{\varepsilon}=U$.\newline
    Finally we must have $|n_{\varepsilon}|\leq \text{diam}(W_{\varepsilon}(x))$, hence the last statement.
\end{proof}

\begin{lemma}
    $U+\Gamma = U$ for a suitable $\Gamma\subset\Lambda$, full rank subgroup of $X$.
\end{lemma}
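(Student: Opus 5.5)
The plan is to take $\Gamma_0=\{n\in\Lambda:\ U+n=U\}$, which is a subgroup of $\Lambda$, and show that its real span is all of $X$; then $\Gamma=\Gamma_0$ works. By Lemma \ref{n epsilon}, applied at a point $x$ with $B(x,\varepsilon)\subset U$, $\Gamma_0$ contains $n_\varepsilon$. I will check that $n_\varepsilon\neq 0$: the volume argument yields two \emph{distinct} points of $W_\varepsilon(x)\cap(X+x)$ with the same image in $X/\Lambda$. So $W:=\mathrm{span}_{\mathbb{R}}\Gamma_0$ is a nonzero subspace of $X$, and it is rational with respect to $\Lambda$. The algebraic input is that $A^k$ is pseudo-Anosov on $(X,\Lambda)$ (Lemma \ref{Pseudo Anosov subspace}). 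Any nonzero $n\in\Lambda$ is then cyclic for every power $A^{kl}$, so a nonzero rational subspace invariant under some $A^{kl}$ must be all of $X$. It therefore suffices to find $l\geq 1$ with $A^{kl}\Gamma_0=\Gamma_0$.

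To relate $\Gamma_0$ to its images under $A^k$, I will use the lift $F=A+G$ with $G$ $\mathbb{Z}^N$-periodic. This gives $F(y+n)=F(y)+An$ for $n\in\mathbb{Z}^N$. Since $E$ is $f$-invariant, $V=\pi^{-1}(E)$ is $F$-invariant, and $F$, being a homeomorphism, permutes the connected components of $V$. Hence $U_j:=F^{jk}(U)$ is again a component, and $\mathrm{Stab}_\Lambda(U_j)=A^{jk}\Gamma_0$; here I use that $A^k$ preserves $X$ and $\mathbb{Z}^N$, hence $\Lambda$. Stabilizers are also unchanged by integer translation: $\mathrm{Stab}(U+m)=\mathrm{Stab}(U)$ for $m\in\mathbb{Z}^N$. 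So the goal reduces to the following claim: there exist $j\ge 1$ and $m\in\mathbb{Z}^N$ with $U_j=U+m$. Granting the claim, $A^{jk}\Gamma_0=\Gamma_0$, hence $A^{jk}W=W$, so $W=X$ and $\Gamma_0$ has full rank.

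The main obstacle is this recurrence claim, i.e.\ showing that only finitely many components of $V$ occur modulo $\mathbb{Z}^N$ among the $U_j$. I would first try a pigeonhole on volumes in $\mathbb{T}^N$. The projections $\pi(U_j)$ are components of $E$, and two of them coincide exactly when the $U_j$ differ by an integer translation. If each $\pi(U_j)$ contains a set of volume bounded below by a constant $c>0$ independent of $j$, then finiteness of the volume of $\mathbb{T}^N$ forces $\pi(U_j)=\pi(U_{j'})$ for some $j<j'$, which gives the claim after applying $F^{-jk}$. To get the uniform lower bound, I plan to use $su$-saturation together with the saturation lemmas of Subsection \ref{Su-saturation}. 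If $U_j$ contains a center plaque $W^c_{\varepsilon_0}(y)$ of some fixed size $\varepsilon_0$, then by Lemma \ref{proper 1} it contains $W^u_r(W^s_r(W^c_{\varepsilon_0}(y)))\supset B(y,\varepsilon_0/2)$ with $r=\varepsilon_0$. So what remains is that every $U_j$ contains a uniform center plaque. This should follow because $F^k$ acts on center leaves with Lipschitz constants close to $1$ (for $f$ close to $A$). If that fails, I would instead apply the volume argument of Lemma \ref{infinite volume} directly to each $U_j$ and argue with a fixed $\varepsilon$. If even the claim is out of reach, a fallback is to get $A^{jk}n$ into $\mathrm{Stab}(U)$ for $j=0,\dots,\dim X-1$ directly, which by cyclicity also gives full rank.
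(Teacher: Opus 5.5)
You correctly reduce the lemma to a recurrence claim, and that reduction is essentially the paper's. The ingredients are the equivariance $F(y+n)=F(y)+An$, the identity $\mathrm{Stab}_\Lambda(F^{jk}(U))=A^{jk}\Gamma_0$, and cyclicity of nonzero elements of $\Lambda$ under $A^{kl}$. Together they reduce everything to: $F^{jk}(U)=U+m$ for some $j\geq 1$ and $m\in\mathbb{Z}^N$. Your check that $n_\varepsilon\neq 0$ is a point the paper leaves implicit.

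The gap is that you never prove this claim, and the route you propose does not work as stated. A Lipschitz bound of $1+\epsilon$ for $F^k$ along center leaves only gives $(1+\epsilon)^{jk}$ for $F^{jk}$, so it yields no center plaque inside $U_j$ of size independent of $j$. Your fallbacks (Lemma \ref{infinite volume} applied to each $U_j$, or reaching $A^{jk}n$ directly) need the same uniform ball, and you give no mechanism for them. Notably, the hypothesis that $f$ preserves volume appears nowhere in your argument, and this is exactly the step that needs it.

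The missing line is short. Since $\pi\circ F=f\circ\pi$, you have $\pi(U_j)=f^{jk}(\pi(U))$. Because $f$ preserves volume, $\mathrm{vol}(\pi(U_j))=\mathrm{vol}(\pi(U))>0$ for every $j$, using that $U$ is open and nonempty. Two such projections are either equal or disjoint: if $\pi(U_i)\cap\pi(U_j)\neq\emptyset$, then $U_i+m$ meets $U_j$ for some $m$, so the two components coincide. Your pigeonhole then gives $U_{j'}=U_j+m$ with $j<j'$, and applying $F^{-jk}$ yields $U_{j'-j}=U+A^{-jk}m$. Equivalently, as the paper does, Poincar\'e recurrence on the open set $\pi(U)$ (the non-wandering set is all of $\mathbb{T}^N$) directly gives $l\geq 1$ and $h\in\mathbb{Z}^N$ with $F^l(U)+h=U$. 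The paper allows an arbitrary return time $l$ and then uses the powers $A^{kl}$, whereas you restrict to multiples of $k$; either works once recurrence is established.
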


\begin{proof}
    (Same as in [RH], expanded). The non-wandering set of $f$ is the whole $\mathbb{T}^N$ and $U$ is open. This implies the existence of $l\in\mathbb{N}$ and $h\in\mathbb{Z}^N$ such that $(F^l(U)+h)\cap U\neq\emptyset$. Since $V$ is $F$-invariant and $\mathbb{Z}^N$-invariant, $F^l(U)+h$ is a connected component of $V$, and therefore $U$, since their intersection is non-empty. To recap $U= F^l(U)+h$.\newline
    Using the fact that $F=A+G$, and that $A$ is linear and $G$ is $\mathbb{Z}^N$-periodic we can show by induction that $F^l(U+n)=F^l(U)+A^ln$.\newline
    Combining the two and lemma \ref{n epsilon}, we can write
    \begin{align*}
        U &= F^l(U) + h\\
        &= F^l(U+n) + h\\
        &= F^l(U) + A^ln + h\\
        &= U + A^ln.
    \end{align*}
By induction we get that $U$ is invariant by $A^{sl}n$ for every $s\in\mathbb{N}$.\newline
We set $\Gamma=\langle n, A^{kl}n, \dots, A^{kl(\dim(X)-1)}n\rangle$, which, by lemma \ref{Pseudo Anosov subspace}, is a full rank subset of $\Lambda$, since $n$ is cyclic for $A^{kl}$ and, by our proof, $U+\Gamma=U$. ($k$ is from Lemma \ref{Pseudo Anosov subspace}.)
\end{proof}

\begin{remark}
    Here we used Poincar\'e recurrence (and hence the volume preserving assumption) to see that the non-wandering set is full.
\end{remark}

\begin{lemma}
    $U$ is simply connected.
\end{lemma}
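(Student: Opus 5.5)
The key fact I will use is that $U$ is an open connected set which is saturated by the lifted foliations $\mathcal{F}^s_F$ and $\mathcal{F}^u_F$, since $V=\pi^{-1}(E)$ is $su$-saturated. By Lemma~\ref{homomorphism}, for a base point $x_0\in U$ the map $\Phi_{x_0}$ is a homeomorphism of $\mathbb{R}^N$. It identifies $\mathbb{R}^N$ with coordinates $(v^c,v^s,v^u)$, where $v^c$ moves along $W^c(x_0)$, then $v^s$ along a stable leaf, then $v^u$ along an unstable leaf. Using the global product structure (Lemma~\ref{unique intersection}), I will introduce the continuous retraction $\pi^{su}$-type map $p:\mathbb{R}^N\to W^c(x_0)$, given by $p(z)=W^s(W^u(z)\cap W^{cs}(x_0))\cap W^c(x_0)$. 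Its fibres are the sets $W^s(W^u(z)\cap W^{cs}(x_0))$, each homeomorphic to $E^s\times E^u$ via $\Phi_{x_0}$, so they are contractible. Since the stable and unstable leaves through points of $U$ lie in $U$, we get $U=p^{-1}(p(U))$. In other words, $U$ is homeomorphic to $D\times E^s\times E^u$, where $D=p(U)\cap W^c(x_0)$ is an open connected subset of the plane $W^c(x_0)\cong E^c$.

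Given this, $\pi_1(U)\cong\pi_1(D)$, and it remains to show that the planar domain $D$ is simply connected. Equivalently, every bounded component of $W^c(x_0)\setminus D$ must be empty. I plan to argue by contradiction. Suppose $K\subset W^c(x_0)$ is a nonempty compact complementary component enclosed by $D$. Then $p^{-1}(K)$ is a closed $su$-saturated set lying in $\mathbb{R}^N\setminus U$, and it is "bounded in the center direction" in the sense that it sits inside a bounded region in the $v^c$ coordinate.

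To reach a contradiction I will use translation invariance and dynamics. First, $U+\Gamma=U$ where $\Gamma$ spans $E^c$ after projection, by the previous lemma together with $E^c\subset X$ and $\Lambda$ being full rank. Second, $F^l(U)+h=U$. The set $\mathbb{R}^N\setminus U$ is invariant under these same maps, so the family of "holes" $p^{-1}(K)$ is permuted by them. Under the induced holonomy maps $\hat T_n$ and the projection of $F^l$, the holes in $D$ are permuted as well.

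The main step I would attempt is a volume or area argument. The hole $p^{-1}(K)$ projects to $\mathbb{T}^N$ into the closed $su$-saturated set $\mathbb{T}^N\setminus\pi(U)$, which is disjoint from $E$'s component. I would then apply $F^{-ml}$ to the hole. Because $A|_{E^c}$ is an isometry and $F$ is $C^1$-close to $A$, the center size of the hole stays controlled, with at most subexponential growth by Lemma~\ref{Holonomies Lip}. Meanwhile the hole is surrounded by the annular region $D$, whose translates by $\Gamma$ must also lie in $D$.

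Combining the Diophantine bound of Lemma~\ref{Diphantine center} with Lemma~\ref{Tn log}, the $\Gamma$-translates $\hat T_n$ of a loop in $D$ around $K$ can be made to come within any prescribed distance of $K$. Choosing $n$ with $|n^c|$ small but nonzero, a translate of the loop encircling $K$ must intersect $K$, which contradicts $K\cap D=\emptyset$. Alternatively, the encircling loop can be shown to be null-homotopic in $D$, again a contradiction.

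I expect the main obstacle to be this last step: controlling the holonomies $\hat T_n$ precisely enough to force a translate of the loop to cross the hole. Doing so requires balancing the Lipschitz bound $C|n|^\beta$ against the small center displacement $|n^c|\ge c'|n|^{-r}$.
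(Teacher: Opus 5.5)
\paragraph{Where the proposal breaks down.} Your first half matches the paper. Your $p$ is the paper's $\pi^{su}$, a trivial fibration with contractible fibres, so $\pi_1(U)\cong\pi_1(U\cap W^c(0))$. By the planar lemma, it then suffices to show that every component of $C=E^c\setminus(\sigma_0^c)^{-1}(U)$ is unbounded.

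The second half, however, contains no mechanism that can produce a contradiction, for two reasons.

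First, you do not have control at the scale of the hole. Lemma~\ref{Tn log} only gives $|T_n(z)-(z+n^c)|\le C\log^+|n|+C$. This error is bounded below by a fixed constant, while $K$ can be arbitrarily small. Insisting that $|n^c|$ be small forces $|n|\gtrsim |n^c|^{-1/r}$ by Lemma~\ref{Diphantine center}, which only makes the error larger. Lemma~\ref{Holonomies Lip} bounds Lipschitz constants, not displacements. Lemma~\ref{Tn close to linear} covers only $|n|\le R$, in a neighbourhood of $A$ fixed before $U$, and hence before the scale of $K$. So you cannot say where $\hat T_n(\ell)$ lies relative to $K$.

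Second, even with perfect control, the statement you aim for cannot hold. Since $U+n=U$, $\hat T_n$ is a homeomorphism of $W^c(0)$ with $\hat T_n(D)=D$. Hence $\hat T_n(\ell)\subset D$ encloses the hole $\hat T_n(K)$ and can never meet $K$. A slightly translated enclosing loop likewise just still encloses $K$.

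The $F^{-ml}$ step does not identify any quantity that yields a contradiction either. The centre distortion of $F$ is subexponential, so holes are neither collapsed nor blown up in a controlled way.

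\paragraph{The missing idea.} You need to work inside the complement and show that the component of a point $x\in C$ reaches arbitrarily far. The paper does this as follows.

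It applies the volume lemma (Lemma~\ref{n epsilon}) at $\hat x=\sigma_0^c(x)$ itself. This gives $n$ with $U+n=U$ (one can take $n\in\Gamma$ by running the volume argument modulo $\Gamma$), with $|n|\lesssim L(\varepsilon)$. It also gives a $5$-legged $su$-path from $W^c_\varepsilon(\hat x+n)$ to $W^c_\varepsilon(\hat x)$. The holonomy $\hat S$ along this path moves $\hat x+n$ to within $C L(\varepsilon)^{5\beta}\varepsilon$ of $\hat x$. This scale-free, small-displacement control is exactly what your argument lacks.

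Next, set $\hat y_i=\hat T_{(i-1)n}\hat S(\hat x+n)$. This point is $\delta=\varepsilon^{\gamma}$-close to $\hat T_{(i-1)n}(\hat x)$ and lies in $AC(\hat x+in)$. That class contains $\hat T_{in}(\hat x)$ and is disjoint from $U$, because $\hat x\notin U$ and $U+in=U$. Projecting an $su$-path by $\pi^{su}$ gives a curve in $C$ joining $\hat y_i$ to $\hat T_{in}(\hat x)$.

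Concatenating $k\sim\varepsilon^{-s}$ such steps yields a $\delta$-chain in $C$ from $x$ to $T_{kn}(x)$. Here the Diophantine bound is used in the opposite direction to yours: $|T_{kn}(x)-x|\ge c'k|n|^{-r}-C\log|kn|-C\to\infty$.

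Finally, let $\delta\to 0$ and take a Hausdorff limit. This produces a connected subset of $C$ from $x$ to $\partial B(x,R)$, contradicting the boundedness of the component of $x$.
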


\begin{proof}
    A detailed proof is given in appendix \ref{Simply connected proof}.\newline
    The proof will mostly follow the one in \cite{Hertz2005}, with the only changes occurring in the proof of their lemma 4.6. Their proof exploits an object very similar, yet different, to $W_{\varepsilon}(x)$. The modifications we introduce in the proof make up for the slight differences between these two objects.

\end{proof}

\subsection{Conclusion of the proof}\label{main proof}

From the properties proved in the previous subsection, we conclude the proof with a topological argument. We actually prove the following statement, from which Theorem \ref{minimality} follows directly.

\begin{proof}
\begin{lemma}
    If $f$ is sufficiently close to $A$ in $\mathcal{C}_{\text{vol}}^1(\mathbb{T}^N)$, for every $V\subset\mathbb{R}^n$ $su$-saturated, $F$-invariant $\mathbb{Z}^N$-invariant non-empty open set, $V=\mathbb{R}^N$.
\end{lemma}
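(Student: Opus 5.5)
Take a connected component $U$ of $V$. By the previous subsection, $U$ is open, $su$-saturated (each accessibility class is connected, so it lies in a single component), simply connected, and invariant under translations by a full-rank subgroup $\Gamma\subset\Lambda\subset X$. The goal is to show that $U$ meets every accessibility class $AC(z)$, $z\in\mathbb{R}^N$. Saturation then gives $z\in U$, hence $U=\mathbb{R}^N$ and $V=\mathbb{R}^N$.

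First I reduce to the center leaf $W^c(0)$. Since $\pi^{su}(z)\in AC(z)$ and $U$ is saturated, $U\cap W^c(0)=\pi^{su}(U)$, and it suffices to show $\pi^{su}(U)=W^c(0)$. Read in the chart $\sigma^c_0$, set $D=(\sigma_0^c)^{-1}(\pi^{su}(U))\subset E^c\cong\mathbb{R}^2$. This set is open, because $\pi^{su}$ is continuous and open: locally it looks like the projection along the product structure of Lemma \ref{homomorphism}. For $n\in\Gamma$, the $\Gamma$-invariance of $U$ together with saturation gives $T_n(D)=D$. Indeed, $\hat T_n=h^{su}_{n,0}\circ(\mathrm{id}+n)$ maps $U\cap W^c(0)$ first into $U+n=U$, then along $su$-paths, which keeps us inside $U$. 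The maps $T_n$ are homeomorphisms of the plane with $|T_n(x)-(x+n^c)|\le C\log^+|n|+C$ by Lemma \ref{Tn log}. Since $\Gamma$ has full rank in $X\supset E^c$, the vectors $n^c$, $n\in\Gamma$, span $E^c$ over $\mathbb{R}$; in fact they are dense by Lemma \ref{Diphantine center} and irrationality, though spanning should suffice.

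Next comes the topological core. I need to show that an open nonempty $D\subset\mathbb{R}^2$ invariant under such coarse translations, and coming from a simply connected $U$, is all of $\mathbb{R}^2$. The plan is to argue by contradiction. Suppose $D\neq\mathbb{R}^2$ and pick $p\notin D$, i.e. $AC(\sigma_0^c(p))\cap U=\emptyset$. Choose $n_1,n_2\in\Gamma$ with $n_1^c,n_2^c$ linearly independent and $|n_i|$ large, so that the displacement $n_i^c$ dominates the $O(\log|n_i|)$ error in Lemma \ref{Tn log}. Take a point $x\in U$ and build a closed loop in $U$: go from $x$ to $x+n_1$, then to $x+n_1+n_2$, then to $x+n_2$, and back to $x$. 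Each side is a translate by an element of $\Gamma$ of a fixed path $\eta_i\subset U$ joining $x$ to $x+n_i$. This loop is a commutator-type loop.

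Project the loop to the center leaf by $\pi^{su}$. The projected loop is a closed curve in $W^c(0)\cong\mathbb{R}^2$ that stays in $D$. Because its sides are approximately translated by $n_1^c$ and $n_2^c$ up to bounded errors, and because $\pi^{su}$ is close to the linear projection onto $E^c$ along $E^{su}$ (Lemma \ref{Phi close to identity}), I expect this curve to wind once around every point of a large parallelogram. Replacing $n_i$ by large multiples enlarges the parallelogram so that it contains $p$. Thus the curve has winding number $\neq 0$ around $p$.

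Simple connectivity now gives a contradiction. The loop bounds a disk $\Delta\subset U$, and $\pi^{su}\circ\Delta$ is a null-homotopy of the projected curve inside $D\subset\mathbb{R}^2\setminus\{p\}$. This is impossible, so $p\in D$.

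The main obstacle is the winding-number estimate. The paths $\eta_i$ are arbitrary inside $U$ and could wander far in the $su$ directions, so their projections are not controlled a priori. I would handle this by fixing $\eta_1,\eta_2$ once and then using translates by $m n_i$, $m\to\infty$. The projected images of the sides then differ from straight segments by a bounded amount, namely $\operatorname{diam}\pi^{su}(\eta_i)$ plus logarithmic errors in $m$. Meanwhile the parallelogram grows linearly in $m$, so the degree can be computed by homotoping to the linear parallelogram without crossing $p$.
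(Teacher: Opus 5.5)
Your strategy is the paper's in different coordinates: $(\sigma_0^c)^{-1}\circ\pi^{su}$ is exactly the $E^c$-component of $\Phi_0^{-1}$. So a winding number of the $\pi^{su}$-projection in $W^c(0)$ is the same as the class of $\Phi^{-1}\circ\tilde\gamma$ in $\pi_1(\mathbb{R}^N\setminus E^{su})$. The gap is the step you flag yourself, the winding-number estimate, and your proposed resolution does not hold up.

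Lemma \ref{Tn log} controls $\pi^{su}(x+n)=\hat T_n(x)$ only for $x\in W^c(0)$. It therefore handles the vertices $x+kn_1$ of your loop, but not the points $\eta_1(t)+kn_1$ on its sides. You claim these project, up to $\operatorname{diam}\pi^{su}(\eta_1)$ plus a logarithmic error, onto a translate of $\pi^{su}(\eta_1)$. That amounts to $\pi^{su}(w+n)\approx\hat T_n(\pi^{su}(w))$ for $w\notin W^c(0)$. Since $\mathcal{F}^s$ and $\mathcal{F}^u$ are not jointly integrable, these are in general different points of $AC(w+n)\cap W^c(0)$, and nothing you cite bounds their distance.

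The only general estimate available, Lemmas \ref{Phi close to identity} and \ref{homomorphism}, gives $|(\sigma_0^c)^{-1}\pi^{su}(z)-z^c|\le\frac{\kappa}{1-\kappa}|z|$, an error linear in $|z|$. Your $n_1,n_2$ are only required to be large with independent center parts, so nothing prevents $|n_i^{su}|\gg|n_i^c|$. In that case the error, of order $\kappa m|n_i|$, can exceed the size $m|n_i^c|$ of the parallelogram. You cannot compensate by shrinking $\kappa$, because $\kappa$ is fixed with $f$ before $U$ and $\Gamma$ are produced.

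What is missing is the paper's cone condition. Since $\Gamma$ is cocompact in $X\supset E^c$, you can choose the lattice steps within $d_\Gamma=\operatorname{diam}(X/\Gamma)$ of prescribed points of $E^c$ of large norm. Take them of comparable length with angle bounded below, so that $|n_i^{su}|\le d_\Gamma\ll|n_i^c|$. After scaling, the whole loop, including the bounded excursions of the fixed connecting paths, lies in a thin cone $\{|z^{su}|<\varepsilon|z|\}$ around $E^c$ and far from the target. There the crude bound $\frac{\kappa}{1-\kappa}|z|$ is dominated by the center distance to the target, which justifies the straight-line homotopy.

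Alternatively, one can upgrade Lemma \ref{Tn log} to all $z\in\mathbb{R}^N$ by the forward/backward iteration argument behind it. That argument shows $W^{cs}(0)$ and $W^{cu}(0)$ stay within bounded distance of $E^{cs}$ and $E^{cu}$, and stable/unstable leaves stay within logarithmic distance of affine subspaces. This is an additional lemma you would have to state and prove.

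A smaller point: with the loop based at $x$ and sides $mn_1$, $mn_2$, the point $p$ is enclosed only if $p-x^c$ lies in the open positive cone of $n_1^c,n_2^c$. Even then, its distance to the two sides through $x^c$ does not grow with $m$, while your error terms there are of size $\operatorname{diam}\eta_i$. Base the loop at $x-\lfloor m/2\rfloor(n_1+n_2)$ instead; it still lies in $U$ by $\Gamma$-invariance.
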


In view of lemma \ref{proper 2} and the $su$-saturation of $U$, to conclude the proof it is enough to show that $U$ is $E^c$-saturated.\newline

Fix $x\in U$ and $y\in E^c(x)$. We want to conclude that $y\in U$.\newline
The main idea is to combine the connectedness of $U$ and its $\Gamma$-invariance to build a curve in $U$ that wraps around $W^s(W^u(y))$. Then exploit the simple connectedness of $U$ to deduce that it must intersect $W^s(W^u(y))$, which yields the conclusion, by its $su$-saturation.\newline

\noindent We first need a preliminary lemma.\newline
Let us define $C_{\varepsilon}=\{z\in \mathbb{R}^n: |z^{su}|<\varepsilon|z|\}$ and $C_{y,\varepsilon}=y+C_{\varepsilon}$.

\begin{lemma}\label{piecewise linear curve}
    Given $x\in U$ and $y\in E^c(x)$ For every $\varepsilon>0$, there exist, $n_1,n_2,n_3\in \Gamma$ such that, for every $R>0$ there exist $M\in\mathbb{N}$ and a closed piecewise linear curve $\gamma:[0,M]\rightarrow \mathbb{R}^n\setminus E^{su}(y)$, such that:
    \begin{enumerate}
        \item $\gamma(i)\in x+\Gamma$, for $i=0,\dots,M$;
        \item $\gamma'|_{(i,i+1)}\equiv n_{j(i)}$ for a suitable $j(i)\in\{1,2,3\}$, for $i=0,\dots,M$;
        \item $\pi_1(\mathbb{R}^n\setminus E^{su}(y))=(\gamma)$;
        \item $\gamma(t)\in C_{y,\varepsilon}$ and $|\gamma(t)-y|>R$ for every $t$.
    \end{enumerate}
\end{lemma}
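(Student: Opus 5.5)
The construction will be carried out in the two-dimensional center, after projecting out $E^{su}$. Let $P^c:\mathbb{R}^N\to E^c$ be the projection along $E^{su}$. Then $\mathbb{R}^N\setminus E^{su}(y)$ deformation retracts onto $y+(E^c\setminus\{0\})$, so its fundamental group is $\mathbb{Z}$. A closed curve avoiding $E^{su}(y)$ generates $\pi_1$ exactly when its image under $P^c-y^c$ winds once around $0$ in $E^c\cong\mathbb{R}^2$. The lattice $\Gamma$ has full rank in $X\supset E^c$, so the projection $P^c(\Gamma)$ is dense in $E^c$; this follows from Lemma \ref{Diphantine center}, since $|n^c|$ can be made arbitrarily small but never $0$ for nonzero $n$, and the pseudo-Anosov property of $A^k$ on $X$ rules out degenerate directions. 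The plan is to choose three steps $n_1,n_2,n_3\in\Gamma$ and to walk from $x$ using only these steps.

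\textbf{Choice of $n_1,n_2,n_3$ (independent of $R$).} I will choose them so that each is nearly central, with $|n_j^{su}|<\frac{\varepsilon}{10}|n_j^c|$, and so that $n_1^c,n_2^c,n_3^c$ positively span $E^c$, i.e.\ $0$ lies in the interior of their convex hull. Vectors of $\Gamma$ with small $su$-part relative to their $c$-part exist in every center direction. One way to see this is to apply $A^{-kl m}$ to lattice vectors; better, use the fact that $\Gamma$ projects densely onto $E^c$ modulo $E^{su}\cap X$, so one can pick lattice points of $X$ lying within bounded distance of $E^c$ with large center part, in three prescribed directions at angles $2\pi/3$. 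Then there are positive integers $a_j$ with $\sum_j a_jn_j^c=0$ (after perturbing the directions so that they are rationally related), or at least $\sum_j a_jn_j^c$ small. Since the steps must be exactly in $\Gamma$, I will instead allow the closing error to be absorbed by a bounded correction word in $n_1,n_2,n_3$. The cone condition is preserved because everything is taken far from $y$.

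\textbf{Construction for given $R$.} Take a large polygon in $E^c$, namely a discretized triangle winding once around $y^c$ at distance about $\rho\gg R$. The first piece is a straight path from $x$ outward in direction $n_1$ repeated $m$ times, so that the point is at distance at least $\rho$. From there the curve goes around by $m$-fold repetitions of $n_2$, then $n_3$, then $n_1$, and so on, arranged so that the center projection traces a triangle enclosing $y^c$. Since $x-y\in E^c$, the $su$-component of $\gamma(t)-y$ is at most $\frac{\varepsilon}{10}$ times the accumulated center length. The center displacement from $y$ is comparable to $\rho$, because the triangle stays away from its center. This gives $\gamma(t)\in C_{y,\varepsilon}$ and $|\gamma(t)-y|>R$. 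The initial radial segment is traversed back and forth, which is allowed because the curve need not be simple. The curve closes exactly because the total step count satisfies $\sum_j a_jn_j=0$ in $\Gamma$; this needs exact integer relations $\sum a_jn_j=0$. That is impossible unless the $n_j$ are linearly dependent over $\mathbb{Z}$. Hence I will choose $n_3=-(an_1+bn_2)$ with $a,b>0$ and check that $n_3^c$ still gives a positively spanning triple with small $su$ ratio; this holds provided $n_1^c,n_2^c$ are at an obtuse angle and well chosen. With this choice, the words $n_1^{ma},n_2^{mb},n_3^{m}$ close up exactly and wind once.

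\textbf{Main obstacle.} The hardest step is ensuring that the $su$-components stay relatively small along the entire curve, including its start point $x$. This is handled by taking $\rho$ large relative to $|x-y|$ and to the fixed ratio bound. A second delicate point is the existence of lattice vectors in $\Gamma$ with prescribed center direction and small $su/c$ ratio. For this I would use density of $P^c(\Gamma)$ modulo a compact fundamental domain in the $su$-part of $X$: taking long lattice vectors near $tv$ for $v\in E^c$, the $su$-part stays bounded while the center part grows, so the ratio tends to $0$.
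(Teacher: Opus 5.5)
Your overall architecture is the paper's. You take three steps in $\Gamma$ with an exact relation (your $n_3=-(an_1+bn_2)$; the paper gets $a=b=1$ by setting $n_i=v_{i+1}-v_i$), nearly central so that their center parts positively span $E^c$, and trace a large triangle whose center projection winds once around $y^c$. However, the curve you actually build violates condition 4.

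You start at $x$ with the radial word $n_1^m$ and later retrace it, so $\gamma$ passes through $x$. Once $x,y$ are given, $|x-y|$ is fixed, while condition 4 must hold for every $R$; so $|\gamma(t)-y|>R$ fails at that point as soon as $R\ge|x-y|$. The loop in the order you describe has the same defect. From $x+mn_1$ you use the words $n_2^{mb},n_3^{m},n_1^{ma}$, so the last side lies on the line $x+\mathbb{R}n_1$, and for $a\ge 1$ that side contains $x$. If you lengthen the radial part to push this side away from $x$, the triangle no longer contains $y^c$. Anchored on the ray from $x$, you cannot get winding number one and distance $>R$ simultaneously.

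This is not cosmetic. In the proof of the minimality criterion, each segment of $\gamma$ is replaced by a $\Gamma$-translate of a path $\tilde\gamma_j\subset U$ from $x$ to $x+n_j$. That path may deviate from the segment by $C_{n_1,n_2,n_3}$. The choice $R=2C_{n_1,n_2,n_3}/\varepsilon$ in condition 4 is exactly what keeps the modified curve in a slightly larger cone and homotopic to $\gamma$ in $\mathbb{R}^n\setminus E^{su}(y)$. Your ``main obstacle'' paragraph, which treats the start point $x$ as part of the curve, shows the underlying misreading.

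The missing observation is that $\gamma$ never has to visit $x$. Condition 1 only asks that the vertices lie in $x+\Gamma$, and that is all that is needed to translate the $\tilde\gamma_j$ into $U$. The paper fixes, independently of $R$, points $v_1,v_2,v_3\in\Gamma$ with $|v_i^{su}|<d_\Gamma=\operatorname{diam}(X/\Gamma)$ and $B(0,2d_\Gamma/\varepsilon)\subset\operatorname{conv}(v_1^c,v_2^c,v_3^c)$. This is possible because $\Gamma$ is cocompact in $X\supset E^c$. It then takes the loop through $x+Kv_1,x+Kv_2,x+Kv_3$, made of $K$ copies of each $n_i$. For $K$ large this loop stays at distance $\gtrsim K$ from both $x$ and $y$ and winds once around $y^c$.

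The paper also gets the cone condition by convexity. On each side the $su$-part is a convex combination of the vertex values, hence bounded by $Kd_\Gamma$, while the center part is at least $2Kd_\Gamma/\varepsilon$. So no bookkeeping of accumulated $su$-drift is needed. In your version the ratio $\varepsilon/10$ would have to become $\varepsilon/C$ with $C$ depending on the triangle's shape, since accumulated center length is only comparable to the distance from $y^c$.

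Finally, density of $P^c(\Gamma)$ is not needed. Lemma \ref{Diphantine center} would not give it anyway, since it is a lower bound on $|n^c|$. The cocompactness argument in your last paragraph is the right tool.
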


\begin{proof}
    We can suppose that $x=0$.\newline
    Define $d_{\Gamma}=\text{diam}(X/\Gamma)$. Take three points $z_1,z_2,z_3\in E^c$, such that $B(0,3d_{\Gamma}/\varepsilon)$ lies inside their convex hull. We can find $v_1,v_2,v_3\in\Gamma$ such that $|v_i-z_i|<d_{\Gamma}$. Therefore, $B(0,2d_{\Gamma}/\varepsilon)$ lies in the convex hull of $v_1^c, v_2^c,v_3^c$. This and the fact that $|v_i^{su}|<d_{\Gamma}$ imply that $\overline{v_iv_{i+1}}\subset C_{\varepsilon/2}$ (indices must be intended modulo 3).\newline
    Set $n_i=v_{i+1}-v_i$ (this does not depend on $R$).\newline
    We now take $K\in\mathbb{N}$, whose size will be specified later.\newline
    Consider $w_i=Kv_i$, and define the closed curve $\gamma:[0,3K]\rightarrow\mathbb{R}^n$, such that $\gamma(i)=w_i$, and $\gamma$ linear between these points.\newline
    Conditions 1 and 2 hold for any $K$.\newline
    For $K>(2d_{\Gamma}/\varepsilon)/|x-y|$, we have that $y$ lies in the convex hull of $w_1^c,w_2^c,w_3^c$, hence condition 3.\newline
    We notice that $\gamma(t)\in C_{\varepsilon/2}$ and $|\gamma(t)|>K(2d_{\Gamma}/\varepsilon)$ for every $t\in[0,3K]$.\newline
    For $K>|x-y|/d_{\Gamma}$, we have that $\gamma(t)\in C_{\varepsilon/2}\cap B(0,2|x-y|/\varepsilon)^c\subset C_{y,\varepsilon}$.\newline
    Finally, we pick $K$ large enough to ensure that $|\gamma(t)|>K(2d_{\Gamma}/\varepsilon)>R+|y|$, which is what we need to meet condition 4.\newline
    This concludes the proof of the lemma.\newline
\end{proof}

For the rest of the proof, we can suppose that $x\in E^c$ and $y=0$, for the sake of notation. Since $U$ is $su$-saturated, we assume, by contradiction, that $U\cap W^u(W^s(0)))=\emptyset$.\newline
We apply Lemma \ref{piecewise linear curve}.
Being $U$ connected there exist a path $\tilde\gamma_j:[0,1]\rightarrow U$ that joins $x$ and $x+n_j$ for $j=1,2,3$. By compactness, there exists a constant $C_{n_1,n_2,n_3}>0$ (that depends only on $n_1,n_2,n_3$) such that $|(\tilde\gamma_j(t)-x)-t\gamma_j)|<C$ for every $t\in[0,1]$. We set $R=2C_{n_1,n_2,n_3}/\varepsilon$. We can now define a new curve $\tilde\gamma:[0,M]\rightarrow \mathbb{R}^n$ by replacing its linear pieces with $\Gamma$-translates of the curves $\tilde\gamma_j$. More precisely
    \begin{equation}
        \gamma(t) = \gamma(i) + \tilde\gamma_{j(i)}(t)-x   \qquad \text{for }t\in[i,i+1).
    \end{equation}
Since $U$ is $\Gamma$-invariant, $\tilde\gamma:[0,M]\rightarrow U$. Moreover $\|\gamma-\tilde\gamma\|_{\infty}<C_{n_1,n_2,n_3}$. This implies that $\tilde\gamma(t)\in C_{2\varepsilon}$ for every $t$, and at the same time that, $\gamma$ and $\tilde\gamma$ are homotopic in $\mathbb{R}^n\setminus E^{su}$, implying that $\pi_1(\mathbb{R}^n\setminus E^{su})=(\tilde\gamma)$.\newline
Recall the map $\Phi_y$ from section \ref{Su-saturation}, and set $\Phi=\Phi_0$. By lemma \ref{homomorphism}, we can apply $\Phi^{-1}.$ Now, $\Phi^{-1}(W^u(W^s(0)))=E^{su}$ and we define $\hat\gamma=\Phi^{-1}\circ\tilde\gamma$. If $\kappa$ and $\varepsilon$ are small enough we have that $|\hat\gamma(t)-\tilde\gamma(t)|<|\tilde\gamma(t)^c|$ for every $t$. This implies that $\hat\gamma$ is homotopic to $\tilde\gamma$ in $\mathbb{R}^n\setminus E^{us}$ and therefore its homotopy type is nontrivial in $\mathbb{R}^n\setminus E^{us}$. Finally, we have that $\Phi^{-1}(U)\cap E^{su}=\emptyset$ and $\text{Im}(\hat\gamma)\subset \Phi^{-1}(U)$. Since $U$ has trivial fundamental group and $\Phi$ is an homomorphism, $\hat\gamma$ should be contractible in $\Phi^{-1}(U)$, but then, a fortiori, it should be contractible in $\mathbb{R}^n\setminus E^{su}$, providing the desired contradiction.
\end{proof}

\section{Proof of Stable Ergodicity}\label{proof of ergodicity}

This section is devoted to the proof of theorem \ref{stable ergodicity}.\newline
As mentioned, the proof follows ideas from \cite{Hertz2005}, with our theorem \ref{minimality} playing the role of theorem 4.1 in \cite{Hertz2005}. Our exposition takes advantage of more recent results about accessibility classes.\newline

Our goal is to prove the ergodicity of $f$ by showing that it is either accessible or essentially accessible. Hence we start by describing the structure of the accessibility classes.\newline

For $x\in\mathbb{T}^N$ we denote its accessibility class with respect to $f$ by $AC_{f}(x)$, while for $\tilde x\in \mathbb{R}^N$ we denote the one with respect to $F$ by $AC_F(\tilde x)$. When no confusion arises, we will drop this indices. If $\tilde x$ is a lift of $x$ we have that $\pi^{-1}(AC_{f}( x)) = AC_F(\tilde x)+\mathbb{Z}^N$ and, in particular, $AC_F(\tilde x)$ is the arc-connected component of $\pi^{-1}(AC_{f}( x))$ containing $\tilde x$.\newline

We remark that if $f$ is close enough to $A$, $f$ is dynamically coherent and center bunching. Therefore, we can apply theorem B from \cite{rodriguez2017structure}.

\begin{theorem}
Let $f : M \rightarrow M$ be a partially hyperbolic $\mathcal{C}^2$ diﬀeomorphism on a closed Riemmaninan manifold. Suppose $f$ to have a two-dimensional center bundle, and to be dynamically coherent and center bunching.\newline
Then all accessibility classes are injectively immersed $\mathcal{C}^1$-submanifolds.
\end{theorem}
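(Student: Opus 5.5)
The plan is to reduce the statement to a problem about a pseudogroup of $\mathcal{C}^1$ maps acting on a surface, namely a local center leaf. Since $f$ is dynamically coherent and center bunched, the results of Pugh--Shub--Wilkinson and Burns--Wilkinson make the stable and unstable holonomies between local center leaves $\mathcal{C}^1$. In particular the su-holonomies $h^\gamma$ of Subsection \ref{Invariant Manifolds and Holonomies} are $\mathcal{C}^1$ local diffeomorphisms between two-dimensional center plaques. For $x\in M$ I would define the \emph{center section}
\[
C(x)=AC(x)\cap W^c_{\mathrm{loc}}(x).
\]
Every point of $AC(x)$ near $x$ is reached from $C(x)$ by a short $s$-leg followed by a short $u$-leg. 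The local product structure (Lemma \ref{homomorphism}) then identifies a neighbourhood of $x$ in $AC(x)$ with $W^u_{\mathrm{loc}}(W^s_{\mathrm{loc}}(C(x)))$. This set is homeomorphic to $C(x)\times D^{s}\times D^{u}$ via a chart of $\mathcal{C}^1$ regularity along the center, because the holonomies are $\mathcal{C}^1$. So it suffices to prove that $C(x)$ is, locally near $x$, an embedded $\mathcal{C}^1$ submanifold of the center plaque, of dimension $0$, $1$ or $2$. Moreover $C(x)$ is exactly the orbit of $x$ under the pseudogroup $\mathcal{P}$ generated by holonomies along su-loops based on the plaque.

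I would then split into cases according to local structure.

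\textbf{Dimension 2.} If $C(x)$ has nonempty interior, homogeneity under $\mathcal{P}$ makes it open. Then $AC(x)$ is open, hence a codimension-$0$ submanifold.

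\textbf{Dimension 0.} If $x$ is isolated in $C(x)$, then all small su-loops close up at $x$, so $E^s\oplus E^u$ is integrable along $AC(x)$. In that case $AC(x)$ is locally $W^u_{\mathrm{loc}}(W^s_{\mathrm{loc}}(x))$, a topological manifold of dimension $\dim E^s+\dim E^u$. It is $\mathcal{C}^1$ because it is a union of $\mathcal{C}^1$ unstable leaves through a $\mathcal{C}^1$ stable disc, with $\mathcal{C}^1$ transverse dependence coming from the holonomies.

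\textbf{Dimension 1.} In the remaining case, $C(x)$ has empty interior but is not discrete. I would first show that it is path connected locally. Take an su-path $\gamma_t$ whose endpoints vary continuously in $t$, and project it to the center plaque by $\pi^{su}$; this yields continuous curves inside $C(x)$. Hence $C(x)$ is a countable union of continuous arcs, and it is homogeneous under $\mathcal{P}$.

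The main obstacle is the one-dimensional case: showing that such a set is locally a single $\mathcal{C}^1$ arc, with no branching and no fractal behaviour. My first approach would be a Sussmann-type orbit argument adapted to $\mathcal{C}^1$ pseudogroups. At each $y\in C(x)$, collect the tangent vectors at $y$ of all $\mathcal{C}^1$ curves of the form $t\mapsto h^{\gamma_t}(y)$ with $\gamma_0$ trivial. These span a subspace $\Delta(y)\subset E^c(y)$, which is invariant under $D\mathcal{P}$. Its dimension cannot be $2$: otherwise an implicit function argument on a two-parameter family of holonomies would produce an open subset of $C(x)$, which is excluded in this case. So $\Delta$ is a line field along $C(x)$, varying continuously thanks to the $\mathcal{C}^1$ holonomies. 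Integrating $\Delta$ near $x$ gives a $\mathcal{C}^1$ arc $\alpha\subset C(x)$. To rule out other points of $C(x)$ accumulating near $\alpha$ from outside it, I would use homogeneity: any such point would produce, via a holonomy composed with the arc family, a two-parameter family sweeping an open set, contradicting empty interior. I expect this non-accumulation step to be the most delicate. It is also the step where two-dimensionality of the center is essential, since it relies on planar topology, for instance that a homogeneous arcwise-connected non-open planar set with a continuous tangent line field is locally an arc. Once it is established, $C(x)$ is locally a $\mathcal{C}^1$ curve, and the product structure above makes $AC(x)$ an injectively immersed $\mathcal{C}^1$ submanifold of codimension $1$.
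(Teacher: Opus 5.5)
The paper does not prove this statement; it quotes it as Theorem B of \cite{rodriguez2017structure}. Judged on its own, your sketch has the right input: $\mathcal{C}^1$ center holonomies, coming from $\mathcal{C}^2$, dynamical coherence and center bunching. It has two genuine gaps.

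\textbf{First gap: $C(x)$ is the wrong object.} With the subspace topology, $C(x)=AC(x)\cap W^c_{\mathrm{loc}}(x)$ is not locally an \emph{embedded} $\mathcal{C}^1$ submanifold of the plaque in general, so your reduction targets a false claim. Accessibility classes need not be locally closed.

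Take the linear automorphism $A$ of this very paper. There $AC(x)=\pi(\tilde x+E^{su})$, so $C(x)=\{\pi(\tilde x-m^c): m\in\mathbb{Z}^N,\ |m^c|<r\}$. By Lemma \ref{Diphantine center}, $n\mapsto n^c$ is injective on $\Lambda$, which has rank $\dim X\geq 4$. Its image in $E^c\cong\mathbb{R}^2$ therefore cannot be discrete, and $C(x)$ accumulates at $x$. So for this codimension-two class, $x$ is not isolated in $C(x)$. You fall into your ``dimension~1'' case and would be trying to show that a countable set is a $\mathcal{C}^1$ arc.

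Codimension-one classes can also be dense. For example, take translation skew products $(x,\theta)\mapsto(Ax,\theta+\psi(x)v)$ on $\mathbb{T}^2\times\mathbb{T}^2$, with $v$ of irrational slope and generic $\psi$. The classes are $\mathbb{T}^2\times(\theta+\mathbb{R}v)$, so $C(x)$ is a countable family of parallel arcs accumulating on one another. Your non-accumulation step is therefore false, not merely delicate.

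This is exactly why the conclusion says ``injectively immersed''. You must work intrinsically: with the local class (points of the plaque reached by su-paths staying in a small neighbourhood, or the path component of $x$), and build the chart $C\times D^s\times D^u$ for that set. Even then, $\mathcal{C}^1$ regularity of $W^u_{\mathrm{loc}}(W^s_{\mathrm{loc}}(\cdot))$ does not follow from $\mathcal{C}^1$ center holonomies alone, because $\mathcal{F}^s$ and $\mathcal{F}^u$ are only H\"older transversally. In your dimension-$0$ case this needs a Journ\'e-type argument.

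\textbf{Second gap: the one-dimensional case needs unavailable differentiability.} This case is the real content of the theorem. Center bunching makes $h^{\gamma}$ a $\mathcal{C}^1$ function of the \emph{point} of the plaque for a fixed path $\gamma$. But $t\mapsto h^{\gamma_t}(y)$, as the legs of $\gamma_t$ move, depends on $t$ through the stable and unstable foliations and is in general only H\"older. Consequently:
\begin{itemize}
\item the tangent vectors defining $\Delta(y)$ need not exist;
\item the implicit-function argument excluding $\dim\Delta=2$ has no basis;
\item nothing gives continuity of $\Delta$, since pseudogroup elements sending $y$ to nearby points need not be $\mathcal{C}^1$-close to the identity.
\end{itemize}
Even granting a continuous line field on $C(x)$, it lives on a non-open set. ``Integrating'' it gives no curve guaranteed to stay in $C(x)$, no uniqueness (Peano), and no exclusion of branching.

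What the $\mathcal{C}^1$ holonomies do give, with no differentiation in the path parameter, is $\mathcal{C}^1$-homogeneity of the local class inside the $\mathcal{C}^1$ surface $W^c_{\mathrm{loc}}(x)$. A $\mathcal{C}^1$-homogeneity criterion could convert this into the desired regularity, for instance the Repov\v{s}--Skopenkov--\v{S}\v{c}epin theorem: locally compact $\mathcal{C}^1$-homogeneous subsets of $\mathbb{R}^n$ are $\mathcal{C}^1$ submanifolds. That route requires first proving that the local class is locally compact, for instance a topological manifold, and your sketch does not address this.
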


We can then consider the quantity $\text{codim}(AC(x))$. By our previous consideretions we also have that $\dim(AC_F(\tilde x))=\dim AC_f(x)$.\newline We investigate how it varies in $x$, with the goal of proving that it must be constant.\newline
The first remark is that $\text{codim}(AC(x))=\text{codim}_{W^c(0)}(AC(x)\cap W^c(0))$.

\begin{lemma}\label{AC invariant homotopy}
    Working in $\mathbb{R}^N$, consider $x\in W^c(0)$ and $y\in W^c(0)\cap AC(x)$.\newline
    There exists a continuous homotopy $H:W^c(0)\times[0,1]\rightarrow W^c(0)$, such that $H(x,0) = x$, $H(x,1) = y$, $H(z,[0,1]) \subset AC(z)$ for any $z\in W^c(0)$. 
\end{lemma}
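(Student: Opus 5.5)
The idea is to build the homotopy by sliding a holonomy along an $su$-path, one leg at a time. Since $y\in AC(x)$, there is an $su$-path $\gamma$ in $\mathbb{R}^N$ from $x$ to $y$ with finitely many legs, say $x=p_0,p_1,\dots,p_K=y$, where consecutive points lie on a common stable or unstable leaf. Composing the corresponding stable and unstable holonomies gives a map $h^{\gamma}:W^c(x)\to W^c(y)$ with $h^{\gamma}(x)=y$. Because everything lives in $\mathbb{R}^N$ with $x,y\in W^c(0)$, all of these center leaves are the same leaf $W^c(0)$. I would not use $h^{\gamma}$ itself as the homotopy. Instead, I would deform each leg continuously and follow the induced holonomy along the way.

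\textbf{Construction for one unstable leg.} Suppose $p_{i+1}\in W^u(p_i)$, and write $p_{i+1}=\sigma^u_{p_i}(v)$. For $t\in[0,1]$ set $q_t=\sigma^u_{p_i}(tv)$, which runs along the unstable leaf from $p_i$ to $p_{i+1}$. Then define
\[ H_i(z,t)=\pi^u\text{-type projection of } z, \]
namely the unique point $W^u(z)\cap W^{cs}(q_t)$ given by Lemma \ref{unique intersection}. This point lies in $W^c(q_t)$. To land back in $W^c(0)$, I would then apply the stable holonomy from $W^c(q_t)$ to $W^c(0)$, that is, intersect with $W^{cu}(0)$. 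Equivalently, I can use the preferred holonomy $h^{su}_{q_t,0}$.

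The resulting point is joined to $z$ by an $su$-path (one unstable leg followed by one stable leg). Hence it lies in $AC(z)$. It is also continuous in $(z,t)$, because the foliations vary continuously and the intersection points in Lemma \ref{unique intersection} are unique.

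\textbf{Endpoints of each piece.} At $t=0$ we have $q_0=p_i$. Since $p_i\in W^c(0)$ only after the earlier pieces have been composed, I would instead work throughout with holonomies $W^c(p_i)\to W^c(q_t)$ and project back to $W^c(0)$ using $\pi^{su}$. The map $\pi^{su}$ is continuous and preserves accessibility classes, as noted right after its definition. So I would simply define
\[ H(z,t)=\pi^{su}\big(h^{\gamma_t}(z)\big), \]
where $\gamma_t$ is the truncation of $\gamma$ parametrized so that its endpoint moves from $x$ to $y$ as $t$ goes from $0$ to $1$, and where the first point of $\gamma_t$ is interpreted via $W^c(x)=W^c(0)$.

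\textbf{Checking the required properties.} Continuity in $t$ across leg changes follows from concatenating the pieces. Continuity in $z$ follows because each holonomy is continuous. The property $H(z,t)\in AC(z)$ holds because both $h^{\gamma_t}(z)$ and $\pi^{su}$ move points only along $su$-paths. The endpoint conditions need a check:
\begin{itemize}
\item At $t=0$, $\gamma_0$ is trivial, so $H(z,0)=\pi^{su}(z)$. This equals $z$, since $\pi^{su}$ restricts to the identity on $W^c(0)$.
\item At $t=1$, $h^{\gamma}(x)=y$ and $y\in W^c(0)$, so $H(x,1)=y$.
\end{itemize}

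\textbf{Main obstacle.} The delicate step is making sure the holonomies along the partial paths $\gamma_t$ are well defined and continuous on the whole (noncompact) leaf $W^c(0)$, jointly in $(z,t)$. This relies on the global product structure from Lemma \ref{unique intersection} and on the graph description from Lemma \ref{invariant manifolds are Lip graphs}. In particular, the intersection points depend continuously on the base points, since they are fixed points of a contraction coming from the Lipschitz graphs.
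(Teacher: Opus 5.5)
Your proposal is correct and, once you set aside the exploratory one-leg construction, it is exactly the paper's argument: the paper also defines $H(z,t)=\pi^{su}\big(h^{\gamma|_{[0,t]}}(z)\big)$ and checks the endpoints and the inclusion $H(z,[0,1])\subset AC(z)$ using the fact that both the holonomies and $\pi^{su}$ preserve accessibility classes. Your additional remarks, namely that $\pi^{su}$ is the identity on $W^c(0)$ and that the holonomies vary continuously with the base point (which gives continuity in $t$ across leg changes), are somewhat more careful than the paper, which leaves continuity implicit.
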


\begin{proof}
    Let $\gamma:[0,1]\rightarrow \mathbb{R}^N$ be a $su$-curve such that $\gamma(0)=x$ and $\gamma(1)=y$. We define $H(t,z)=\pi^{su}(h^{\gamma_{|[0,t]}}(z))$. First notice that $\Im(H)\subset\Im(\pi^{su})=W^c(0)$. Then, we have that $H(x,0)=\pi^{su}(x)=x$ and that $H(1,x)=\pi^{su}(h^{\gamma}(x))=\pi^{su}(y)=y$. Finally, the last requirement is met since both $h^{\gamma|_{[0,t]}}$ and $\pi^{su}$ preserve the accessibility classes.\newline
\end{proof}

This implies that $AC_F(x)\cap W^c(0)$ is arc-connected for every $x\in W^c(0)$.\newline
In particular, if $\dim(AC(x)\cap W^c(0))=0$, we must have that $\#AC(x)\cap W^c(0)=1$. 

\begin{lemma}
    The map $x \mapsto \text{codim}(AC(x))$ is upper semi-continuous on $\mathbb{T}^N$ and $\mathbb{R}^N$.
\end{lemma}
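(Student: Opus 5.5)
The plan is to reduce everything to the two-dimensional center leaf $W^c(0)$ and to transport accessibility classes by the holonomy homotopy of Lemma \ref{AC invariant homotopy}. We use the remark above that $\text{codim}(AC(x))=\text{codim}_{W^c(0)}(AC(x)\cap W^c(0))$, which lets us work in $\mathbb{R}^N$. Since $\pi^{su}:\mathbb{R}^N\to W^c(0)$ is continuous and preserves accessibility classes, it is enough to prove upper semi-continuity of $z\mapsto \text{codim}_{W^c(0)}(AC(z)\cap W^c(0))$ for $z\in W^c(0)$. The statement on $\mathbb{T}^N$ then follows because $\dim AC_F(\tilde x)=\dim AC_f(x)$ and $\pi$ is a local homeomorphism.

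Since $\dim W^c(0)=2$, the codimension takes only the values $0,1,2$. Upper semi-continuity therefore amounts to two openness statements: the set where the codimension is $0$ is open, and the set where the codimension is at most $1$ is open.

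The codimension $0$ case is immediate. If $AC(x)\cap W^c(0)$ has codimension $0$, it is a $2$-dimensional injectively immersed submanifold of the $2$-dimensional leaf. By invariance of domain it contains an open neighbourhood of $x$ in $W^c(0)$, so every nearby $z$ lies in the same class and also has codimension $0$.

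For the codimension $1$ case, let $AC(x)\cap W^c(0)$ have dimension at least $1$. Since it is arc-connected, we can pick $y\neq x$ in it and take the homotopy $H$ of Lemma \ref{AC invariant homotopy}. Then $H(x,0)=x$, $H(x,1)=y$, and $H(z,[0,1])\subset AC(z)$ for every $z$. By continuity of $H$ and compactness of $[0,1]$, if $|z-x|$ is small then $H(z,1)$ is close to $y$, hence different from $z=H(z,0)$ once $|x-y|$ exceeds twice the perturbation. So the curve $t\mapsto H(z,t)$ is a nonconstant path inside $AC(z)\cap W^c(0)$, and this set contains more than one point. By the dichotomy noted before the statement (dimension $0$ forces a single point), $\dim(AC(z)\cap W^c(0))\geq 1$, i.e. the codimension is at most $1$ near $x$.

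The main subtlety is the last step: we need that a $0$-dimensional class meets $W^c(0)$ in exactly one point. This relies on arc-connectedness, which comes from Lemma \ref{AC invariant homotopy} (any two points of $AC(x)\cap W^c(0)$ are joined by the path $t\mapsto H(x,t)$), together with the fact that a $0$-dimensional injectively immersed manifold is discrete, and an arc-connected discrete set is a point. We also need $H(z,0)=z$ for all $z$, not only $z=x$. This holds since $h^{\gamma|_{[0,0]}}=\mathrm{id}$ and $\pi^{su}$ is the identity on $W^c(0)$, and that is where the proof should check the construction carefully.
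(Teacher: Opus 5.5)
Your proof is correct and follows the same route as the paper. The codimension-$0$ set is open, the codimension-$\le 1$ set is open by continuity of the homotopy $H$ of Lemma \ref{AC invariant homotopy}, and the torus case passes through the covering map. Your additions spell out steps the paper leaves implicit: the reduction to $W^c(0)$ through $\pi^{su}$, invariance of domain, and the dimension-$0$ dichotomy. Note that $H(z,0)=z$ is not needed, because $H(z,0)\neq H(z,1)$ with both points in $AC(z)\cap W^c(0)$ already suffices.
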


\begin{proof}
    The set $\{x\in\mathbb{R}^N:\text{codim}(AC(x))=0\}$ is open by definition.\newline
    We are left with proving that $\{x\in\mathbb{R}^N:\text{codim}(AC(x))\leq 1\}$ is open as well. For $x\in W^c(0)$ belonging to such set, we can find $y\neq x$ in $AC(x)\cap W^c(0)$. We can then apply lemma \ref{AC invariant homotopy}, and obtain the described homotopy $H$. Since $H(x,0)\neq H(x,1)$, we have that there exists $\varepsilon>0$ such that $H(z,0)\neq H(z,1)$ for every $z\in W^c_{\varepsilon}(x)$. Therefore, for every such $z$, $\text{codim}(AC(z))\leq 1$. This ends the proof.\newline
    The statement for $\mathbb{T}^N$ follows since $\pi:\mathbb{R}^N\rightarrow \mathbb{T}^N$ is open.
\end{proof}

\begin{corollary}
    All accessibility classes have the same dimension.
\end{corollary}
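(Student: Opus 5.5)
The plan is to combine the upper semi-continuity of $x\mapsto \text{codim}(AC(x))$ just proved with the minimality criterion, Theorem \ref{minimality}. Since $\dim(E^c)=2$ and the accessibility classes are injectively immersed $\mathcal{C}^1$-submanifolds containing the $s$ and $u$ leaves, the codimension takes only the values $0,1,2$. Let $m=\min_{x\in\mathbb{T}^N}\text{codim}(AC(x))$ and consider
\begin{equation*}
E=\{x\in\mathbb{T}^N:\ \text{codim}(AC(x))\geq m+1\}.
\end{equation*}
The aim is to show that $E$ is empty, which means every class has codimension exactly $m$.

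First I will check that $E$ satisfies the hypotheses of Theorem \ref{minimality}, except possibly non-emptiness. Its complement $\{x:\text{codim}(AC(x))\leq m\}$ is open by the upper semi-continuity lemma, so $E$ is closed. Next, $E$ is $su$-saturated: the codimension is a function of the class itself, so it is constant on each class. Finally, $E$ is $f$-invariant, because $f$ maps $\mathcal{F}^s,\mathcal{F}^u$ leaves to leaves. Hence $f(AC(x))=AC(f(x))$, and since $f$ is a diffeomorphism it preserves the dimension of this immersed submanifold.

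Now suppose $E$ is non-empty. Theorem \ref{minimality} then forces $E=\mathbb{T}^N$. But any point realizing the minimum $m$ lies outside $E$, and such a point exists because the codimension takes finitely many values. This is a contradiction, so $E=\emptyset$ and all classes have codimension $m$.

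The only delicate point is that the codimension must be well defined and $f$-equivariant. This rests on the classes being genuine immersed submanifolds, by theorem B of \cite{rodriguez2017structure}. It also uses the earlier remark that the codimension can be computed inside $W^c(0)$, which ensures the values lie in $\{0,1,2\}$. Everything else is formal once Theorem \ref{minimality} is available.
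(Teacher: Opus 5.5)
Your proof is correct and is essentially the paper's argument: semi-continuity of $\text{codim}(AC(x))$ combined with the minimality criterion. The paper applies the lifted form of Theorem \ref{minimality} (open, $su$-saturated, $F$- and $\mathbb{Z}^N$-invariant subsets of $\mathbb{R}^N$) to the open sublevel sets $\{\text{codim}(AC(x))\leq k\}$, while you apply the closed-set version on $\mathbb{T}^N$ to the complementary superlevel set $\{\text{codim}(AC(x))\geq m+1\}$; the two are the same up to taking complements.
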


\begin{proof}
    The sublevels $\{x\in\mathbb{R}^N:\text{codim}(AC(x))\leq k\}$ are open, $su$-saturated, $F$-invariant and $\mathbb{Z}^N$-invariant. By theorem \ref{minimality}, each of these sets must be either empty or full, hence the conclusion.
\end{proof}

\begin{lemma}
    If $f$ is sufficiently $\mathcal{C}^1$-close to $A$, then $\text{codim}(AC(0))\neq 1$.
\end{lemma}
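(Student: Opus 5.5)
We argue by contradiction, following the approach of Rodriguez-Hertz for the fixed point. Suppose $\text{codim}(AC(0))=1$. By the preceding corollary every accessibility class then has codimension one. In $\mathbb{R}^N$ we work inside the two-dimensional leaf $W^c(0)$. By the remark $\text{codim}(AC(x))=\text{codim}_{W^c(0)}(AC(x)\cap W^c(0))$, each set $\mathcal{L}(x)=AC_F(x)\cap W^c(0)$ is then a one-dimensional injectively immersed $\mathcal{C}^1$ curve, and it is arc-connected by Lemma \ref{AC invariant homotopy}. These curves partition $W^c(0)\cong\mathbb{R}^2$ into a continuous lamination by leaves. Since accessibility classes are disjoint, distinct leaves do not cross. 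The projection $\pi^{su}$ and the maps $\hat T_n$ preserve this lamination, and so does $F$ restricted to $W^c(0)$, because $F$ fixes $0$, preserves $W^c(0)$, and maps accessibility classes to accessibility classes.

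The first key step is to show that the leaf $\mathcal{L}(0)$ through the fixed point is a properly embedded line, and that it is invariant under $F|_{W^c(0)}$. For the properness we use that the leaves are immersed submanifolds of a codimension-one lamination of the plane, so a Poincaré--Bendixson/Haefliger type argument excludes compact leaves (a circle leaf would bound a disk, which must then contain a singularity) and excludes accumulating leaves. Invariance follows from $F(0)=0$. Next we look at the dynamics of $F|_{W^c(0)}$ near $0$. By Lemma \ref{Tn close to linear}, in the chart $\sigma^c_0$ its derivative at $0$ is $C^1$-close to $A|_{E^c}$, which is an irrational rotation since $A$ is ergodic and the central eigenvalues are not roots of unity. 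The tangent line $T_0\mathcal{L}(0)$ must therefore be an invariant line of $D_0(F|_{W^c(0)})$.

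The main point is to obtain a contradiction from the fact that a linear map close to an irrational rotation has an invariant real line. Such a derivative has non-real eigenvalues $\lambda,\bar\lambda$, as long as $f$ is close enough to $A$ that the rotation angle stays bounded away from $0$ and $\pi$; the angle is fixed and not $0$ or $\pi$, since $\pm1$ are not eigenvalues. A linear map with non-real eigenvalues has no invariant one-dimensional real subspace. This contradicts the existence of the $\mathcal{C}^1$ invariant curve $\mathcal{L}(0)$ through $0$, and it requires only $\mathcal{C}^1$-closeness.

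I expect the main obstacle to be the regularity at $0$. We need $\mathcal{L}(0)$ to be a genuine $\mathcal{C}^1$ curve through $0$ with a well-defined tangent line that is invariant under $D_0F|_{E^c}$. This comes from Theorem B of \cite{rodriguez2017structure} (injectively immersed $\mathcal{C}^1$ submanifolds), together with the equality $\text{codim}(AC(x))=\text{codim}_{W^c(0)}(\cdot)$, which requires the intersection with $W^c(0)$ to be transverse. To secure transversality, I would first try the local product structure given by $\Phi_x$ in Lemma \ref{homomorphism}. If instead the leaf through $0$ were only topological, I would use a rotation-number argument: an orientation-preserving homeomorphism of a neighbourhood of $0$ in the plane that leaves a curve through the fixed point invariant has a rational rotation number in the sense of prime ends. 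This is incompatible with $C^1$-closeness to an irrational rotation, because the derivative at $0$ has nonzero twist.
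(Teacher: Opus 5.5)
Your core argument is the paper's: since $F(0)=0$, the one-dimensional $\mathcal{C}^1$ set $AC(0)\cap W^c(0)$ would give a $D_0F$-invariant real line in $T_0W^c(0)$, which is impossible because $D_0F$ restricted there is close to $A|_{E^c}$ and so has non-real eigenvalues; the intersection is automatically transverse, as $T_0AC(0)\supset E^s_F(0)\oplus E^u_F(0)$, and your properness/lamination step is not needed. Two small corrections: the derivative estimate comes from $\mathcal{C}^1$-closeness of $F$ to $A$ together with Lemma \ref{invariant manifolds are Lip graphs}, not from Lemma \ref{Tn close to linear}, which concerns the holonomies $T_n$ rather than $F$; and your topological fallback would fail, because for $\rho\neq 1$ close to $1$ the map $\rho R_\theta$ (with $R_\theta$ an irrational rotation) preserves a curve through $0$ made of two logarithmic spirals, so the $\mathcal{C}^1$ regularity from \cite{rodriguez2017structure} is genuinely needed.
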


\begin{proof}
    Since $E^c$ contains no real eigenvector of $A$, if $f$ is sufficiently close to $A$, $D_0F$ is close enough to $A$ (in the space of normed linear operators) to guarantee that there are no real eigenvectors of $D_0F$ in a suitable open cone $U$ around $E^c$. Again, if $f$ is sufficiently close to $A$, $T_0W^c(0)$ is so close to $E^c$ to be contained in $U$. Therefore, no line of $T_0W^c(0)$ can be invariant under $D_0F$ contradicting the fact that $W^c(0)\cap AC(0)$ is $F$-invariant and has dimension 1.\newline
\end{proof}

We can now conclude the proof of Theorem \ref{stable ergodicity}.

\begin{itemize}
    \item If $\text{codim}(AC(x))=0$ for every $x$, all accessibility classes are open and by connectedness there is only one of them. Being $f$ accessible, it is also ergodic by Burns Wilkinson theorem.

    \item If $\text{codim}(AC(x))=2$ for every $x$, one can work exactly as in section $6$ of \cite{Hertz2005}.\newline
    We just describe the general strategy. The joint integrability $\mathcal{F}^s$ and $\mathcal{F}^u$ imply that $T_n\circ T_m=T_{n+m}$, i.e. $(T_n)_n$ induces a $\mathbb{Z}^N$ action on $E^c$.  Lemma \ref{Tn close to linear} says that the maps $T_n$ are close to traslations in $\mathcal{C}^{22}$ and lemmas \ref{Diphantine condition 1} and \ref{Diphantine condition 2} guarantee that for suitable choices of $n$ these translations are Diophantine. By KAM theory it is possible to linearize this whole $\mathbb{Z}^N$ action. This implies a $\mathcal{C}^1$ conjugacy between the stable and unstable foliations of $f$ and $A$. This implies that the essential accessibilty of A is inherited by $f$, which, in turn, implies the ergodicity.
\end{itemize}

\newpage

\section*{Appendix A - $U$ is simply connected}\label{Simply connected proof}


Recall the map $\pi^{su}:\mathbb{R}^N\rightarrow W^c(0)$, from subsection \ref{Invariant Manifolds and Holonomies}. It is a fibration (lemma 4.3 in \cite{Hertz2005}), and, as a consequence, we have the following (lemma 4.4 in \cite{Hertz2005}).

\begin{lemma}
    Given any open and connected su-saturated set $E$,
    \begin{equation*}
        \pi_1 (E) = \pi_1 (E \cap W^c(0)).
    \end{equation*}
\end{lemma}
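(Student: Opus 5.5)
The plan is to deduce the statement from the fact that $\pi^{su}:\mathbb{R}^N\rightarrow W^c(0)$ is a fibration with contractible fibres, and then run the long exact sequence of homotopy for its restriction to $E$. First, $E$ is $su$-saturated and $\pi^{su}(z)\in AC(z)$ for every $z$, so $\pi^{su}(E)=E\cap W^c(0)$ and $E=(\pi^{su})^{-1}(E\cap W^c(0))$. Thus $E$ is a union of full fibres. Moreover $E\cap W^c(0)$ is open in $W^c(0)$, and the restriction $\pi^{su}|_E:E\rightarrow E\cap W^c(0)$ is again a fibration, since the restriction of a (Hurewicz or Serre) fibration to the preimage of an open subset remains one.

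Next I identify the fibres. The fibre over $c\in W^c(0)$ is the set of points $z$ with $W^s(z)\cap W^{cu}(0)\in W^u(c)$. Equivalently, it is $\bigcup_{w\in W^u(c)} W^s(w)$, using Lemma \ref{unique intersection} to show that $\pi^u$ lands in $W^{cs}(0)$ and that the composition is well defined. This set is the image of $E^u\times E^s$ under $(v^u,v^s)\mapsto\sigma^s(v^s,\sigma^u(v^u,c))$. Injectivity follows from the uniqueness of intersections, as in the proof of Lemma \ref{homomorphism}, and the map is a homeomorphism onto its image by the properness argument of Lemmas \ref{Phi close to identity}--\ref{homomorphism}. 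So every fibre is homeomorphic to $\mathbb{R}^{\dim E^s+\dim E^u}$ and is in particular contractible.

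With this in hand, the long exact sequence
\[
\pi_1(F_c)\rightarrow\pi_1(E)\rightarrow\pi_1(E\cap W^c(0))\rightarrow\pi_0(F_c)
\]
has trivial outer terms, so $\pi^{su}_*:\pi_1(E)\rightarrow\pi_1(E\cap W^c(0))$ is an isomorphism. Connectedness of $E\cap W^c(0)$ follows from that of $E$, because it is the continuous image $\pi^{su}(E)$. A more concrete alternative is to note that $\pi^{su}|_E$ is a homotopy equivalence: the inclusion $E\cap W^c(0)\hookrightarrow E$ is a section, and a fibrewise contraction onto the section point can be built by sliding along $W^u$ and then $W^s$ with linear parameters in the charts $\sigma^u,\sigma^s$. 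This contraction stays inside $E$ by $su$-saturation.

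I expect the main obstacle to be the fibration property itself, that is, local triviality of $\pi^{su}$ over $W^c(0)$. The approach I would take is to construct local trivialisations using the $u$- and $s$-holonomies, $(c',v^u,v^s)\mapsto\sigma^s(v^s,\sigma^u(v^u,c'))$, and check continuity in $c'$ via the continuous dependence of the foliations. Since this is precisely lemma 4.3 of \cite{Hertz2005}, which the paper takes as given, I would cite it and concentrate on the saturation and fibre-contractibility steps above.
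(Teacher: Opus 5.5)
Your proposal is correct and follows the paper's route: the paper cites that $\pi^{su}$ is a fibration (Lemma 4.3 of \cite{Hertz2005}) and obtains the statement as a consequence of it (Lemma 4.4 there), which is the fibration-with-contractible-fibres argument over the saturated set $E=(\pi^{su})^{-1}(E\cap W^c(0))$ that you spell out. One harmless slip: with the paper's convention $\pi^{su}=\pi^s\circ\pi^u$, the fibre over $c$ is $\bigcup_{w\in W^s(c)}W^u(w)$ rather than $\bigcup_{w\in W^u(c)}W^s(w)$, which changes nothing by symmetry, and your deformation-retraction alternative is also valid and does not even need local triviality.
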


The proof then reduces to showing that $U\cap W^c(0)$ is simply connected. We want to exploit the following algebraic topolgy lemma.

\begin{lemma}
    Let $E\subset\mathbb{R}^2$ be a connected open set such that all connected components of $\mathbb{R}^2\setminus E$ are unbounded. Then $E$ is simply connected.
\end{lemma}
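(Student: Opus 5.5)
The plan is to pass to the Riemann sphere $\hat{\mathbb{C}}=\mathbb{R}^2\cup\{\infty\}$ and reduce the statement to the classical characterization of simply connected plane domains. If $\mathbb{R}^2\setminus E=\emptyset$, then $E=\mathbb{R}^2$ and there is nothing to prove, so we assume the complement is non-empty.

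The first step is to show that $K:=\hat{\mathbb{C}}\setminus E=(\mathbb{R}^2\setminus E)\cup\{\infty\}$ is connected.
\begin{itemize}
\item Let $C$ be a connected component of the closed set $\mathbb{R}^2\setminus E$. Components of a closed set are closed, so $C$ is closed in $\mathbb{R}^2$.
\item By hypothesis $C$ is unbounded. Hence its closure in $\hat{\mathbb{C}}$ is $C\cup\{\infty\}$, and this set is connected, being the closure of a connected set.
\item $K$ is the union of the connected sets $C\cup\{\infty\}$ over all components $C$. They all share the point $\infty$, so $K$ is connected.
\end{itemize}

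The second step is to show that every loop in $E$ has winding number zero around every point outside $E$. Let $\gamma$ be a loop in $E$ and let $a\in\mathbb{R}^2\setminus E$. The winding number $\operatorname{ind}_\gamma(a)$ is locally constant on $\hat{\mathbb{C}}\setminus\operatorname{Im}(\gamma)$, with the convention that it is $0$ near $\infty$. The set $K$ is connected and disjoint from $\operatorname{Im}(\gamma)$, so it lies in a single component of $\hat{\mathbb{C}}\setminus\operatorname{Im}(\gamma)$. That component contains $\infty$, hence $\operatorname{ind}_\gamma(a)=0$. Thus $E$ is homologically simply connected.

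The third step, which I expect to be the main obstacle, is upgrading this homological statement to the vanishing of $\pi_1(E)$; it is a genuine theorem rather than formal bookkeeping. I would invoke the standard equivalences from complex analysis (e.g. Rudin, \emph{Real and Complex Analysis}, Thm.~13.11).
\begin{itemize}
\item By the homology form of Cauchy's theorem, the winding-number condition implies that every holomorphic function on $E$ has a primitive.
\item Consequently every non-vanishing holomorphic function on $E$ has a holomorphic logarithm, and hence a holomorphic square root.
\item This is exactly the input needed for the Riemann mapping theorem. It gives that $E$, a proper subdomain of $\mathbb{C}$, is conformally equivalent to the unit disk. The disk is simply connected, so $E$ is simply connected.
\end{itemize}
Alternatively, one could cite directly the topological fact that a domain in $\hat{\mathbb{C}}$ with connected complement is simply connected. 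Either way the content reduces to the connectedness of $K$ established in the first step.
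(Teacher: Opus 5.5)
Your proof is correct, but it takes a different route from the paper.

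\textbf{The paper's route.} The paper argues directly with curves. Assuming $E$ is not simply connected, it picks a non-contractible Jordan curve $\gamma$ in $E$. It then uses the Jordan--Schoenflies theorem to write $\operatorname{Im}(\gamma)=\partial D$ with $D$ a closed disk. Next it notes that $D\not\subset E$, since otherwise $\gamma$ would contract inside $D$. Finally, the component of $\mathbb{R}^2\setminus E$ through a point of $D\setminus E$ cannot cross $\partial D\subset E$, so it is trapped in $D$ and hence bounded, a contradiction.

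\textbf{Your route.} You translate the hypothesis into the statement that $\hat{\mathbb{C}}\setminus E$ is connected, which is the real topological content, and your argument for it is sound. You then invoke the classical equivalence for plane domains (Rudin, Thm.~13.11, (d)$\Rightarrow$(b)). Your second and third steps faithfully retrace its proof: winding numbers, the homology Cauchy theorem, logarithms and square roots, then the Riemann mapping theorem. Once your first step is done, you could simply cite that implication directly.

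\textbf{The trade-off.} The paper's argument is short and purely topological. However, it silently relies on two nontrivial facts:
\begin{enumerate}
\item a non-simply-connected planar domain contains a non-contractible \emph{simple} closed curve;
\item the Schoenflies theorem, needed to contract $\gamma$ inside the closed Jordan domain.
\end{enumerate}
Your winding-number step handles arbitrary loops directly, so you never need to reduce to Jordan curves, and every ingredient you use is a standard textbook theorem. The price is heavy complex-analytic machinery for what is a purely topological statement.
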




\begin{proof}
    Suppose that $E$ is not simply connected. Since $E$ is an open subset of $\mathbb{R}^2$ we can find  Jordan curve $\gamma:[0,1]\rightarrow E$, which is non-contractible in $E$.
    By Jordan curve theorem there exists a disk $D\subset\mathbb{R}^2$ such that $\Im(\gamma)=\partial D$. If $D\subset E$, then $\gamma$ would be contractible in $E$. Hence a point of $D$ must belong to $\mathbb{R}^2\setminus E$, but then its connected component must be contained in $D$ and therefore bounded. This is the desired contradiction. \newline
\end{proof}

Let us consider $C=E^c\setminus (\sigma_0^c)^{-1}(U)$. Recalling that $\sigma_0^c$ is a bi-Lipschitz homeomorphism and identifying $E^c$ with $\mathbb{R}^2$, in view of the previous lemma, to conclude the proof we only need to prove that all connected components of $C$ are unbounded.

\begin{lemma}
    For any $ x\in E^c$ and $\delta>0$ there exist $n\in\Lambda$ with $U+n=U$, $k\in\mathbb{N}$, and curves $\eta_i:[0,1]\rightarrow E^c$, for $i=1,\dots,k$, such that, calling $\hat\eta_i=\sigma_0^c\circ\eta_i$ and $\hat x=\sigma_0^c( x)$:
    \begin{enumerate}
        \item $\hat\eta_i([0,1]) \subset (AC(\hat x)+\Lambda)\cap W^c(0)$;
        \item $|\eta_i(0)-T_{(i-1)n}( x)|<\delta$ and $\hat\eta_i(1) = T_{in}(\hat x)$;
        \item $|T_{kn}(x)- x|\rightarrow +\infty$, when $\delta\rightarrow 0$.
    \end{enumerate}
\end{lemma}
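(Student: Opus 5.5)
The plan is to mimic Lemma 4.6 of \cite{Hertz2005}, with the set $W_{\varepsilon}(x)$ of Lemma \ref{infinite volume} playing the role of their auxiliary set. Given $x\in E^c$ and $\delta>0$, I choose $\varepsilon\le\delta$ small, consider $\hat x=\sigma_0^c(x)$, and apply Lemma \ref{n epsilon} at $\hat x$. This produces $n=n_\varepsilon\in\Lambda$ with $|n|\le 5(1+\kappa)L(\varepsilon)$ and $W_\varepsilon(\hat x)\cap(W_\varepsilon(\hat x)+n)\neq\emptyset$. The invariance $U+n=U$ holds as in Lemma \ref{n epsilon} if $\hat x$ has a small ball in $U$. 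Otherwise I work with the lattice translates already known to fix $U$, namely $\Gamma$ from the previous subsection: I take $n$ among the elements of $\Gamma$ produced by the volume argument, which is legitimate because the volume estimate only needs $\mathrm{Vol}(W_\varepsilon\cap(X+\hat x))>\mathrm{Vol}(X/\Gamma)$.

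\textbf{Construction of one curve.} The nonempty intersection gives points $p\in W_\varepsilon(\hat x)$ and $p-n\in W_\varepsilon(\hat x)$. Unwinding the definition, $p$ is reached from some $c_1\in W^c_\varepsilon(\hat x)$ by an $su$-path with legs of lengths $L(\varepsilon),L(\varepsilon)+\varepsilon,\varepsilon$; similarly $p-n$ is reached from some $c_2\in W^c_\varepsilon(\hat x)$. Translating the second path by $n$ and concatenating, I obtain an $su$-path from $c_1$ to $c_2+n$. Projecting with $\pi^{su}$ and using the homotopy of Lemma \ref{AC invariant homotopy} (follow $\pi^{su}\circ h^{\gamma|_{[0,t]}}$), I get a curve in $W^c(0)$ inside $(AC(\hat x)+\Lambda)\cap W^c(0)$. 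It starts within $C\varepsilon$ of $\hat x$, since $h^{\gamma}$ is Lipschitz with constant $C^K L^{K\beta}$ by Lemma \ref{Holonomies Lip}. It ends at $\pi^{su}(c_2+n)$, which is $\varepsilon$-close to $\hat T_n$ of a point $\varepsilon$-close to $\hat x$.

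I then adjust the starting point by a short center segment and choose $\varepsilon\ll\delta L(\varepsilon)^{-3\beta}$ so that the Lipschitz losses stay below $\delta$; the endpoint can be made exactly $T_n(x)$ by composing with the holonomy. Conjugating by $\sigma_0^c$ gives $\eta_1$ satisfying items 1 and 2 for $i=1$.

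\textbf{Iteration.} The curves $\eta_i$ are obtained by transporting $\eta_1$ along the $su$-loop holonomies $\hat T_{(i-1)n}$. These preserve $(AC+\Lambda)\cap W^c(0)$ and have Lipschitz constant $\le C|in|^\beta$. Because of that growth, the starting errors must be re-estimated; taking $k$ moderate, say with $k|n|$ polynomial in $L(\varepsilon)$, keeps the errors below $\delta$. Here one has to accept $T_{in}\neq T_n^{\,i}$ and define item 2 through $T_{(i-1)n}$ directly, using that $\hat T_{in}$ and $\hat T_n\circ\hat T_{(i-1)n}$ differ by an $su$-loop holonomy within the same class.

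\textbf{Divergence (item 3).} Lemma \ref{Tn log} gives $|T_{kn}(x)-x|\ge k|n^c|-C\log(k|n|)-C$. Lemma \ref{Diphantine center} gives $|n^c|\ge c'|n|^{-r}$. Choosing $k\approx|n|^{r+1}$, which is still polynomial in $L(\varepsilon)$, makes this lower bound $\gtrsim|n|$. Since $|n|\ge\mathrm{dist}$ forced by the intersection growing as $\varepsilon\to0$ (otherwise a fixed nonzero $n$ would recur and its $n^c\neq0$ still drives growth through $k$), the displacement tends to infinity.

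\textbf{Main obstacle.} The hard step is the bookkeeping in the iteration: balancing the Lipschitz growth $(k|n|)^\beta$ of the holonomies against $\delta$ while still taking $k$ large enough for divergence. I would fix $\beta$ tiny (with $f$ close to $A$) and choose $\varepsilon$ as a suitable power of $\delta$ to close the estimates.
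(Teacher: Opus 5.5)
Your use of $\Gamma$ in place of $\Lambda$ in the pigeonhole is a real improvement on the paper. In the application $\hat x\in C$, so $\hat x\notin U$ and the ``moreover'' clause of Lemma \ref{n epsilon} does not apply. Running the volume argument against $\mathrm{Vol}(X/\Gamma)$ is what actually guarantees $U+n=U$. However, item 3 has a genuine gap, and the curve construction needs repair.

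\textbf{Item 3.} You tie $k$ to $n$ by $k\approx|n|^{r+1}$. This only gives $|T_{kn}(x)-x|\gtrsim|n|-C\log|n|-C$, so you then need $|n_\varepsilon|\to\infty$ as $\delta\to0$. Nothing provides this: the pigeonhole argument gives only the upper bound $|n_\varepsilon|\le5(1+\kappa)L(\varepsilon)$, and $n_\varepsilon$ may stay bounded. Your parenthetical (a recurring fixed $n$ still drives growth through $k$) contradicts your own choice, since bounded $|n|$ forces bounded $k$.

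The repair, which is exactly the paper's choice, is to let $k$ depend on $\varepsilon$ and feed the \emph{upper} bound on $|n|$ into Lemma \ref{Diphantine center}. Take $s=2r+1$ and $\varepsilon^{-s}/2<k<\varepsilon^{-s}$. Then $|n^c|\ge c'|n|^{-r}\gtrsim\varepsilon^{2r}$, so $k|n^c|\gtrsim\varepsilon^{-1}$, while $C\log(k|n|)=O(\log(1/\varepsilon))$. This $k$ is still polynomial in $L(\varepsilon)$, so the Lipschitz losses stay of order $\varepsilon^{1-O(\beta)}<\delta$ for $\beta$ small.

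\textbf{Items 1--2.} The curve $\pi^{su}\circ\gamma$ runs from $c_1$ to $\pi^{su}(c_2+n)$ inside $AC(c_1)$. But $c_1\in W^c_\varepsilon(\hat x)$ need not lie in $AC(\hat x)+\Lambda$, and ``adjusting the starting point by a short center segment'' leaves the class, violating item 1. Likewise $\hat T_{(i-1)n}\circ\hat\eta_1$ ends at $\hat T_{(i-1)n}\hat T_n(\hat x)\neq\hat T_{in}(\hat x)$. Item 2 demands equality, so it cannot be ``redefined''.

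What you are missing is that item 1 constrains the curve only through class membership, so only the starting points need estimates. Following the paper:
\begin{itemize}
\item let $\hat S:W^c(n)\to W^c(0)$ be the holonomy along your path from $c_2+n$ back to $c_1$;
\item set $\hat y_i=\hat T_{(i-1)n}(\hat S(\hat x+n))$, which lies in $AC(\hat x+in)$;
\item observe $|\hat y_i-\hat T_{(i-1)n}(\hat x)|\le\mathrm{Lip}(\hat T_{(i-1)n})\bigl(\mathrm{Lip}(\hat S)\varepsilon+C\varepsilon\bigr)$;
\item take $\hat\eta_i=\pi^{su}\circ\tilde\eta_i$ for any $su$-path $\tilde\eta_i$ from $\hat y_i$ to $\hat x+in$.
\end{itemize}
This curve lies in $AC(\hat x)+in$, starts at $\hat y_i$, and ends exactly at $\pi^{su}(\hat x+in)=\hat T_{in}(\hat x)$. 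Your remark that the two endpoints lie in the same class points this way, but as written the construction does not deliver item 2.
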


\begin{proof}
    We recall the constants $\beta$ from lemma \ref{Holonomies Lip} and $r$ from lemma \ref{Diphantine center}. We set $s=2r+1$ and $\gamma=1-\beta(s+14)$.\newline
    If $f$ is sufficiently $\mathcal{C}^1$-close to $A$, $\beta$ approaches $0$ and $\gamma$ is positive.\newline
    Given $\delta>0$, we consider $\varepsilon>0$ such that $\varepsilon^{\gamma}<\delta$ (notice that $\varepsilon\rightarrow0$, whenever $\delta\rightarrow0$). We obtain $n=n_{\varepsilon}$ from lemma \ref{n epsilon} (with $|n_\varepsilon|<5(1+\kappa)L(\varepsilon)$).\newline
    Finally we pick $k\in\mathbb{N}$ such that $\varepsilon^{-s}/2<k<\varepsilon^{-s}$.\newline

    We start by proving item 3.\newline
    Exploiting lemma \ref{Tn log} at the first line, and lemma \ref{Diphantine center} at the second one we have the following. (In the next computations we denote all universal constants with $C$, which might assume a diffent value, even within the same line.)
    \begin{align*}
        |T_{kn}( x) - (x + n^c)| &\geq (|kn^c|) - C\log|kn| - C\\
        &\geq k\frac{C}{|n|^r} - C\log(k)  - C\log(|n|) - C\\
        &\geq C \frac{\varepsilon^{-s}}{\varepsilon^{2r}} - sC\log(\varepsilon^{-1}) - 2C\log(\varepsilon^{-1})-2C\log(5C) -C\\
        &\geq C \varepsilon^{-1} - (2C + sC)\log(\varepsilon^{-1}) - C\\
        &\rightarrow+\infty.
    \end{align*}

    In order to construct the curves $\eta_i$, we first construct their starting points $\eta_i(0)= y_i$. We aim to find points $ y_i\in E^c$ such that, again denoting $\hat y_i=\sigma_0^c( y_i)$:
    \begin{itemize}
        \item[a)] $|T_{(i-1)n}( x)- y_i|<\delta$;
        \item [b)] $ \hat y_i\in AC ( \hat x+in)$.\newline
    \end{itemize}

    Recalling that, $W_{\varepsilon}(\hat x+n) = W_{\varepsilon}(\hat x)+n$, by lemma \ref{n epsilon}, we have that $W_{\varepsilon}(\hat x +n ) \cap W_{\varepsilon}(\hat x)\neq\emptyset$. This implies the existence of a 5-legged $su$-path $\gamma:[0,1]\rightarrow \mathbb{R}^N$ such that $\gamma(0)\in W_{\varepsilon}^c(\hat x + n)\subset W^c(n) $ and $\gamma(1)\in W_{\varepsilon}^c(\hat x)\subset W^c(0)$.\newline
    We call $\hat S=h^{\gamma}:W^c(n)\rightarrow W^c(0)$ the $su$-holonomy along $\gamma$. Notice that $\gamma$ is shorter that $10L(\varepsilon)$ and consequently, by lemma \ref{Holonomies Lip}, $\text{Lip}(\hat S)\leq C(L(\varepsilon))^{5\beta}$.\newline
    We call $\hat T_{m} = \sigma_0^c\circ T_m \circ (\sigma_0^c)^{-1}: W^c(0)\rightarrow W^c(0)$. By lemma \ref{Holonomies Lip}, $\text{Lip}(\hat T_m)\leq C|m|^{\beta}$.\newline

    We define $\hat y_i= \hat T_{(i-1)n}\circ \hat S(\hat x +n)$.\newline

    By definition of $\hat S$, we have that $\hat S (\hat x +n)\in AC(\hat x +n)$. Moreover, for any $\hat z \in W^c(0)$ and any $m\in\mathbb{Z}^N$, we have that $\hat T_m(\hat z) \in AC(\hat z + m)$. We conclude that $\hat y_i= \hat T_{(i-1)n}\circ \hat S(\hat x +n)\in AC(\hat x + (i-1)n +n)=AC(\hat x +in)$, satisfying the second condition we required on $ y_i$.\newline

    We now focus on the first condition on $y_i$.\newline
    Indeed, for $\varepsilon$ small enough, we have the following computation.

    \begin{align*}
        |y_i-T_{(i-1)n}(x)| & \leq |\hat y_i-\hat T_{(i-1)n}(\hat x)|\\
        & \leq \text{Lip}(\hat T_{(i-1)n}) | \hat S (\hat x +n) - \hat x|\\
        & \leq \text{Lip}(\hat T_{(i-1)n}) (| \hat S (\hat x +n) - \hat S(\gamma(0))| + | \gamma(1) - \hat x|)\\
        & \leq \text{Lip}(\hat T_{(i-1)n}) (\text{Lip}(\hat S)\varepsilon + C\varepsilon)\\
        & \leq C |kn|^{\beta} L(\varepsilon)^{5\beta} \varepsilon\\
        & \leq C \varepsilon^{-s\beta}\varepsilon^{-2\beta}\varepsilon^{-10\beta}\varepsilon\\
        &\leq \varepsilon^{1-\beta(14+s)} = \varepsilon^{\gamma} < \delta.
    \end{align*}

Now we can build the curves $\hat \eta_i$.\newline
Since $\hat y_i\in AC(\hat x + in)$, there exists a $su$-curve $\tilde \eta_i$, such that $\tilde\eta_i(0)=y_i$ and $\tilde\eta_i(1)=\hat x +in$. Of course, $\tilde \eta([0,1)]\subset AC(\hat x+in)$.\newline
We set $\hat \eta_i = \pi^{su} \circ \tilde \eta_i$.\newline
Since we have that $\pi^{su}(\mathbb{R}^N)=W^c(0)$ and $\pi^{su}$ preserves the accessibility classes, $\hat\eta_i$ satisfies item 1.\newline 
Moreover, we have that $\hat\eta_i(1)=\pi^{su}(\tilde\eta_i(1))=\pi^{su}(\hat x +in)=\hat T_{in}(\hat x)$, hence $\eta_i(1)=T_{in}(x)$. Finally, since $\hat y_i\in W^c(0)$, we have that $\hat\eta_i(0)= \pi^{su}(\tilde\eta_i(0))=\pi^{su}(\hat y_i)=\hat y_i$, and, therefore, $\eta_i(1)=y_i$ and $\eta_i$ satisfies item 2.\newline
\end{proof}

We can now conclude the proof following \cite{Hertz2005} (see the proof of Corollary 4.7).\newline
Let $x$ be a point in $C$. Suppose, by contradiction that its component of $C$ is contained in $B(x,R)$ for some $R>0$. For $\delta>0$ we apply the previous lemma to $x$.\newline
Since $U$ is $n$-invariant, $T_{in}(x)\in C$ for every $i=0,\dots,k$. In addidion, the $su$-saturation of $U$ implies that $\eta_i([0,1])\subset C$ for $i=0,\dots,k$.\newline
We define
\begin{equation*}
    \tilde K_\delta = \bigcup_{i=0}^k \overline{B(T_{in}(x),\delta)} \cup \bigcup_{i=1}^k \eta_i([0,1]),
\end{equation*}
and $K_{\delta}$ to be the connected component of $\tilde K_{\delta}\cap \overline{B(x,R)}$ that contains $x$.\newline
Since $\tilde K_{\delta}$ is connected and $|T_{kn}(x)-x|>R$ for $\delta$ small enough, we have that
\begin{equation*}
    K_{\delta}\cap \partial B(x,R)\neq \emptyset \quad \text{and that} \quad K_{\delta} \subset C+\overline{B(0,\delta)}.
\end{equation*}
By the compactness of the space of the compact subsets of $\overline{B(x,R)}$ with respect to the Hausdorff distance, we have the existence of a sequence $\delta_m\rightarrow 0$ and a compact subset $K$ of $\overline{B(x,R)}$ such that $K_{\delta_n}\rightarrow K$ in the Hausdorff metric.\newline
Passing to the limit, $K$ must be connected, contain $x$ and intersect $\partial B(x,R)$. At the same time we have that $K\subset C +\overline{B(0,\delta)}$ for every $\delta>0$, implying that $K\subset C$, in contradiction with our initial assumption.

\chapter{Delocalization for Schr\"odinger Operators on Some Large Graphs}\label{Chapter - Schreodinger}
\newcommand{\va}{\varepsilon}
\newcommand{\n}{\bold{n}}

\begin{center}
\large{A. Avila, A. Ulliana}
\end{center}

\subsection*{Abstract}

We prove that delocalization of most of eigenvectors is topologically common in the space of deterministic Schr\"odinger Operators on a given large graph, provided that their IDS has a certain degree of regularity. This generalizes a recent theorem of Avila and Damanik. The proof is based on an iterative scheme of perturbations of the potential and a study of the stability of delocalization. We also describe a flexible family of graphs that fall under our criterion, by proving a variant of the Thouless formula.

\section{Introduction}

\subsection{Context}

This paper focuses on deterministic Schr\"odinger operators on large graphs. Our purpose is to investigate for what family of graphs delocalization of eigenvectors is a common property from a topological point of view.\newline

Let $\mathcal{G}=(\mathcal{V},\mathcal{E})$ be a graph. Let us fix $\lambda>0$ and let $V:\mathcal{V}\rightarrow[-\lambda,\lambda]$ be a potential. We define the Schr\"odinger Operator $H_{\mathcal{G},V}:l^2(\mathcal{V})\rightarrow l^2(\mathcal{V})$ by
\begin{equation*}
    \left(H_{\mathcal{G},V}\phi\right)(v) =  \sum_{v'\sim v} [\phi(v')-\phi(v)] + V(v)\phi(v),
\end{equation*}
where $\phi\in l^2(\mathcal{V})$ and $v\sim v'$ means that $v$ and $v'$ are neighbours.\newline
When the potential is bounded by $\lambda$ and the degree of every vertex of $\mathcal{G}$ is bounded by $k$, $H_{\mathcal{G},V}$ is a bounded self-adjoint operator and $\sigma(H_{\mathcal{G},V})\subset[-\lambda-2k,\lambda+2k]$.\newline

Schr\"odinger Operators are oftentimes Hamiltonians of quantum mechanics systems, the most prominent examples coming from condensed matter physiscs. The graph structure of $\mathcal{G}$ and the potential $V$ are modeled after the physiscal carachteristics of system in question. Every element $\phi\in l^2(\mathcal{V})$ (called wave function) represents a state of the system. To each unitary $\phi$, one associates the probability measure $\mu_{\phi}=\sum_{\bold{v}\in\mathcal{V}}\phi(\bold{v})^2 \delta_{\bold{v}}$. The time-evolution of the wave function $\phi_t$ is described by the Schr\"odinger Equation
\begin{equation}
    i\hbar \frac{d \phi_t}{dt}=H\phi_t.
\end{equation}
Delocalization of Schr\"odinger Operators can be understood in three ways: dynamical, spectral and spatial. Dynamical delocalization is an asymptotical property of the solutions $\phi_t$ of the Schr\"odinger equation. Roughly, we say that $H$ has dynamical delocalization, if, for any fixed compact $K\subset\mathcal{G}$, we have that $\mu_{\phi_t}(K)\rightarrow0$ when $t\rightarrow+\infty$. Conversely $H$ has dynamical localization, when, for any $\va>0$, there exists a compact set $K\subset\mathcal{G}$ such that $\mu_{\phi_t}(K)>1-\va$ for all times $t$.\newline

The solutions of the non-time dependent Schreodinger equation are given by $\phi_t=e^{i\hbar t H}\phi$, whose behaviour depends heavily on the spectral properies of the Hamiltonian $H$. \newline
We say that $H$ has spectral delocalization when it has absolutely continum spectrum and that it has spectral localization when it has pure point spectrum.\newline
The RAGE theorem guarantees a degree of correspondance between spectral and dynamical localization. To give an intuition, we highlight the role of the eigenvectors of $H$ in this discussion. If $\phi_0$ is linear combination of the eigenfunctions $\psi_1,\dots,\psi_k$, then, at any time $t$, ${\phi_t}$ belongs to $\text{span}\{{\psi_1},\dots,{\psi_k}\}$. This implies that the bulk of the wave function will remain in a fixed compact region, determined by $\psi_1,\dots,\psi_k$. Otherwise, the intuition suggests that $\phi_t$ spreads along some non-integrable formal solution $\psi$ of $H\psi=E\psi$.\newline
A celebrated example of spectral localization is given by the Anderson model.\newline

The previous discussion refers to Schr\"odinger operators on infinite graphs.
However, we will focus on finite, although very large, graphs. This setting has recently received attention (in \cite{anantharaman2017quantum}, \cite{anantharaman2019quantum}, \cite{avila_delocalization_2024}) for mainly two reasons. First, it is a more faithful representation of the physical reality as the infinite graphs are just an idealization of the actual matter structure. Second, the guaranteed existence of a basis of eigenvectors allows a more detailed understanding of the degree of localization or delocalization of the wave function.\newline

When $\mathcal{G}$ is finite, according to the two previous definitions, only localization can take place. Delocalization has then to be understood in terms of the shape of the eigenvectors, as the size of the graph grows. Let $(\mathcal{G}_n)_n$ be an increasing sequence of graphs, with a corrsponding sequence of potentials $V_n:\mathcal{V}(\mathcal{G})\rightarrow [-\lambda,\lambda]$. We say that the corresponding sequence of Schr\"odinger operators $H_{\mathcal{G}_n,V_n}$ has spatial delocalization if, for most of the eigenvectors $\psi$ of $H_{\mathcal{G}_n,V_n}$, $\|{\psi}\|_{\infty}$ becomes small when $n\rightarrow\infty$.\newline

We remark that $\ell^{\infty}$ delocalization of eigenvectors can be expressed in terms of spectral measures. Indeed, given $v\in\mathcal{V}(\mathcal{G})$, let $\sigma_{V,v}$ denote the spectral measure corresponding to $\delta_v\in l^2(\mathcal{V}(\mathcal{G}))$), with respect to $H_{\mathcal{G},V}$. If $\psi$ is an eigenvector of $H_{\mathcal{G},V}$ corresponding to the eigenvalue $E$, it holds that $\psi(v)^2\leq \sigma_{V,v}(E)$, with equality when $E$ is simple.\newline

Finally we introduce a crucial element for our discussion, which has a notable relevance by itself: the Integrated Density of States (or IDS).\newline
When $\mathcal{G}$ is finite we define the desity of states measure $\widetilde\sigma$ as the average of the spectral measure of every site in $\mathcal{G}$,
\begin{equation}
    \widetilde{\sigma}_V = \frac 1{\#\mathcal{V}(\mathcal{G})} \sum_{\bold{v'}\in\mathcal{V}(\mathcal{G})} \sigma_{V,\bold{v'}}.
\end{equation}
The integrated density of states is its accumulation function $\mathcal{N}_V(E)=\widetilde\sigma((-\infty,E])$. It turns out to be equal to the normalized counting function of the eigenvalues of $H_{\mathcal{G},V}$. More precisely, denoting by $E_1\leq\dots\leq E_{\#\mathcal{V}(\mathcal{G})}$ the eigenvalues of $H_{\mathcal{G},V}$ (appearing with multiplicity), we have that
\begin{equation}
    \mathcal{N}_V(E)=\frac{\#\{i:E_i\leq E)\}}{\#\mathcal{V}(\mathcal{G})}.
\end{equation}

For infinite graphs, the IDS (or the desity of states) is not always well defined. An asymptotic average of the spectral measure exists when the system has some additional homogenuity, as, for instance, in the case of ergodic Schr\"odinger operators (see \cite{damanik2022one}). Later in the paper, we will encounter $\mathbb{Z}$-periodic graphs with periodic potentials. In this context the density of states is well defined as the average of the spectral measures over a period. The general problem of its existence is studied, for example, in \cite{gruber2007uniform}, \cite{veselic2008existence}, \cite{schwarzenberger2013integrated}. For $\mathcal{G}=\mathbb{Z}^d$, Bourgain and Klein (\cite{bourgain2013bounds}) introduced the density of states outer measure, whose existence is always guaranteed. This quantity is studied and slightly generalized in \cite{hislop2021dependence}. We will be particularly interested in the regularity of the IDS, which has also been an intensive subject of study on its own (see, for example, \cite{thouless1972relation}, \cite{craig1983log}, \cite{grigorchuk1899laplace}, \cite{fillman2022irreducibility}).

\subsection{Main results}

In the same spirit of \cite{avila_delocalization_2024}, we investigate how common delocalisation is in the space of potentials of Schr\"odinger Operators on large graphs. As we will show later on, for finite dimensional Schr\"odinger Operators, delocalisation and localisation are stable properties in the space of potentials, hence one cannot hope for density of delocalisation. Therefore, we look for existence of $\delta$-nets of potentials showing delocalisation for most of the spectral measures. Here $\delta$ has to be thought to get smaller when the size of the graph increases.

\begin{definition}
    We say that a family of graphs $\mathcal{F}$ has asymptotically dense delocalization if the following holds. For any $\lambda>0$, any $\varepsilon>0$ and any $\delta>0$ there exists $N_{\mathcal{F}}(\lambda,\va,\delta)>0$, such that, given any graph $\mathcal{G}\in\mathcal{F}$ with $\#\mathcal{V}(\mathcal{G})>N_{\mathcal{F}}$, for any potential $V:\mathcal{V}(\mathcal{G})\rightarrow[-\lambda,\lambda]$, there exists a potential $W$, with $\|V-W\|_{\infty}<\delta$, for which, for at least $\delta\#\mathcal{V}(\mathcal{G})$ vertices $v\in\mathcal{V}(\mathcal{G})$, the spectral measure $\sigma_{V,v}$ has no atom of weight larger than $\va$.
\end{definition}

Our main contribution is the following theorem, which represent a criterion for asymptotically dense delocalization for families of large graphs, based on the regularity of the IDS of their Schreodinger operators. We define formally the concept of asymptoyical uniform continuity of IDS in later sections.

\begin{theorem}
    Suppose that a family of graphs $\mathcal{F}$ has asymptotically $\omega$-uniformly continuous IDSs, for some modulus of continuity $\omega$ and bounded maximum degree. Then, $\mathcal{F}$ has asymptotically dense delocalization.
\end{theorem}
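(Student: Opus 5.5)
We will prove the theorem with an iterative perturbation scheme in the spirit of Avila--Damanik. The driving mechanism is that a rank-one change of the potential at a single vertex can only destroy, never create, a large atom at that vertex. We rely on two basic facts.

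\emph{Stability.} The spectral measures $\sigma_{V,v}$ depend continuously on $V$ in the weak-* topology, with a modulus that is uniform in the size of the graph (the resolvent is $1$-Lipschitz in $V$ for $\Im z$ fixed). Hence the property ``$\sigma_{V,v}$ has no atom larger than $\va$'' is, up to replacing $\va$ by $2\va$, open in a neighbourhood of $V$ whose size depends only on $\va$, $\lambda$ and the degree bound. This is what lets already-delocalized vertices survive later perturbations.

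\emph{Rank-one spreading.} Fix a vertex $v$ and the one-parameter family $V_t=V+t\delta_v$ with $t\in[0,\eta]$. By the Aronszajn--Donoghue / spectral averaging formula, $\int_0^\eta \sigma_{V_t,v}\,dt$ is bounded by $\eta$ times Lebesgue measure, up to bounded distortion on the spectral window. The atoms of $\sigma_{V_t,v}$ are located at the solutions of $F_v(E)=-1/t$, where $F_v$ is the Borel transform of $\sigma_{V,v}$. An atom of mass $\ge\va$ forces $F_v'$ to be small there. Since $\mathcal{V}$ is finite, the eigenvalues move monotonically with $t$ and interlace. Consequently, for all but a proportion $\lesssim \sqrt{\va}$ of parameters $t\in[0,\eta]$, the measure $\sigma_{V_t,v}$ has no large atom, unless the local eigenvalue count near some energy is very small.

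This is where the IDS hypothesis enters. Asymptotic $\omega$-uniform continuity of the IDS means that every interval of length $\ell$ contains at most $(\omega(\ell)+o(1))\#\mathcal{V}$ eigenvalues, uniformly over potentials bounded by $\lambda+1$. Averaging over sites, the fraction of vertices $v$ for which $\sigma_{V,v}$ places mass $\ge\va$ on a single eigenvalue is then small once $\omega$ is small on the relevant scales. More precisely, the sum over eigenvalues of the number of vertices carrying weight $\ge\va$ is at most $1/\va$ per eigenvalue. The key counting lemma will be: if every window of length $\ell$ contains at most $\omega(\ell)\#\mathcal{V}$ eigenvalues, then perturbing a single site $v$ by $t\in[0,\eta]$ sweeps eigenvalues across windows, so the weight of $\delta_v$ is redistributed among many eigenvectors. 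After the perturbation, the largest atom at $v$ is then at most $\va$ for most $t$, as soon as $\omega(\eta)$ is controlled in terms of $\va$.

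The scheme itself runs as follows. Choose a small step $\eta\ll\delta$ adapted to the stability radius from the first step. Then greedily pick vertices $v_1,v_2,\dots$ that are pairwise far apart in graph distance; this is possible because the degree is bounded, so balls have bounded size. At each vertex apply a perturbation $t_i\delta_{v_i}$ that delocalizes $v_i$. Here the difficulty appears: a perturbation at $v_j$ alters $\sigma_{\cdot,v_i}$ for $i<j$, and its operator norm $\eta$ is not small relative to the stability radius. My first attempt will make the perturbation sizes geometrically decreasing, $\eta_j=\eta\theta^j$, with the total bounded by $\delta$. We stop after $\approx\delta\#\mathcal{V}$ vertices, which requires running the argument on blocks of vertices simultaneously, since one vertex at a time cannot reach a fixed fraction. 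In the block version, we perturb a whole separated set $S$ of $\delta\#\mathcal{V}$ sites with independent uniform random $t_v\in[0,\eta]$ and estimate the \emph{expected} number of $v\in S$ with a large atom. Conditioning on $(t_w)_{w\neq v}$ reduces each term to the rank-one estimate above, with the IDS bound applied to the conditioned potential, which is still bounded by $\lambda+\eta$. A Markov inequality then yields a good choice of $W$.

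The main obstacle is the rank-one step. One must turn uniform continuity of the IDS, which is a statement averaged over all sites, into a bound for the specific vertex $v$ whose parameter is being varied. A single site could see a spectral measure concentrated on a few eigenvalues that barely move under $t$. I would handle this by discarding the vertices where $\sigma_{V,v}$ is concentrated on a set of eigenvalues of small total count. By the averaging identity for $\widetilde\sigma_V$ and $\omega$-continuity, these bad vertices form a fraction $\lesssim\omega(\eta)/\va$ of $\mathcal{V}$. Choosing $\eta$ so that this fraction is below $1-2\delta$, and $\#\mathcal{V}>N_{\mathcal{F}}$ large enough to absorb the $o(1)$ term, leaves enough good vertices to fill $S$.
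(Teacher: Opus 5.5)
The step that carries all the weight in your proposal fails: the rank-one spreading, together with the claim that the ``bad'' vertices form a fraction $\lesssim\omega(\eta)/\varepsilon$. The hypothesis simply does not give a per-vertex statement.

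Asymptotic $\omega$-continuity of the IDS is anti-concentration of the \emph{average} $\widetilde\sigma_V=\frac{1}{\#\mathcal{V}}\sum_v\sigma_{V,v}$. For a \emph{fixed} interval $I$ it bounds $\#\{v:\sigma_{V,v}(I)\ge\varepsilon\}$ by $(\omega(|I|)+o(1))\#\mathcal{V}/\varepsilon$. But the energy at which a vertex carries its large atom depends on $v$, so nothing bounds the number of vertices whose spectral measure sits on a few eigenvalues. In fact the hypothesis is compatible with \emph{every} $\sigma_{V,v}$ being nearly a point mass. In the zero-hopping caricature, $\sigma_{V,v}=\delta_{V(v)}$ for all $v$, while the IDS is the distribution function of the values of $V$. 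On boxes, which satisfy the hypothesis for every $\lambda$ by the paper's box result, a strongly disordered potential has exponentially localized eigenvectors, and for most sites $v$ some eigenvector has $\psi(v)^2$ close to $1$.

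There is also a direction mismatch. Splitting an atom needs spectral mass (resonances) near it, i.e.\ a lower bound on local eigenvalue counts, whereas the hypothesis only gives upper bounds. At such a vertex the rank-one family does not help. Along $t\mapsto V+t\delta_v$ an eigenvalue moves with speed equal to its mass (Hellmann--Feynman). Spectral averaging over $t\in[0,\eta]$ only gives $\int_0^\eta\sigma_{V+t\delta_v,v}\,dt\le\mathrm{Leb}$, and this bound is saturated by a single atom of mass close to $1$ travelling a distance close to $\eta$, passing eigenvalues of distant localized states through avoided crossings of negligible width. Independent shifts of size $\eta<\delta$ at well-separated sites do not create the resonances needed to split localized eigenvectors at most sites. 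So the expectation bound and the Markov step have nothing to work with. This defeats even your target of $\delta\#\mathcal{V}$ good vertices; the paper's scheme yields all but $\delta\#\mathcal{V}$.

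Your stability step is also false as stated. ``No atom of $\sigma_{V,v}$ larger than $\varepsilon$'' is not open with a radius depending only on $\varepsilon$, $\lambda$ and the degree. Suppose $\sigma_{V,v}$ has $m$ atoms of mass $\varepsilon$ within distance $\tau$ of each other; $\tau$ can be arbitrarily small on large graphs, e.g.\ for resonant wells coupled by tunnelling. Then by the Aronszajn--Krein formula $F_t=F/(1+tF)$, a shift $t\delta_v$ with $t$ of order $\tau/(m\varepsilon)$ produces an atom of mass close to $m\varepsilon$. This is why the paper measures delocalization at a positive scale and in the IDS variable, $\nu_v=(\mathcal{N}_V)_*\sigma_{V,v}$. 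Its stability radius $e^{-c/((\varepsilon'-\varepsilon)(\eta-\eta'))}$ uses the IDS regularity to control $\|\mathcal{N}_V-\mathcal{N}_W\|_\infty$.

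The missing idea is the paper's global, non-constructive perturbation argument:
\begin{itemize}
\item Take $\beta$ supported on a target window $J$, alternating in sign on consecutive blocks whose length in the IDS variable is slightly larger than $\eta'$.
\item Then $\Phi_\beta(V)=\int E\,\beta(\mathcal{N}_V(E))\,d\mathcal{N}_V(E)$ is of the order of that block length for \emph{every} $V$.
\item Its Hellmann--Feynman gradient is $\Psi_\beta(V)[v]=\int\beta\,d\nu_v$. If no potential in the $\delta$-ball were $(\varepsilon',\eta',J)$-delocalized on most of $B$, then at every point of the ball $\Psi_\beta$ would have $\gtrsim\Delta\#\mathcal{V}$ coordinates of size $\gtrsim\varepsilon$.
\item Moving along the coherent direction $\mathrm{sgn}\,\Psi_\beta$, which changes the potential at many sites simultaneously, then changes $\Phi_\beta$ by $\gtrsim\delta\Delta\varepsilon$, a contradiction.
\end{itemize}
This step is iterated over windows covering $[0,1]$ and over $\varepsilon\mapsto\frac34\varepsilon$, with tower-type scales so that each new perturbation stays inside the stability radius of the earlier ones. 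The IDS hypothesis enters through this stability and through $\frac{1}{\#\mathcal{V}}\sum_v\nu_v(I)\le|I|+o(1)$, never as a per-vertex spreading bound.
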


The proof is inspired by the methods used in \cite{avila_delocalization_2024} to prove the density of delocalization for Schr\"odinger operators in $\mathbb{Z}^d$. In our setting an iterative procedure and a careful study of the continuity of the delocalization property are required. This works around the lack of many measure theoretic tools that are available in the infinite dimensional setting.\newline

We denote the the $d$-dimesional box graph by $\mathcal{B}_N$, where its set of verices is $[N]^d=\{1,\dots,N\}^d$ and two vertices are connected by an edge if their distance is exactly 1. We refer to the operators $H_{\mathcal{B}_N,V}$ as box Schreodinger operators. We first  present the proof in the context of box Schr\"odinger operators, in order to provide explicit estimates, and we outline, in a second moment, how this  generalizes to our actual result.\newline

As a first corollary, we recover the main result from \cite{avila_delocalization_2024}. Our approach has the advantage of providing quantitative bounds, unavailbale earlier, due to the use of a correspondance principle.

\begin{theorem}
    For any $d>0$, the sequence of graphs $\mathcal{G}_N=[N]^d$ has asymptotically dense delocalization.
\end{theorem}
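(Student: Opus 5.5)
The statement should follow from the general criterion of the previous theorem: a family with bounded maximum degree and asymptotically $\omega$-uniformly continuous IDSs has asymptotically dense delocalization. The box graphs $[N]^d$ have maximum degree $2d$. So the whole task is to show that the IDSs of box Schr\"odinger operators $H_{\mathcal{B}_N,V}$, with $V:[N]^d\to[-\lambda,\lambda]$ arbitrary, are asymptotically uniformly continuous with a modulus $\omega$ depending only on $d$ and $\lambda$. Concretely, for every $\eta>0$ there should be $h>0$ and $N_0$ such that for $N>N_0$, every such $V$ and every $E$,
\begin{equation*}
\mathcal{N}_V(E+h)-\mathcal{N}_V(E)<\eta .
\end{equation*}

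For $d=1$ I would use the machinery of the Thouless formula section, applied to the deterministic truncated operator. Let $p_N(z)=\det(z-H_{\mathcal{B}_N,V})$. By the argument in the proof of the Thouless formula, $p_N$ is the $(1,1)$ entry of the transfer-matrix product $A^N_z$, possibly up to a harmless shift of the diagonal coming from the $-\phi(v)$ terms of the Laplacian. Hence
\begin{equation*}
\tfrac1N\log|p_N(z)|=\int\log|z-E'|\,d\widetilde\sigma_V(E').
\end{equation*}
The upper bound $\tfrac1N\log|p_N(z)|\le \log(\lambda+C)$ is uniform in $V$. What is needed is a complementary lower bound showing that this logarithmic potential is not much smaller than the potential of a measure without concentration. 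The plan is to argue by contradiction, following the log-H\"older corollary of \cite{MR705040}. Suppose mass $\eta$ sits in an interval $I=[E,E+h]$. Take $z=E+ih/2$; the potential there is at most $\eta\log h+(1-\eta)\log C$, which is very negative. A lower bound of order $-C$ on $\tfrac1N\log\|A^N_z\|$ then gives $\eta\lesssim 1/\log(1/h)$. For a deterministic potential there is no Lyapunov exponent, so I would obtain that lower bound by noting that $\|A^N_z\|\ge 1$ since $\det A^N_z=1$. Because all four entries of $A^N_z$ are characteristic polynomials of boxes of sizes $N,N-1,N-1,N-2$, and the eigenvalues of these interlace, $\max_{ij}|(A^N_z)_{ij}|\ge 1/2$ transfers to $\int\log|z-E'|\,d\widetilde\sigma_V\ge -C/N$, up to $O(1/N)$ interlacing errors.

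For general $d$ the transfer-matrix route fails, and this is the step I expect to be the main obstacle. My first attempt is a direct spectral argument that needs no ergodicity. Cut the box into slabs or sub-boxes of side $L$ and use Dirichlet decoupling: removing the edges between sub-boxes is a perturbation of rank at most $C N^{d}/L$. By eigenvalue interlacing, this moves the counting function by at most $C/L$ uniformly in $E$. Inside each sub-box I would want a uniform bound on the number of eigenvalues in a window of size $h$. A cruder, more robust alternative is to view $[N]^d$ as a strip $[N]\times[N]^{d-1}$ and use block transfer matrices. That again leads to a Thouless-type formula, which I would prove as the "variant of the Thouless formula" mentioned in the abstract. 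With a determinant identity $\det(z-H)=\det(\text{top-right block of a symplectic product})$, the same lower bound from symplecticity, $\|A\|\ge1$, gives a bound of the form $\int\log|z-E'|\,d\widetilde\sigma_V\ge -C/N$ after normalizing by the number of vertices. Once that is established, the log-H\"older estimate follows exactly as in $d=1$. It provides $\omega(h)=C/\log(1/h)$ uniformly in $V$ and large $N$, and the previous theorem then yields asymptotically dense delocalization.
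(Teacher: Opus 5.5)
Your overall reduction is the same as the paper's. The box theorem is a corollary of the IDS-regularity criterion, so everything rests on approximate log-H\"older continuity of $\mathcal N_V$, uniformly in $V$ and in large $N$. Your $d=1$ argument does supply this; the interlacing comparison of the four transfer-matrix entries costs $O(\log(1/h)/N)$ rather than $O(1/N)$, which is harmless.

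\textbf{The gap is $d\ge 2$.} You flag this case but do not close it.
\begin{itemize}
\item Route (a) is circular as written. After Dirichlet decoupling you need a uniform bound on the number of eigenvalues of an $L$-box in a window of width $h$. That is the original problem at scale $L$, and no mechanism for it is given.
\item In route (b), the fact you invoke, $\|T\|\ge 1$ for the $2M\times 2M$ product with $M=N^{d-1}$, controls a single entry. It says nothing about the $M\times M$ determinant $\det(z-H)$. What $|\det T|=1$ actually gives is $\|\wedge^M T\|\ge 1$.
\item Even with $\|\wedge^M T\|\ge 1$, you still need the analogue of your one-dimensional interlacing step. You must pass from the largest of the $\binom{2M}{M}^2$ minors of $T$ to the specific Dirichlet minor, with a loss of at most $e^{O(M\log(1/\mathrm{Im}\,z))}$, so that it disappears after dividing by $N^d=NM$. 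Nothing in the proposal provides this.
\item The paper's Thouless variant cannot be quoted for this step. It is a length-to-infinity limit at fixed section size, whereas your strip has section $N^{d-1}$ and length only $N$.
\end{itemize}

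\textbf{How the paper avoids it.} The paper extends $V$ periodically to $\mathbb Z^d$. It takes log-H\"older continuity of $\mathcal N_V^\infty$ from Craig--Simon, with a constant depending only on $\lambda$ and $d$. It then shows by the moment method that $d(\widetilde\sigma_V,\widetilde\sigma_V^\infty)\lesssim N^{-1/3}$, hence $\|\mathcal N_V-\mathcal N_V^\infty\|_\infty<\varepsilon$ once $N\ge e^{c/\varepsilon}$.

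\textbf{How route (b) can be completed.}
\begin{itemize}
\item Each minor $\det T_{R,C}$ equals $\pm\det(H-z+K)$. Here $K$ replaces at most $2M$ columns belonging to the first and last slices by unit columns, namely those of the boundary unknowns $u_0$ and $u_{N+1}$. Hence $\mathrm{rank}\,K\le 2M$ and $\|K\|\le 1+\|H-z\|$.
\item Since $H$ is self-adjoint, $\|(H-z)^{-1}\|\le 1/\mathrm{Im}\,z$. This gives $|\det T_{R,C}|\le|\det(z-H)|\,(1+C/\mathrm{Im}\,z)^{2M}$.
\item Combined with $\max_{R,C}|\det T_{R,C}|\ge\binom{2M}{M}^{-1}\|\wedge^M T\|\ge 4^{-M}$, this yields $\int\log|z-E'|\,d\widetilde\sigma_V(E')\ge -C\log(2+1/\mathrm{Im}\,z)/N$.
\item Your contradiction argument then gives $\widetilde\sigma_V([E,E+h])\lesssim 1/\log(1/h)+1/N$. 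This even improves on the paper's threshold $N\ge e^{c/\varepsilon}$.
\end{itemize}

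\textbf{How route (a) can be closed.} Use unique continuation from a face. Suppose more than $L^{d-1}$ eigenvalues of an $L$-box lie in a window of width $h$. Then some unit vector in that spectral subspace vanishes on the first slice. Propagating the slice recursion across $L$ slices forces $1\le LC^L h$. Taking $L\sim\log(1/h)$ then gives the log-H\"older bound.
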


Finally, we describe a more general family of $\mathbb{Z}$-invariant graphs that falls under our criterion. This includes some notable family of graphs, for instance finite range box Schr\"odinger operators (that we will define precisely in the main body of the paper).

\begin{corollary}
    The family of the $r$-range Schr\"odinger operators on $[N]^d$ has asymptotically dense delocalization for every $r,d\in\N^+$.
\end{corollary}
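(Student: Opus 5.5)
The natural plan is to reduce to the main criterion (the theorem above): a family with bounded maximum degree whose IDSs are asymptotically $\omega$-uniformly continuous has asymptotically dense delocalization. The $r$-range operator $H_{[N]^d,V,r}$ is the ordinary Schr\"odinger operator on the graph $\mathcal{G}_{N,r}=([N]^d)_r$, in which two vertices are joined when their $\mathbb{Z}^d$-graph distance is at most $r$. Every vertex of $\mathcal{G}_{N,r}$ has degree at most the size of an $\ell^1$-ball of radius $r$ in $\mathbb{Z}^d$, a constant depending only on $r,d$. So the bounded-degree hypothesis is immediate, and everything reduces to proving a uniform continuity estimate for the IDS of $H_{\mathcal{G}_{N,r},V}$, uniform in $N$ and in $V:[N]^d\to[-\lambda,\lambda]$.

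For the continuity I would use the "flexible family of $\mathbb{Z}$-invariant graphs" and the variant of the Thouless formula announced in the abstract. The plan is to view $[N]^d$ with range $r$ as a strip: write $[N]^d=[N]\times[N]^{d-1}$ and slice along the first coordinate into layers of width $r$. Only consecutive layers interact, so the operator becomes a block Jacobi (block-tridiagonal) matrix along a one-dimensional chain of length about $N/r$, with blocks of size $m=r\cdot N^{d-1}$. The eigenvalue equation then gives a transfer-matrix cocycle in dimension $2m$. The Thouless-type identity should express $\frac1{\#\mathcal{V}}\sum_i\log|E-E_i|$, up to $o(1)$ boundary terms, as a normalized log-determinant of the transfer product, that is, a normalized sum of the nonnegative Lyapunov exponents.

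The difficulty is that $m$ grows with $N$, so this naive slicing loses uniformity. To fix this, I would instead treat $d=1$ first, and then handle general $d$ by comparing with a product or periodic structure. An alternative I would try first is a purely determinant-based argument. Since the off-diagonal coupling blocks are not invertible in general, the key estimate to aim for is a lower bound
\begin{equation*}
\frac{1}{\#\mathcal{V}}\sum_i \log|E-E_i| \ \ge\ -C(r,d,\lambda)
\end{equation*}
for all $E$ in a complex neighbourhood of the spectrum, obtained from the determinant of a suitable submatrix with nonsingular coupling (e.g.\ a cofactor expansion as in the proof of the Thouless formula, where the entry $(A^N_E)_{11}$ is a characteristic polynomial). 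Given such a bound together with the trivial upper bound $\log(2k+\lambda+|E|)$, the argument of the corollary of Craig--Simon yields log-H\"older continuity:
\begin{equation*}
|\mathcal{N}_V(x)-\mathcal{N}_V(y)|\le \frac{C}{\log(1/|x-y|)}+o_N(1).
\end{equation*}
This is exactly an asymptotic $\omega$-uniform modulus with $\omega(t)=C/\log(1/t)$.

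The main obstacle is the lower bound on the averaged log-potential uniformly in $N$ and $V$. In dimension one with $r=1$ it comes for free from $SL(2)$ and $\det=1$. For range $r$ in one dimension, the coupling block between consecutive layers is an $r\times r$ upper-triangular matrix of ones, hence invertible, and a symplectic-type transfer matrix of bounded size $2r$ still gives $\sum$ of Lyapunov exponents $\ge 0$. For $d\ge2$ I would try to establish the bound through the $\mathbb{Z}$-invariant family framework: treating $[N]^{d-1}$ cross-sections as the fibre and controlling the boundary contributions by $O(1/N)$ via the moment-comparison argument used in the lemma relating the two definitions of the DOSM. If that fails, I would fall back to a Wegner-free argument bounding the number of eigenvalues in $[E-\eta,E+\eta]$ by rank perturbation across layers.
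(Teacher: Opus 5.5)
Your reduction is right and is the paper's: $([N]^d)_r$ has degree bounded in terms of $r,d$, so by the general criterion everything hinges on a log-H\"older bound for the IDS that is uniform in $N$ and $V$. The gap is that you never establish this bound for $d\ge 2$. Moreover, the reason you give for abandoning the layer-slicing approach is mistaken. That approach is exactly the paper's route, via its theorem on truncated propagating graphs, which it notes imposes no condition on the size of the section. The growth of $m=rN^{d-1}$ does no harm.

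Concretely, $([N]^d)_r$ is the subgraph induced by a convex set in the $\mathbb{Z}$-invariant graph $(\mathbb{Z}\times[N]^{d-1})_r$, with $f$ the shift by $r$ in the first coordinate and the lexicographic order. The farthest band entry is always $1$, so one-vertex transfer matrices of size $2m$ with $|\det|=1$ exist. Your worry about non-invertible coupling does not arise either: if each width-$r$ layer is ordered by in-layer offset and then by cross-section position, the coupling blocks are unit triangular for every $d$.

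The key point is that you should not run the Thouless identity on the finite box. Run it on the operator with $V$ extended periodically in the first coordinate, with the section fixed and the length sent to infinity. All $m$-dependent constants then vanish in the limit; examples are the norm-comparison constant for a finite spanning set of $\bigwedge^m\mathbb{R}^{2m}$ and the effect of the boundary ``fake potentials''. One gets exactly $\int\log|E'-E|\,d\tilde\sigma^\infty(E')=\gamma_1(E)+\dots+\gamma_m(E)\ge 0$ for $\mathrm{Im}E\neq 0$. Nonnegativity holds for every $m$ because the $2m$ exponents sum to zero. Craig--Simon's lemma then gives a log-H\"older constant depending only on the support bound $\lambda+2k$. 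The finite-versus-periodic comparison (moment lemma plus the cumulative-function lemma) depends only on the length $\lfloor N/r\rfloor$, since only sites near the two cut faces see the truncation.

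By contrast, the estimate you propose as the key step is not available in the form you state. A bound $\frac{1}{\#\mathcal{V}}\sum_i\log|E-E_i|\ge -C$ for all $E$ in a complex neighbourhood of the spectrum, uniformly in $N$, is false for every finite box. Taking $E=E_1+i\eta$ with $\eta\to 0$ sends the left side to $-\infty$. At best such a bound could hold when $|\mathrm{Im}E|$ exceeds an $N$-dependent threshold, and you give no mechanism for that. The quantity $\det(E-H_N)$ is a single $m\times m$ minor of the transfer product, and a single minor has no a priori lower bound. This is precisely why the paper passes to the infinite periodic operator: there, the minors under all boundary conditions have the same growth rate and jointly control $\|\bigwedge^m T_n\|$. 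Your remaining suggestions for $d\ge 2$ are only things to try: the ``$\mathbb{Z}$-invariant framework'' if it works, and a rank-perturbation eigenvalue count otherwise. As written, the proposal at most covers $d=1$, and even there only once the Lyapunov/Thouless argument is applied to the periodic extension rather than to the box.
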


\section{Results and Plan of Proof}

\subsection{Producing delocalization}

We will work with a slight modification of the spectral measures. Recalling that we denote the IDS by $\mathcal{N}$, we set
\begin{equation}
    \nu_{\bold{v}} = \mathcal{N}_*\sigma_{\bold{v}}.
\end{equation}
Let us give a description of $\nu_{\bold{v}}$ in the case where $H$ has simple spectrum. Let us denote by $\psi_i\in \ell^2(\mathcal{V})$ the unitary eigenvector corresponding to $E_i$. Then $\nu_{\bold{v}}$ is supported on the points of the form $i/\#\mathcal{V}$ and $\nu_{\bold{v}}(i/\#\mathcal{V})=\psi_i(\bold{v})^2$.\newline
The maximal size of the atoms of $\nu_{\bold{v}}$ is always larger than the one of $\sigma_{\bold{v}}$. Hence, the following definitions of delocalization imply $\ell^{\infty}$-spatial delocalisation.

\begin{definition}
    Given $\varepsilon,\eta>0$, $J\subset\R$ and a Borel measure $\mu$ on $\R$, we say that $\mu$ is $(\varepsilon,\eta,J)$-delocalized if $\mu(I)\leq\varepsilon$ for every interval $I\subset J$ such that $|I|\leq \eta$.\newline
    We just say that $\mu$ is $(\varepsilon,\eta)$-delocalized if it is $(\varepsilon,\eta,\R)$-delocalized.
\end{definition}

We say that $J$ is the region where $\mu$ is delocalized and that $\va$ and $\eta$ are, respectively, the size and the scale of the delocalization. 

\begin{definition}
    Given $\varepsilon,\eta>0$, $J\subset\R$ and a potential $V:\mathcal{V}\rightarrow [-\lambda,\lambda]$ we say that $V$ is $(\varepsilon,\eta,J)$-delocalized at $\bold{v}$ if $\nu_{\bold{v}}$ is $(\varepsilon,\eta,J)$-delocalized.\newline
    Given $B\subset\mathcal{V}$, we say that $V$ is $(\varepsilon,\eta,J)$-delocalized on $B$ if it is $(\varepsilon,\eta,J)$-delocalized at every $\n\in B$.
\end{definition}

A small-scale delocalization is easier to achieve, but it is harder to preserve under perturbations.\newline

The main step in the proof of our main result is the following lemma. It states that if a potential has some delocalization, via a small perturbation, we can produce a new potential, which has a stronger delocalization on a specific target region.\newline
Although stated for the box operators, this theorem holds for more general graphs, provided some bounds on the discontinuities of the IDS of the corresponding Schreodinger Operator.

\begin{theorem}\label{thm perturbation}
    Let $V:[N]^d\rightarrow[-\lambda,\lambda]$ be a potential and suppose that any potential $W$, such that $\|V-W\|_{\infty}<\delta$ is $(\varepsilon,\eta)$-delocalized on $B$.\newline
    Fix real numbers $\va'\geq \frac23 \va$, $0<\Delta<1$, $\eta'>0$ and an interval $J$ such that $|J|\leq \eta/2$. If
    \begin{equation}
        \frac1{\eta'}\gtrsim_{\lambda,d}  \frac{\eta}{\delta\va^2\Delta^2},
        \qquad\qquad N>\frac6{\va\eta'},\frac1{\eta\eta'},
    \end{equation}\label{hypotesis}
    then there exists $V_1:[N]^d\rightarrow[-\lambda,\lambda]$, with $||V-V_1||_{\infty}<\delta$, that is $(\varepsilon',\eta',J)$-delocalized on $B'\subset B$,  where $\#(B\backslash B')/N^d\leq\Delta $.
\end{theorem}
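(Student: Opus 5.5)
The plan is to follow the rank-one perturbation mechanism of Avila--Damanik, adapted to the finite-volume setting. The key tool is a spectral averaging identity. For a fixed site $\bold{v}$ and a rank-one perturbation $W_t=W+t\delta_{\bold{v}}$ (or more generally a perturbation $W+t\chi$ by a bump on many sites), the average over $t$ of the spectral measures $\sigma_{W_t,\bold{v}}$ is controlled by Lebesgue measure. More precisely, $\int \sigma_{W_t,\bold v}(I)\,dt\le |I|$ by the Simon--Wolff / Kotani averaging formula. I would instead use a collective perturbation $V_1=V+t\,\Xi$, where $\Xi$ is a random (or deterministic, well-chosen) potential with $\|\Xi\|_\infty\le1$ and $t\in[0,\delta)$. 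The aim is to show that the expected mass that $\nu_{\bold v}$ puts on small intervals inside $J$ is small, and then to conclude by Chebyshev and pigeonhole in $t$.

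First, I pass from $\sigma$ to $\nu=\mathcal N_*\sigma$. The hypothesis that every $W$ in the $\delta$-ball is $(\va,\eta)$-delocalized on $B$ gives, for each $\bold v\in B$, that no interval of $\mathcal N$-length $\eta$ carries $\nu_{\bold v}$-mass above $\va$. Since $|J|\le\eta/2$, the energy window $\mathcal N_W^{-1}(J)$ carries $\nu_{\bold v}$-mass at most $\va$ uniformly along the family.

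Second, I cover $J$ by about $|J|/\eta'$ intervals $I_j$ of length $\eta'$, doubled to handle the boundary. For each pair $(\bold v, I_j)$ I estimate $\mathbb E_t[\nu_{W_t,\bold v}(I_j)]$ via spectral averaging. The derivative $\frac{d}{dt}$ of the eigenvalues is $\langle\psi_i,\Xi\psi_i\rangle$, which is of order $1$ for a suitably nondegenerate $\Xi$, so eigenvalues sweep through energy windows at unit speed. Meanwhile $\mathcal N$ moves eigenvalue indices only by the counting change, which is bounded by the hypothesized IDS regularity. As a result, the time an atom of weight $\psi_i(\bold v)^2$ spends in $I_j$ is $\lesssim \eta'/\eta$ relative to $\delta$. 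This yields $\mathbb E_t\,\nu_{W_t,\bold v}(I_j)\lesssim \va\,\eta'\eta^{-1}\delta^{-1}\cdot(\text{mass in window})$. Summing over $j$ and applying Markov's inequality gives, for each $\bold v$, that the probability some $I_j$ has mass $>\va'$ is $\lesssim \eta/(\delta\va^2\eta'^{-1}\cdots)$. The stated condition $\frac1{\eta'}\gtrsim \frac{\eta}{\delta\va^2\Delta^2}$ is exactly what makes this probability at most $\Delta$. The conditions $N>6/(\va\eta')$ and $N>1/(\eta\eta')$ absorb the $1/N^d$ discreteness of $\mathcal N$, which comes from atoms sitting at points $i/N^d$, and the mass of a single eigenvector. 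The constraint $\va'\ge\frac23\va$ leaves room to split the mass of an atom near an interval boundary.

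Third, I average over $\bold v\in B$. Fubini shows that for some $t<\delta$, all but $\Delta N^d$ sites of $B$ are $(\va',\eta',J)$-delocalized, which defines $V_1$ and $B'$. I truncate $V_1$ to $[-\lambda,\lambda]$ by perturbing inward, which is harmless since the hypothesis covers the whole $\delta$-ball.

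I expect the main obstacle to be the second step: controlling how the reparametrization by $\mathcal N_{W_t}$, which itself depends on $t$, interacts with eigenvalue motion. An eigenvector can stay stuck inside a single $\nu$-interval if the eigenvalues move coherently with the IDS. I would first try $\Xi\equiv1$ on a sparse set, or a uniform shift combined with a random sign pattern, exploiting the fact that a constant shift moves $\sigma$ but not $\nu$. If that fails, I would use an independent-random $\Xi$ and bound the $\mathcal N$-level displacement via second-order eigenvalue perturbation. The $\Delta^2$ in the hypothesis suggests a Chebyshev (variance) bound rather than Markov.
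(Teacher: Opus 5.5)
There is a genuine gap, and it is in your second step, which carries the entire content of the theorem.

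\textbf{Atoms of $\nu$ do not move.} After pushing forward by the IDS, the atoms of $\nu_{W,\mathbf n}=\sum_i\psi_i(\mathbf n)^2\,\delta_{i/N^d}$ (simple spectrum) are pinned at the points $i/N^d$ and do not move as $t$ varies. Only their weights change, through rotation of eigenvectors across the endpoints of a rank block. So the picture of eigenvalues sweeping through windows, with an atom spending a fraction $\eta'/\eta$ of the time in $I_j$, has no counterpart for $\nu$. The constant shift you mention is the extreme case of this, not an exception.

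\textbf{Spectral averaging does not reach scale $\eta'$.} What spectral averaging provides is $\mathbb{E}\,\sigma_{\mathbf n}(I)\lesssim|I|/\delta$ for a \emph{fixed energy} interval $I$. It also requires $W(\mathbf n)$ itself to carry independent randomness of size $\delta$; a one-parameter family $V+t\Xi$ gives nothing of the sort. To transfer the bound to $\nu$ you must control $\sigma_{\mathbf n}$ on the energy windows $\mathcal N^{-1}(I_j)$, enlarged by one eigenvalue by interlacing. The lengths of these windows add up to the energy length $w$ of $\mathcal N^{-1}(J)$. That quantity is independent of $\eta'$ and in general not small compared with $\delta\varepsilon'\Delta$. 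Markov plus a union bound over $j$ then gives a failure probability $\lesssim w/(\delta\varepsilon')$, which need not be below $1$, let alone below $\Delta$. The condition on $\eta'$ never enters; the ellipsis in your estimate is where this is hidden.

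\textbf{Unused structural input.} Your sketch never uses the fact that drives the result: since $\varepsilon'>\varepsilon/2$ and $\nu_{\mathbf n}(J)\le\varepsilon$, an interval of length $\eta'$ carrying mass $\ge\varepsilon'$ carries strictly more than the rest of $J$.

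\textbf{The missing idea: the paper's variational argument.} This is the technique the paper takes from Avila--Damanik, not spectral averaging. Argue by contradiction with $\Phi_\beta(W)=N^{-d}\sum_iE_i(W)\beta(i/N^d)$. By Hellmann--Feynman ($\partial E_i/\partial W(\mathbf n)=\psi_i(\mathbf n)^2$), its gradient is $N^{-d}\Psi_\beta(W)[\mathbf n]$ with $\Psi_\beta(W)[\mathbf n]=\int\beta\,d\nu_{W,\mathbf n}$, so the argument lives natively in rank coordinates.
\begin{enumerate}
\item Choose $\beta$ supported in $J$, equal to $\pm1$ on consecutive blocks of length $k/N^d\approx\eta\eta'/(\varepsilon\Delta)$ with alternating signs and ramps of width $\eta'$.
\item Antiperiodicity of $\beta$ gives $|\Phi_\beta|\lesssim_{\lambda,d}k/N^d$ for \emph{every} potential.
\item If the conclusion failed throughout the $\delta$-ball, then for every $W$ there would be at least $\Delta N^d/2$ sites whose concentration interval avoids the ramps. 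The remaining sites are few because $N^{-d}\sum_{\mathbf n}\nu_{\mathbf n}$ is close to Lebesgue. On the good sites, $|\Psi_\beta(W)[\mathbf n]|\ge2\varepsilon'-\varepsilon\ge\varepsilon/3$.
\item Move along a piecewise linear path of length $\delta$ in the direction $\mathrm{sgn}\,\Psi_\beta$ on those sites. This is legitimate by the uniform continuity of $\Psi_\beta$, which is where $N>6/(\varepsilon\eta')$ is used. Along the path $\Phi_\beta$ increases by $\gtrsim\delta\Delta\varepsilon$.
\item This contradicts $|\Phi_\beta|\lesssim\eta\eta'/(\varepsilon\Delta)$ once $1/\eta'\gtrsim\eta/(\delta\varepsilon^2\Delta^2)$.
\end{enumerate}
The two factors of $\Delta$ come from the fraction of good sites and from the block length, not from a Chebyshev bound. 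The conclusion is purely existential: nothing in it says that a random point of the $\delta$-ball works.
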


The proof exploits the technique behind the main theorem in \cite{avila_delocalization_2024}, which directly dealt with the infinite dimensional setting. In our setting, the argument does not provide immediately the desired degree of delocalization, since we cannot exploit many tools from measure theory that are available in the non-discrete case. Instead, we will reach the desired result by an iterative application of this result.\newline

The proof goes by contraddiction. Suppose that no potential $W$ in $B(V,\delta)$ shows $(\va',\eta',J)$-delocalization. Then the following must occur: $\nu_{W,\bold{n}}(J)<\va$ and $\nu_{W,\bold{n}}(I_{W})> \va'$ for some interval $I_W\subset J$ of size $\eta'$. This means that most of the measure of $J$ is concentrated on a much shorter interval $I_W$. We highlight how this leads to a contraddiction.\newline
We fix a function $\beta:[0,1]\rightarrow\mathbb{R}$, and we define the following function, from the space of potentials to $\mathbb{R}$,
\begin{equation}
    \Phi_{\beta}(V)=\int E\beta(\mathcal{N}_V(E))d\mathcal{N}_V(E).
\end{equation}
It is easy to check that if $\beta$ oscillates fast, then $\|\Phi_{\beta}\|_{\infty}$ is close to $0$.\newline
One can also see that, thanks to the Hellmann-Feynman formula, $\Phi_{\beta}$ admits a weak gradient $\Psi_{\beta}$, that can be expressed in terms of $\nu_{\bold{n}}$, as follows

\begin{equation}
    \Psi_{\beta}(V)[\n] = \int \beta(t) d\nu_{\n}(t).
\end{equation}

To reach the desired contraddiction we make a specific choice of $\beta$. We choose it to be supported on $J$ and with oscillation just slightly slower than $\eta'$. This ensures that $\Psi_{\beta}$ sees only the mass of $\nu_{\bold{n}}$ that sits on $J$, and at the same time there is no cancellation on $I_W$, because the oscillation is too slow on that extremely small scale. Since most of the mass on $J$ resists cancelation (since it sits on $I_{W}$), we conclude that $\Psi_{\beta}$ is always large.\newline
This, whenever $r$ is large enough, concludes the argument: a function with a large gradient, on a sufficiently large disk, must have a large oscillation (which $\Phi_{\beta}$ does not have).

\subsection{Stability of Delocalization}

Along the iterative application of theorem \ref{thm perturbation}, we need to be careful not to destroy the delocalization created with the previous steps on different regions. This is made possible by the next theorem, about stability of delocalization under perturbations.

\begin{theorem}\label{stability of delocalization}
    Let $\va'>\va>0$ and $\eta>\eta'>0$ be real numbers, and $V$ a potential that is $(\va,\eta,J)$-delocalized at $\n$.\newline
    If $W$ is a potential, 
    \begin{equation}
        \|W-V\|_{\infty}<e^{-\frac{c}{(\va'-\va)(\eta-\eta')}} \quad\text{and}\quad N > e^{\frac{c}{(\va'-\va)(\eta-\eta')}},
    \end{equation}
    then $W$ is $(\va',\eta',J')$-delocalized at $\n$, where $J'=\{x\in J : d(x,\partial J) > \frac12(\eta-\eta')\}$
\end{theorem}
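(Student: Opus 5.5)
The plan is to compare $\nu_{W,\n}=\mathcal{N}_W{}_*\sigma_{W,\n}$ with $\nu_{V,\n}$ through two separate continuity statements in $W$: one for the spectral measure $\sigma_{\cdot,\n}$ and one for the IDS $\mathcal{N}_\cdot$. Since $\|H_{\mathcal{G},W}-H_{\mathcal{G},V}\|=\|W-V\|_\infty=:\rho$, both objects move little in a weak sense. What has to be controlled is how a perturbation of size $\rho$ can shift mass of $\nu$ across scales $\eta$ versus $\eta'$. The gap $\eta-\eta'$ provides room for endpoints to move by $\tfrac12(\eta-\eta')$, which is exactly why the region shrinks from $J$ to $J'$. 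The gap $\va'-\va$ absorbs the mass error.

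\textbf{Step 1: the IDS.} By eigenvalue interlacing (Weyl), every eigenvalue moves by at most $\rho$. Hence
\[
\mathcal{N}_V(E-\rho)\le\mathcal{N}_W(E)\le\mathcal{N}_V(E+\rho).
\]
The spectral measures of the individual site $\n$ are not that rigid. I would instead use smoothed test functions: for $g$ Lipschitz, $|\int g\,d\sigma_{W,\n}-\int g\,d\sigma_{V,\n}|\le\|g(H_W)-g(H_V)\|$. For polynomial or analytic $g$ this is $O(\rho)$ times a constant depending on smoothness.

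\textbf{Step 2: reducing to one interval.} Fix an interval $I'\subset J'$ with $|I'|\le\eta'$, and let $I$ be its $\tfrac12(\eta-\eta')$-neighbourhood, so $I\subset J$ and $|I|\le\eta$; hence $\nu_{V,\n}(I)\le\va$. Choose a continuous cutoff $\chi$ equal to $1$ on $I'$, supported in $I$, with Lipschitz constant $\lesssim 1/(\eta-\eta')$. Then $\nu_{W,\n}(I')\le\int\chi\circ\mathcal{N}_W\,d\sigma_{W,\n}$. The function $\chi\circ\mathcal{N}_W$ is a function of energy, but $\mathcal{N}_W$ is a step function. This is where regularity enters: I would replace $\mathcal{N}_W$ by $\mathcal{N}_V$ using the sandwich from Step 1 together with a modulus of continuity of $\mathcal{N}_V$. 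The hypothesis $N>e^{c/((\va'-\va)(\eta-\eta'))}$ is there to make the jumps $1/N^d$ negligible, together with a $\log$-H\"older-type continuity of the IDS (Craig--Simon type, as in the Thouless corollary) at scales above $1/N$. This gives $\mathcal{N}_W$ uniformly close to $\mathcal{N}_V$ at resolution $\tfrac14(\eta-\eta')$ in the $t$-variable, provided $\rho$ is exponentially small.

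\textbf{Step 3: main obstacle.} The hard part is passing from $\sigma_{W,\n}$ to $\sigma_{V,\n}$ when the test function $\chi\circ\mathcal{N}_V$ is only as regular as $\mathcal{N}_V$, which is merely $\log$-H\"older. The plan is to approximate $h=\chi\circ\mathcal{N}_V$ from above by a smooth $\tilde h$ whose integral against $\sigma_{V,\n}$ exceeds that of $h$ by at most $\va'-\va$; this should be achievable by mollifying $h$ at energy scale $r$. The $\log$-H\"older modulus forces $r\approx e^{-c/((\va'-\va)(\eta-\eta'))}$, since $\mathcal{N}_V$ must vary by less than $\sim(\eta-\eta')$ on scale $r$, and the $\va'-\va$ factor comes from the loss in mass. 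Then
\[
\Big|\int\tilde h\,d\sigma_W-\int\tilde h\,d\sigma_V\Big|\lesssim\rho/r,
\]
by Helffer--Sj\"ostrand or the resolvent identity. This is small because $\rho\le e^{-c/((\va'-\va)(\eta-\eta'))}$ with a larger $c$. Chaining the bounds gives $\nu_{W,\n}(I')\le\va+(\va'-\va)=\va'$.

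I expect the delicate point to be making the IDS modulus of continuity quantitative on a finite box, where only a uniform version holding down to scale $\sim 1/N$ is available. This is the source of the exponential lower bound required on $N$.
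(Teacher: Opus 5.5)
Your proposal is correct and has the same skeleton as the paper's proof, but it uses different tools. The paper also reduces to one interval with a trapezoid cutoff of Lipschitz constant $2/(\eta-\eta')$. It then bounds $\bigl|\int\beta\,d\nu_{V,\mathbf n}-\int\beta\,d\nu_{W,\mathbf n}\bigr|$ via Lemma \ref{stability tested against lip}, with $L=2/(\eta-\eta')$ and $E=\varepsilon'-\varepsilon$. The error splits into an IDS-replacement term and a site-measure term tested against a uniformly continuous function.

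The paper routes both terms through its Fourier-type metric $d$:
\begin{itemize}
\item The resolvent identity gives $d(\sigma_{V,\mathbf n},\sigma_{W,\mathbf n})$ and $d(\tilde\sigma_V,\tilde\sigma_W)$ both $\lesssim\|V-W\|_\infty^{1/4}$.
\item Lemma \ref{Fourier controls cumulative} converts the second bound into smallness of $\|\mathcal N_V-\mathcal N_W\|_\infty$.
\item Lemma \ref{fourier controls test} (mollification plus Plancherel) handles $\beta\circ f_W$, where $f_W$ is a log-H\"older function uniformly close to $\mathcal N_W$.
\end{itemize}
Your Weyl sandwich replaces the IDS part more elementarily and slightly more sharply. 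Your mollify-then-Helffer--Sj\"ostrand step is the same mechanism as Lemma \ref{fourier controls test}. Any polynomial loss $\rho/r^k$ there is harmless, because $\rho$ is exponentially smaller than $r$. The paper's packaging gives one metric that it reuses for the box-versus-periodic comparison. Yours shows that the only non-elementary input is the almost-log-H\"older regularity of the box IDS.

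You should cite that input rather than leave it open as the ``delicate point''. It is Theorem \ref{L infty ids} together with Remark \ref{almost UC}: $\mathcal N_V$ lies within $\varepsilon_0$ of a $C_{\lambda,d}$-log-H\"older function once $N\ge e^{c/\varepsilon_0}$. This is proved by comparing with the periodic extension, whose IDS is log-H\"older by Craig--Simon. The $1/\log N$ rate of that approximation is what forces $N>e^{c/((\varepsilon'-\varepsilon)(\eta-\eta'))}$. The size of individual jumps is not the issue; with multiplicity they can be of order $1/N$, not $1/N^d$.

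One bookkeeping fix in Step 2. If you keep the Lipschitz cutoff $\chi$, replacing $\mathcal N_W$ by $\mathcal N_V$ needs $\|\mathcal N_W-\mathcal N_V\|_\infty\lesssim(\varepsilon'-\varepsilon)(\eta-\eta')$, not merely $\tfrac14(\eta-\eta')$. The weaker closeness suffices only if you first enlarge $I'$ by $\|\mathcal N_W-\mathcal N_V\|_\infty$ and put the cutoff on the enlarged interval. Your sandwich and the cited regularity give either version under the stated hypotheses.
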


This essentially follows from the fact that spectral measures depend continuously on the potential. To make this more evident, let us unpack the dependance of $\nu_{V,\bold{n}}$ on the spectral measures $\mu_{V,\bold{n}}$'s. We have that
\begin{equation}
    \nu_{V,\bold{n}}=(\mathcal{N}_V)_*\sigma_{V,\bold{n}}.
\end{equation}

Hence, the continuity of $\nu_{V,\bold{n}}$ in $V$ depends on: the continuity of $\sigma_{V,\bold{n}}$ in $V$, the dependence of $\mathcal{N}_V$ on $V$, and the regularity of $\mathcal{N}_V(E)$ as a function of $E$. Equation \ref{Nv is average} ensures that the second one boils down to the first one.\newline

The continuous weak-$*$ dependance of the spectral measures on the potential is a well-known fact; see, for example, \cite{damanik2022one}. However, it is in our interest to provide quantitative and uniform bounds. This is done studying the Stjlties transform of the spectral measure, exploiting the spectral theorem and concluding with an harmonic analysis argument.\newline
Similar results are obtained for some families of infinite graph in \cite{hislop2021dependence}.\newline

Our study of the continuity of the spectral measures and formula \ref{Nv is average} apply to general graphs, while the regularity of the IDS depends heavily on the graph structure. It is, in fact, the crucial property that determines if asymptotic dense delocalization holds for a given family of graphs. Let us be more specific about the required regularity in the next section.\newline

\subsection{The general criterion}

When $\mathcal{G}$ is a finite graph the IDS cannot be continuous, since it shows jump discontinuities of size at least $1/\#\mathcal{V}(\mathcal{G})$. We describe its regularity as being uniformly close to a uniform continuous function with a a specified modulus of continuity.

If $\omega:\mathbb{R}^+\rightarrow\mathbb{R}^+$ is a modulus of continuity and $f:\R\rightarrow\R$ a function that admits $\omega$ as modulus of continuity, we say, for short, that $f$ is $\omega$-uniformly continuous.\newline

\begin{definition}
    Let $\omega:\R^+\rightarrow\R^+$ be a modulus of continuity. We say that a family $\mathcal{M}$, of functions from $\R$ to $\R$, is $\va$-quasi $\omega$-uniformly continuous if for any $f\in\mathcal{M}$ there exists $g:R\rightarrow\R$ that is $\omega$-uniformly continuous and such that $\|f-g\|_{\infty}\leq\va$.
\end{definition}

\begin{definition}
    Given $\lambda>0$, we say that a family of graphs $\mathcal{F}$ has asymptotically $\omega$-uniformly continuous IDSs if for every $\va>0$, there exists $N_{\va}$ such that the family of IDSs
 \begin{equation}
     \mathcal{M}_{\mathcal{G},N_{\va}}=\{\mathcal{N}_{V,\mathcal{G}}: \mathcal{G}\in\mathcal{F}, \#\mathcal{V}(G)\geq N_{\va}, V:\mathcal{V}(G)\rightarrow[-\lambda,\lambda]\}
 \end{equation}
is $\va$-quasi $\omega$-uniformly continuous.
\end{definition}

When the IDS has this regularity the discussion of the two previous sections works without obstructions, providing our main result, possibly with different quantitative bounds, that depend only on $\omega$, the bound on the potential $\lambda$, and the maximal degree of the vertices of the family of graphs $k$.

\begin{theorem}
    Suppose that a family of graphs $\mathcal{F}$ has asymptotically $\omega$-uniformly continuous IDSs, for some modulus of continuity $\omega$ and bounded maximum degree. Then, $\mathcal{F}$ has asymptotically dense delocalization.
\end{theorem}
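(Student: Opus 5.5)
The plan is to obtain the general criterion by rerunning the box-operator argument, namely Theorem \ref{thm perturbation} and Theorem \ref{stability of delocalization}, on an arbitrary graph of bounded degree $k$. The only place where the box structure enters quantitatively is the regularity of $\mathcal{N}_V$. In the general setting I will replace it by the asymptotic $\omega$-uniform continuity hypothesis.

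The first step is to check that the proof of Theorem \ref{thm perturbation} is graph-independent. Two ingredients are needed. The Hellmann--Feynman formula gives the weak gradient $\Psi_\beta(V)[\n]=\int\beta\,d\nu_{\n}$, and it holds verbatim on any finite graph. The smallness $\|\Phi_\beta\|_\infty\approx 0$ for fast-oscillating $\beta$ comes from writing $\Phi_\beta(V)=\int E\,\beta(\mathcal{N}_V(E))\,d\mathcal{N}_V(E)$. Substituting $t=\mathcal{N}_V(E)$ turns this into $\int_0^1 \mathcal{N}_V^{-1}(t)\beta(t)\,dt$ up to an error of order $k\lambda/\#\mathcal{V}$ coming from the jumps. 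Since $\mathcal{N}_V^{-1}$ is monotone with range in $[-\lambda-2k,\lambda+2k]$, oscillation of $\beta$ at scale $\eta'$ makes this integral $O((\lambda+k)\eta')$. So only the degree bound and $\#\mathcal{V}$ large are used here, and the conditions $N>6/(\va\eta')$ become $\#\mathcal{V}>6/(\va\eta')$. The gradient-versus-oscillation argument on the $\delta$-ball in $\ell^\infty(\mathcal{V})$ is dimension-free once it is normalized by $\#\mathcal{V}$.

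The second step, which I expect to be the main obstacle, is redoing Theorem \ref{stability of delocalization}. I will control the Stieltjes transforms $\langle\delta_{\n},(H_W-z)^{-1}\delta_{\n}\rangle$ via the resolvent identity, which gives weak-$*$ closeness of $\sigma_{W,\n}$ and $\sigma_{V,\n}$ at scale $\operatorname{Im}z$ with a bound depending only on $k$ and $\lambda$. Averaging over sites, as in equation \ref{Nv is average}, gives the same for $\mathcal{N}_W$ versus $\mathcal{N}_V$. To pass to $\nu=\mathcal{N}_*\sigma$ I need the pushforward to be stable, and here the hypothesis enters. Given $\va_0$, for $\#\mathcal{V}\ge N_{\va_0}$ I replace $\mathcal{N}_V$ by an $\omega$-uniformly continuous $g$ with $\|\mathcal{N}_V-g\|_\infty\le\va_0$. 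An interval of length $\eta'$ in the $\mathcal{N}$-variable then pulls back to a set of energies whose structure is controlled up to the errors $\va_0$ and $\omega(\cdot)$. Smoothing the indicator at an energy scale $\rho$ with $\omega(\rho)\ll\eta-\eta'$ gives the estimate with $e^{-c/((\va'-\va)(\eta-\eta'))}$ replaced by some explicit function of $\omega^{-1}$, $\va'-\va$ and $\eta-\eta'$. In the box case this role was played by the $\log$-H\"older continuity coming from the Thouless formula.

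Finally, I will run the iterative scheme. Start from any $V$, partition $[0,1]$ into intervals $J_1,\dots,J_m$ of length $\le\eta/2$, and apply the generalized Theorem \ref{thm perturbation} successively on each $J_i$ with perturbation budgets $\delta_i$ summing to less than $\delta$ and exceptional fractions $\Delta_i$ summing to less than $1-\delta$. After each step, the generalized stability theorem guarantees that later perturbations, chosen much smaller (geometrically decreasing $\delta_i$ dictated by the stability threshold), preserve the delocalization already achieved on earlier $J_j$, at the cost of slightly enlarging $\va$ and shrinking the regions. The starting hypothesis of Theorem \ref{thm perturbation}, namely that every potential near $V$ is delocalized at some coarse scale, holds trivially with $\va=1$ and $\eta=1$. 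The sizes will be driven from $1$ down to the target $\va$ through finitely many rounds ($\va'\ge\frac23\va$). The required $N_{\mathcal{F}}$ is the maximum of all the graph-size conditions, together with $N_{\va_0}$ from the hypothesis, and it depends only on $\lambda,\va,\delta,k,\omega$. At the end, since $\nu_{\n}$ has larger atoms than $\sigma_{\n}$, the resulting potential $W$ has at least $\delta\#\mathcal{V}$ vertices where $\sigma_{W,\n}$ has no atom exceeding $\va$.
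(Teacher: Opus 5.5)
Your route is the paper's: rerun the box argument, with the hypothesis of asymptotic $\omega$-uniform continuity standing in for the box-specific input (Theorem \ref{L infty ids}). Your Step 2 is essentially the paper's stability argument, via the resolvent identity together with Lemmas \ref{Fourier controls cumulative} and \ref{fourier controls test}.

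The gap is in Step 1. There you assert that the perturbation theorem of Section \ref{Perturbations producing delocalization} needs only bounded degree and large $\#\mathcal{V}$, and that $N>6/(\varepsilon\eta')$ simply becomes $\#\mathcal{V}>6/(\varepsilon\eta')$. Neither is right.

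The gradient-descent step uses the uniform continuity of $\Psi_\beta$ (Lemma \ref{psi continuity}). The modulus there is $\rho_\beta$ evaluated at the largest jump of $\mathcal{N}_V$, i.e.\ at (maximal eigenvalue multiplicity)$/\#\mathcal{V}$. The reason is that at an eigenvalue of multiplicity $m$, arbitrarily small perturbations of the potential can redistribute the mass of $\nu_v$ among the $m$ labels $i/\#\mathcal{V},\dots,(i+m-1)/\#\mathcal{V}$.

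Likewise, counting the sites whose concentration interval $I_{V',v}$ lies near some $\partial J_i$ uses Lemma \ref{average nu similar to Lebesgue}. Its error term is again the largest jump.

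For boxes, multiplicities are at most $N^{d-1}$, so jumps are at most $1/N$. This is what the conditions $N>6/(\varepsilon\eta')$ and $N>1/(\eta\eta')$ actually encode: a jump smaller than a multiple of $\varepsilon\eta'$, not a lower bound on the number of vertices. For general bounded-degree graphs this fails. In the diamond chain with $V=0$, one eigenvalue has multiplicity about $\#\mathcal{V}/3$. Then $\rho_\beta$ at that jump is of order one, and the lower bound $|\Psi_\beta|\geq\varepsilon/6$ on a $\delta_0$-neighbourhood is lost.

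The repair is exactly the observation the paper's proof rests on. If $\|\mathcal{N}_V-g\|_\infty\leq\varepsilon_0$ with $g$ continuous, then every jump of $\mathcal{N}_V$ is at most $2\varepsilon_0$. So you must invoke the hypothesis in the perturbation step as well. Take $\varepsilon_0$ to be a small multiple of $\varepsilon\eta'$ at each of the finitely many fine scales $\eta'$ in the iteration, and require $N_{\mathcal{F}}$ to be at least the corresponding $N_{\varepsilon_0}$.

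A smaller point in Step 3: do not partition $[0,1]$. Stability shrinks each region at both ends, so a partition leaves uncovered gaps at the cut points. An interval straddling two cells would also only be bounded by $2\varepsilon'$. Instead, use overlapping intervals covering a slight enlargement of $[0,1]$, with overlaps larger than the shrinkage plus the final scale, as in the proof of Theorem \ref{main theorem}.
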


\subsection{Regularity of the IDS for the box operator}

To conclude the proof of asymptotically dense delocalization for the family of box graphs, we just need to prove some regularity for the IDS of their corresponding Schr\"odinger operators. This is already achieved in \cite{bourgain2013bounds} with a direct argument, but we present anyway an alternative proof that relies on the comparison between finite graphs and some infinite extension. This follows the common intuition that the IDS of an infinite dimensional operator is the limit of some finite rank truncated operators. As this is well know for the box operator, the novelty of our argument are quantitative uniform bounds.\newline

We will deal with the following modulus of continuity. When
\begin{equation}
    \omega(x)= 
    \begin{cases}
        \frac C{-\ln|x|} & \text{if } |x|<1/2\\
        +\infty & \text{otherwise},
    \end{cases}
\end{equation}
we replace the expression '$\omega$-uniformly continuous' with '$C$-$\log$ Holder continuous'.\newline

\begin{theorem}
    The sequence of graphs $\mathcal{G}_N=[N]^d$ has asymptotically $C$-$\log$ Holder continuous IDSs, where $C=C_{\lambda,d}$ depends only on $\lambda$ and $d$.
\end{theorem}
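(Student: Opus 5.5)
The strategy is to compare the finite box operator $H_{[N]^d,V}$ with a periodic operator on $\mathbb{Z}^d$, and then transfer a uniform Thouless-type bound from the periodic setting back to the box.

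\textbf{Step 1: periodic extension.} Given $V:[N]^d\rightarrow[-\lambda,\lambda]$, extend it to a $\mathbb{Z}^d$-periodic potential $\tilde V$ with period $N$ in each coordinate. Reflecting before periodizing gives a continuous extension, but plain periodization also works for counting purposes. Then $H_{[N]^d,V}$ and the restriction of $H_{\mathbb{Z}^d,\tilde V}$ to $[N]^d$ differ by an operator supported on the boundary of the box, whose rank is at most $C_d N^{d-1}$.

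\textbf{Step 2: compare the IDSs.} By eigenvalue interlacing (min–max), a perturbation of rank $r$ moves the counting function by at most $r$. Hence
\begin{equation*}
\|\mathcal{N}_{[N]^d,V}-\mathcal{N}_{\text{per}}\|_\infty\leq C_d/N.
\end{equation*}
The same bound holds when $\mathcal{N}_{\text{per}}$ is the IDS of the periodic operator on the torus $(\mathbb{Z}/N)^d$, which is also the DOSM of $H_{\mathbb{Z}^d,\tilde V}$ (the trace per unit volume over one period). This gives the $\varepsilon$-quasi part of the statement once $N_\varepsilon\gtrsim C_d/\varepsilon$. It therefore suffices to prove that the periodic IDS $\mathcal{N}_{\text{per}}$ is $C_{\lambda,d}$-$\log$ Hölder continuous, with a constant uniform in $N$ and $V$.

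\textbf{Step 3: a Thouless-type bound for the periodic operator.} This is the main obstacle, since $\mathcal{N}_{\text{per}}$ has atoms on the torus and in $d>1$ there is no transfer matrix. I would instead consider the strip or cylinder $\mathbb{Z}\times(\mathbb{Z}/N)^{d-1}$, viewed as a $\mathbb{Z}$-periodic graph with fiber $(\mathbb{Z}/N)^{d-1}$ of size $m=N^{d-1}$. Its IDS per site coincides with that of $H_{\mathbb{Z}^d,\tilde V}$.

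The transfer matrices $A_E\in Sp(2m)$ have the block form $\begin{pmatrix}E-H_{\text{slice}}&-I\\ I&0\end{pmatrix}$. Along the lines of the Thouless proof above, $\det$ of the upper-left block of the $n$-step product equals $\prod_i(E-E_i^{(n)})$ over the eigenvalues of the length-$n$ truncation. Hence
\begin{equation*}
\frac1{nm}\log|\det(\cdot)|\rightarrow\int\log|E-E'|\,d\sigma(E').
\end{equation*}
The left side is at most the average of the top $m$ Lyapunov exponents divided by $m$. Each exponent is bounded by $\log(2+\lambda+4d+|E|)$, which gives a uniform upper bound $\int\log|E-E'|d\sigma\leq C$ for real $E$ in a compact set. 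Symplecticity forces the sum of the top $m$ exponents to be nonnegative, which gives the lower bound
\begin{equation*}
\int\log|E-E'|\,d\sigma(E')\geq 0.
\end{equation*}

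\textbf{Step 4: from the log-potential bounds to continuity.} Once the logarithmic potential $L(E)=\int\log|E-E'|d\sigma$ satisfies $0\leq L\leq C$ on a neighbourhood of the (compact) spectrum, Craig–Simon's argument from the Corollary after the Thouless formula applies verbatim. For an interval $I$ of length $\delta$ centred at $E$, split $L(E)$ into the contributions from inside and outside $I$; the outside part is bounded above by a constant, while the inside part is at most $\sigma(I)\log\delta$. So
\begin{equation*}
0\leq L(E)\leq \sigma(I)\log\delta+C,
\end{equation*}
which gives $\sigma(I)\leq C/\log(1/\delta)$. Only measure-theoretic properties were used, so this holds for the atomic periodic DOSM with $C=C_{\lambda,d}$.

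\textbf{Step 5: conclusion.} Because $\sigma$ may have atoms, I would apply the estimate to its convolution with a tiny smoothing, or directly observe that the bound already controls all jumps. The resulting $\log$ Hölder function $g$ is within $C_d/N$ of $\mathcal{N}_{[N]^d,V}$, which completes the proof.
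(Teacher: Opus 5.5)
\textbf{Gap in Step 3 (the lower bound $L\ge0$).} This step carries the whole regularity estimate, and its deduction runs the wrong way. Let $P_n(E)$ be the upper-left $m\times m$ block of $T_n(E)$. What you actually justify is that $\det P_n(E)$ is one matrix coefficient of $\bigwedge^m T_n(E)$. So $|\det P_n(E)|\le\|\bigwedge^m T_n(E)\|$, and hence
\[
L(E)=\int\log|E-E'|\,d\sigma(E')\le\tfrac1m\,(\gamma_1+\dots+\gamma_m)(E).
\]
Nonnegativity of $\gamma_1+\dots+\gamma_m$ (which already follows from $\det T_n=1$) says nothing about the sign of $L(E)$ given this inequality.

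To get $L\ge0$ you need the reverse inequality $\frac1m(\gamma_1+\dots+\gamma_m)\le L(E)$ for $\mathrm{Im}\,E\neq0$. In other words, the single Dirichlet minor must already capture the full growth of $\|\bigwedge^m T_n(E)\|$. This is the nontrivial half of the strip Thouless formula. In one dimension it is easy, because every entry of $T_n$ is itself a Dirichlet determinant of a shifted truncation. For $m\ge2$, however, the other $m\times m$ minors correspond to mixed boundary conditions.

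The paper handles this, following Craig--Simon, in two moves:
\begin{itemize}
\item It tests $\bigwedge^r T_n(E)$ against a finite spanning family of $r$-vectors $(e_1+\tilde d_1)\wedge\dots\wedge(e_r+\tilde d_r)$.
\item It identifies each such coefficient with the Dirichlet determinant of a non-self-adjoint truncation carrying an artificial boundary potential, whose density of states still converges to $\tilde\sigma$.
\end{itemize}
Without this step, or a direct citation of Craig--Simon (which is what the paper's proof of this statement does), Step 4 has no input. Your worry about atoms in Step 5 is a symptom of the same issue: any measure with $L\ge0$ off $\mathbb{R}$ is atomless, since $L(E_0)=\lim_{y\downarrow0}L(E_0+iy)$ by monotone convergence.

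\textbf{Two false identifications (repairable).}
\begin{itemize}
\item The counting measure of the periodic torus $(\mathbb{Z}/N)^d$ is not the DOSM of $H_{\mathbb{Z}^d,\tilde V}$. By Floquet theory the latter is the average over $\theta\in\mathbb{T}^d$ of the counting measures of the $\theta$-twisted torus operators. For $d=1$, $V=0$, one is atomic and the other is an arcsine law; you yourself note that the torus IDS has atoms, which the $\mathbb{Z}^d$ DOSM cannot have.
\item Likewise, the cylinder DOSM differs from the $\mathbb{Z}^d$ one when $d\ge2$.
\end{itemize}
Both are easy to fix. Each twisted operator is a Hermitian perturbation of the box operator of rank at most $2dN^{d-1}$, so your interlacing bound survives the averaging. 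You can therefore compare the box directly with the cylinder and never use $\mathbb{Z}^d$.

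\textbf{Comparison with the paper.} With that repair and a correct strip Thouless formula, your route would improve on the paper's. Interlacing gives $\|\mathcal{N}_{[N]^d,V}-\mathcal{N}_{\mathrm{cyl}}\|_\infty\le C_d/N$, hence $N_\varepsilon\sim C_d/\varepsilon$, and it needs no regularity of the limiting IDS for the comparison itself. The paper's moment/Fourier comparison, followed by its lemma turning closeness in the metric $d$ into closeness of cumulative functions, only yields $N\ge e^{c/\varepsilon}$.
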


Let us introduce the infinite extension in question.\newline
Given a box Schr\"odinger Operator $H_V=H_{V,[N]^d}:\ell^2([N]^d)\rightarrow\ell^2([N]^d)$, we consider its potential $V:[N]^d\rightarrow [-\lambda,\lambda]$ and we extend it periodically to $V^{\infty}:\mathbb{Z}^d\rightarrow[-\lambda,\lambda]$, by $V^{\infty}(\n)=V(\n')$ if $\n\equiv\n'$ (mod $N$). The operator corresponding to the new potential $H^{\infty}_V=H_{V^{\infty},\mathbb{Z}^d}:\ell^2(\mathbb{Z}^d)\rightarrow\ell^2(\mathbb{Z}^d)$, is the periodic extension of $H_V$.\newline

For convenience, we denote by $\sigma_{V,\n}^{\infty}$ the spectral measure at site $\n\in\mathbb{Z}^d$ relative to $H^{\infty}_V$.\newline
A priori the existence of a well-defined density of states is not guaranteed for infinite dimension Schr\"odinger operators, nevertheless, thanks to the periodicity, we can simply set it to be equal to

\begin{equation}
    \tilde\sigma^{\infty}_V = \frac1{N^d} \sum_{\n\in[N]^d}\sigma_{V,\n}^{\infty}.
\end{equation}

Consequently, we define the IDS as $\mathcal{N}_V^{\infty}(E)=\tilde\sigma_n^{\infty}((-\infty,E])$.\newline
Being periodic, the potential $V^{\infty}$ can be seen as dynamically generated by an ergodic system. By \cite{craig1983log}, this implies that the IDS $\mathcal{N}_V^{\infty}$ is $C$-$\log$ Holder continuous, for $C$ that depends only on $\lambda$ and $d$.\newline

The conclusion follows from the fact that, when $N$ goes to $+\infty$, most of the spectral measures $\nu_{\n}$ get close to the measures $\nu_{\n}^{\infty}$, impling that $\|\mathcal{N}_V-\mathcal{N}_V^{\infty}\|_{\infty}\rightarrow 0$.\newline
This last statement is proved by the moments method, exploiting the spectral theorem.

\subsection{A more general family of Graphs}

Finally, we present a more general class of graphs that satisfies the hypotesis of our criterion for asymptotically dense delocalization.\newline
To show that they present regular IDSs we follow the same strategy of proof that we employed for the box graphs. Here, the key step is the regularity of IDS for the infinite extensions. We describe first these ifinite graphs and we recover our family as their truncations.\newline

We say that a graph $\mathcal{G}$ is $\mathbb{Z}$-invariant if there exist a graph automorphisms $f:\mathcal{V}(\mathcal{G})\rightarrow\mathcal{V}(\mathcal{G})$ that acts freely on $\mathcal{G}$. In addition, we say that $\mathcal{G}$ has finite section if $f$ has finitely many distinct orbits.\newline
We say that $\mathcal{G}$ is propagating, if it is $\mathbb{Z}$-invariant with finite section and if there exists a partial order $\leq$ on $\mathcal{V}(\mathcal{G})$ such that $f$ is a strictly increasing order automorphism with the two following properties:
\begin{itemize}
    \item $f(v)$ and $v$ are linked for any $v\in\mathcal{V}(\mathcal{G})$;
    \item for any neighbor $w$ of $v$, it holds that $f^{-1}(v)\leq w \leq f(v)$.
\end{itemize}

We say that a potential $V:\mathcal{V}(\mathcal{G})\rightarrow [-\lambda,\lambda]$ is periodic if it is $f^p$ invariant for some $p\in\mathbb{N}$. The combined $\mathbb{Z}$-periodicity of the graph and of the potential (with possibly different periods) ensures the periodicity of the spectral measures, implying the existence of the density of states and, consequentely, of the IDS.

\begin{theorem}\label{IDS regularity graphs}
    Let $\mathcal{G}$ be a $\mathbb{Z}$-invariant finite section propagating graph of maximal degree $k$. Let $V:\mathcal{V}(\mathcal{G})\rightarrow [-\lambda,\lambda]$ be a periodic potential, then the IDS of the associated Schr\"odinger operator $H_{\mathcal{G}, V}$ is $C_{\lambda,k}$-$\log$ Holder continuous.
\end{theorem}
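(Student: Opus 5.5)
We will imitate the proof of the Thouless formula from the Preliminaries, replacing the $2\times 2$ Schr\"odinger cocycle by a block transfer matrix adapted to the propagating structure. Let $S$ be a set of representatives of the $f$-orbits (finite, say $\#S=m$). Replacing $f$ by $f^p$, where $p$ is the period of $V$, we may assume that $V$ is $f$-invariant. This keeps the propagating properties except that neighbours now satisfy $f^{-1}(v)\leq w\leq f(v)$ with $f$ the new generator, which still holds since $f^p$ dominates $f$. The ``slices'' $S_j=f^j(S)$ partition $\mathcal{V}$. The ordering conditions should let us choose $S$ so that every neighbour of a vertex in $S_j$ lies in $S_{j-1}\cup S_j\cup S_{j+1}$, possibly after grouping several slices into one. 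The operator $H=H_{\mathcal{G},V}$ is then a block-tridiagonal Jacobi matrix with blocks $A_j=A$ (diagonal) and $B_j=B$ (off-diagonal), constant in $j$ by periodicity.

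Step one is to use propagation to show that $B$ is invertible, or at least triangular with nonzero diagonal after ordering each slice compatibly with $\leq$. The condition ``$f(v)$ and $v$ are linked'' puts $1$'s on the diagonal of the coupling between $S_j$ and $S_{j+1}$. The condition $w\leq f(v)$ for neighbours, together with $f$ being strictly increasing, should force the remaining couplings onto one side of that diagonal. With $B$ invertible the eigenvalue equation $H\psi=E\psi$ is solved by the $2m\times 2m$ transfer matrix
\begin{equation*}
T_E=\begin{pmatrix} B^{-1}(E-A) & -B^{-1}B^{*}\\ I & 0\end{pmatrix},
\end{equation*}
which has determinant of modulus one, since $|\det B^{-1}B^*|=1$.

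Step two is a multi-channel Thouless formula. For the truncation $H_n$ to $S_1\cup\dots\cup S_n$, a Schur-complement or block-determinant identity gives that $\det(E-H_n)$ equals, up to the constant $\det(B)^{n}$, an $m\times m$ minor of $T_E^n$. Hence $\frac1{mn}\log|\det(E-H_n)|=\int\log|E-E'|\,d\sigma_n(E')$, where $\sigma_n$ is the normalized eigenvalue counting measure of $H_n$. By the moment argument of the Preliminaries, $\sigma_n$ converges to the density of states $\tilde\sigma$ of $H$. On the other side, the growth rate of the $m$-minor is the sum $L_1(E)+\dots+L_m(E)$ of the top $m$ Lyapunov exponents of the constant cocycle $T_E$, i.e.\ $\log$ of the product of its $m$ largest eigenvalue moduli. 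We therefore aim for
\begin{equation*}
\frac1m\sum_{i=1}^m L_i(E)=\int\log|E-E'|\,d\tilde\sigma(E'),
\end{equation*}
first for $E\notin\mathbb{R}$ and then for all $E$ by subharmonicity. The hard part of this step is showing that the particular minor realises the full top-$m$ growth, rather than being smaller because of cancellation. Off the real axis we would handle this with a Combes--Thomas/Green function argument, or by comparing with neighbouring truncations as in the scalar proof.

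Step three gives the regularity. Since $|\det T_E|=1$, the $m$ largest exponents have nonnegative sum, so the logarithmic potential of $\tilde\sigma$ is $\ge 0$ everywhere. Also $\tilde\sigma$ is supported in $[-\lambda-2k,\lambda+2k]$. The Craig--Simon argument then applies verbatim to give the $\log$-H\"older bound. For an interval $I$ of length $\eta<1/2$ centred at $E$, bound
\begin{equation*}
0\leq\int\log|E-E'|\,d\tilde\sigma\leq \tilde\sigma(I)\log\eta+\log(2\lambda+4k+1),
\end{equation*}
which yields $\tilde\sigma(I)\leq C_{\lambda,k}/\log(1/\eta)$. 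The constant depends only on $\lambda$ and $k$ and not on $m$, because the normalization by $m$ is built into the density of states.

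I expect the main obstacle to be step one: extracting from the abstract partial-order axioms a slicing with nearest-slice interactions and an invertible coupling block. If $B$ fails to be invertible, the fallback is to work directly with a Jensen-type formula for $\log|\det(E-H_n)|$, rather than with transfer matrices, and prove its growth is $\geq -o(n)$.
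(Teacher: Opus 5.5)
Your architecture matches the paper's. You slice by translates of a convex fundamental domain of size $m$ (the paper's $r$). The inter-slice coupling is unipotent triangular because neighbours of $n$ lie in $[n-r,n+r]$ and $n\sim n+r$. A Dirichlet minor of the transfer matrix is identified with $\det(E-H_n)$, and you then prove a Thouless formula and apply the Craig--Simon bound. Your Step~3 is fine.

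\textbf{The gap is in Step~2, and it carries the whole content of the theorem.} The minor identity only yields
\[
\int\log|E-E'|\,d\tilde\sigma(E')\;\le\;\frac1m\sum_{i\le m}L_i(E),
\]
because a single coefficient of $\bigwedge^m T_E^n$ is bounded by its norm. This is the paper's easy lemma on $P(E)$. Step~3 needs the \emph{reverse} inequality, since that is the only way positivity of the top exponents reaches the logarithmic potential. You explicitly leave this open.

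Your suggestion to ``compare with neighbouring truncations as in the scalar proof'' does not transfer. For $m=1$, all four entries of the transfer matrix are Dirichlet determinants of shifted truncations. For $m>1$, a generic coordinate of $\bigwedge^m T_E^n$ encodes mixed constraints that are not eigenvalue problems of truncations of $H$. So nothing yet prevents the norm from being carried by coordinates the Dirichlet minor does not see.

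The paper closes this with a \emph{fake boundary potential}. Take a finite spanning family of $r$-vectors $(e_1+\tilde d_1)\wedge\dots\wedge(e_r+\tilde d_r)$, with $\tilde d_i\in\mathrm{span}\{d_1,\dots,d_r\}$. For each pair $s_1,s_2$, the truncation is extended by one fundamental domain at each end. The artificial, non-symmetric couplings there are chosen so that the transfer matrix across the added layer sends $d_i\mapsto e_i+\tilde d_i$. Then $\langle s_1,\bigwedge^rT_n(E)s_2\rangle$ is the Dirichlet minor, i.e.\ the characteristic polynomial, of a bounded-rank boundary modification of $H_n$. Its growth is again governed by $\tilde\sigma$. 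This controls every coefficient, hence the norm, and gives $\gamma(E)\le\int\log|E-E'|\,d\tilde\sigma(E')$.

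Since your cocycle is periodic, you could instead complete your Green-function hint. Fix $\mathrm{Im}E\neq0$ and let $M_E$ be the one-period transfer matrix.
\begin{itemize}
\item $M_E$ has no unimodular eigenvalue, since a Bloch solution would put $E$ in $\sigma(H)\subset\mathbb{R}$.
\item The Dirichlet subspace $\{\psi_0=0\}$ meets the stable subspace trivially. Otherwise the half-line Dirichlet restriction of $H$ would have the non-real eigenvalue $E$.
\item The same holds symmetrically at the other end.
\end{itemize}
Hence both invariant subspaces have dimension $m$, and the Dirichlet minor grows like the product of the $m$ largest eigenvalue moduli. More directly still, Floquet--Bloch plus Jensen's formula give
\[
\int\log|E-E'|\,d\tilde\sigma=\frac1{pm}\sum_j\log^+|\mu_j|\ge0 .
\]
Here $\mu_j$ are the eigenvalues of $M_E$ and $p$ is the number of slices per period. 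This uses $|\det(E-H(\theta))|=\prod_j|\det B_j|\,|\det(M_E-e^{i\theta})|$ together with $|\det B_j|=1$.

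A secondary point: your Step~0 conflicts with Step~1. The linking condition is not inherited by $f^p$, since $f^p(v)$ need not be adjacent to $v$. For slices made of $p$ consecutive original slices, the coupling between consecutive $f^p$-slices has rank at most $r<pr$. So $B$ is singular exactly where you invoke ``$f(v)$ and $v$ are linked''. Keep the original $f$, for which $B$ is unipotent lower triangular. Then absorb the period of $V$ into a $p$-periodic cocycle, as the paper does with its matrices $A_n(E)$.
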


The proof, which follows the original approach for the same result about box operators (\cite{thouless1972relation}, \cite{craig1983log}), is based on a version of the Thouless formula. We prove that $\int_{\mathbb{R}}\log|E'-E|d\tilde\sigma(E')=\gamma(E)$ for any $E\in\mathbb{C}$ with $Im(E)\neq0$, where $\gamma(E)$ is the some fo some Lyapunov exponents of an associated cocycle over $\mathbb{Z}$. For such $E$, $\gamma(E)$ is always non negative, implying, as desired, that the density of states $\tilde\sigma$ cannot be too concentrated.\newline

We say that $X\subset \mathcal{V}(\mathcal{G})$ is a fundamental domain for the action of $f$ if any orbit of $f$ intersects $X$ in exactly one vertex. A subset $Y\subset \mathcal{V}(\mathcal{G})$ is said to be convex if for any two vertices $v,z\in Y$, and any $w$ such that $v\leq w \leq z$, we have that  $w\in Y$.\newline
We denote by $\mathcal{G}_Y$ the subgraph of $\mathcal{G}$ induced by $Y$. We say that a graph of the form $\mathcal{G}_Y$, for $\mathcal{G}$ propagating and $Y$ convex, is a truncated propagating graph. We finally define the length of $\mathcal{G}_Y$ as the maximal number of disjoint fundamental domains contained in $\mathcal{G}_Y$.

\begin{theorem}\label{main results graph}
    Let $\mathcal{G}_N$ be a sequence of truncated propagating graphs with increasing lengths and bounded maximal degree. Then $\mathcal{G}_N$ has asymptotically dense delocalization.
\end{theorem}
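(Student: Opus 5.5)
The plan is to reduce Theorem \ref{main results graph} to the general criterion: it suffices to show that the family $\{\mathcal{G}_N\}$ has asymptotically $\omega$-uniformly continuous IDSs, with $\omega$ the $C$-$\log$ H\"older modulus and $C=C_{\lambda,k}$. Bounded degree is assumed, so the criterion then gives asymptotically dense delocalization. To establish the regularity we copy the strategy used for box graphs. Each truncated graph $\mathcal{G}_Y$ is compared with a periodic infinite extension, whose IDS is $C_{\lambda,k}$-$\log$ H\"older by Theorem \ref{IDS regularity graphs}. We then show that the two IDSs are uniformly close when the length of $\mathcal{G}_Y$ is large.

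\textbf{Step 1 (periodic extension).} Let $\mathcal{G}_N=\mathcal{G}_Y$ with $Y$ convex of length $L$. We pick $L'\approx L$ consecutive disjoint fundamental domains $X, f(X),\dots,f^{L'-1}(X)$ inside $Y$, up to a boundary layer. Convexity and the propagating order should let us choose them so that the block $Z=\bigcup_{j<L'} f^j(X)$ covers all of $Y$ except $O(\#X)$ vertices near the two ends. Given $V$ on $Y$, we define $V^\infty$ on $\mathcal{V}(\mathcal{G})$ by $V^\infty(f^{mL'}(z))=V(z)$ for $z\in Z$. This potential is $f^{L'}$-periodic and bounded by $\lambda$. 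By Theorem \ref{IDS regularity graphs}, $\mathcal{N}^\infty_V$ is $C_{\lambda,k}$-$\log$ H\"older, with a constant uniform in $V$ and $L'$. This uniformity is the point to check, and it holds because the constant depends only on $\lambda$ and $k$.

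\textbf{Step 2 (moment comparison).} For a fixed moment order $m$ and a vertex $v\in Y$ at graph distance $>m$ from both $\mathcal{V}(\mathcal{G})\setminus Y$ and $\mathcal{V}(\mathcal{G})\setminus Z$, the moment $\langle\delta_v,H^m\delta_v\rangle$ depends only on the $m$-ball around $v$. It therefore agrees for $H_{\mathcal{G}_Y,V}$ and $H_{\mathcal{G},V^\infty}$. The propagating property says that neighbours of $v$ lie between $f^{-1}(v)$ and $f(v)$, so an $m$-ball spans at most $2m+1$ fundamental domains. Hence the number of bad vertices is at most $O(m\#X)$, while $\#Y\gtrsim L\#X$. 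Averaging over sites, as in the proof of the moment lemma for $\mathbb{Z}$, gives
\begin{equation*}
|m_m(\widetilde\sigma_V)-m_m(\widetilde\sigma^\infty_V)|\le C\,(2k+\lambda)^m\,\frac{m}{L}.
\end{equation*}
The infinite density of states is the average over one period, $Z$, which matches the finite average up to the same boundary error.

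\textbf{Step 3 (from moments to uniform IDS closeness).} Both measures are supported in $[-\lambda-2k,\lambda+2k]$. We approximate indicators $\mathbbm{1}_{(-\infty,E]}$ from above and below by polynomials of degree $m$ that are $\eta$-close outside an interval of width $\eta$ around $E$, using Jackson-type approximation of a Lipschitz ramp. Combining this with Step 2 and the $\log$-H\"older continuity of $\mathcal{N}^\infty_V$ gives
\begin{equation*}
\|\mathcal{N}_V-\mathcal{N}^\infty_V\|_\infty\le \omega(\eta)+\eta+ C^m m/L.
\end{equation*}
Choosing $\eta$ small, then $m$, then $L$ large makes this below any prescribed $\va$, uniformly in $V$ and in the graph. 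This yields asymptotic $\omega$-uniform continuity, and the general criterion concludes.

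I expect Step 3 to be the main obstacle. The polynomial degree needed grows like $1/\eta$, so the error $C^m/L$ forces $L$ to be exponentially large in $1/\va$. This is still fine for an asymptotic statement, but the quantifiers must be ordered carefully. A secondary issue is Step 1: we must verify that convexity of $Y$ really allows a block $Z$ of whole fundamental domains with a boundary remainder that is $O(\#X)$ and independent of $L$. If that fails, I would instead bound the boundary count using the fact that $\#Y/L$ is comparable to $\#X$.
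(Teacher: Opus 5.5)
Your outline follows the paper's proof. You reduce to Theorem~\ref{IDS regularity implies delocalization}, compare $\mathcal{N}_V$ with the IDS of a periodic extension (which is $C_{\lambda,k}$-$\log$ H\"older by Theorem~\ref{IDS regularity graphs}), and use that spectral measures at sites far from the boundary share their low moments, as in Lemma~\ref{Fourier - periodic vs box}.

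\textbf{Step 3 uses a different tool.} The paper Taylor-expands $e^{-i\xi x}$ to get $d(\tilde\sigma_V,\tilde\sigma^\infty_V)\lesssim L^{-1/3}$ in its Fourier metric; the $k!$ denominators absorb the growth of the moments. It then applies Lemma~\ref{Fourier controls cumulative}. This reuses the machinery of the stability section and gives a threshold $L\ge e^{c/\varepsilon}$, as in Theorem~\ref{L infty ids}. Your Jackson approximation of ramps is more elementary and also works. However, routing it through differences of monomial moments costs the factor $C^m$. Since $\omega(\eta)\le\varepsilon$ forces $\eta\le e^{-C/\varepsilon}$ and $m\gtrsim 1/\eta$, your $L$ must be doubly exponential in $1/\varepsilon$, not singly as you state. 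This is harmless for an asymptotic statement, and easy to avoid. At a site whose $m$-ball lies in $Z$ one has $\langle\delta_v,p(H_{\mathcal{G}_Y,V})\delta_v\rangle=\langle\delta_v,p(H_{\mathcal{G},V^\infty})\delta_v\rangle$ exactly for $\deg p\le m$. So the error is $O(\|p\|_{L^\infty[-M,M]}\,m/L)$ with $M=\lambda+2k$, which gives a single-exponential threshold.

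\textbf{Step 1 is a genuine gap.} The bound $\#(Y\setminus Z)=O(\#X)$ is exactly what makes your error depend on the length alone; the paper stresses that no condition on the section is imposed. Your fallback cannot replace it, because $\#Y\asymp L\,\#X$ only bounds the remainder by a fixed fraction of $\#Y$. Convexity for the partial order does not give it either. From $f^{j-1}(v)\le f^{j}(w)\le f^{j+1}(v)$ for $w\sim v$, you only get that adjacent orbits meet $Y$ in blocks whose lengths differ by at most $2$. That yields a remainder $O(\#X^2)$ for connected $\mathcal{G}$, and this bound is attained.

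Here is an example. Take $\mathcal{V}=\mathbb{Z}\times\{0,\dots,r-1\}$ with nearest-neighbour edges and $f(n,i)=(n+1,i)$. Set $(n,i)<(m,j)$ iff $m-n\ge\max(1,|i-j|)$; with this order the graph is propagating. For $s\ge 2r$, let $Y$ be the order interval between $(0,0)$ and $(s,0)$. It is convex, meets orbit $i$ in $s-2i+1$ vertices, has length $L=s-2r+3$, and satisfies $\#Y=rL+r(r-1)$. For $r\approx L$, about half of $Y$ lies outside any union of $L$ fundamental domains, so Step~2 gives no control on a non-negligible fraction of sites. The conclusion is still plausible here, since $Y$ is a large region of $\mathbb{Z}^2$, but not by one $f$-periodic extension.

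As written, your argument covers two cases:
\begin{itemize}
\item families with $\#X=o(L)$, e.g.\ bounded section (periodizing each component separately if $\mathcal{G}$ is disconnected);
\item truncations that are intervals of an $f$-equivariant total order, as in the Thouless-formula section, where $f$ acts as $n\mapsto n+r$ and the remainder is $<r$.
\end{itemize}
The paper's sketch only asserts that the speed of convergence depends only on the length, and does not address this point either.
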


Notice that this result does not require any condition of the size of the section of the graph. For instance the result about box Schr\"odinger operator can be recovered as a corollary. Another notable family of operators that falls under this theorem is the one of finite range box Schr\"odinger operators, see \cite{ge2024almost}.\newline

Finally, we have to highlight that this result can be obtained also by the approach from \cite{bourgain2013bounds}, which does not rely on (nor show) the Thouless formula for such operators.

\section{Real Analysis Preliminaries}

For our argument to work it is crucial to have a uniform (and quantitative) control on the stability of delocalisation of measures. Since this property is easily captured by suitable test functions, it will be our concern to metrize the weak-$*$ topology on the space of Borel probability measures on $\mathbb{R}$.\newline
It is well-know that this is achieved by the Fortet-Mourier metric, defined as
\begin{equation}
    d_{FM}(\mu,\nu) = \sup_{\phi\in \text{Lip}^1({\mathbb{R}}), \|\phi\|_{\infty}\leq1} \left| \int_{\mathbb{R}} \phi d\mu - \int_{\mathbb{R}} \phi d\nu \right|.
\end{equation} 

The Fortet-Mourier metric does not control directly the interaction of measures with test functions that are less regualr than Lipschitz. For this reason we prefer to introduce the following one.

\begin{equation}
    d(\mu,\nu) = \inf \left\{ \delta>0 : \|\widehat{\mu-\nu}\|_{L^1([-1/\delta,1/\delta])}   < \delta     \right\}
\end{equation}

\begin{remark}
    For probability measures sharing a common bounded support, the metric $d$ is Holder equivalent to the Fortet-Mourier metric $d_{FM}$.
\end{remark}

\begin{proof}
    Suppose that $d_{FM}(\mu,\nu)<\delta$. Let $\phi:\mathbb{R}\rightarrow[-1,1]$ be an $L$-Lipschitz function. Applying the definition of $d_{FM}$ to $\phi/L$ we obtain that $\int_{\mathbb{R}}\phi d(\mu-\nu)\leq L \delta$. Noticing that $e^{-ix\xi}$ is a $|\xi|$-Lipschitz function of $x$ we get that $|(\widehat{\mu-\nu})(\xi)|\leq |\xi|\delta$. Hence $\|\widehat{\mu-\nu}\|_{L^1([-1/\va,1/\va])}\lesssim \delta/\va^2$. Picking $\delta = \va^3$ we prove the first inequality.\newline
    The following lemma \ref{fourier controls test} gives the opposite one when applyied with $\omega_{\phi}(t)=t$.
\end{proof}

As anticipated, the next lemma shows that the distance $d$ provides direct control on the integral of uniformly continuous test functions, of any modulus of continuity.\newline
In the following statements we will assume all modulus of continuity $\omega$, to satisfy $\omega(t)\geq t$.\newline

Let $f\in\mathcal{C}^2(\mathbb{R})$ be a non-negative function, supported on $[-1,1]$, with $\|f\|_{L^1(\mathbb{R})}=1$.\newline
We remark that
\begin{equation}
    |\widehat{f}(\xi)|\lesssim \frac{ 1}{|\xi|^2}.
\end{equation}
Fuerthermore we define $f_r:\R\rightarrow\R$ by $f_r(t)=\frac1r f(\frac tr)$.

\begin{lemma}\label{fourier controls test}
    Let $\mu,\nu$ be Borel probability measures on $\R$. Let $\phi:\R\rightarrow\R$ a continuous function, supported on [-M,M], with $\|\phi\|_{\infty}\leq 1$. 
    Let $\omega_{\phi}:\R^+\rightarrow\R^+$ be its modulus of continuity. Then
    \begin{equation}
        \left| \int_{\R}\phi d\mu - \int_{\R}\phi d\nu \right| \lesssim_{M}   \omega_{\phi}\left( d(\mu,\nu)^{\frac13}  \right)
    \end{equation}
\end{lemma}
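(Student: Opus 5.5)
Set $\delta=d(\mu,\nu)$ and take a scale $r>0$, to be fixed later in terms of $\delta$. The plan is to mollify the test function, $\phi_r=\phi*f_r$, and split
\begin{equation*}
\left|\int\phi\,d(\mu-\nu)\right|\le \left|\int(\phi-\phi_r)\,d\mu\right|+\left|\int(\phi-\phi_r)\,d\nu\right|+\left|\int\phi_r\,d(\mu-\nu)\right|.
\end{equation*}
The first two terms are immediate. Since $f_r\ge0$ has unit mass and is supported on $[-r,r]$, we have $\|\phi-\phi_r\|_\infty\le\omega_\phi(r)$. As $\mu$ and $\nu$ are probability measures, these two terms together contribute at most $2\omega_\phi(r)$.

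For the third term I will go to Fourier space. Since $\phi_r$ is $\mathcal{C}^2$ with compact support, Fourier inversion and Fubini give $\int\phi_r\,d(\mu-\nu)=c\int\widehat{\phi_r}(\xi)\,\overline{\widehat{\mu-\nu}(\xi)}\,d\xi$, up to sign conventions. We have $\widehat{\phi_r}=\widehat\phi\cdot\widehat{f}(r\,\cdot)$, and $|\widehat\phi|\le\|\phi\|_{L^1}\le 2M$. I will split the frequency integral at $|\xi|=1/\delta$.
\begin{itemize}
\item On $[-1/\delta,1/\delta]$: by the definition of $d$ (choose $\delta'$ slightly above $\delta$ and let it decrease), $\|\widehat{\mu-\nu}\|_{L^1([-1/\delta,1/\delta])}\le\delta$. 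Together with $|\widehat{\phi_r}|\le 2M$, this gives a contribution $\lesssim_M\delta$.
\item On $|\xi|>1/\delta$: we use $|\widehat{\mu-\nu}|\le2$ and $|\widehat f(r\xi)|\lesssim (r\xi)^{-2}$. This gives $\lesssim_M\int_{|\xi|>1/\delta}r^{-2}\xi^{-2}\,d\xi\lesssim \delta/r^2$.
\end{itemize}
So the third term is $\lesssim_M \delta+\delta r^{-2}$.

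Now choose $r=\delta^{1/3}$. Then $\delta/r^2=\delta^{1/3}$, and the total bound is $\lesssim_M \omega_\phi(\delta^{1/3})+\delta+\delta^{1/3}$. The standing assumption $\omega(t)\ge t$ absorbs $\delta^{1/3}$ and $\delta$ (for $\delta\le1$) into $\omega_\phi(\delta^{1/3})$. If $\delta>1$, the claim is trivial: the left side is at most $2$ and $\omega_\phi(\delta^{1/3})\ge1$.

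The only delicate point is justifying the Parseval-type identity for the measures $\mu,\nu$, which is not an $L^2$ pairing. I expect to handle this by writing $\phi_r(x)=\int\widehat{\phi_r}(\xi)e^{2\pi i x\xi}\,d\xi$ pointwise and then applying Fubini. This is legitimate because $\widehat{\phi_r}\in L^1$, which follows from $|\widehat{f}(r\xi)|\lesssim \min(1,(r\xi)^{-2})$, and because $\mu,\nu$ are finite measures. Finally, the exponent bookkeeping has to match the normalization of $\widehat{\cdot}$ used in the definition of $d$; a different constant in the frequency variable only changes the implied constants.
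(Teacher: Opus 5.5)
Your proposal is correct and follows essentially the same route as the paper: you mollify $\phi$ by $f_r$, bound $\|\phi-\phi*f_r\|_\infty$ by $\omega_\phi(r)$, split the frequency integral at $|\xi|=1/\delta$ (using the definition of $d$ on low frequencies and the $|\xi|^{-2}$ decay of $\widehat f$ on high frequencies), and then choose $r=\delta^{1/3}$. You are in fact slightly more careful than the paper, since you justify the Parseval-type pairing for measures via Fourier inversion and Fubini, and you explicitly absorb the $\delta+\delta^{1/3}$ terms into $\omega_\phi(\delta^{1/3})$ using the standing assumption $\omega(t)\ge t$.
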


\begin{proof}
Fix $\delta>d(\mu,\nu)$, set $\sigma=\mu-\nu$ and let $r>0$ be real number.\newline
By the Plancherel identity we can argue that
\begin{align}
    \left|\int_{\R} \phi d\sigma\right| &\leq \left|\int_{\R} (\phi - \phi*f_r)d\sigma\right| + \left|\int_{\R} (\phi*f_r) d\sigma\right| \\
    &\lesssim \|\phi - \phi*f_r\|_{\infty} + \int_{\R} |\widehat{\phi}(\xi)||\widehat{f_r}(\xi)||\widehat{\sigma}(\xi)|d\xi\\
    &\lesssim_{M} \omega_{\phi}(r) +  \int_{-1/\delta}^{1/\delta} |\widehat{\sigma}(\xi)|d\xi + \int_{\R\setminus[-1/\delta,1/\delta]}|\widehat{f_r}(\xi)|d\xi \\
    &\lesssim_{M} \omega_{\phi}(r) +  \int_{-h}^h |\widehat{\sigma}(\xi)|d\xi + \frac{1}{r}\int_{r/\delta}^{\infty}\frac{ 1}{|\xi|^2}d\xi.\\
    &\lesssim_{M}   \omega_{\phi}(r) + \delta + \frac{\delta}{r^{2}}.
\end{align}
Setting $r=\delta^{1/3}$ and letting $\delta\rightarrow d(\mu,\nu)$, we obtain the desired conclusion.
\end{proof}

Finally, we show that measures, which are close in the metric $d$, must have close cumulative functions, provided some regularity for at least one of two.\newline
This will be relevant as the IDS is the cumulative distribution of the averaged specral measure.

\begin{lemma}\label{Fourier controls cumulative}
    Let $\sigma_1$ and $\sigma_2$ be two probability measures supported on $[-M,M]$ and $\mathcal{N}_1$ and $\mathcal{N}_2$ their cumulative functions. Suppose that $\mathcal{N}_1$ is $\va$-almost $\omega$-uniformly continuous.
    Then,
    \begin{equation}
        \|\mathcal{N}_1 - \mathcal{N}_2\|_{\infty} \lesssim_M  \omega\left(d(\mu,\nu)^{1/6}\right) + \va.
    \end{equation}
\end{lemma}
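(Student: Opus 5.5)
The plan is to reduce the comparison of the cumulative functions to the comparison of integrals of suitable test functions, and then apply Lemma \ref{fourier controls test}. Write $\mu=\sigma_1$, $\nu=\sigma_2$ and $\rho = d(\mu,\nu)$. Let $g$ be an $\omega$-uniformly continuous function with $\|\mathcal{N}_1-g\|_\infty\le\va$. Fix $E\in\R$ and a scale $r>0$, to be chosen as a power of $\rho$. Let $\phi^-_{E,r}$ and $\phi^+_{E,r}$ be the piecewise linear functions that equal $1$ on $[-M-1,E-r]$, respectively on $[-M-1,E]$, vanish on $[E,\infty)$, respectively on $[E+r,\infty)$, interpolate linearly in between, and decrease linearly to $0$ on $[-M-2,-M-1]$, so that they are supported on $[-M-2,M+2]$. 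These functions are $1/r$-Lipschitz, so their modulus of continuity is $\omega_\phi(t)=\min(t/r,2)$. Since both measures are supported on $[-M,M]$,
\begin{equation*}
\int \phi^-_{E,r}\,d\sigma_2 \le \mathcal{N}_2(E) \le \int \phi^+_{E,r}\,d\sigma_2 .
\end{equation*}

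Next I apply Lemma \ref{fourier controls test} with this modulus, which gives
\begin{equation*}
\Big|\int\phi^\pm_{E,r}\,d\sigma_2-\int\phi^\pm_{E,r}\,d\sigma_1\Big|\lesssim_M \rho^{1/3}/r .
\end{equation*}
On the $\sigma_1$ side, $\int\phi^+_{E,r}\,d\sigma_1\le \mathcal{N}_1(E+r)$ and $\int\phi^-_{E,r}\,d\sigma_1\ge \mathcal{N}_1(E-r)$. Then $|\mathcal{N}_1(E\pm r)-\mathcal{N}_1(E)|\le |g(E\pm r)-g(E)|+2\va\le\omega(r)+2\va$. Combining the two sides,
\begin{equation*}
|\mathcal{N}_2(E)-\mathcal{N}_1(E)|\lesssim_M \frac{\rho^{1/3}}{r}+\omega(r)+\va .
\end{equation*}

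Choosing $r=\rho^{1/6}$ turns the first term into $\rho^{1/6}$. The standing convention $\omega(t)\ge t$ then gives $\rho^{1/6}\le\omega(\rho^{1/6})$, and taking the supremum over $E$ yields $\|\mathcal{N}_1-\mathcal{N}_2\|_\infty\lesssim_M \omega(\rho^{1/6})+\va$, as claimed.

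The main obstacle is only bookkeeping. The statement writes $d(\mu,\nu)$ for $d(\sigma_1,\sigma_2)$. Lemma \ref{fourier controls test} requires compact support, which is why the test functions are cut off at $\pm(M+2)$; the resulting constant then depends only on $M$. The trapezoid shape is needed to avoid using the indicator $\mathbf 1_{(-\infty,E]}$, which is discontinuous. All remaining steps are routine. If $\rho\ge 1$ the bound is trivial, since both sides are bounded by a constant.
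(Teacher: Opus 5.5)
Your proof is correct and is essentially the paper's argument. It uses a trapezoidal $1/r$-Lipschitz cutoff fed into Lemma \ref{fourier controls test}, the almost-uniform continuity of the cumulative function to absorb the shift by $r$, and the choice $r=d(\sigma_1,\sigma_2)^{1/6}$, which is the paper's $L=d^{-1/6}$. One point in your favour: by sandwiching with both $\phi^-_{E,r}$ and $\phi^+_{E,r}$ you only use the regularity of $\mathcal{N}_1$, which is the actual hypothesis, whereas the paper's displayed chain goes through $\mathcal{N}_2(E+1/L)$ and then needs continuity of $\mathcal{N}_2$, which is not assumed.
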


\begin{proof}
    Fix $E\in\R$. Given $L>1$, we define  $\alpha:\R\rightarrow\R$ as 
    \begin{equation}
        \alpha(t)=
        \begin{cases}
            1 &\text{if } t\leq E\\
            1-L(t-E) &\text{if } t_0<t<E+1/L\\
            0 &\text{if } t\geq E+1/L.
        \end{cases}
    \end{equation}
    We can consider $\tilde{\alpha}:\R\rightarrow\R$ with the following properties: it coincides with $\alpha$ on $[-M,M]$, it is $L$-Lipschitz and is supported on $[-M-1,M+1]$. We can now apply lemma \ref{fourier controls test} to $\tilde{\alpha}$ and $\sigma_1$ and $\sigma_2$. We get, for $r>1$,

    \begin{align}
        \mathcal{N}_{1}(E) &\leq  \int \alpha d\sigma_{1}\\
        &\leq \int \alpha d\sigma_{2} + C_M L \cdot d(\mu,\nu)^{1/3}\\
        &\leq \mathcal{N}_{2}(E+1/L) +  C_M L \cdot d(\mu,\nu)^{1/3}\\  
        &\leq \mathcal{N}_{2}(E) + 2\va + C'_M \left( \omega(1/L) + L \cdot d(\mu,\nu)^{1/3}\right).
        \label{equation continuity}
    \end{align}
    The reverse inquality can be proved in an analogous way.\newline
    Finally, setting $L=d(\mu,\nu)^{-1/6}$ gives us the desired result. 
\end{proof}

\section{Fourier Analysis of Spectral Measures}

In this section, we prove some continuity properties of spectral measures. This will be done in terms of the metric $d$, introduced in the previous section. To be more specific, we first prove that spectral measure of box Schr\"odinger operators depend Holder-continuously on the potential. Then we prove that, in the limit $N\rightarrow\infty$, the density of states of the box operator approximates the one of its periodic extension, and the speed of convergence is again polynomial in $N$.

\begin{lemma}\label{Fourier - dependance on potential}
    Let $V,W:[N]^d\rightarrow[-\lambda,\lambda]$ be two potentials. Then, for every $\n\in[N]^d$,
    \begin{equation}
        d({\sigma}_{V,\n},{\sigma}_{W,\n}) \lesssim \|V-W\|_{\infty}^{1/4}
    \end{equation}
    and
    \begin{equation}
        d(\tilde{\sigma}_V,\tilde{\sigma}_W) \lesssim \|V-W\|_{\infty}^{1/4}.
    \end{equation}    
\end{lemma}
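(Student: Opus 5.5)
The plan is to work directly with the Fourier transforms of the spectral measures, since the metric $d$ is defined through $\widehat{\mu-\nu}$ on $[-1/\delta,1/\delta]$. For $\xi\in\R$ we have
\begin{equation*}
    \widehat{\sigma_{V,\n}}(\xi)=\int e^{-iE\xi}d\sigma_{V,\n}(E)=\langle \delta_{\n}, e^{-i\xi H_V}\delta_{\n}\rangle,
\end{equation*}
by the spectral theorem. The difference of the two transforms is therefore controlled by $\|e^{-i\xi H_V}-e^{-i\xi H_W}\|$.

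The first step is a Duhamel formula. Since $H_V-H_W=V-W$ is diagonal, we can write
\begin{equation*}
    e^{-i\xi H_V}-e^{-i\xi H_W}=-i\int_0^{\xi} e^{-i(\xi-s)H_V}(V-W)e^{-isH_W}\,ds .
\end{equation*}
Both exponentials are unitary, so this gives
\begin{equation*}
    |\widehat{\sigma_{V,\n}}(\xi)-\widehat{\sigma_{W,\n}}(\xi)|\le |\xi|\,\|V-W\|_{\infty}.
\end{equation*}
This bound is uniform in $N$, $d$ and $\n$.

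The second step integrates over $[-1/\delta,1/\delta]$:
\begin{equation*}
    \|\widehat{\sigma_{V,\n}-\sigma_{W,\n}}\|_{L^1([-1/\delta,1/\delta])}\le \delta^{-2}\|V-W\|_{\infty}.
\end{equation*}
We want this to be less than $\delta$, which holds as soon as $\delta^3>\|V-W\|_\infty$. Hence $d(\sigma_{V,\n},\sigma_{W,\n})\le 2\|V-W\|_\infty^{1/3}$. Since $\|V-W\|_\infty\le 2\lambda$, this implies the stated bound with exponent $1/4$, with a constant depending on $\lambda$.

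The density of states follows the same way. The transform $\widehat{\tilde\sigma_V}(\xi)$ is the average over $\n\in[N]^d$ of the transforms $\widehat{\sigma_{V,\n}}(\xi)$, so the same pointwise bound $|\xi|\|V-W\|_\infty$ holds by the triangle inequality. The $L^1$ estimate and the choice of $\delta$ are then identical.

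There is no real obstacle here. The only points needing care are checking that the definition of $d$ (an infimum over $\delta$) is correctly inverted, and making sure the implicit constant depends only on $\lambda$. That dependence enters only through downgrading the exponent from $1/3$ to $1/4$, for which a bound on $\|V-W\|_\infty$ is needed.
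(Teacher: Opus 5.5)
Your proof is correct, and it takes a genuinely different and more direct route than the paper.

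\textbf{The paper's route.} The paper works with the Stieltjes transform. It bounds the Poisson-kernel smoothing of $\sigma_{V,\mathbf{n}}-\sigma_{W,\mathbf{n}}$ at height $y$ via the resolvent identity,
$\bigl|\langle((H_V-E)^{-1}-(H_W-E)^{-1})\delta_{\mathbf{n}},\delta_{\mathbf{n}}\rangle\bigr|\le \|V-W\|_\infty\, d(E,\sigma(H_V))^{-1}d(E,\sigma(H_W))^{-1}$.
It then integrates in $x=\mathrm{Re}\,E$, which needs the a priori spectral bound $[-2d-\lambda,2d+\lambda]$ for the integral to converge. This gives an $L^1$ bound, hence a sup bound on the Fourier side. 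Finally it deconvolves by dividing by $\widehat{g_y}(\xi)=e^{-y|\xi|}$ with $y=1/|\xi|$. The deconvolution costs a power of $|\xi|$: the paper only gets $|\widehat{\sigma_{V,\mathbf{n}}-\sigma_{W,\mathbf{n}}}(\xi)|\lesssim_{\lambda,d}\|V-W\|_\infty(|\xi|^2+|\xi|)$. That leads to an $L^1([-1/\delta,1/\delta])$ bound of order $\|V-W\|_\infty\delta^{-3}$, which is where the exponent $1/4$ comes from.

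\textbf{Your route.} Your Duhamel formula for the unitary groups directly controls the Fourier transform, which is exactly the object entering the metric $d$. It gives the sharper pointwise bound $|\xi|\,\|V-W\|_\infty$ with an absolute constant, independent of $\lambda$, $d$, $N$ and the site. So you get exponent $1/3$, and $\lambda$ enters only when you use $\|V-W\|_\infty\le 2\lambda$ to weaken this to the stated $1/4$. Your inversion of the infimum in the definition of $d$ is also handled correctly: every $\delta>\|V-W\|_\infty^{1/3}$ is admissible.

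\textbf{Comparison.} The paper's argument is the standard Borel/Stieltjes-transform machinery of spectral theory, and it is the route the authors announce in their outline. In this setting, however, it needs the same input as yours (boundedness of $H_V-H_W$) and buys nothing extra. Your argument is shorter, and it is quantitatively better.
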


\begin{proof}
Let us set $\|V-W\|_{\infty}= \va$ and $\sigma_{\n}=\sigma_{V,\n}-\sigma_{W,\n}$ and denote by $g(t)=\frac1{1+t^2}$ the Cauchy function. 
Given $E=x+iy\in\mathbb{H}$, by the spectral theorem and the resolvent identity, we get
\begin{align*}
    \sigma_{\n} * g_y(x) &= -\text{Im}\left(\int_{\R} \frac 1{t-E}d\sigma_{\n}(t)\right)\\
    &\leq \left|\int_{\R} \frac 1{t-E}d\sigma_{\n}(t)\right|\\
    &=\left|\left\langle \left((H_V-E)^{-1}-(H_W-E)^{-1})\right)\delta_n,\delta_n \right\rangle\right|\\
    &=\left|\left\langle \left((H_V-E)^{-1}(H_W-H_V)(H_W-E)^{-1})\right)\delta_n,\delta_n \right\rangle\right|\\
    &\leq \va d\left(E,\sigma(H_V))^{-1}d(E,\sigma(H_W)\right)^{-1}\\
    &\leq \frac{\va}{d\left(x,[-2d-\lambda,2d+\lambda]\right)^2+y^2}.
\end{align*}

Integrating in $x$, we get

\begin{equation}
    \|\sigma_{\n} * g_y\|_1 \leq \va\left(\frac{2\lambda+4d}{y^2}+\frac{\pi}{y}\right),
\end{equation}
and it follows that

\begin{equation}
    \|\widehat{\sigma_{\n} * g_y}\|_{\infty}\leq \va\left(\frac{2\lambda+4d}{y^2}+\frac{\pi}{y}\right).
\end{equation}
Recalling that $\widehat{g_y}(\xi)=e^{-y|\xi|}$, we deduce that

\begin{equation}
    |\widehat{\sigma}_{\n}(\xi)| = |\widehat{\sigma_{\n} * g_y}(\xi)| |\widehat{g_y}(\xi)|^{-1} \leq \va\left(\frac{2\lambda+4d}{y^2}+\frac{\pi}{y}\right) e^{y|\xi|}
\end{equation}
and choosing $y=1/|\xi|$, we get that
\begin{equation}
        \left|\widehat{\sigma_{\n}}(\xi)\right| \lesssim_{\lambda,d}\va\left(|\xi|^2+|\xi|\right).
    \end{equation}
Integrating of over $[-1/\delta,1/\delta]$, we obtain
\begin{equation}
        \|\widehat{{\sigma}_{\n}}\|_{L^1([-1/\delta,1/\delta])}  \lesssim_{\lambda,d}\va \delta^{-3}.
    \end{equation}
Setting $\delta=\va^{1/4}$, the first conclusion follows from the assumptions on $\va$. Averaging over $\n\in[N]^d$, the second one follows directly.
\end{proof}

We now focus on the approximation of spectral measure with the ones of the periodicly extended operator. This will be done through the moments method.\newline
We remark that the following proof exploits the geometry of $\mathbb{Z}^d$. This happens in two ways: we rely on the fact that any box extends periodically to the full lattice, and, in second place, we use that $\mathbb{Z}^d$ is amenable (i.e. the boundary of any large box has negligible volume when compared to the box itself).\newline

Given a Borel measure $\mu$ on $\mathbb{R}$, we denote its $k$-th moment by $m_k(\mu)=\int_\mathbb{R} x^k d\mu(x)$.\newline
If the potential $V$ is sampled from $[-\lambda,\lambda]$, both in the box and in the periodic case, for every site $\n$, the spectral measure $\sigma_{V,\bold{n}}$ of $H_V$ is supported on $[-\lambda-2d, \lambda+2d]$. 

\begin{lemma}\label{Fourier - periodic vs box}
    Let $V:[N]^d\rightarrow[-\lambda,\lambda]$ be a potential. Then
    \begin{equation}
        d(\tilde{\sigma}_{V},\tilde{\sigma}_{V}^{\infty}) \lesssim N^{-1/3}.
    \end{equation}  
\end{lemma}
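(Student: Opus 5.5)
The approach is the moments method, converting moment agreement into control of $\widehat{\tilde\sigma_V-\tilde\sigma_V^\infty}$ on a frequency window and then choosing the window width. Set $M=\lambda+2d$; both $\tilde\sigma_V$ and $\tilde\sigma_V^\infty$ are probability measures supported on $[-M,M]$. For a site $\n\in[N]^d$ with $\text{d}(\n,\partial[N]^d)>k$, the $k$-th moments coincide:
\begin{equation*}
m_k(\sigma_{V,\n})=\langle\delta_\n,H_V^k\delta_\n\rangle=\langle\delta_\n,(H_V^\infty)^k\delta_\n\rangle=m_k(\sigma^\infty_{V,\n}).
\end{equation*}
This holds because $H^k$ has range $k$, and $V^\infty$ agrees with $V$ on the box, so the walks of length $k$ from $\n$ never leave $[N]^d$. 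For the remaining sites, at most $2dkN^{d-1}$ of them, the difference of moments is bounded by $2M^k$. Averaging over $\n$ gives
\begin{equation*}
|m_k(\tilde\sigma_V)-m_k(\tilde\sigma_V^\infty)|\lesssim_d \frac{k}{N}\,2M^k .
\end{equation*}

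Next I pass to the Fourier side. Set $\sigma=\tilde\sigma_V-\tilde\sigma_V^\infty$ and expand $\widehat\sigma(\xi)=\sum_k \frac{(-i\xi)^k}{k!}m_k(\sigma)$. I split this series at some $K$. The terms with $k\le K$ are controlled by the moment bound above. The tail $k>K$ is controlled by $\frac{(|\xi|M)^k}{k!}\cdot 2$, since $|m_k(\sigma)|\le 2M^k$. Taking $|\xi|\le 1/\delta$, this gives
\begin{equation*}
|\widehat\sigma(\xi)|\lesssim \frac{K}{N}e^{M/\delta}+\sum_{k>K}\frac{2(M/\delta)^k}{k!}.
\end{equation*}

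The main obstacle is that this crude Taylor bound produces the factor $e^{M/\delta}$. That factor yields only a logarithmic rate in $N$, not the polynomial rate $N^{-1/3}$. I would therefore avoid the power-series bound and instead apply the moment comparison directly to trigonometric polynomials. Fix $\xi$ and approximate $e^{-it\xi}$ on $[-M,M]$ by Chebyshev polynomials in $t/M$. The Chebyshev coefficients of $e^{-iMs\xi}$ are Bessel values $J_k(M\xi)$, which decay superexponentially once $k\gtrsim e M|\xi|$.

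So I truncate at degree $K\approx C M|\xi|+\log N$. The truncation error is then $O(N^{-1})$. The coefficients are bounded in size, and each Chebyshev polynomial $T_k(t/M)$ is bounded by $1$ on $[-M,M]$. The boundary sites are again the only contributors, and each contributes at most $2$. This gives
\begin{equation*}
|\widehat\sigma(\xi)|\lesssim \frac{K^2}{N}\lesssim \frac{(|\xi|+\log N)^2}{N}.
\end{equation*}
To avoid the issue with non-local polynomials, I phrase the argument as follows. The degree-$K$ polynomial $P$ in $H$ has range $K$, so $\langle\delta_\n,P(H_V)\delta_\n\rangle$ agrees for interior sites. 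Each boundary site contributes at most $2\sup_{[-M,M]}|P|\lesssim\sum_k|J_k|\lesssim1$.

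It remains to choose $\delta$. Integrating over $[-1/\delta,1/\delta]$ gives
\begin{equation*}
\|\widehat\sigma\|_{L^1([-1/\delta,1/\delta])}\lesssim \frac{\delta^{-3}+\delta^{-1}\log^2N}{N}.
\end{equation*}
This is less than $\delta$ once $\delta^4\gtrsim N^{-1}$, i.e. for $\delta\approx N^{-1/4}$ up to logarithms. In particular $d(\tilde\sigma_V,\tilde\sigma_V^\infty)\lesssim N^{-1/3}$ for large $N$, absorbing constants. The step I expect to need the most care is the uniform polynomial approximation, with its bounded coefficient sums; everything else is bookkeeping of the boundary count $kN^{d-1}/N^d$.
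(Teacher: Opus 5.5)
Your last step does not prove the stated rate. From $|\widehat\sigma(\xi)|\lesssim K^2/N$ with $K\approx CM|\xi|+\log N$ you correctly get $\|\widehat\sigma\|_{L^1([-1/\delta,1/\delta])}\lesssim(\delta^{-3}+\delta^{-1}\log^2N)/N$. This is $<\delta$ only for $\delta\gtrsim N^{-1/4}$, so it yields $d(\tilde\sigma_V,\tilde\sigma_V^\infty)\lesssim N^{-1/4}$. That is strictly \emph{weaker} than the lemma, since $N^{-1/4}/N^{-1/3}=N^{1/12}\to\infty$, and no absorption of constants turns one into the other.

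The extra factor $K$ in $K^2/N$ comes from bounding $\sup_{[-M,M]}|P|$ by the sum of its coefficients, which is $\lesssim K$. Your later claim $\sum_k|J_k(M\xi)|\lesssim 1$ would remove that factor, but it fails uniformly in $\xi$. We have $J_0(z)^2+2\sum_{k\ge1}J_k(z)^2=1$, with the mass spread over the $\sim|z|$ indices $k<|z|$, where $|J_k(z)|$ is typically of order $|z|^{-1/2}$. Hence $\sum_k|J_k(z)|\asymp|z|^{1/2}$. With the correct $\ell^1$ bound this route gives $K^{3/2}/N$ and only $N^{-2/7}$.

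The repair stays within your framework. Bound $\sup_{[-M,M]}|P|\le 1+\|e^{-it\xi}-P\|_{L^\infty([-M,M])}\le 1+O(N^{-1})$ directly from the uniform approximation, not from the coefficients. There are at most $2dKN^{d-1}$ boundary sites, each contributing at most $2+O(N^{-1})$. This gives $|\widehat\sigma(\xi)|\lesssim K/N\lesssim (M|\xi|+\log N)/N$, hence $\|\widehat\sigma\|_{L^1([-1/\delta,1/\delta])}\lesssim (M\delta^{-2}+\delta^{-1}\log N)/N$. That is $<\delta$ for $\delta=CN^{-1/3}$ with $C$ large, which is exactly the lemma.

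The Chebyshev detour itself is unnecessary. The $e^{M/\delta}$ loss you diagnosed comes from averaging the moment differences over sites before summing the Taylor series term by term. The paper instead works site by site:
\begin{itemize}
\item For an interior site, $\sigma_{V,\mathbf{n}}-\sigma^\infty_{V,\mathbf{n}}$ has vanishing moments up to order $K$, so only the Taylor tail survives. It is bounded by $2(eM|\xi|/K)^K e^{M|\xi|}\le 2e^{-K}$ on $|\xi|\le h=K/(e^3M)$.
\item A boundary site gets the trivial bound $|\widehat{\sigma_{\mathbf{n}}}|\le 2$. So boundary sites contribute $\lesssim h\cdot K/N\sim h^2/N$ to the $L^1([-h,h])$ norm.
\end{itemize}
Taking $h\sim N^{1/3}$ concludes. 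Equivalently, a Taylor polynomial of degree $\max(e^2M|\xi|,\log N)$ would serve exactly as well as your Chebyshev polynomial, since at boundary sites only its supremum on $[-M,M]$ matters.
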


\begin{proof}
    Let us fix $K\in\mathbb{N}$. Let $\n\in[N]^d$ be a site, such that $\text{d}(\n,\partial[N]^d)\geq K$.\newline
    
    We start by showing that the spectral measures $\sigma_{V,\n}$ and $\sigma^{\infty}_{V,\n}$ have the same first $K$ moments. Indeed, by the spectral theorem,
    \begin{equation}
    \int_{\R}x^k d\sigma_{V,\n}(x) = \langle H_V^k\delta_{\n},\delta_{\n}\rangle = \langle H_{V^{\infty}}^k\delta_{\n},\delta_{\n}\rangle = \int_{\R}x^k d\sigma^{\infty}_{V,\n}(x),
\end{equation}
where the second equality holds because $H_V^k$ and $H_{V^{\infty}}^k$ are $k$-range operators and $\n$ is at greater distance from the boundary (which is where $H_V^k$ and $H_{V^{\infty}}^k$ disagree).\newline

We now set $\sigma_{\n}=\sigma_{V,\n}-\sigma_{V,\n}^{\infty}$. Our next goal will be to find a pointwise bound for $\widehat\sigma_{\n}$.\newline
We recall that $m_k(\sigma_{\n})=0$ for $k\leq K$, and $|m_k(\sigma_{\n})| \leq 2 \cdot (2d+\lambda)^k$ otherwise.

\begin{equation}
    \widehat{\sigma_{\n}}(\xi) = \int_{\R} e^{-i\xi x} d\sigma_{\n}(x)
    = \int_{\R} \sum_{k=0}^{\infty} \frac{(-i\xi x)^k}{k!} d\sigma_{\n}(x)
    = \sum_{k=0}^{\infty} \frac{(-i)^k m_k(\sigma_{\n})}{k!}\xi^k.
\end{equation}

It follows that
\begin{equation}
    |\widehat{\sigma_{\n}}(\xi)| \leq \sum_{k>K} \frac{|m_k(\sigma_{\n})|}{k!}|\xi|^k \leq 2 \frac{(\lambda+2d)^K}{K!}|\xi|^K e^{(\lambda+2d)|\xi|} 
\end{equation}

We now set $h = \frac K{e^3(\lambda+2d)}$ and focus on $\xi\in[-h,h]$ (in order to estimate $\|\widehat\sigma_{\n}\|_{L^1([-h,h])}$).
\begin{align}
    |\widehat{\sigma}(\xi)| &\leq 2 \left( \frac{|\xi| (\lambda+2d)e}{K} \right)^K e^{(\lambda+2d)|\xi|}\\
    &\leq 2 \left( \frac{|\xi| (\lambda+2d)e^2}{K} \right)^K \frac{e^{(\lambda+2d)|\xi|}}{e^{(\lambda+2d)h}}\\
    &\leq 2 e^{-K} e^{(\lambda+2d)(|\xi|-h)}.
\end{align}

Integrating in $\xi$ over $[-h,h]$, we get that

\begin{equation}
    \|\widehat\sigma_{\n}\|_{L^1([-h,h])} \lesssim_{\lambda,d} e^{-K} \lesssim_{\lambda,d} e^{-h}.
\end{equation}
\newline
For the sites $\n\in[N]^d$ with $\text{d}(\n,\partial[N]^d) < K$, we settle for the following estimate

\begin{equation}
    \|\widehat\sigma_{\n}\|_{L^1([-h,h])} \leq 2h \cdot \|\widehat\sigma_{\n}\|_{L^{\infty}(\mathbb{R})} \lesssim h.
\end{equation}

Recalling that the number of sites at distance less than $K$ from the boundary is less than $2K/N$ of the total number of sites, and averaging over $\n\in[N]^d$, we get that 

\begin{equation}
        \|\widehat{\tilde{\sigma}_{V} - \tilde{\sigma}_{V}^{\infty}}\|_{L^1([-h,h])} \lesssim_{\lambda,d} \frac{h^2}{N} + e^{-h}.
    \end{equation}  
Setting $h=1/\delta$, and $\delta=N^{-1/3}$, the conclusion follows.
\end{proof}

\section{Regularity of IDS}\label{Regularity of IDS}

In this section, we prove that, provided that the box size $N$ is large enough, the IDS of the box operator has a certain degree of regularity. In particular it is uniformly close to a uniformly continuous function. The modulus of continuity is universal and depends only on the dimension $d$ and the bound on the potential $\lambda$.\newline

The first key element of the proof is the regualrity of the IDS of the priodic extension operator. Since $V^{\infty}$ is dynamically generated by an ergodic system, the IDS of $H_{V^{\infty}}$ is log-Holder continuous, as proved in \cite{craig1983log}.

\begin{theorem}[Craig-Simon,1983]\label{log continuity}
    For any $\lambda>0$ and $d\in\mathbb{N}^+$ there exist $C=C_{\lambda,d}$ such that, for any potential $V:[N]^d\rightarrow[-\lambda,\lambda]$ then $\mathcal{N}_V^{\infty}$ is $\omega_C$-uniformly continuous.
\end{theorem}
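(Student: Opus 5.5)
The plan is to run the Craig--Simon argument: derive log-Hölder continuity of $\mathcal{N}_V^{\infty}$ from a Thouless-type formula, keeping every constant dependent only on $\lambda$ and $d$ and never on the period $N$. The periodic potential $V^{\infty}$ is generated by the ergodic $\mathbb{Z}^d$-action of translations on the finite torus $(\mathbb{Z}/N\mathbb{Z})^d$ with uniform measure. So $\tilde\sigma_V^{\infty}$ equals the density of states of that ergodic family, and it is also the weak-$*$ limit of the normalized eigenvalue counting measures of the Dirichlet restrictions of $H_{V^{\infty}}$ to boxes $[L]^d$ as $L\to\infty$. I will prove the latter with the moments argument of Lemma \ref{Fourier - periodic vs box}: the boundary proportion $\sim K/L$ vanishes by amenability.

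The key identity replaces the one-dimensional Lyapunov exponent. For $E\in\mathbb{C}\setminus\mathbb{R}$ I will show
\begin{equation*}
\int_{\mathbb{R}} \log|E-E'|\,d\tilde\sigma_V^{\infty}(E') = \lim_{L\to\infty}\frac{1}{L^d}\log\left|\det\left(E-H_{V^{\infty},[L]^d}\right)\right| =: \gamma(E).
\end{equation*}
The first equality is immediate for the box measures, since $\log\det(E-H_{[L]^d})=\sum_i\log(E-E_i)$. The limit follows from weak convergence, because $\log|E-\cdot|$ is continuous on the common support $[-\lambda-2d,\lambda+2d]$. The estimate needed is a lower bound: $\gamma(E)\ge -C_{\lambda,d}$ uniformly in $E$ with $|\operatorname{Im}E|\le 1$, after which I extend to real $E$ by subharmonicity and lower semicontinuity.

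In $d=1$ this lower bound comes from writing the determinant as a transfer-matrix entry of a cocycle with nonnegative exponent. In general $d$ I will follow Craig--Simon. I view $[L]^d$ as $L$ slices $[L]^{d-1}$ and write $\det(E-H_{[L]^d})$ as the determinant of a block tridiagonal matrix. Its growth can be compared with that of a $2L^{d-1}$-dimensional symplectic-type transfer cocycle, and the sum of the nonnegative exponents of that cocycle gives $\gamma(E)\ge -C$, with $C$ depending only on $\lambda$ and $d$ through $\log$ of the norms of the off-diagonal blocks. An alternative that avoids transfer matrices, and which I would try first, uses Jensen's formula and subharmonicity. The function $u(E)=\int\log|E-E'|\,d\tilde\sigma(E')$ satisfies $u(E)\le \log(|E|+\lambda+2d)$, and a lower bound $u(E_0)\ge -C$ at a single fixed point $E_0=i(\lambda+2d+1)$ is trivial. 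The lower bound needed near the real axis, however, is genuinely the Thouless-type statement, since subharmonicity alone does not bound $u$ from below; this is the main obstacle. I would therefore rely on the statement of \cite{craig1983log} for ergodic $\mathbb{Z}^d$ operators, checking only that their constant depends on the potential solely through $\lambda$ and not on the underlying ergodic system.

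Given $u\ge -C$ on $\mathbb{R}$ and $u\le C'$, the conclusion is routine. Take an interval $I=[E-\eta,E+\eta]$ with $\eta<1/2$. Splitting the integral at $E$ over $I$ and its complement gives
\begin{equation*}
-C\le u(E)\le \tilde\sigma(I)\log(2\eta)+\log(2\lambda+4d+1).
\end{equation*}
This yields $\tilde\sigma(I)\le C_{\lambda,d}/\log(1/\eta)$, which is exactly $\omega_C$-uniform continuity of $\mathcal{N}_V^{\infty}$.
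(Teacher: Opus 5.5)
Your final route is the paper's. The paper gives no argument of its own: it notes that $V^{\infty}$ is generated by an ergodic system (translations of $(\mathbb{Z}/N\mathbb{Z})^d$) and invokes \cite{craig1983log}. Your check that the constant depends only on the support $[-\lambda-2d,\lambda+2d]$, not on the ergodic system, and your closing Jensen-type splitting are correct, and they make explicit what the paper leaves implicit.

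The self-contained transfer-matrix sketch, however, would not work as written, for two reasons.

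\emph{The slicing.} Cutting $[L]^d$ into $L$ slices of $[L]^{d-1}$ gives a cocycle of dimension $2L^{d-1}$ run for only $L$ steps, so there are no Lyapunov exponents. The strip argument instead fixes the cross-section and works on $\mathbb{Z}\times[M]^{d-1}$, which is periodic along the strip. With $k_M$ the density of states of the strip, it proves $\int\log|E-E'|\,dk_M(E')=M^{1-d}\sum_{i\le M^{d-1}}\gamma_i^{(M)}(E)\ge 0$, and only then lets $M\to\infty$. The bound survives the weak limit because $\log|E-\cdot|$ is upper semicontinuous and bounded above on compacts.

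\emph{The direction of the comparison.} A determinant being a matrix element of $T_n$ (already in $d=1$), or of $\bigwedge^{M^{d-1}}T_n$, only yields $\int\log|E-E'|\,dk_M\le M^{1-d}\sum\gamma_i$. That upper bound is useless here. The half you need is that the norm of the transfer operator is controlled by finitely many matrix elements, each again a boundary-modified determinant with the same asymptotics. In $d=1$ these are the four truncations. In general it is the fake-potential argument of the paper's last section. The strips are propagating graphs in the paper's sense, so that theorem covers them with a constant depending only on $\lambda$ and the degree. Note also that this gives $\ge 0$ directly, not merely $\ge -C$.

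Finally, log potentials are upper, not lower, semicontinuous. The passage to real $E$ is monotone convergence: $u(E_0+iy)\downarrow u(E_0)$ as $y\downarrow 0$.
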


The second and final step of the proof is the fact that the IDS of the box operator gets uniformly close to the IDS of its periodic extension as the box size $N$ increases. This is a direct consequence of the closness of the spectral measure of the two operators. 

\begin{theorem}\label{L infty ids}
    There exists a constant $c=c_{\va,d}$ such that, for any $0<
    \delta<1$, if
    \begin{equation}
        N \geq e^{{c}/{\va}},
    \end{equation}
    then for any potential $V:[N]^d\rightarrow[-\lambda,\lambda]$, we have that $\|\mathcal{N}^{\infty}_V-\mathcal{N}_V\|_{\infty}<\va$.
\end{theorem}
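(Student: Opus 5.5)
The plan is to combine the three ingredients already available: Lemma \ref{Fourier - periodic vs box}, which gives $d(\tilde\sigma_V,\tilde\sigma_V^{\infty})\lesssim N^{-1/3}$ uniformly in $V$; Theorem \ref{log continuity}, which says that $\mathcal{N}_V^{\infty}$ is $\omega_C$-uniformly continuous with $C=C_{\lambda,d}$ independent of $N$ and $V$; and Lemma \ref{Fourier controls cumulative}, which converts closeness in the metric $d$ into uniform closeness of the cumulative functions, provided one of the two is quasi-uniformly continuous.

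First, I fix the common support. For every potential bounded by $\lambda$, both $\tilde\sigma_V$ and $\tilde\sigma^{\infty}_V$ are probability measures supported on $[-M,M]$ with $M=\lambda+2d$. I then apply Lemma \ref{Fourier controls cumulative} with $\sigma_1=\tilde\sigma_V^{\infty}$ and $\sigma_2=\tilde\sigma_V$. Since $\mathcal{N}_1=\mathcal{N}_V^{\infty}$ is genuinely $\omega_C$-uniformly continuous, it is $0$-almost $\omega_C$-uniformly continuous. The lemma therefore gives
\begin{equation*}
\|\mathcal{N}_V^{\infty}-\mathcal{N}_V\|_{\infty}\lesssim_{\lambda,d}\omega_C\left(d(\tilde\sigma_V,\tilde\sigma_V^{\infty})^{1/6}\right)\lesssim_{\lambda,d}\omega_C\left(C'N^{-1/18}\right),
\end{equation*}
where I use that $\omega_C$ is increasing. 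For $N$ large enough that $C'N^{-1/18}<1/2$, this equals $C/\ln(N^{1/18}/C')$, which is at most $c_1/\ln N$ for $N$ beyond a threshold depending only on $\lambda,d$. Requiring $c_1/\ln N<\va$ amounts to $N>e^{c_1/\va}$. The small-$N$ threshold conditions are absorbed by enlarging the constant: taking $\va<1$ without loss of generality, $e^{c/\va}\geq e^{c}$ exceeds any fixed threshold once $c$ is large.

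The main point needing care is checking that Lemma \ref{Fourier controls cumulative} is valid as stated with uniform constants. Its proof uses a Lipschitz cutoff $\tilde\alpha$ supported in $[-M-1,M+1]$ with $\|\tilde\alpha\|_\infty\leq1$, and the choice $L=d^{-1/6}$ balances $\omega(1/L)$ against $L\,d^{1/3}=d^{1/6}$. Because $\omega_C(t)\geq t$ for small $t$, the $d^{1/6}$ term is dominated by $\omega_C(d^{1/6})$. The lemma is invoked with $\va=0$, so the additive term disappears.

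I also need to check that Lemma \ref{Fourier - periodic vs box} really averages over the fundamental box $[N]^d$, matching the definition of $\tilde\sigma^{\infty}_V$. It does, and its constant depends only on $\lambda,d$. This gives the required uniformity over all $V$, and hence the statement with $c=c_{\lambda,d}$.
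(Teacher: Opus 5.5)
Your proposal is correct and is essentially the paper's own proof, which is only the one-line combination of Lemma \ref{Fourier - periodic vs box}, Lemma \ref{Fourier controls cumulative} and the Craig--Simon log-H\"older regularity of $\mathcal{N}^{\infty}_V$. The paper mis-cites the statement itself at that point, and its constant should indeed read $c_{\lambda,d}$; your bookkeeping $d\lesssim N^{-1/3}$, then $\omega_C(C'N^{-1/18})\lesssim 1/\ln N$, supplies the details it omits. One small point concerns the check you flagged: the chain displayed in the proof of Lemma \ref{Fourier controls cumulative} applies the modulus of continuity to $\mathcal{N}_2$ rather than $\mathcal{N}_1$, so to use only the regularity of $\sigma_1=\tilde\sigma^{\infty}_V$, bound $\mathcal{N}^{\infty}_V(E)-\mathcal{N}_V(E)$ using a cutoff that decreases on $[E-1/L,E]$ instead of $[E,E+1/L]$.
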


\begin{proof}
    It follows from Lemma \ref{Fourier - periodic vs box} and Lemma \ref{Fourier controls cumulative}, exploiting the regularity of $\mathcal{N}^{\infty}_V$, guaranteed by theorem \ref{L infty ids}.
\end{proof}

\begin{remark}\label{almost UC}
    In more general terms, we just proved that, whenever $N\geq e^{{c}/{\va}}$, $\mathcal{N}_V$ is $\va$-almost $\omega_C$-uniformly continuous.
\end{remark}

\section{Stability of delocalization}\label{Proof of stability of delocalization}

We are now set to prove that the stability of delocalization of spectral measures is a stable property under perturbations of the potential. Delocalization can be detected by Lipschitz test function, where the Lipschitz constant is larger, the smaller the scale of the delocalization is. The main theorem of the section will therefore follow dirctly from the next lemma, which states that the measures $\nu_{\n}$ depend continuosly in the potential (where the continuity is understood in terms of testing against $L$-Lipschitz functions).

\begin{lemma}\label{stability tested against lip}
    Fix $\lambda>0$ and $d\in N^+$. There exists $c=c_{\lambda,d}$ such that, given real numbers $E>0$ and $L>1$, if
    \begin{equation}
        N > e^{c \frac{L}{E}}
    \end{equation}
    the following holds. For any $L$-Lipschitz function $\beta:[0,1]\rightarrow\R$ with $\|\beta\|_{\infty}\leq 1$, and any two potentials $V,W:[N]^d\rightarrow[-\lambda,\lambda]$ with
    \begin{equation}
        \|W-V\|_{\infty}<e^{-c \frac{L}{E}},
    \end{equation}
    for any $\n\in[N]^d$ we have 
    \begin{equation}
        \left| \int \beta d\nu_{V,\n} - \int \beta d\nu_{W,\n} \right| < E.
    \end{equation}
\end{lemma}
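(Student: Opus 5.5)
Write $\int\beta\,d\nu_{V,\n}=\int \beta(\mathcal{N}_V(t))\,d\sigma_{V,\n}(t)$ and split the difference as
\begin{equation*}
\left|\int \beta\circ\mathcal{N}_V\, d\sigma_{V,\n}-\int \beta\circ\mathcal{N}_W\, d\sigma_{V,\n}\right|+\left|\int \beta\circ\mathcal{N}_W\, d\sigma_{V,\n}-\int \beta\circ\mathcal{N}_W\, d\sigma_{W,\n}\right|.
\end{equation*}
The first term is at most $L\|\mathcal{N}_V-\mathcal{N}_W\|_\infty$, since $\beta$ is $L$-Lipschitz and $\sigma_{V,\n}$ is a probability measure. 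The second term is a difference of integrals of one test function against two spectral measures, so it is handled by Lemma \ref{fourier controls test}. The catch is that $\beta\circ\mathcal{N}_W$ is not continuous: $\mathcal{N}_W$ jumps. To fix this, I replace $\mathcal{N}_W$ by an $\omega_C$-uniformly continuous function $g$ with $\|\mathcal{N}_W-g\|_\infty\le\va_0$. Such a $g$ exists by Remark \ref{almost UC} once $N\ge e^{c/\va_0}$, and I may assume $g$ is nondecreasing with values in $[0,1]$, for instance by taking it to be $\mathcal{N}^\infty_W$ from Theorem \ref{log continuity}. Swapping $\mathcal{N}_W$ for $g$ costs at most $2L\va_0$ in total, one $L\va_0$ for each measure.

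The function $\beta\circ g$ is bounded by $1$, has modulus of continuity $L\,\omega_C$, and after multiplying by a fixed cutoff equal to $1$ on $[-\lambda-2d,\lambda+2d]$ it is supported on a fixed compact set. Lemma \ref{fourier controls test} then bounds the second term by $\lesssim L\,\omega_C\big(d(\sigma_{V,\n},\sigma_{W,\n})^{1/3}\big)$. Lemma \ref{Fourier - dependance on potential} gives $d(\sigma_{V,\n},\sigma_{W,\n})\lesssim\|V-W\|_\infty^{1/4}$, so this term is $\lesssim L\,C/\log(1/\|V-W\|_\infty)$. For the first term I use Lemma \ref{Fourier controls cumulative} with $\mathcal{N}_1=\mathcal{N}_V$, which is $\va_0$-almost $\omega_C$-continuous, and $\mathcal{N}_2=\mathcal{N}_W$, together with the second estimate of Lemma \ref{Fourier - dependance on potential}. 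This gives $\|\mathcal{N}_V-\mathcal{N}_W\|_\infty\lesssim \omega_C(\|V-W\|_\infty^{1/24})+\va_0$, i.e.\ $\lesssim C/\log(1/\|V-W\|_\infty)+\va_0$.

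Putting the pieces together,
\begin{equation*}
\left|\int\beta\, d\nu_{V,\n}-\int\beta\, d\nu_{W,\n}\right|\lesssim_{\lambda,d} L\va_0+\frac{L}{\log(1/\|V-W\|_\infty)}.
\end{equation*}
Choose $\va_0=E/(3C'L)$, which forces $N\ge e^{c/\va_0}=e^{c''L/E}$. Then require $\|V-W\|_\infty<e^{-c''L/E}$, so that the logarithmic term is also below $E/3$. This gives exactly the thresholds in the statement.

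The main obstacle is the discontinuity of $\beta\circ\mathcal{N}_W$. Everything hinges on being able to trade $\mathcal{N}_W$ for a genuinely uniformly continuous function at cost $\va_0$, uniformly in $W$. This is precisely where the log-Hölder regularity of Theorem \ref{log continuity} and the box–periodic comparison of Theorem \ref{L infty ids} enter. It is also why only logarithmic moduli survive, which produces the exponential dependence on $L/E$. I would check carefully that the constants in Lemma \ref{fourier controls test} depend only on the support size $M$, which here is fixed by $\lambda$ and $d$.
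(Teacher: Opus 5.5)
Your proposal is correct and follows the paper's proof essentially step for step. Your second term is the paper's terms two through four, with the paper's $f_W$ played by your $g$. The $\mathcal{N}_V$ versus $\mathcal{N}_W$ term is handled, as in the paper, by the averaged estimate of Lemma \ref{Fourier - dependance on potential} together with Lemma \ref{Fourier controls cumulative}. The test-function term uses the sitewise estimate together with Lemma \ref{fourier controls test}, with the same bookkeeping leading to $N, \|V-W\|_\infty^{-1} \sim e^{cL/E}$.

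Two of your details are small points of care that the paper passes over. Choosing $g=\mathcal{N}^\infty_W$ keeps values in $[0,1]$, so $\beta\circ g$ is actually defined. The cutoff makes the test function compactly supported, as Lemma \ref{fourier controls test} requires.
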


\begin{proof}

Remark \ref{almost UC} (essentially theorem \ref{L infty ids}), if $c$ is large enough, ensures the existence of a $\omega_C$-uniformly continuous function $f_W$ such that $\|\mathcal{N}_W-f_W\|_{\infty}<E/6L$.\newline

Recalling that $\int \beta d\nu_{V,\n}=\int \beta\circ \mathcal{N}_V d\sigma_{V,\n}$, we can write

\begin{align}\label{Breaking down the estimate}
    \int \beta d\nu_{V,\n} - \int \beta d\nu_{W,\n} &= \int \beta\circ \mathcal{N}_V d\sigma_{V,\n} - \int \beta\circ \mathcal{N}_W d\sigma_{V,\n}\\
    &+ \int \beta\circ \mathcal{N}_W d\sigma_{V,\n} - \int \beta\circ f_W d\sigma_{V,\n}\\
    &+\int \beta\circ f_W d\sigma_{V,\n} - \int \beta\circ f_W d\sigma_{W,\n}\\
    &+ \int \beta\circ f_W d\sigma_{W,\n} - \int \beta\circ \mathcal{N}_W d\sigma_{W,\n}.
\end{align}

Since $\|\mathcal{N}_W-f_W\|_{\infty}<E/6L$ and $\beta$ is $L$-Lipschitz, we have that $\|\beta\circ\mathcal{N}_W-\beta\circ f_W\|_{\infty}<E/6$. Given that $\sigma_{V,\n}$ and $\sigma_{W,\n}$ are probability measures, we immediately deduce that

\begin{equation}\label{ineq1}
    \left|\int \beta\circ \mathcal{N}_W d\sigma_{V,\n} - \int \beta\circ f_W d\sigma_{V,\n}\right|,
    \left|\int \beta\circ f_W d\sigma_{W,\n} - \int \beta\circ \mathcal{N}_W d\sigma_{W,\n} \right|
    < E/6.
\end{equation}

Now we focus on the first term. By theorem \ref{Fourier - dependance on potential} and provided that $c$ was chosen large enough, we get that $d(\tilde\sigma_V,\tilde\sigma_W)<e^{-C\cdot6L/E}$. Recalling that $\mathcal{N}_W$ is $(C\cdot E/6L)$-almost $\omega_C$-uniformly continuous, lemma \ref{Fourier controls cumulative} ensures that $\|\mathcal{N}_V-\mathcal{N}_W\|_{\infty}<E/2L$. Proceeding just as above, we get that

\begin{equation}\label{ineq2}
    \left| \int \beta\circ \mathcal{N}_V d\sigma_{V,\n} - \int \beta\circ \mathcal{N}_W d\sigma_{V,\n} \right| < E/2.
\end{equation}

We are then left to deal with the third term.\newline
Again, theorem \ref{Fourier - dependance on potential} tells us that 
$d(\tilde\sigma_{V,\n},\tilde\sigma_{W,\n}) < e^{-C\cdot6L/E}$. Noticing that $\beta\circ f_W$ is $L\cdot\omega_C$-uniformly continuous, lemma \ref{fourier controls test} allows us to conclude that

\begin{equation}\label{ineq3}
    \left|\int \beta\circ f_W d\sigma_{V,\n} - \int \beta\circ f_W d\sigma_{W,\n}\right| < E/6.
\end{equation}

The combination of the inequalities \ref{ineq1}, \ref{ineq2} and \ref{ineq3} concludes the proof.

\end{proof}

\begin{theorem}\label{stability of delocalization}
    Fix $\lambda>0$ and $d\in N^+$. There exists $c=c_{\lambda,d}>0$ such that, given real numbers $\va'>\va>0$ and $\eta>\eta'>0$, if
    \begin{equation}
        N > e^{\frac{c}{(\va'-\va)(\eta-\eta')}},
    \end{equation}
    the following holds.
    If $V:[N]^d\rightarrow[-\lambda,\lambda]$ is $(\va,\eta,J)$-delocalized at $\n$, then any $W:[N]^d\rightarrow[-\lambda,\lambda]$, such that
    \begin{equation}
        \|W-V\|_{\infty}<e^{-\frac{c}{(\va'-\va)(\eta-\eta')}},
    \end{equation}
    is $(\va',\eta',J')$-delocalized at $\n$, where $J'=\{x\in J : d(x,\partial J) > \frac12(\eta-\eta')\}$
\end{theorem}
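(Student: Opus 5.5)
The plan is to reduce the statement to Lemma \ref{stability tested against lip} by testing the interval masses of $\nu_{W,\n}$ against suitable Lipschitz bump functions. Set $\theta=\frac12(\eta-\eta')$. Fix an interval $I'\subset J'$ with $|I'|\leq\eta'$; we must show $\nu_{W,\n}(I')\leq\va'$. Let $I$ be the concentric enlargement of $I'$ by $\theta/2$ on each side. Then $|I|\leq\eta'+\theta<\eta$. Since every point of $I'$ has distance more than $\theta$ from $\partial J$, we get $I\subset J$. The hypothesis on $V$ therefore gives $\nu_{V,\n}(I)\leq\va$.

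Next, build a test function $\beta:[0,1]\rightarrow[0,1]$. It equals $1$ on $I'$ (intersected with $[0,1]$, where all the $\nu$'s live), vanishes outside $I$, and is linear in between. This makes $\beta$ an $L$-Lipschitz function with $L=2/\theta=4/(\eta-\eta')$ and $\|\beta\|_\infty\leq1$. The two indicator comparisons give
\begin{equation*}
\nu_{W,\n}(I')\leq\int\beta\,d\nu_{W,\n}\quad\text{and}\quad \int\beta\,d\nu_{V,\n}\leq\nu_{V,\n}(I)\leq\va.
\end{equation*}
Apply Lemma \ref{stability tested against lip} with this $L$ and with error $\va'-\va$ in place of its $E$. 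This yields $\left|\int\beta\,d\nu_{V,\n}-\int\beta\,d\nu_{W,\n}\right|<\va'-\va$, provided that
\begin{equation*}
N>e^{c_0 L/(\va'-\va)}\quad\text{and}\quad\|W-V\|_\infty<e^{-c_0L/(\va'-\va)}.
\end{equation*}
Since $L/(\va'-\va)=4/((\va'-\va)(\eta-\eta'))$, both conditions follow from the hypotheses once we take $c=4c_0$. Chaining the inequalities gives $\nu_{W,\n}(I')<\va'$, as required.

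The only points to check are bookkeeping. First, Lemma \ref{stability tested against lip} needs $L>1$. This holds automatically when $\eta-\eta'<4$, which we may assume, since $\nu$ is supported on $[0,1]$ and otherwise the statement is trivial. Second, the lemma is stated uniformly for all $\beta$ and all $\n$, so no further dependence enters. The real content, namely the control of $\mathcal{N}_V$ versus $\mathcal{N}_W$ and of the spectral measures, is already contained in Lemma \ref{stability tested against lip}. I therefore expect no serious obstacle beyond getting the margins right between $J'$, $I$ and the Lipschitz constant, so that the exponent takes the form $c/((\va'-\va)(\eta-\eta'))$.
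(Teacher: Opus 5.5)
Your proposal is correct and follows the paper's proof: both reduce to Lemma \ref{stability tested against lip} by sandwiching $\mathbf{1}_{I'}\leq\beta\leq\mathbf{1}_{I}$ with a trapezoidal Lipschitz bump between $I'$ and an enlarged interval $I\subset J$, taking $E=\varepsilon'-\varepsilon$. The differences are cosmetic. Your margin $\frac14(\eta-\eta')$ gives $L=4/(\eta-\eta')$ instead of the paper's $2/(\eta-\eta')$, which is absorbed into $c$; starting from an arbitrary $I'\subset J'$ rather than shrinking a given $I\subset J$ avoids an endpoint issue; and your remark on $L>1$ supplies a check the paper omits.
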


\begin{proof}
    Let $I\subset J$ be an interval such that $|I|\leq\eta$. Let us set $I'=\{x\in I : d(x,\partial I) > \frac12(\eta-\eta')\}$. We then just need to prove that $\nu_{W,\n}(I')\leq \va'$.\newline
    Let us define $\beta:[0,1]\rightarrow\R$ by
    \begin{equation}
        \beta(t)=
        \begin{cases}
            0 & \text{if } t\notin I,\\
            1-\frac{2d(t,I')}{\eta-\eta'}  & \text{if } t\in I.
        \end{cases}
    \end{equation}
    Notice that $\beta$ is $2/(\eta-\eta')$-Lipschitz. We can therefore apply lemma \ref{stability tested against lip} to $\beta$ and $\nu_{V,\n}-\nu_{W,\n}$, with $L=2/(\eta-\eta')$ and $E=\va'-\va$. As a consequence, we get that
    \begin{equation}
        \nu_{W,\n}(I') \leq \int \beta d\nu_{W,\n} \leq \int \beta d\nu_{V,\n} + (\va'-\va) \leq \va'.
    \end{equation}
\end{proof}

\section{Perturbations producing delocalization}\label{Perturbations producing delocalization}

In this section, we prove theorem \ref{thm perturbation}, which is the core of our whole argument. It will enable us to upgrade a weaker global delocalization to a stronger delocalization on a specific target region (of a small enough size), by perturbing the potential slightly.\newline

The proof relies on two ausiliary function, that we proceed to define.\newline
For $\beta:[0,1]\rightarrow\R$ continuous function, we define

\begin{gather}
    \Phi_{\beta}(V)=\int E\beta(\mathcal{N}_V(E))d\mathcal{N}_V(E),\\
    \Psi_{\beta}(V)[\n] = \int \beta(\mathcal{N}_V(E))d\sigma_{\n}(E) = \int \beta(t) d\nu_{\n}(t).
\end{gather}

Let us denote by $\Omega_N\subset [-\lambda,\lambda]^{[N]^d}$ the set of potentials whose correspondig Schr\"odinger box operators have simple spectrum. This is an open and dense full measure set, complement of an algebraic one. Any line is either disjoint from $\Omega_N$, or its intersection with $\Omega_N$ has full one-dimensional measure. \newline
For $V\in\Omega_N$, we have a more concrete description of $\Phi_{\beta}(V)$ and $\Psi_{\beta}(V)$, as they respectively coincide with

\begin{gather}
    \bar{\Phi}_{\beta}(V)=\frac1{N^d}\sum_{i=1}^{N^d}E_i\beta(i/{N^d}),\\
    \bar{\Psi}_{\beta}(V)[\n] = \sum_{i=1}^{N^d} \beta(i/N^d)\sigma_{\n}(E_i) = \sum_{i=1}^{N^d} \beta(i/N^d)\nu_{\n}(i/N^d).
\end{gather}
The function $\bar{\Phi}_{\beta}$ is well defined even for $V\notin\Omega_N$, differently from $\bar{\Psi}_{\beta}$. Moreover, $\bar{\Phi}_{\beta}$ is continuous on the whole space of potentials $[-\lambda,\lambda]^{[N]^d}$.\newline

Critically, it turns out that $\Psi_{\beta}$ is, up to a factor $1/N^d$, the weak gradient of $\Phi_{\beta}$.

\begin{lemma}\label{phi differentiability}
    Let be $V,W:[N]^d\rightarrow [-\lambda,\lambda]$ two potentials and $\beta:[0,1]\rightarrow\R$ a continuous function. Suppose that $V\in\Omega_N$ 
    Then
    \begin{equation}\label{Hellman-Feyman}
        \bar\Phi_{\beta}(V+W)-\bar\Phi_{\beta}(V) = \int_0^1 \frac 1{N^d} \sum_{\n\in[N]^d} \Psi_{\beta}(V+tW)[\n]W[\n]dt.
    \end{equation}
\end{lemma}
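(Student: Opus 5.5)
The plan is to combine the Hellmann--Feynman formula for simple eigenvalues with the fundamental theorem of calculus along the segment $t\mapsto V+tW$. Set $\gamma(t)=V+tW$ for $t\in[0,1]$. Since $V\in\Omega_N$, this line meets $\Omega_N$, so by the remark preceding the lemma $\gamma(t)\in\Omega_N$ for all $t$ outside a finite set $F\subset[0,1]$. The complement of $\Omega_N$ meets the line in the zero set of a nontrivial polynomial in $t$, namely the discriminant of the characteristic polynomial, so $F$ is indeed finite.

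\textbf{Step 1: differentiability off $F$.} On each open interval of $[0,1]\setminus F$ the spectrum of $H_{\gamma(t)}$ is simple. By analytic perturbation theory, or the implicit function theorem applied to the characteristic polynomial, each ordered eigenvalue $E_i(t)$ is then real-analytic in $t$. Let $\psi_i(t)$ denote the normalized eigenvector. The Hellmann--Feynman formula gives
\begin{equation*}
E_i'(t)=\langle \psi_i(t), W\psi_i(t)\rangle=\sum_{\n\in[N]^d} W[\n]\,\psi_i(t)(\n)^2=\sum_{\n} W[\n]\,\sigma_{\gamma(t),\n}(E_i(t)),
\end{equation*}
where the last equality uses simplicity of the spectrum. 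Multiplying by $\beta(i/N^d)/N^d$ and summing over $i$ yields
\begin{equation*}
\frac{d}{dt}\bar\Phi_\beta(\gamma(t))=\frac1{N^d}\sum_{\n}W[\n]\sum_i\beta(i/N^d)\,\sigma_{\gamma(t),\n}(E_i(t))=\frac1{N^d}\sum_{\n}\Psi_\beta(\gamma(t))[\n]\,W[\n].
\end{equation*}
This uses the identification $\Psi_\beta=\bar\Psi_\beta$, valid on $\Omega_N$. The ordering of eigenvalues is preserved on each interval because they never cross there.

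\textbf{Step 2: integrate across the bad points.} The map $t\mapsto\bar\Phi_\beta(\gamma(t))$ is continuous on all of $[0,1]$. Continuity of $\bar\Phi_\beta$ was noted above, and it follows from continuity of the ordered eigenvalues, e.g. via Weyl's inequality, since $|E_i(V)-E_i(V')|\le\|V-V'\|_\infty$. The same Weyl bound shows that $\bar\Phi_\beta\circ\gamma$ is Lipschitz with constant at most $\|W\|_\infty\|\beta\|_\infty$. It is therefore absolutely continuous, and its derivative exists off the finite set $F$. Hence
\begin{equation*}
\bar\Phi_\beta(V+W)-\bar\Phi_\beta(V)=\int_0^1\frac{d}{dt}\bar\Phi_\beta(\gamma(t))\,dt,
\end{equation*}
and substituting Step 1 gives \eqref{Hellman-Feyman}. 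The integrand is defined for almost every $t$, which suffices. Note also that $\Psi_\beta$ is bounded by $\|\beta\|_\infty$, since $\nu_\n$ is a probability measure, so the integral converges.

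\textbf{Main obstacle.} The only delicate point is justifying the passage through $F$, where eigenvalues may collide and $\Psi_\beta$ is not given by the concrete formula. I would handle this entirely through the Lipschitz continuity of $\bar\Phi_\beta\circ\gamma$ rather than by analyzing behaviour at the crossings. Finiteness of $F$ is not even strictly needed: a null set suffices once absolute continuity is in hand.
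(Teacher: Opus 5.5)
Your proof is correct, but it takes a somewhat different route from the paper's.

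\textbf{The paper's route.} When both endpoints $V$ and $V+W$ lie in $\Omega_N$, the paper simply cites the Hellmann--Feynman formula from Avila--Damanik. When $V+W\notin\Omega_N$, it reduces to that case. It shifts the segment to $[V+s_nW,\,V+(1+s_n)W]$, with $s_n\to0$ chosen so that both shifted endpoints are simple-spectrum. Then it passes to the limit using continuity of $\bar\Phi_\beta$ and boundedness of $\Psi_\beta$.

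\textbf{Your route.} You derive the formula directly. On the finitely many open parameter intervals where the spectrum is simple, Hellmann--Feynman together with $\Psi_\beta=\bar\Psi_\beta$ identifies $\frac{d}{dt}\bar\Phi_\beta(V+tW)$. Weyl's inequality then shows that $t\mapsto\bar\Phi_\beta(V+tW)$ is Lipschitz with constant $\|\beta\|_\infty\|W\|_\infty$. The fundamental theorem of calculus for absolutely continuous functions therefore treats every degenerate parameter as a null set. This covers interior eigenvalue crossings and a bad endpoint $V+W$ alike, so the approximation step becomes unnecessary.

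\textbf{Comparison.} The paper's argument is shorter but relies on the citation, which must itself deal with crossings inside the segment. Yours is self-contained and handles all degeneracies uniformly. It also gives a quantitative Lipschitz bound for $\bar\Phi_\beta$, whereas the paper only asserts its continuity. In both arguments the hypothesis $V\in\Omega_N$ is used only to ensure that the segment lies in $\Omega_N$ for almost every $t$, which is where $\Psi_\beta$ agrees with the derivative.
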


\begin{proof}
    If both $V$ and $V+W$ belong to $\Omega_N$ It follows from the Hellmann-Fayman formula (see \cite{avila_delocalization_2024}). If not, let $s_n\rightarrow0$ be a sequence of real numbers such that $V+s_nW,V+(1+s_n)W\in\Omega_N$. It exists since $\{t\in\mathbb{R}: V+tW\in \Omega_N \}$ is non empty, and therefore has full measure. Letting $n$ go to $\infty$, by continuity of $\bar\Phi_{\beta}$ and of the right hand side (since $\Psi_{\beta}$ is bounded), we conclude.
\end{proof}


Let us define $\rho_{\beta}(s)=\sup_{|t-t'|<s}|\beta(t)-\beta(t')|$.\newline

As discussed in the introduction, we prove theorem \ref{thm perturbation} by contraddiction. If it didn't hold it would be possible to find a $\beta$, for which $\|\Phi_\beta\|_{\infty}$ is small, while, for every $W$ in a neighborhood of $V$, most coordinates of $\Psi_{\beta}(W)$ are big. This leads to the desired contraddiction, thanks to a "gradient descent" argument.\newline
This last step works only if $\Psi_{\beta}$, the gradient of $\Phi_{\beta}$, is regular enough, and the needed regularity is provided by the next lemma.

\begin{lemma}\label{psi continuity}
    Let $\beta:[0,1]\rightarrow\R$ be a continuous function. Let us set $\va_0=\rho_{\beta}(1/N)$. Then there exists $\delta_0>0$ such that $|\Psi_{\beta}(V)[\n]-\Psi_{\beta}(V')[\n]|<\va_0$ for any two potentials such that $||V-V'||<\delta_0$.
\end{lemma}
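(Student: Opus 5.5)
The claim is a uniform continuity statement, and I would prove it by compactness of the space of potentials $[-\lambda,\lambda]^{[N]^d}$. The point to handle is that $\Psi_\beta$ itself is \emph{not} continuous at potentials with degenerate spectrum, so continuity alone is not enough. Instead I would show that near every potential $V$, degenerate or not, the values $\Psi_\beta(V')[\n]$ vary by at most about $\rho_\beta(1/N)$. A Lebesgue-number argument then produces $\delta_0$.

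\textbf{Step 1: multiplicity bound.} Every eigenvalue of a box operator $H_V$ on $[N]^d$ has multiplicity at most $N^{d-1}$. Indeed, the eigenvalue equation at the site $\n$ can be solved for $\phi(\n+e_1)$ in terms of the values of $\phi$ on the layers $n_1$ and $n_1-1$. Hence an eigenvector is determined by its restriction to the face $\{n_1=1\}$, which has $N^{d-1}$ sites. Consequently a cluster of eigenvalues coming from a single eigenvalue of $H_V$ occupies at most $N^{d-1}$ consecutive indices. In the IDS scale these indices lie in a window of length at most $(N^{d-1}-1)/N^d<1/N$.

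\textbf{Step 2: local control near a fixed $V$.} Let $E_1<\dots<E_r$ be the distinct eigenvalues of $H_V$, with multiplicities $m_j$, and set $k_j=m_1+\dots+m_{j-1}$. Then $\nu_{V,\n}$ gives mass $\|P_j\delta_\n\|^2$ to the point $(k_j+m_j)/N^d$, where $P_j$ is the spectral projection onto the eigenspace of $E_j$.

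Fix $\eta>0$ and choose disjoint small circles around the $E_j$. By continuity of Riesz projections, there is $r_V>0$ such that every $V'$ with $\|V'-V\|<r_V$ satisfies the following.
\begin{itemize}
\item Exactly $m_j$ eigenvalues of $H_{V'}$ lie inside the $j$-th circle. These carry the indices $k_j+1,\dots,k_j+m_j$, so their $\mathcal{N}_{V'}$-values lie in $[(k_j+1)/N^d,(k_j+m_j)/N^d]$, even if $V'$ is itself degenerate.
\item The total $\sigma_{V',\n}$-mass of this cluster is within $\eta/r$ of $\|P_j\delta_\n\|^2$.
\end{itemize}
Now take two potentials $V',V''$ in this ball. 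For each $j$, both $\nu_{V',\n}$ and $\nu_{V'',\n}$ put almost the same mass on the same window of length less than $1/N$. Comparing them cluster by cluster gives
\begin{equation*}
|\Psi_\beta(V')[\n]-\Psi_\beta(V'')[\n]|\leq \rho^*+C\eta\|\beta\|_\infty,
\qquad \rho^*=\max_{|t-t'|\leq (N^{d-1}-1)/N^d}|\beta(t)-\beta(t')|.
\end{equation*}
The maximum defining $\rho^*$ is over $|t-t'|\le(N^{d-1}-1)/N^d<1/N$, so $\rho^*\leq\rho_\beta(1/N)=\va_0$. The comparison is made directly between $V'$ and $V''$ rather than through $V$; this avoids a factor $2$ from the triangle inequality.

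\textbf{Step 3: compactness.} The balls $B(V,r_V/2)$ cover the compact space of potentials, so there is a finite subcover. Let $\delta_0$ be a Lebesgue number of this cover, taken smaller than the minimal radius $r_V/2$ in it. Then any pair $V',V''$ with $\|V'-V''\|<\delta_0$ lies in a single ball $B(V,r_V)$ of the cover, and Step 2 applies to it.

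\textbf{Main obstacle.} The hard part is getting the strict inequality $<\va_0$ rather than $\leq\va_0+C\eta$. Choosing $\eta$ small gives strictness whenever $\rho^*<\rho_\beta(1/N)$. The only delicate case is the borderline $\rho^*=\rho_\beta(1/N)$. There I would use that all clusters share the total mass $1$, with separation between the two extreme $\beta$-values in the window. If that fails, I would settle for the non-strict bound, which suffices for the later gradient-descent argument.
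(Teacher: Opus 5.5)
Your proof is correct and follows the same route as the paper, which cites the local statement from Avila--Damanik and upgrades it to a uniform $\delta_0$ by compactness. You derive that local statement yourself: Riesz projections plus the $N^{d-1}$ multiplicity bound, which is exactly why each cluster window fits inside scale $1/N$. Comparing $V',V''$ directly inside a single ball is also what lets compactness yield $\varepsilon_0$ rather than $2\varepsilon_0$.

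Your concern about strictness is justified: for constant $\beta$ one has $\varepsilon_0=0$, and the stated strict inequality is false. Note, however, that what your argument actually delivers is $|\Psi_\beta(V')[\mathbf{n}]-\Psi_\beta(V'')[\mathbf{n}]|\le\varepsilon_0+\tau$ for any prescribed $\tau>0$, with $\delta_0$ depending on $\tau$. It does not give the non-strict bound $\le\varepsilon_0$ that your fallback invokes. The $\tau$-version is all the later gradient-descent argument needs, since there $\rho_\beta(1/N)<\varepsilon/6$ strictly.
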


\begin{proof}
    The pointwise statement is proved in \cite{avila_delocalization_2024}. Our uniform version follows from the compactness of the space of potentials.
\end{proof}

The last preliminary to the proof is the following lemma, that states that the average of the measures $\nu_{\n}$ is not too far from the Lebesgue measure on $[0,1]$.

\begin{lemma}\label{average nu similar to Lebesgue}
For any interval $I\subset\mathbb{R}$,
    \begin{equation}
        \frac1{N}^d \sum_{\n\in[N]^d} \nu_{\n}(I) \leq  |I| + 1/N.
    \end{equation}
\end{lemma}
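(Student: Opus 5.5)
The plan is to reduce the averaged measure $\frac1{N^d}\sum_{\n}\nu_{\n}$ to an explicit atomic measure, namely the pushforward of the density of states under the IDS. Throughout, write $M=N^d$ for the number of sites. The argument rests on the identity that the average of the spectral measures is the density of states:
\begin{equation*}
    \frac1{M}\sum_{\n\in[N]^d}\sigma_{\n}=\widetilde\sigma_V .
\end{equation*}
Since pushforward is linear, this gives
\begin{equation*}
    \frac1{M}\sum_{\n}\nu_{\n}=(\mathcal{N}_V)_*\widetilde\sigma_V .
\end{equation*}
So it suffices to bound $\widetilde\sigma_V(\mathcal{N}_V^{-1}(I))$ for any interval $I$.

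First treat the simple spectrum case $V\in\Omega_N$. Here $\widetilde\sigma_V=\frac1M\sum_{i=1}^M\delta_{E_i}$ and $\mathcal{N}_V(E_i)=i/M$. Hence $(\mathcal{N}_V)_*\widetilde\sigma_V=\frac1M\sum_{i=1}^M\delta_{i/M}$, the uniform measure on the grid $\{1/M,\dots,1\}$. An interval $I$ contains at most $|I|M+1$ grid points, so its mass is at most $|I|+1/M\le |I|+1/N$.

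Next treat the general case, where eigenvalues may have multiplicity. Group the eigenvalues into distinct values $E$ with multiplicity $m_E$. Then $\mathcal{N}_V(E)$ takes values $k/M$ with consecutive jumps of size $m_E/M$, and $(\mathcal{N}_V)_*\widetilde\sigma_V$ places mass $m_E/M$ at the point $k_E/M$, where $k_E$ is the number of eigenvalues $\le E$.

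The atoms now sit at the right endpoints of the disjoint intervals $((k_E-m_E)/M,\,k_E/M]$, each of length equal to its mass. I would bound the total mass of atoms in $I$ by the length of the union of these intervals meeting $I$. This union lies in $I$ together with one overhanging interval at the left end. A single large multiplicity could make this overhang exceed $1/M$, which is the main obstacle.

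To get around it, I would use the definition of $\nu_{\n}$ intended by the paper: for degenerate spectrum, distribute the atom at $E$ according to the eigenvalue labels. Equivalently, I would obtain the general case by a limiting argument. Approximate $V$ by $V_k\in\Omega_N$; because $\Omega_N$ is dense, such $V_k$ exist. Along this sequence the eigenvector labels converge in the projection sense, so the labelled averages are exactly uniform on the grid and the bound passes to the limit for closed intervals, with open intervals following by inner approximation.

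If instead the paper's $\nu_{\n}$ is literally $\mathcal{N}_*\sigma_{\n}$ with multiplicities, I would note that only $V\in\Omega_N$ is used later, via Lemma \ref{phi differentiability} and the full-measure property of $\Omega_N$ along lines. In that case I would state and prove the lemma for $V\in\Omega_N$.
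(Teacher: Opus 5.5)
Your reduction $\frac{1}{N^d}\sum_{\mathbf{n}}\nu_{\mathbf{n}}=(\mathcal{N}_V)_*\widetilde\sigma_V=\sum_E \frac{m_E}{N^d}\,\delta_{k_E/N^d}$ and your ``right endpoint'' bound are exactly the paper's argument. However, you stop at the only step with real content. For degenerate spectrum you must show that every atom has mass $m_E/N^d\le 1/N$, i.e.\ $\dim\ker(H_V-E)\le N^{d-1}$. The overhang only needs to be at most $1/N$, not $1/M=1/N^d$; the $1/N$ in the statement is there precisely to absorb multiplicities up to $N^{d-1}$.

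This bound is elementary, and the paper simply takes it as known. Write sites as $(j,w)$ with $j\in[N]$ and $w\in[N]^{d-1}$. The site $(j+1,w)$ is the unique neighbour of $(j,w)$ in the next slice, and it enters the equation $H_Vu=Eu$ at $(j,w)$ with coefficient $1$. So the values of $u$ on slice $j+1$ are determined by those on slices $j$ and $j-1$. Hence restriction to the face $\{1\}\times[N]^{d-1}$ is injective on each eigenspace, and $m_E\le N^{d-1}$. Now let $E_a<\dots<E_b$ be the distinct eigenvalues with $k_E/N^d\in I$. The mass of $I$ is $(k_{E_b}-k_{E_a}+m_{E_a})/N^d\le |I|+1/N$, for every potential.

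None of your workarounds replaces this step.
\begin{itemize}
\item Distributing the atom at $E$ ``according to eigenvalue labels'' changes the object. The paper defines $\nu_{\mathbf{n}}=(\mathcal{N}_V)_*\sigma_{\mathbf{n}}$, which puts all of $\sigma_{\mathbf{n}}(\{E\})$ at the single point $k_E/N^d$. A labelled version would also depend on a choice of eigenbasis.
\item The limiting argument fails for the same reason. As $V_k\to V$ within $\Omega_N$, a degenerate eigenvalue splits, and $\nu_{V_k,\mathbf{n}}$ spreads its mass over the grid points $(k_E-m_E+1)/N^d,\dots,k_E/N^d$. So $\nu_{V_k,\mathbf{n}}$ does not converge to $\nu_{V,\mathbf{n}}$. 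The averages for $V_k$ are exactly uniform on the grid for every $k$, whereas the true average at $V$ has an atom of mass $m_E/N^d$ at one point, which is exactly the quantity to be bounded. Separately, upper bounds survive weak limits for open sets, not for closed ones, so your Portmanteau step runs the wrong way.
\item Proving the lemma only for $V\in\Omega_N$, where it is trivial even with $1/N^d$, establishes a weaker statement than the one posed. The lemma is asserted for all potentials. The paper later remarks that it rests on the jumps of the IDS tending to zero, which shows the degenerate case is intended.
\end{itemize}
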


\begin{proof}
    The measure $1/N^d \sum_{\n\in[N]^d} \nu_{\n}$ has the form $\sum_{i=1}^{k} (x_i-x_{i-1})\delta_{x_i}$ where $x_0=0$ and $x_i-x_{i-1}\in(0,1/N]$. In particular $(x_i-x_{i-1})N^d$ is the dimension of the $i$-th eigenspace of $H$, which we know being always bounded by $N^{d-1}$.
\end{proof}

We can now prove the main result of this section.

\begin{remark}
    In the following statement the condition $\va'\geq \frac23 \va$ can be improved to $\va'\geq \alpha \va$ for any $\alpha>\frac12$. Moreover one has that $C_{\lambda,\delta}\rightarrow\infty$, as $\alpha\rightarrow\frac12$.
\end{remark}

\begin{theorem}\label{thm perturbation}
    Fix $\lambda>0$ and $d,N\in\mathbb{N}^+$. There exists a constant $C=C_{\lambda,d}$, such that the following holds. Let $V:[N]^d\rightarrow[-\lambda,\lambda]$ be a potential in $\Omega_N$. Let $\varepsilon>0$ and $\eta>0$ be real numbers. Let $B\subset[N]^d$ and $\delta>0$. Suppose that $V':[N]^d\rightarrow[-\lambda,\lambda]$ is $(\varepsilon,\eta)$-delocalized on $B$, whenever $||V-V'||_{\infty}<\delta$. Fix a real numbers $\va'\geq \frac23 \va$, $0<\Delta<1$ and an interval $J$ such that $|J|\leq \eta/2$.\newline
    Let $\eta'>0$. If
    \begin{equation}
        \frac1{\eta'}\geq C  \frac{\eta}{\delta\va^2\Delta^2},
        \qquad\qquad N>\frac6{\va\eta'},\frac1{\eta\eta'}.
    \end{equation}\label{hypotesis}
    Then there exists $V':[N]^d\rightarrow[-\lambda,\lambda]$ such that $||V-V'||_{\infty}<\delta$ and that $V'$ is $(\varepsilon',\eta',J)$-delocalized on a suitbale $B'\subset B$,  such that $\#(B\backslash B')/N^d\leq\Delta $.
\end{theorem}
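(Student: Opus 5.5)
We argue by contradiction, following the sketch given after the first statement of Theorem \ref{thm perturbation}. Suppose that for every $V'$ with $\|V-V'\|_\infty<\delta$ the set of sites $\n\in B$ at which $V'$ fails to be $(\va',\eta',J)$-delocalized has cardinality larger than $\Delta N^d$. Fix such a $V'$ and a bad site $\n$. By hypothesis $V'$ is $(\va,\eta)$-delocalized at $\n$, so $\nu_{V',\n}(J)\leq\va$ because $|J|\leq\eta/2\leq\eta$. Badness gives an interval $I\subset J$ with $|I|\le\eta'$ and $\nu_{V',\n}(I)>\va'\ge\frac23\va$. So at least two thirds of the $J$-mass of $\nu_{V',\n}$ sits on a window of length $\eta'$, at a location depending on $V'$ and $\n$.

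The test function will be $\beta(t)=\mathbbm{1}_J(t)\cos(2\pi t/\ell)$, smoothed to vanish near $\partial J$, with wavelength $\ell$ satisfying $\eta'\ll\ell\ll\eta$. Its oscillation on $I$ is then at most $O(\eta'/\ell)$. Since the argument cannot control where $I$ lies, we use the family $\beta_\theta(t)=\mathbbm{1}_J(t)\cos(2\pi t/\ell+\theta)$ with $\theta\in\{0,\pi/2\}$, i.e.\ the real and imaginary parts of $e^{2\pi i t/\ell}$. Write $\Psi_\theta=\Psi_{\beta_\theta}$. At a bad site,
\[|\Psi_0|+|\Psi_{\pi/2}|\gtrsim \va'-O(\va'\eta'/\ell)-(\va-\va')\ge \va/3-O(\va\eta'/\ell)>\va/4 .\]
The bulk contributes, up to phase error, a complex number of modulus $\ge\va'$, while the rest of $J$ contributes at most $\va-\va'\le\va/3$; this is where $\va'\geq\frac23\va$, and more generally $\alpha>\frac12$, is used. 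Thus for every $V'$ in the ball there are $\gtrsim\Delta N^d$ sites where one of the two gradients has a coordinate of size $\gtrsim\va$.

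The upper bound on $\Phi$ comes from Lemma \ref{average nu similar to Lebesgue}. The Riemann sum $\bar\Phi_{\beta}(V')=N^{-d}\sum_iE_i\beta(i/N^d)$ is small because $E_i$ is monotone in $i$ with total variation $\lesssim_{\lambda,d}1$, while $\beta$ has cancelling oscillations of wavelength $\ell$. Summation by parts gives $|\bar\Phi_\beta|\lesssim_{\lambda,d}\ell+1/N$, using $N>1/(\eta\eta')$ to absorb the discretization error.

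For the descent, we build a path $V_t$ inside the $\delta$-ball. Starting at $V$, at each step we move by $\delta_0$ (from Lemma \ref{psi continuity}, whose error $\rho_\beta(1/N)\lesssim1/(N\ell)\ll\va$ by $N>6/(\va\eta')$) in the direction $W=\pm\mathbbm{1}_{S}\,\mathrm{sign}\Psi_\theta$. Here $S$ is the set of large coordinates for whichever phase $\theta$ carries at least half of them; by pigeonhole over steps we can keep one fixed $\theta$ along a sub-collection of steps. Lemma \ref{phi differentiability} then shows that each unit of $\|\cdot\|_\infty$-movement increases $\bar\Phi_{\beta_\theta}$ by at least $N^{-d}\cdot\Delta N^d\cdot\va/8$. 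We can move a total distance of order $\delta$ in sup norm, so the gain is $\gtrsim\delta\va\Delta$. This contradicts $|\bar\Phi|\lesssim\ell$ once $\ell\approx c\,\delta\va\Delta$.

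The main obstacle is the interaction between coordinate-wise movement and the sup-norm constraint. The sets $S$ change along the path, and the signs of coordinates may flip, so a coordinate could be pushed back and forth. We must ensure the path stays in the ball while the accumulated gain is still linear in $\delta$. The plan is to move only by $\pm\delta$ per coordinate in a single-sign scheme. We fix the direction $W=\mathrm{sign}(\Psi_\theta(V))$ on all coordinates, estimate $\int_0^1\langle\Psi(V+tW),W\rangle$, and use Lemma \ref{psi continuity} along the segment only to within $\va$. If sign changes spoil this, we divide the segment into pieces of length $\delta_0$, accepting a loss. Choosing $\ell$ with $\eta'/\ell\lesssim\va\Delta$ and $\ell\lesssim\delta\va\Delta$ is what yields the stated hypothesis $1/\eta'\ge C\eta/(\delta\va^2\Delta^2)$, with the extra factor $\eta$ coming from normalizing the number of oscillations on $J$.
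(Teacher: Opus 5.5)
Your proposal has a genuine gap: the two-phase device does not combine with the descent argument. The descent gives a contradiction only for a \emph{single} real functional. You need one $\beta$ such that, at every $V'$ in the ball, $\Psi_\beta(V')$ has $\gtrsim\Delta N^d$ coordinates of size $\gtrsim\varepsilon$; then $\bar\Phi_\beta$ gains $\gtrsim\delta\varepsilon\Delta$ along the path while $|\bar\Phi_\beta|\lesssim\ell$ everywhere. If you switch between $\theta=0$ and $\theta=\pi/2$ along the path, then on the $\pi/2$-steps $\bar\Phi_{\beta_0}$ changes at rate $N^{-d}\sum_{\mathbf{n}\in S}\Psi_0[\mathbf{n}]\,\mathrm{sign}(\Psi_{\pi/2}[\mathbf{n}])$. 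This rate has no sign and can be as large as $\varepsilon\,\#S/N^d$, which is comparable to (even larger than) the gain rate on the $0$-steps. So those steps can erase the gain, and ``pigeonhole over steps'' does not help, because they remain part of the path whose endpoint you evaluate. This is not a technicality. A $\mathbb{C}$-valued function bounded by $O(\ell)$ whose normalized gradient has all coordinates of modulus $\varepsilon$ at every point is perfectly possible: $\ell\exp\bigl(i\varepsilon N^{-d}\sum_{\mathbf{n}}V[\mathbf{n}]/\ell\bigr)$ simply winds around the disc. Nor can you keep a single cosine phase. For $\varepsilon'=\frac23\varepsilon$ you need $|\beta|>(\varepsilon-\varepsilon')/\varepsilon'=\frac12$ on $I$, but $|\cos|\le\frac12$ on a third of $J$. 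Lemma \ref{average nu similar to Lebesgue} would let you discard the bad sites whose window lies there only if $|J|\lesssim\varepsilon\Delta$, which is not assumed. The same unaddressed problem affects bad sites whose $I$ falls in your cutoff zone near $\partial J$.

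The missing idea, which is the paper's, is to make $|\beta|=1$ off a set of \emph{small} measure.
\begin{itemize}
\item Take a square wave on $J$, antiperiodic under $t\mapsto t+\ell$ with $\ell\in N^{-d}\mathbb{Z}$. Pairing $E_i$ with $E_{i+\ell N^d}$ gives $|\bar\Phi_\beta|\le\ell(4d+2\lambda)$, which is equivalent to your summation by parts.
\item Give it linear ramps of width $\eta'$ at its $\lesssim\eta/\ell$ sign changes and at $\partial J$. Then $\beta$ is $1/\eta'$-Lipschitz, and $N>6/(\varepsilon\eta')$ gives $\rho_\beta(1/N)<\varepsilon/6$ in Lemma \ref{psi continuity}.
\item A bad site whose $I$ stays $\eta'$-far from all ramps has $\beta\equiv\pm1$ on $I$, hence $|\Psi_\beta|\ge2\varepsilon'-\varepsilon\ge\varepsilon/3$.
\item The remaining bad sites carry $\nu_{\mathbf{n}}$-mass $\ge\varepsilon'$ on the $2\eta'$-neighbourhood of the ramps, a set of measure $\lesssim\eta\eta'/\ell$. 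Lemma \ref{average nu similar to Lebesgue} limits them to $\lesssim N^d\eta\eta'/(\ell\varepsilon)$; this count, not the bound on $\Phi$, is that lemma's role.
\end{itemize}
Requiring this count to be $\le\Delta N^d/2$ forces $\ell\gtrsim\eta\eta'/(\varepsilon\Delta)$. Together with $\ell\lesssim\delta\varepsilon\Delta$ from the descent, this is exactly $1/\eta'\ge C\eta/(\delta\varepsilon^2\Delta^2)$, and it is the true origin of the factor $\eta$. Your constraint $\eta'/\ell\lesssim\varepsilon\Delta$ does not follow from your argument, whose phase error only needs $\eta'\ll\ell$.

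Finally, the obstacle you single out is not one. Use pieces of length $<\delta_0$, directions of unit sup norm, and nodes $V(T_k)$ chosen in $\Omega_N$, which Lemma \ref{phi differentiability} requires. The triangle inequality then gives $\|V(t)-V\|_\infty\le t$, and each piece gains in proportion to its length, so sign flips cost nothing. Conversely, fixing one direction over a segment of length $\delta$ is unavailable, since $\delta_0$ comes from compactness and bears no relation to $\delta$.
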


\begin{proof}[Proof]
The proof goes by contraddiction. We therefore suppose that for any potential $V'$ such that $||V-V'||_\infty<\delta$ there exists a set of sites $C_{V'}\subset B$, with $\#C_W\geq \Delta N^d$, such that for every $\n\in C$ there exists an interval $I_{V',\n}\subset J$ such that $|I_{V',\n}|\leq\eta'$ and $\nu_{\n}(I_{V',\n})\geq \va'$. Recall that $\nu_{\n}(J)\leq\va$ for any $\n\in C_{V'}$ (and we can assume $|J|=\eta/2$). \newline

We now construct the function $\beta$ for which the ausiliary function $\Phi_{\beta}$ and $\Psi_{\beta}$ will make the contradiction evident. $\beta$ will be supported on the target region for delocalization and its frequency of oscillation has to be chosen carefully.\newline

We pick an integer $k$ such that
\begin{equation}
   16 \frac{\eta\eta'}{\va\Delta} < \frac k{N^d} <  32 \frac{\eta\eta'}{\va\Delta}
\end{equation}

We cover $J$ with $2M$ consecutive intevals of  length $k/N^d$ and we label them as $J_i$ for $i=1,\dots,2M$. We take $M$ to be minimal. Let $J'=\bigcup_{i=1}^{M}J_i$. Notice that $|J'|<\eta$, and, therefore,

\begin{equation}
    M<\frac12 \frac {\va\Delta}{16\eta'}.
\end{equation}
Now we define
\begin{equation}
\beta(t)=
\begin{cases}
    (-1)^i\min\{d(t,\partial J_i)/\eta',1\} & \text{for } t\in J_i,\\
    0 & \text{otherwise}.
\end{cases}
\end{equation}

We can immediately notice that $\beta$ oscillates fast enough to force $\Psi_{\beta}$ to be small.\newline
Notice that the support of $\beta$ is $J'$ and that $\beta$ is $2k/N^d$-periodic. In particular, $\beta(t+k/N^d)=-\beta(t)$. Let us also recall that $|E_i|\leq 2d+\lambda$. Combining the two, we get
\begin{equation}\label{phi bound}
    |\Phi_{\beta}(V')|\leq \frac k{N^d}  (4d +2\lambda)
\end{equation}
for any potential $V'$.\newline

We now want to prove that, on the contrary, $\beta$ oscillates slow enough to have $\Psi_{\beta}$ big. More precisely, we will produce a lower bound on $|\Psi_{\beta}(V')[\n]|$ for most part of $\n\in C_{V'}$.\newline

First of all, we estimate the number of sites $\bold{n}$ for which the concentration region $I_{V',\bold{n}}$ lies where $\beta$ is not oscillating.\newline
For $X\subset\R$ and $r>0$ we donote by $X_r$ the $r$-thickening of $X$.\newline 
Let us denote by $D_{V'}\subset C_{V'}$ the set of $\n\in C_{V'}$ for which $d(\partial J_i, I_{V',\n})>\eta'$ for every $i$. If $\n\in C_{V'}\setminus D_{V'}$ then $I_{V',\n}\subset (\partial J_i)_{2\eta'}$ for some $i$. Therefore

\begin{align*}
    \frac 1{N^d}\va'\cdot\#(C_{V'}\setminus D_{V'}) &\leq \frac 1{N^d} \sum_{\n\in C_{V'}\setminus D_{V'}} \nu_{\n}(I_{V',\n})\\
    &\leq \frac 1{N^d} \sum_{\n \in C_{V'}\setminus D_{V'}} \nu_{\n}\left( \bigcup_{i=1}^{2M} (\partial J_i)_{2\eta'} \right) \\
    &\leq \frac 1{N^d} \sum_{\n\in[N]^d} \nu_{\n}\left( \bigcup_{i=1}^{2M} (\partial J_i)_{2\eta'} \right) \\ 
    &\leq \sum_{i=1}^{2M} \Big| (\partial J_i)_{2\eta'}\Big| + \frac{8M}N = 16M\eta'.
\end{align*}

It follows that $\#D_{V'}\geq N^d\Delta - 16MN^d\eta'/\va'\geq  N^d\Delta/2$.\newline

Since $|\beta|\leq1$ and $\beta$ is constantly equal to 1 or $-1$ on $I_{V',\n}$, for $\n\in D_{V'}$ we have
\begin{align*}
    |\Psi_{\beta}(V')[\n]| &\geq \nu_{\n}(I_{V',\n}) - \int_{J'\setminus I_{V',\n}}\beta(t)d\nu_{\n}(t)\\
    &\geq \nu_{\n}(I_{V',\n}) - \nu_{\n}(J'\setminus I_{V',\n})\\
    &\geq \va' - (\va - \va')\\
    &= 2\va' -\va \geq \va/3.
\end{align*}

Now that we proved that $\Phi_{\beta}$ is small and most of the coordinates of $\Psi_{\beta}$ are big we conclude the proof with a "gradient descent" argument. We define a piecewise-linear flow along whose direction $\Psi_{\beta}$ is always big. This would produce a big variation of $\Phi_{\beta}$ along the flow, which should not be possible, given equation \ref{phi bound}.\newline

For our choice of $\beta$, $\rho_{\beta}(1/N)=1/N\eta'<\va/6$. It follows from lemma \ref{psi continuity} that there exists $\delta_0>0$ such that for any $\|V''-V'\|<\delta_0$ and $\n\in D_{V'}$ we have

\begin{equation}
\left|\Psi_{\beta}(V'')[\n]\right| \geq \va/6.
\end{equation}

Let us now define $W_{V'}:[N]^d\rightarrow\R$ by
\begin{equation}
    W_{V'}[\n]=
    \begin{cases}
        \text{sgn}\left(\Psi_{\beta}(V')[\n]\right) &\text{ for } \n \in D_{V'},\\
        0 &\text{ otherwise}.
    \end{cases}
\end{equation}
It follows that, for any $\|V''-V'\|<\delta_0$
\begin{equation}
    \frac 1{N^d} \sum_{\n\in[N]^d} \Psi_{\beta}(V'')[\n]W_{V'}[\n] \geq \frac{\Delta\va}{12}.
\end{equation}
Now, for $0\leq t\leq \delta$, define the flow $V(t)$ in the space of potentials and the sequence of times $T_0=0, T_1, \dots T_m$ so that: $T_{k+1}-T_k\in(0,\delta_0)$, $\delta-T_m\in(0,\delta_0)$, $V(T_k)\in \Omega_N$ for every $k=0,\ldots,m$ and

\begin{equation}
    V(t)=
    \begin{cases}
        V   & \text{if } t=0,\\
        V(T_k) + (t-T_k)W_{V(T_k)} & \text{if } T_k <t\leq T_{k+1}.
    \end{cases}
\end{equation}
This is a piecewise linear flow and its speed, when defined, is unitary (since $W_{V(T_k)}$ is unitary for any $k$). It follows that for any $t\leq\delta$ we have $\|V(t)-V\|\leq\delta$ and $\|V(t)-V(T_k)\|\leq\delta_0$ whenever $T_k<t\leq T_{k+1}$. Then we can exploit all previous inequalities, and deduce, thanks to \ref{phi differentiability}, that
\begin{equation}
    \Phi_{\beta}(V(\delta))-\Phi_{\beta}(V(0)) \geq \delta (\Delta - 16M\eta'/\va')(\va'-\va/2) \geq \frac{\delta\Delta\va}{12}
\end{equation}
We also know that
\begin{equation}
    |\Phi_{\beta}(V(\delta)|,|\Phi_{\beta}(V(0)|\leq \frac k{N^d} (4d +2\lambda)\leq 32(4d +2\lambda)\frac{\eta\eta'}{\va\Delta},
\end{equation}
which gives us, by the triangular inequality,
\begin{equation}
    \frac{\delta\Delta\va}{12} \leq 2\cdot 32(4d +2\lambda)\frac{\eta\eta'}{\va\Delta}
\end{equation}
that is a contaddiction, provided that we set $C=C_{\lambda,d}>12\cdot64\cdot(4d+2\lambda)$, recalling inequality \ref{hypotesis} imposed on $\eta'$.

\end{proof}

\section{The Iterative Scheme}\label{Induction}

Thanks to a double induction we are now ready to prove our main result. The first step, theorem \ref{main theorem}, is the crucial one: given a potential which is $(\va,\eta)$-delocalized we produce a perturbed potential that is $(\frac34\va,\eta')$-delocalized, provided that the size box $N$ is large enough. Then, the second step is a straight-forward repeated iteration of the first one. \newline

It will turn out to be convenient to reformulate theorem \ref{stability of delocalization} in the following way, as in the first induction the loss in $\va$ and $\eta$ will always be proportional to their original sizes.

\begin{theorem}\label{stability}
    There exists $c=c'_{\lambda,d}$ such that the following holds. Given $0<\va,\eta\leq1$, whenever $N>e^{\frac {c}{\va\eta}}$ and $\bar{\va}\geq\frac1{\sqrt2}\va$, given two potentials $V,W:[N]^d\rightarrow[-\lambda,\lambda]$, the following holds. If $V$ is $(\bar{\va},\eta,J)$-delocalized at $\n$ and $\|W-V\|_{\infty}<e^{-\frac {c}{\va\eta}}$, then $W$ is $\left(\frac3{2\sqrt2}\bar{\va},\eta/2,J_{(-\eta/2)}\right)$-delocalized at $\n$.
\end{theorem}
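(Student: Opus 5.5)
The plan is to obtain this as a direct specialization of Theorem \ref{stability of delocalization}, which I apply with $\bar{\va}$ in place of $\va$, with $\va'=\frac3{2\sqrt2}\bar{\va}$, and with $\eta'=\eta/2$.

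With these choices the two gaps appearing in that theorem become
\begin{equation*}
\va'-\bar{\va}=\Big(\tfrac3{2\sqrt2}-1\Big)\bar{\va}, \qquad \eta-\eta'=\tfrac{\eta}{2}.
\end{equation*}
The constant $\frac3{2\sqrt2}-1$ is positive, roughly $0.06$. Using the hypothesis $\bar{\va}\geq\frac1{\sqrt2}\va$, their product satisfies
\begin{equation*}
(\va'-\bar{\va})(\eta-\eta')\geq \tfrac12\Big(\tfrac3{2\sqrt2}-1\Big)\tfrac1{\sqrt2}\,\va\eta .
\end{equation*}
So if $c$ denotes the constant $c_{\lambda,d}$ of Theorem \ref{stability of delocalization}, I set
\begin{equation*}
c'=\frac{c}{\frac12\big(\frac3{2\sqrt2}-1\big)\frac1{\sqrt2}}.
\end{equation*}
Then
\begin{equation*}
\frac{c}{(\va'-\bar{\va})(\eta-\eta')}\leq \frac{c'}{\va\eta}.
\end{equation*}
It follows that $N>e^{c'/(\va\eta)}$ implies the size hypothesis of Theorem \ref{stability of delocalization}. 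Likewise, $\|W-V\|_\infty<e^{-c'/(\va\eta)}$ implies its perturbation hypothesis, by monotonicity of the exponential.

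Next I check the remaining hypotheses: $\va'>\bar{\va}>0$ and $\eta>\eta'>0$ hold trivially. The constant $c'$ depends only on $\lambda$ and $d$, as required. Since $\va,\eta\leq1$, one could also enlarge $c'$ harmlessly if needed.

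The conclusion of Theorem \ref{stability of delocalization} then gives that $W$ is $(\frac3{2\sqrt2}\bar{\va},\eta/2,J')$-delocalized at $\n$, where
\begin{equation*}
J'=\{x\in J: d(x,\partial J)>\eta/4\}.
\end{equation*}
The statement instead asks for the region $J_{(-\eta/2)}$. I read this as the inward $\eta/2$-shrinking of $J$, which is contained in $J'$. Delocalization on a region is inherited by any subregion, since it only constrains intervals $I$ contained in the region. Hence the claim follows.

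The only point needing care, and the place I expect any obstacle, is this bookkeeping of the shrinking convention. If $J_{(-r)}$ is meant as removing $r$ from each side, it is a subset of $J'$ and we are done. If the notation instead means a total length loss of $\eta/2$, it coincides with $J'$ (up to endpoints), again fine. Either way no new analytic input is needed: everything reduces to the algebra of constants described above.
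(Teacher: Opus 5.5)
Your proposal is correct and matches how the paper treats this statement: the paper gives no separate proof and presents it as a reformulation of Theorem~\ref{stability of delocalization} with $\va'=\frac3{2\sqrt2}\bar\va$ and $\eta'=\eta/2$, absorbing the constant via $\bar\va\geq\va/\sqrt2$ exactly as you do. Your handling of the region is also right under the paper's convention $J_{(-r)}=[j_1+r,j_2-r]$, stated in the remark right after the theorem: $J_{(-\eta/2)}\subset\{x\in J: d(x,\partial J)>\eta/4\}$, and delocalization passes to subregions.
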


\begin{remark}
    Here, given an inteval $J=[j_1,j_2]$ and $r>0$, we denote by $J_{(-r)}$ the inteval $[j_1+r,j_2-r]$. In other words $J_{(-r)}$ is the interval obtained by shortening $J$ by $r$ at both endpoints.
\end{remark}

Let us recall the constant $c$ from theorem \ref{stability}, and $C$ from theorem \ref{thm perturbation}.\newline
Let $a$ be such that
\begin{equation}
    e^{a/5}\geq 64C,\qquad a\geq c, \qquad a\geq32.
\end{equation}
We remark that $a$ depends only on $\lambda$ and $d$.\newline
We define $h_{\va}:\mathbb{R}\rightarrow\mathbb{R}$ and a tower function $T_{\va}:\mathbb{N}\times\mathbb{R}\rightarrow\mathbb{R}$, by

\begin{equation}
    h_{\va}(x)=\left(e^{\frac{a}{\va}}\right)^x,\qquad\qquad T_{\va}(n,x)=h^{(n)}_{\va}(x)=\underbrace{\left(e^{\frac{a}{\va}}\right)^{\left(e^{\frac{a}{\va}}\right)^{\:\cdot^{\:\:\cdot^{\:\:\cdot^{\left(e^{\frac{a}{\va}}\right)^x}}}}}}_{n \text{ iterations}}
\end{equation}
When the value of $\va$ is clear by the context (as in the proof of the following theorem) we will drop the subscript $\va$ and refer to $h_{\va}$ and $T_{\va}$ just as $h$ and $T$.\newline
We remark that, for $\va\leq1$, (and since $a\geq32$) we have that $h_{\va}(x)\geq 32 \frac1{\va}x$ and $h_{\va}(x)\geq x^2$.

\begin{theorem}\label{main theorem}
    Fix $\lambda>0, d\in\mathbb{N}^+$ and let $0<\va,\eta\leq 1$ be real numbers.\newline
    Suppose that $V:[N]^d\rightarrow[-\lambda,\lambda]$ is $(\va,\eta)$-delocalized on $B\subset [N]^d$. If
    \begin{equation}
        N> T_{{\va}}\left(\left\lceil\frac{8}{\eta}\right\rceil+2,\frac1{\eta}\right),
    \end{equation}
    then there exists $W:[N]^d\rightarrow[-\lambda,\lambda]$ that is $(\frac34 \va, \eta')$-delocalized on a suitable $B'\subset B$ and such that $\|W-V\|_{\infty}<\delta$, where
    \begin{equation}
        \frac1{\eta'}= T_{{\va}}\left(\left\lceil\frac{8}{\eta}\right\rceil+2,\frac1{\eta}\right), \qquad \frac{\#(B\setminus B')}{N^d}\leq\Delta=\frac12 e^{-\frac a{5\va} \frac1{\eta}}, \qquad \delta=\frac12 e^{-\frac a{5\va} \frac1{\eta}}.
    \end{equation}
\end{theorem}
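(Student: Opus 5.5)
The plan is an inner induction over a partition of $[0,1]$ into short target intervals. We apply Theorem \ref{thm perturbation} once per target interval, to an ever smaller neighbourhood of the current potential, and after each step we invoke Theorem \ref{stability} to make sure the delocalization already produced on earlier intervals survives all later perturbations. Since $\nu_{\n}$ is supported on $[0,1]$, we cover $[0,1]$ by $K\approx\lceil 8/\eta\rceil$ overlapping intervals $J_1,\dots,J_K$ of length $\eta/2$. The overlaps are chosen so that every interval of length $\le\eta'$ lies in some shrunken interval $(J_j)_{(-\eta_j/2)}$, where $\eta_j$ is the scale reached at step $j$.

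\textbf{Recursive scales.} We set $\eta_0=\eta$ and define $1/\eta_{j+1}=h(1/\eta_j)$, so that $1/\eta_K\le T(K+2,1/\eta)=1/\eta'$. We also set $\va_0=\va$ and $\va_{j+1}=\frac{3}{2\sqrt2}\cdot\frac{1}{\sqrt2}\va_j$-type bookkeeping. The point is that each application of Theorem \ref{thm perturbation} with $\va'=\frac{1}{\sqrt2}\va\ge\frac23\va$ gives size $\frac1{\sqrt2}\va$ on the new interval. Each later stability step then multiplies the size by $\frac3{2\sqrt2}$ and halves the scale. Since the intervals are treated independently, each interval undergoes only one perturbation step followed by stability steps. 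We therefore need to arrange that the number of stability losses felt by a given interval is bounded. The cleanest way is to apply Theorem \ref{stability} only once, to the final potential, using a radius $\delta_j=e^{-c/(\va\eta_j)}$ that dominates the sum of all subsequent perturbation sizes $\sum_{i>j}\delta_i$. The resulting size is $\frac3{2\sqrt2}\cdot\frac1{\sqrt2}\va=\frac34\va$, which matches the claimed $\frac34\va$.

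\textbf{Step $j$.} We have a potential $V_j$ with $\|V_j-V\|<\sum_{i<j}\delta_i$. By the global hypothesis and Theorem \ref{stability}, applied with a generous radius, every potential within $\delta_j$ of $V_j$ is still $(\sqrt2\cdot\frac{3}{2\sqrt2}\va$-ish$,\eta/2)$-delocalized on $B$. We then apply Theorem \ref{thm perturbation} with target $J_j$, loss $\Delta/K$, new scale $\eta_{j+1}$ and radius $\delta_j$. This produces $V_{j+1}$ with $\|V_{j+1}-V_j\|<\delta_j$ that is $(\va/\sqrt2,\eta_{j+1},J_j)$-delocalized off a set of proportion $\le\Delta/K$. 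The hypothesis $1/\eta_{j+1}\ge C\eta/(\delta_j\va^2\Delta^2)$ holds because $1/\delta_j=e^{c/(\va\eta_j)}$ is at most $h(1/\eta_j)/(64C)$-comparable. This is exactly where the conditions $a\ge c$, $e^{a/5}\ge64C$ and $h(x)\ge x^2$ are used. The lower bounds on $N$ follow from $N>1/\eta'$.

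\textbf{Conclusion.} We set $W=V_K$ and $B'=B$ minus the union of the bad sets, which has proportion $\le\Delta$. For each $j$ we have $\|W-V_{j+1}\|\le\sum_{i>j}\delta_i\ll e^{-c/(\va\eta_{j+1})}$, because $\delta_i$ decays as a tower. Theorem \ref{stability} then gives $(\frac34\va,\eta_{j+1}/2,(J_j)_{(-\eta_{j+1}/2)})$-delocalization of $W$. Since $\eta'\le\eta_{j+1}/2$, and the shrunken intervals still cover $[0,1]$ at scale $\eta'$ thanks to the overlaps, $W$ is $(\frac34\va,\eta')$-delocalized on $B'$. Finally, the total displacement $\sum\delta_i$ is bounded by $\delta=\frac12e^{-a/(5\va\eta)}$.

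The main obstacle is the parameter bookkeeping. The radii $\delta_j$ must be small enough to be absorbed by stability of all earlier intervals, yet large enough that Theorem \ref{thm perturbation} can still reach scale $\eta_{j+1}$. Reconciling these two demands forces the tower growth, and it is where the constants $a$, $\lceil 8/\eta\rceil+2$ and the factor $\frac15$ have to be verified.
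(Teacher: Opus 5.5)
Your route is the paper's. It uses overlapping target intervals, and one application of Theorem \ref{thm perturbation} per interval, inside a ball on which Theorem \ref{stability} keeps the original delocalization at size $\frac{3}{2\sqrt2}\varepsilon$ and scale $\eta/2$ (so $\varepsilon'=\varepsilon/\sqrt2$ is exactly $\frac23$ of that size). The radii decay like a tower so that all later perturbations stay inside the stability radius of each earlier interval, and a single final application of Theorem \ref{stability} gives $\frac{3}{2\sqrt2}\cdot\frac{1}{\sqrt2}\varepsilon=\frac34\varepsilon$. Your one real variation, a uniform loss $\Delta/K$ per step instead of the paper's $\Delta_{k+1}=d_k/2$, is harmless, since $K^2/\Delta^2\lesssim\eta^{-2}d_k^{-2}$ is still absorbed by the tower.

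The bookkeeping, however, is wrong in three places and has to be repaired as in the paper.

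(i) Intervals of length $\eta/2$ violate the hypothesis of Theorem \ref{thm perturbation}. The ball is delocalized only at scale $\eta/2$, so the targets need $|J|\le\eta/4$. The paper takes $|J_k|=\eta/4$ with overlaps $\ge\eta/8$ and $K=\lceil 8/\eta\rceil+1$. It also covers $[-\eta/32,1+\eta/32]$ rather than $[0,1]$: a cover of $[0,1]$ alone leaves short intervals at $0$ and $1$ outside every shrunken $(J_k)_{(-\eta_k/2)}$.

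(ii) Your $\delta_j=e^{-c/(\varepsilon\eta_j)}$ serves both as the step radius and as the stability radius at scale $\eta_j$. Then $\sum_{i>j}\delta_i\ge\delta_{j+1}=e^{-c/(\varepsilon\eta_{j+1})}$, so your claim $\sum_{i>j}\delta_i\ll e^{-c/(\varepsilon\eta_{j+1})}$ is false. Likewise $\delta_0+\delta_1$ already leaves the stability ball around $V$, and the total displacement $\approx e^{-c/(\varepsilon\eta)}$ need not be below $\frac12e^{-a/(5\varepsilon\eta)}$.
\begin{itemize}
\item The paper instead uses $d_k=e^{-\frac{a}{5\varepsilon\eta_k}}$, with $a$ large enough that this is below the stability radius.
\item It perturbs by at most $d_k/2$ and maintains $\|V_k-V_l\|<(1-2^{-(k-l)})d_l$.
\item The exponent $\frac15$ comes from comparing $1/(\delta_{k+1}\Delta_{k+1}^2)=8d_k^{-3}$ with $1/\eta_{k+1}=d_k^{-5}$. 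The spare factor $d_k^{-2}$ beats $C/\varepsilon^2$ because $e^{a/5}\ge 64C$.
\end{itemize}

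(iii) Theorem \ref{thm perturbation} is stated for a centre in $\Omega_N$. Before each step you must therefore move to a nearby simple-spectrum potential; this is the paper's $V_k'$, within $\delta_{k+1}/2$ of $V_k$.
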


\begin{proof} 
Let us cover $[-\eta/32, 1+\eta/32]$ by closed intervals $J_k$ (for $k=1,\dots,K$), with $|J_k|=\eta/4$, in such a way that $|J_k\cap J_{k+1}|\geq \eta/8$. In order to achieve this we can take $ K=\lceil8/\eta\rceil+1$.\newline

We now set $V_0=V$, $\eta_0=\eta$ and $d_0=e^{-\frac{a}{5\va}\frac1{\eta_0}}$.\newline
By theorem \ref{stability}, we notice that any potential $W:[N]^d\rightarrow[-\lambda,\lambda]$ such that $\|W-V_0\|_{\infty}<d_0$ is $\left(\frac3{2\sqrt2}{\va},\eta/2\right)$-delocalized on $B$.\newline

We now want to define, for $k=1,\dots,K$, a sequence of potentials $V_k:[N]^d\rightarrow[-\lambda,\lambda]$ and sequences of real numbers $\eta_k$, $d_k$ and $\Delta_k$, such that:
\begin{enumerate}
    \item $\frac1{\eta_{k}}=e^{\frac{a}{\va}\frac1{\eta_{k-1}}}=T\left(k,\frac1{\eta}\right)$ (notice that $\eta_k$ is a decreasing sequence);
    
    \item $d_k= \eta_{k}^{1/5} =e^{-\frac{a}{5\va}\frac1{\eta_{k}}}\leq e^{-\frac {c}{\va\eta_k}}$ (notice that $d_k\leq d_{k-1}/2$);
    
    \item $\Delta_k=d_{k-1}/2$ (notice that $\Delta_k\leq\Delta_{k-1}/2$ for $k>1$);
    
    \item $\|V_k-V_l\| < \left(1-\frac1{2^{k-l}}\right)d_l$ for $0\leq l \leq k$;
    
    \item $V_k$ is $\left(\frac1{\sqrt2}\va, \eta_k, J_k \right)$-delocalized on a suitable $B_k\subset B$, with $\frac{\#(B\setminus B_k)}{N^d}\leq \Delta_k$.\newline
\end{enumerate}

The main goal of this construction is to produce delocalization on $J_k$ when constructing $V_k$ (condition (5)), while at the same time staying close enough to the previous potentials $V_i$ (less the $d_i$ respectively) in order to retain the previusly produced delocalization on $J_i$ for $i<k$ (condition (4)).\newline
At each step the smallness of radius of stability of delocalization $d_k$ is imposed by the smallness of the current scale of delocalization $\eta_k$. When producing the next potential $V_k$, since we are forced to find it at distance at most $d_{k}$, we are forced to impose a smaller scale of delocalization $\eta_{k+1}$, according to theorem \ref{thm perturbation}.\newline
Along this procedure we also keep account of how many sites we lose control on at each step, i.e. $\Delta_kN^d$.\newline

We show that this is possible by induction on $k$.\newline
Suppose that the construction works until $V_k$. Set $\delta_{k+1}=d_k/2$ (notice that $\delta_{k+1}=\Delta_{k+1}$). Pick $V'_k\in\Omega_N$ with $\|V'_k-V_k\|<\delta_{k+1}/2$. By condition $(2)$ and $(4)$ and the triangular inequality, we know that any potential $W$ such that $\|W-V_k'\|_{\infty}\leq\delta_{k+1}/2$ is $\left(\frac3{2\sqrt2}{\va},\eta_0/2\right)$-delocalized on $B$. We set $\eta_{k+1}$ and $\Delta_{k+1}$ as specified by conditions $(1)$ and $(3)$. Theorem \ref{thm perturbation} will now provide the existence of a $V_{k+1}$, with $\|V_{k+1}-V_k'\|_{\infty} < \delta_{k+1}/2$, that satisfies $(5)$, provided that
\begin{equation}
    \frac1{\eta_{k+1}}\geq C\frac{\eta_0}2 \frac2{\delta_{k+1}}\frac8{9\va^2}\frac1{\Delta_{k+1}^2}, \qquad\qquad N> \frac1{\eta_0\eta_{k+1}},\frac{6}{\va\eta_{k+1}}.
\end{equation}

Noticing that $6/\va, 1/\eta_0,1/\eta_{k+1} \leq T(\lceil 8/\eta\rceil+1,1/\eta)$ and that $N>T(\lceil 8/\eta\rceil+2,1/\eta)>T(\lceil 8/\eta\rceil+1,1/\eta)^2$, the latter inequality is directly implied by our assumptions on $N$. On the other hand, the former follows from next computation.
\begin{equation}
    \frac1{\eta_{k+1}} = \frac1{d_k^5} \geq e^{\frac a5} \frac1{d_{k}} \frac1{d_k^2} \left(e^{\frac1{\va}}\right)^{\frac a5}\geq 64C \frac1{\delta_{k+1}} \frac1{\Delta_{k+1}^2}\left(\frac1{\va}\right)^2 \frac{\eta_0}2.
\end{equation}
The condition on $\delta_{k+1}$, and the triangular inequality on $V_k$, $V_k'$ and $V_{k+1}$, implies condition $(4)$ for $k+1$. Finally, just set $d_{k+1}$ according to condition $(2)$. This concludes the induction.\newline

We need to take into consideration that the regions of delocalization shrink slightly under perturbation.
Given an inteval $J=[j_1,j_2]$ and $r>0$, we denote by $J_{(-r)}$ the inteval $[j_1+r,j_2-r]$. In other words $J_{(-r)}$ is the interval obtained by shortening $J$ by $r$ at both endpoints.\newline

By condition $(4)$ it holds that $\|V_K-V_k\|<d_k$. This, condition $(5)$ and theorem \ref{stability} allow us to conclude that $V_K$ is $\left(\frac34\va,\frac{\eta_k}2,(J_k)_{(-\eta_k/2)}\right)$-delocalized on $B_k$.\newline
Let us denote by $\widetilde{J_k}$ the shortened interval $(J_k)_{(-\eta_k/2)}$. Then we can say that $V_K$ is $\left(\frac34\va,\frac{\eta_K}2,\widetilde{J_k}\right)$-delocalized on $B_k$.\newline

Let us set $\eta'=T\left(\left\lceil\frac{8}{\eta}\right\rceil+2,\frac1{\eta}\right)$, $\Delta=2\Delta_1$, $\delta=d_0$ and $B'=\bigcap_{k=1}^K B_k $.\newline
Let us point out that $\eta'\leq\eta_K/2$, and therefore that $V_K$ is $(\frac34\va,\eta',\widetilde{J_k})$-delocalized on $B'$, $\forall k$.\newline
Notice that, for $k\geq1$, $\eta_k\leq \eta_1=e^{-\frac a{\va}e^{\frac a{\va\eta}}}\leq \eta/32$, since $a\geq32$. Therefore the intervals $\tilde{J_k}$ are still covering $[0,1]$, and $\widetilde{J_k}$ and $\widetilde{J}_{k+1}$ are still overlapping on an interval of length $\eta/16$, for any $k$. Since $\eta_K\leq\eta/16$, and that $\nu_{V_K,\n}$ are supported on $[0,1]$, we can finally deduce that $V_K$ is $\left(\frac34\va,\eta' \right)$-delocalized on $B'$, and $\|V_K-V\|_{\infty}<\delta$.\newline

We finally check, thanks to condition $(3)$, that
\begin{equation}\label{computing Delta}
    \frac{\#(B\setminus B')}{N^d} \leq  \sum_{k=1}^K\Delta_k \leq \sum_{k=1}^K \frac{\Delta_1} {2^{k-1}}  \leq 2 \Delta_1  = \frac 12 e^{-\frac a{5\va} e^{\frac a{\va}\frac1{\eta}}}.
\end{equation}
\end{proof}

\begin{remark}\label{remark thm}
    $T_{\va}$ can be replaced by $T_{\tilde{\va}}$ for any $0<\tilde{\va}\leq\va$, beacuse of its monotonicity in $\va$.
\end{remark}

We can final prove our main result, by a simple iteration of theorem \ref{main theorem}.\newline

Let us now define $H_{\va}:\R\rightarrow \R$ and $G_{\va}:\N\times\R \rightarrow \R$ as follow
\begin{equation}
    H_{\va}(x)=T_{\va}\left(\lceil 8x\rceil+2, x \right), \qquad \qquad G_{\va}(n,x)=H_{\va}^{(n)}(x)=\underbrace{H_{\va}\circ\dots\circ H_{\va}}_{n \text{ iterations}} (x)
\end{equation}

\begin{theorem}\label{final theorem}
    Fix $\lambda>0$ and $d,N\in\N^+$. Let $\va>0$ and $\delta>0$.\newline
    Let $V:[N]^d\rightarrow[-\lambda,\lambda]$ a potential and $H_V$ its associated Schr\"odinger box operator on $l^2([N]^d)$. If
    \begin{equation}
        N > G_{\va}\left(\left\lceil \log_{\frac34}(\va)\right\rceil, \frac{5\va}{a}\log\left(\frac1{\delta}\right)\right)
    \end{equation}
    there exists a potential $W:[N]^d\rightarrow[-\lambda,\lambda]$, with $\|W-V\|_{\infty}<\delta$ such that, for all, except at most $\delta N^d$, $\n\in[N]^d$, the spectral measure $\sigma_{H_W,\n}$ has no atoms of weight more that $\va$.
\end{theorem}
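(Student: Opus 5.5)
The plan is to iterate Theorem \ref{main theorem} roughly $n=\lceil \log_{3/4}(\va)\rceil$ times, starting from the trivial delocalization every potential has. Since each $\nu_{V,\n}$ is a probability measure, every $V$ is $(1,\eta_0)$-delocalized on $B_0=[N]^d$ for any $\eta_0$. I will take $\va_0=1$ and choose $\eta_0$ so that $1/\eta_0=\frac{5\va}{a}\log(1/\delta)$, or larger if needed. With this choice the first perturbation radius $\frac12 e^{-\frac a{5\va}\frac1{\eta_0}}$ is at most $\delta/2$, once Remark \ref{remark thm} lets me replace $T_1$ by $T_{\va}$.

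At step $j$ I apply Theorem \ref{main theorem} to $V_j$, which is $((3/4)^j,\eta_j)$-delocalized on $B_j$. By Remark \ref{remark thm} the tower $T_{(3/4)^j}$ can be replaced by $T_{\va}$, since $(3/4)^j\geq\va$ for $j<n$. The theorem produces $V_{j+1}$ that is $((3/4)^{j+1},\eta_{j+1})$-delocalized on $B_{j+1}\subset B_j$, with $1/\eta_{j+1}=H_{\va}(1/\eta_j)$ and $\|V_{j+1}-V_j\|_\infty<\delta_j:=\frac12 e^{-\frac a{5\va}\frac1{\eta_j}}$. The number of lost sites satisfies $\#(B_j\setminus B_{j+1})\leq\delta_j N^d$. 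So $1/\eta_j=G_{\va}(j,1/\eta_0)$, and the hypothesis of Theorem \ref{main theorem} at step $j$ is $N>H_{\va}(1/\eta_j)=G_{\va}(j+1,1/\eta_0)$. Because $G_{\va}$ is increasing in $n$, all these conditions follow from the single assumption $N>G_{\va}(n,1/\eta_0)$.

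Since $1/\eta_j$ increases super-exponentially, $\delta_{j+1}\leq\delta_j/2$. Summing gives $\|V_n-V\|_\infty<2\delta_0\leq\delta$ and $\#([N]^d\setminus B_n)\leq 2\delta_0 N^d\leq\delta N^d$. If the constants come out slightly off, I absorb them by shrinking $\eta_0$ a little; this only affects constants inside the tower.

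The conclusion is that $W=V_n$ is $((3/4)^n,\eta_n)$-delocalized on $B_n$, with $(3/4)^n\leq\va$. On each $\n\in B_n$, every interval of length at most $\eta_n$ has $\nu_{W,\n}$-mass at most $\va$. In particular the atoms of $\nu_{W,\n}=(\mathcal{N}_W)_*\sigma_{W,\n}$ have weight at most $\va$. An atom of $\sigma_{W,\n}$ at $E$ pushes forward entirely to the single point $\mathcal{N}_W(E)$, so $\sigma_{W,\n}(\{E\})\leq\nu_{W,\n}(\{\mathcal{N}_W(E)\})\leq\va$.

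The step I expect to be the main obstacle is the bookkeeping of how $\va$ enters the towers. Theorem \ref{main theorem} is stated with $T_{\va_j}$ at the current $\va_j$, and its constraints $0<\va,\eta\leq1$ and $\Delta$ depend on $\va_j$. I must check that Remark \ref{remark thm}'s monotonicity really lets me use $\va$ uniformly, both in the lower bound on $N$ and in the $\delta_j,\Delta_j$ estimates. The concern is that the perturbation radii $\frac12e^{-\frac a{5\va_j}\frac1{\eta_j}}$ computed with $\va_j>\va$ are larger than the $\va$-versions, so the sum bound needs the first term controlled. If needed, I would simply run every step with $\va$ in place of $\va_j$ in the exponents, using that the proof of Theorem \ref{main theorem} only requires the exponent constants to be at least as large.
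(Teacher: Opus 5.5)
Your proposal is correct and is essentially the paper's proof. You iterate Theorem~\ref{main theorem} $\lceil\log_{3/4}\varepsilon\rceil$ times from the trivial $(1,\eta_0)$-delocalization with $1/\eta_0=\frac{5\varepsilon}{a}\log(1/\delta)$, use Remark~\ref{remark thm} to run every tower with $T_{\varepsilon}$, sum the geometrically decaying perturbation radii and lost-site fractions to at most $\delta$, and pass from atoms of $\nu_{W,\mathbf{n}}$ to atoms of $\sigma_{W,\mathbf{n}}$; this is the paper's route. The obstacle you flag is real, since Remark~\ref{remark thm} as stated only covers the tower. Your fix is exactly what the paper does implicitly when it sets $\delta_{k+1}=\frac12 e^{-\frac{a}{5\varepsilon}\frac{1}{\eta_k}}$: use the final $\varepsilon$ instead of $(3/4)^j$ in the exponents of the radii and of $\Delta$, which is harmless because smaller radii only help the stability and perturbation steps of that proof. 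Your other option, shrinking $\eta_0$, would change the stated threshold on $N$, so drop it.
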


\begin{proof}
    Set $K=\left\lceil \log_{\frac34}(\va)\right\rceil$, $1/\eta_0=\frac{5\va}{a}\log(\frac1{\delta})$, $V_0=V$, $B_0=[N]^d$.\newline
    Notice that $V$ is always $(1,\eta_0)$-delocalized on $[N]^d$.\newline
    For $k=1,\dots,K$ we define a sequence of potentials $V_k:[N]^d\rightarrow[-\lambda,\lambda]$ and sequences of real numbers $\eta_k$, $\delta_k$ such that:
    \begin{enumerate}
        \item $1/\eta_{k+1}= T_{\va}(\lceil 8/\eta_k\rceil+2,1/\eta_k)=G_{\va}(k,1/\eta_0)$;
        \item $\delta_{k+1}=\frac12 e^{-\frac a{5\va} \frac1{\eta_k}}$;
        \item $V_k$ is $\big(\left(\frac34\right)^k, \eta_k\big)$-delocalized on a suitable $B_k\subset B_{k-1}$ with $\frac{\#(B_{k-1}\setminus B_k)}{N^d}\leq \delta_k$;
        \item $\|V_k-V_{k-1}\|_{\infty}\leq \delta_k$.
    \end{enumerate}
    We proceed again by induction, and we only have to check that it is possible to fulfill contitions $(3)$ and $(4)$. This follows by an direct application of theorem \ref{main theorem}. We are able to apply the theorem to $V_k$, with the required parameters, since, for $k<K$, we have that $(3/4)^k<\va$ and that $N,1/\eta_{k+1}\geq H_{\va}(1/\eta_k)$.\newline

    Now we just notice that, since $\eta_{k+1}\leq\frac12\eta_k$ we also have that $\delta_{k+1}\leq\frac12\delta_k$, giving us that 
    \begin{equation}
        \sum_{k=1}^K \delta_k \leq \sum_{k=1}^K \frac1{2^{k-1}}\delta_1 \leq 2\delta_1 = e^{-\frac a{5\va} e^{\frac a{\va}\frac1{\eta_0}}} = \delta.
    \end{equation}

    We now set $W=V_K$ and remark that for all $\n\in B_K$, $\nu_{H,\n}$ is $(\va,\eta_K)-$delocalized, since $\va\geq(3/4)^K$. It follows that no atom of $\nu_{H,\n}$ can have weight bigger than $\va$. Since atoms of $\nu_{H,\n}$ have weight bigger or equal than the ones of $\sigma_{H_W,\n}$, the same conclusion holds for those measures. To conclude we then just need the two following inequalities:

    \begin{equation}
        \|W-V\|_{\infty} \leq \sum_{k=1}^K \|V_k-V_{k-1}\|_{\infty} \leq \sum_{k=1}^K \delta_k < \delta,
    \end{equation}
    and
    \begin{equation}
        \frac{\#([N]^d\setminus B_K)}{N^d} \leq \sum_{k=1}^K \frac{\#(B_{k-1}\setminus B_k)}{N^d} \leq \sum_{k=1}^K \delta_k < \delta.
    \end{equation}
\end{proof}

\section{Schr\"odinger Operators on Graphs}

In this section we briefly discussed how our proof can be generalized. The result is the following criterion for asymptotically dense delocalization.

\begin{theorem}\label{IDS regularity implies delocalization}
    Suppose that a family of graphs $\mathcal{F}$ has asymptotically $\omega$-uniformly continuous IDSs, for some modulus of continuity $\omega:\R^+\rightarrow\R^+$. Then $\mathcal{F}$ admits asymptotically dense delocalization.
\end{theorem}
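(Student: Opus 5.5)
The plan is to rerun the box-operator argument verbatim. The goal is to check that every place where the geometry of $[N]^d$ entered can be replaced either by the hypothesis on the IDS or by bounded degree. Let $k$ be the maximal degree in $\mathcal{F}$. Then for every $\mathcal{G}\in\mathcal{F}$ and every $V:\mathcal{V}(\mathcal{G})\to[-\lambda,\lambda]$ all spectral measures are supported in $[-\lambda-2k,\lambda+2k]$, and the constant $2d+\lambda$ is replaced by $2k+\lambda$ throughout.

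First I redo Lemma \ref{Fourier - dependance on potential} for a general graph. Its proof used only the resolvent identity and the support bound, so it gives
\[
d(\sigma_{V,v},\sigma_{W,v}),\ d(\tilde\sigma_V,\tilde\sigma_W)\lesssim_{\lambda,k}\|V-W\|_\infty^{1/4}.
\]
The only genuinely graph-specific inputs were Lemma \ref{Fourier - periodic vs box} and Theorem \ref{log continuity}. Their sole role was to produce Remark \ref{almost UC}: for $N$ large, $\mathcal{N}_V$ is $\va$-almost $\omega_C$-uniformly continuous. This is now exactly the hypothesis. For every $\va$ there is $N_\va$ such that, when $\#\mathcal{V}(\mathcal{G})\ge N_\va$, $\mathcal{N}_{V}$ is $\va$-quasi $\omega$-uniformly continuous. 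With this substitution, Lemma \ref{stability tested against lip} and Theorem \ref{stability of delocalization} go through with the same proofs, using Lemma \ref{fourier controls test} and Lemma \ref{Fourier controls cumulative} with the modulus $\omega$. The explicit thresholds $e^{cL/E}$ are replaced by the two requirements $\#\mathcal{V}>N_{\va_0}$ and $\|W-V\|<\rho(L,E)$. Here $\va_0=E/(6L)$, and $\rho$ is chosen so that $\omega(\rho^{1/24})\lesssim E/L$. The estimates are no longer explicit, but they depend only on $\omega,\lambda,k$.

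Next, Theorem \ref{thm perturbation}. The Hellmann--Feynman identity in Lemma \ref{phi differentiability}, the genericity of simple spectrum, Lemma \ref{psi continuity}, and the bound $|\Phi_\beta|\lesssim (k/|\mathcal{V}|)(4k+2\lambda)$ are all graph independent. Lemma \ref{average nu similar to Lebesgue} is the one delicate point, since it used that eigenspaces have dimension at most $N^{d-1}$. On a general graph I instead use the IDS hypothesis. An atom of $\tilde\sigma_V$ of mass $m$ forces $\mathcal{N}_V$ to jump by $m$, which is incompatible with being $\va_1$-close to an $\omega$-continuous function unless $m\le 2\va_1$. Hence
\[
\frac1{\#\mathcal{V}}\sum_v\nu_v(I)\le |I|+2\va_1
\]
for graphs of size at least $N_{\va_1}$. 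With $\va_1\lesssim \va\eta'$ this replaces the terms $1/N$ in the counting of $C_{V'}\setminus D_{V'}$. The same bound controls $\rho_\beta$ on jumps, where $1/N$ is replaced by $\va_1$. Since $\beta$ is $1/\eta'$-Lipschitz, the condition becomes $\va_1/\eta'<\va/6$. So the conditions $N>6/(\va\eta')$ become $\#\mathcal{V}>N_{\va\eta'/12}$.

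Finally, I run the double induction of Theorems \ref{main theorem} and \ref{final theorem} unchanged. The tower-type functions $T_\va,G_\va$ are replaced by the analogous iterated functions built from $\rho$ and $N_\cdot$, depending only on $\omega,\lambda,k$. This yields the threshold $N_{\mathcal{F}}(\lambda,\va,\delta)$ in the definition of asymptotically dense delocalization. I expect the main obstacle to be the replacement of Lemma \ref{average nu similar to Lebesgue}, that is, controlling eigenvalue multiplicities purely through the quasi-continuity of the IDS. I also have to make sure the accumulated quasi-continuity errors stay small relative to the shrinking scales $\eta_k$ at each step of the iteration. This forces the size threshold to be chosen after all the scales $\eta_k$ are fixed, which is consistent because the scales depend only on $\va,\delta,\omega,\lambda,k$.
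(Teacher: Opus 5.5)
Your proposal is correct and is essentially the paper's own argument. The paper likewise audits the box proof, noting that Lemma~\ref{Fourier - periodic vs box} and Theorem~\ref{log continuity} were used only to obtain Theorem~\ref{L infty ids}, which is now the hypothesis, and that Lemmas~\ref{psi continuity} and~\ref{average nu similar to Lebesgue} need only that the maximal jump of the IDS tends to zero; you correctly extract this from quasi-continuity (jumps at most $2\varepsilon_1$), and the bounded-degree assumption you add is the one stated in the introduction's version of the theorem and used implicitly by the paper.
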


\begin{proof}
We need to show that any time that the specific graph structure of  $[N]^d$ plays a role in the proof, the only property exploited is the asymptotical $log$-Holder continuity of the IDSs.\newline

In lemma \ref{Fourier - periodic vs box}, we used the amenability of $\mathbb{Z}^d$, and theorem \ref{log continuity} holds specifically for ergodic Schredinger operators on $\mathbb{Z}^d$. Anyway, both of them were only used to prove theorem \ref{L infty ids}, which is exactly the asymptotical $log$-Holder continuity of the IDSs for the bos Operators.\newline

We used specific properties of Schredinger opertors on $[N]^d$ in only two more istances: the proofs of lemma \ref{psi continuity} and lemma \ref{average nu similar to Lebesgue}. They both rely on the same fact: that the maximal size of discontinuities of the IDSs goes to zero when the graph size $N$ goes to $\infty$, that is a weaker property than the asymptotical $\log$-Holder continuity of the IDSs.
\end{proof}

We end the section we a list of few examples, showing that the asymptotical uniform continuity of IDSs depends subtly on the geometry of the graphs.

\begin{example}
    Set $\mathcal{F}_1=\{G_n:n\in\mathbb{N}\}$, where $\mathcal{G}_n$ is the $n$-diamonds chain graph, shaped as explained in the next picture.\newline
    For $V=0$ the eigenspace of $0$ has dimension at least $n$, while $\#\mathcal{V}(\mathcal{G}_n)=3n+1$. Therefore the IDS has a discontinuity of size approximately 1/3 at 0, and $\mathcal{F}_1$ does not have asymptotycally $\omega$-uniformly continuous IDSs for any modulus of continuity $\omega$.
\end{example}

\begin{figure}[ht]
\centering
\begin{tikzpicture}[every node/.style={circle, draw, inner sep=1.5pt}, node distance=1.2cm and 1.2cm]

\node (a1) at (0,0) {};
\node (a2) [above right=of a1] {};
\node (a3) [below right=of a2] {};
\node (a4) [below right=of a1] {};

\draw (a1) -- (a2) -- (a3) -- (a4) -- (a1);

\node (b2) [above right=of a3] {};
\node (b3) [below right=of b2] {};
\node (b4) [below right=of a3] {};

\draw (a3) -- (b2) -- (b3) -- (b4) -- (a3);

\node (c2) [above right=of b3] {};
\node (c3) [below right=of c2] {};
\node (c4) [below right=of b3] {};

\draw (b3) -- (c2) -- (c3) -- (c4) -- (b3);

\node (d2) [above right=of c3] {};
\node (d3) [below right=of d2] {};
\node (d4) [below right=of c3] {};

\draw (c3) -- (d2) -- (d3) -- (d4) -- (c3);

\end{tikzpicture}
\caption{The 4-diamonds chain graph}
\end{figure}

\begin{example}
    Set $\mathcal{F}_2=\{G_n:n\in\mathbb{N}\}$, $\mathcal{V}(\mathcal{G}_n)=[N]\times[2]$, and two vertices are connected if and only if their $x$-coordinates differ exactly by 1.\newline
    Here, again for $V=0$, the eigenspace of $0$ has dimension has dimension at least $n$, while $\#\mathcal{V}(\mathcal{G}_n)=2n$. Hence the situation is the same one described in the previous example.
\end{example}

\begin{figure}[ht]
\centering
\begin{tikzpicture}[every node/.style={circle, draw, inner sep=1.5pt},
                    node distance=1.8cm]

\node (a1) at (0,0) {};
\node (a2) [right=of a1] {};
\node (a3) [below=of a2] {};
\node (a4) [below=of a1] {};

\draw (a1) -- (a2);
\draw (a4) -- (a3);
\draw (a1) -- (a3);
\draw (a4) -- (a2);

\node (b2) [right=of a2] {};
\node (b3) [below=of b2] {};

\draw (a3) -- (b2);
\draw (a2) -- (b3);
\draw (a3) -- (b3);
\draw (a2) -- (b2);

\node (c2) [right=of b2] {};
\node (c3) [below=of c2] {};

\draw (c3) -- (b2);
\draw (c2) -- (b3);
\draw (c3) -- (b3);
\draw (c2) -- (b2);

\node (d2) [right=of c2] {};
\node (d3) [below=of d2] {};

\draw (c3) -- (d2);
\draw (c2) -- (d3);
\draw (c3) -- (d3);
\draw (c2) -- (d2);

\end{tikzpicture}
\caption{The graph $G_5$ from the family $\mathcal{F}_2$}
\end{figure}

\begin{example}
    Set $\mathcal{F}_3=\{G_n:n\in\mathbb{N}\}$, where $\mathcal{V}(\mathcal{G}_n)=[2]\times[N]$ and two vertices are connected if and only if they are at distance 1.\newline 
    This family of graphs, has asymptotically $\log$-Holder continuous IDSs, as we will show in the next section.
\end{example}

\begin{figure}[ht]
\centering
\begin{tikzpicture}[every node/.style={circle, draw, inner sep=1.5pt},
                    node distance=1.8cm]

\node (a1) at (0,0) {};
\node (a2) [right=of a1] {};
\node (a3) [below=of a2] {};
\node (a4) [below=of a1] {};

\draw (a1) -- (a2) -- (a3) -- (a4) -- (a1);

\node (b2) [right=of a2] {};
\node (b3) [below=of b2] {};

\draw (a2) -- (b2) -- (b3) -- (a3) -- (a2);

\node (c2) [right=of b2] {};
\node (c3) [below=of c2] {};

\draw (b2) -- (c2) -- (c3) -- (b3) -- (b2);

\node (d2) [right=of c2] {};
\node (d3) [below=of d2] {};

\draw (c2) -- (d2) -- (d3) -- (c3) -- (c2);

\end{tikzpicture}
\caption{The graph $G_5$ from the family $\mathcal{F}_3$}
\end{figure}

\section{Continuity of the IDS for certain Graph Operators}

\begin{theorem}\label{IDS regularity graphs}
    Let $\mathcal{G}$ be a $\mathbb{Z}$-invariant finite section propagating graph of maximal degree $k$. Let $V:\mathcal{V}(\mathcal{G})\rightarrow [-\lambda,\lambda]$ be a periodic potential, then the IDS of the associated Schr\"odinger operator $H_{\mathcal{G}, V}$ is $C_{\lambda,k}$-$\log$ Holder continuous.
\end{theorem}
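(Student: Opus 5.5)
Following the strategy of Craig--Simon for $\mathbb{Z}^d$ and the one-dimensional argument recalled in Chapter~\ref{Chapter - Preliminaries}, I will prove a Thouless-type formula and then use non-negativity of the resulting exponent to bound concentration of the density of states $\tilde\sigma$.

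\textbf{Step 1: transfer matrices.} Fix a fundamental domain $X$ for the action of $f$ that is convex for the partial order, and set $X_j=f^j(X)$. Its size $m=\#X$ is finite because the graph has finite section. The propagating axioms (neighbours of $v$ lie between $f^{-1}(v)$ and $f(v)$, and $v\sim f(v)$) imply that every edge joins $X_j$ only to $X_{j-1}\cup X_j\cup X_{j+1}$. The eigenvalue equation $H\phi=E\phi$ therefore becomes a block three-term recursion
\begin{equation*}
B_j\phi_{j+1}+(D_j-E)\phi_j+B_{j-1}^*\phi_{j-1}=0 .
\end{equation*}
Because $v\sim f(v)$, each $B_j$ contains the ``shift'' edges. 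I would choose $X$, or the ordering inside it, so that $B_j$ is triangular with ones on the diagonal, hence invertible with determinant $1$. This gives $2m\times 2m$ transfer matrices $T_j(E)$ with $|\det T_j|=1$. Since $V$ is $f^p$-periodic and the graph is $f$-periodic, $T_j$ is periodic in $j$, say with period $p$. Let $\gamma(E)$ be the sum of the $m$ largest Lyapunov exponents of the monodromy $M(E)=T_{p-1}\cdots T_0$, divided by $p$. Equivalently, $\gamma(E)=\lim_n \frac1{np}\log\|\Lambda^m M(E)^n\|\ge 0$, because the exponents sum to zero.

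\textbf{Step 2: Thouless formula.} Truncate to the convex set $Y_n=X_0\cup\dots\cup X_{np-1}$. The truncated operator $H_{Y_n}$ has $nmp$ eigenvalues. Imposing Dirichlet conditions $\phi_{-1}=\phi_{np}=0$ shows that $\det(E-H_{Y_n})$ equals, up to the factor $\prod_j\det B_j=1$, the determinant of an $m\times m$ block of $M(E)^n$: the block sending $(\phi_0,0)$ to $\phi_{np}$. Hence
\begin{equation*}
\int\log|E-E'|\,d\tilde\sigma_{Y_n}(E')=\frac1{nmp}\log|\det (M^n)_{12\text{-block}}|.
\end{equation*}
For $\operatorname{Im}E\neq0$ I will show that this block grows like $\|\Lambda^mM^n\|$. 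Here $M(E)$ is a symplectic-type matrix with no unimodular eigenvalues (a Floquet eigenvalue of modulus one would give a bounded solution, i.e.\ spectrum at non-real $E$). Its generalized expanding eigenspace is then an $m$-dimensional Lagrangian-type subspace, transversal to $\{\phi_{-1}=0\}$ by the Weyl/Herglotz argument, since a decaying solution cannot satisfy the Dirichlet condition at non-real energy. On the other side, the moments argument of Lemma~\ref{Fourier - periodic vs box}, which only needs vanishing boundary proportion, here $O(1/n)$, gives $\tilde\sigma_{Y_n}\to\tilde\sigma$ weakly. Taking $n\to\infty$ yields
\begin{equation*}
\int\log|E-E'|\,d\tilde\sigma(E')=\tfrac1m\gamma(E)\ge 0
\end{equation*}
off the real axis. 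Both sides are subharmonic, so the inequality $\int\log|E-E'|\,d\tilde\sigma(E')\ge0$ extends to real $E$ by taking limits, with the left side upper semicontinuous.

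\textbf{Step 3: log-H\"older bound.} This is the Craig--Simon argument. Fix $E$ and $\eta<1/2$. Since $\operatorname{supp}\tilde\sigma\subset[-\lambda-2k,\lambda+2k]$, we have
\begin{equation*}
0\le\int\log|E-E'|\,d\tilde\sigma\le \tilde\sigma([E-\eta,E+\eta])\log\eta+\log(2\lambda+4k+1).
\end{equation*}
Therefore $\tilde\sigma([E-\eta,E+\eta])\le C_{\lambda,k}/\log(1/\eta)$, which is the claimed continuity, and $C$ depends only on $\lambda$ and $k$.

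The main obstacle is Step 2, and specifically the lower bound for the $m\times m$ Dirichlet block of $M^n$, together with the earlier need to make the $B_j$ invertible. If triangularity of $B_j$ fails for a general propagating graph, I would replace the determinant identity by a Schur-complement or resolvent argument: express $\log|\det(E-H_{Y_n})|$ via a Green's function recursion, and prove only the inequality $\liminf\ge0$. That inequality is all Step 3 needs.
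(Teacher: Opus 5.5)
Your argument is correct. It shares the paper's skeleton:
\begin{itemize}
\item after linearly extending the order, the convex fundamental domain becomes a block of size $m=r$;
\item the transfer matrices have determinant one;
\item $\det(E-H_{Y_n})$ is an $m\times m$ Dirichlet minor of the transfer product;
\item the truncated densities of states converge to $\tilde\sigma$;
\item Craig--Simon's Lemma~1.5, which you reprove in Step~3 (extending to real $E$ by upper semicontinuity), turns $\int\log|E-E'|\,d\tilde\sigma(E')\ge0$ into log-H\"older continuity.
\end{itemize}

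The genuine difference is in the inequality $\gamma(E)\le\int\log|E-E'|\,d\tilde\sigma(E')$, which is the direction that gives non-negativity. The paper, following Craig--Simon, gets only the opposite inequality from the Dirichlet minor. For this direction it tests $\bigwedge^r T_n(E)$ against a finite spanning set of decomposable $r$-vectors. It realizes each matrix coefficient as the Dirichlet determinant of a non-self-adjoint operator $\tilde H_n$ with a ``fake boundary potential''. It must then show that the complex density of states of $\tilde H_n$ converges to $\tilde\sigma$, with an approximation argument for the unbounded logarithm.

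You instead exploit periodicity. For $\operatorname{Im}E\neq0$ the monodromy has no unimodular Floquet multipliers, and the symplectic pairing $\mu\leftrightarrow\mu^{-1}$ gives $\dim E^u=\dim E^s=m$. Transversality of the Dirichlet subspaces then shows that the single self-adjoint Dirichlet minor already grows like $|\det(M|_{E^u})|^n$, i.e.\ like $\|\bigwedge^m M^n\|$. This yields the exact Thouless formula using only real-supported measures, so the limit at non-real $E$ is immediate. The price is reliance on the Floquet decomposition. The paper's route does not need it, and it is the one that survives for non-periodic ergodic potentials.

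Two small corrections:
\begin{itemize}
\item The transversality conditions your minor argument needs are $\{\phi_{-1}=0\}\cap E^s=\{0\}$ at the left end, so that $M^n$ carries the Dirichlet subspace onto $E^u$, and $E^u\cap\{\phi_0=0\}=\{0\}$ at the right end (transported to index $0$ by periodicity). The condition you wrote, $E^u\cap\{\phi_{-1}=0\}=\{0\}$, is not the relevant one. Both correct conditions follow from exactly the reason you give: a nonzero vector in either intersection would be an $\ell^2$ eigenvector, at non-real energy, of $H$ compressed to a half-line of blocks.
\item The fallback in your last paragraph is unnecessary. Identify $X_j$ and $X_{j+1}$ with $X$ via $f^j$ and $f^{j+1}$, and order $X$ by any linear extension of the partial order. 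The axiom $w\le f(v)$ for neighbours $w$ of $v$ then makes $B_j$ lower triangular, and $v\sim f(v)$ puts ones on its diagonal. Hence $\det B_j=1$ exactly, which is the paper's normalization $A_{n,n\pm r}=1$.
\end{itemize}
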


Before proving the theorem let us state some corollaries.

\begin{theorem}\label{main results graph}
    Let $\mathcal{G}_N$ be a sequence of truncated propagating graphs with increasing lengths and bounded maximal degree. Then $\mathcal{G}_N$ has asymptotically dense delocalization.
\end{theorem}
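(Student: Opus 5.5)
The plan is to reduce the statement to Theorem \ref{IDS regularity implies delocalization}, so the only thing to establish is that truncated propagating graphs of increasing length and bounded degree $k$ have asymptotically $\omega_C$-uniformly continuous IDSs with $C=C_{\lambda,k}$. I follow the same route used for box operators in Section \ref{Regularity of IDS}: compare the finite truncation with a periodic infinite extension whose IDS is controlled by Theorem \ref{IDS regularity graphs}, and then transfer the regularity via the moments method and Lemma \ref{Fourier controls cumulative}.

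\textbf{Step 1: periodic extension.} Fix a truncated propagating graph $\mathcal{G}_Y$, where $Y$ is convex in a propagating graph $\mathcal{G}$ with automorphism $f$, and let $V:Y\to[-\lambda,\lambda]$ be a potential. Say $Y$ contains $L$ disjoint fundamental domains. By convexity and the propagation properties, $Y$ lies between $f$-translates of a fundamental domain. I will pick a convex union $Y_0\subset Y$ of consecutive fundamental domains, of length $L-O(1)$, such that $\mathcal{V}(\mathcal{G})=\bigsqcup_{j\in\mathbb{Z}} f^{jp}(Y_0)$ for a suitable $p$. I then extend $V|_{Y_0}$ to an $f^{p}$-periodic potential $V^\infty$ on $\mathcal{V}(\mathcal{G})$. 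Theorem \ref{IDS regularity graphs} gives that $\mathcal{N}^\infty_V$ is $C_{\lambda,k}$-log Hölder continuous, uniformly in $V$ and in the section size. This uniformity is exactly what allows the sections to be unbounded.

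\textbf{Step 2: comparison of densities of states.} I rerun the proof of Lemma \ref{Fourier - periodic vs box}. For a vertex $v\in Y_0$ whose distance to $\mathcal{V}(\mathcal{G})\setminus Y_0$ is at least $K$, the operators $H_{\mathcal{G}_Y,V}^m$ and $H^m_{V^\infty}$ agree on $\delta_v$ for $m\le K$. Hence the first $K$ moments of the two spectral measures coincide, and the Taylor estimate gives $\|\widehat{\sigma_v-\sigma^\infty_v}\|_{L^1([-h,h])}\lesssim_{\lambda,k} e^{-h}$ once $K\sim h$.

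The main obstacle is amenability: I must show that the proportion of vertices of $Y$ within distance $K$ of the boundary is $o(1)$ as $L\to\infty$, uniformly in the section size. For this I use the partial order. Any neighbor $w$ of $v$ satisfies $f^{-1}(v)\le w\le f(v)$, so a path of length $K$ from $v$ stays between $f^{-K}(v)$ and $f^{K}(v)$. If $v$ lies in a fundamental domain $f^j(X)$ with $K<j<L-K$, this should keep the whole $K$-neighborhood of $v$ inside $Y$ by convexity. That leaves at most $2K$ of the $L$ domains as boundary layers, so the bad fraction is at most $\approx 2K/L+O(1/L)$. The remaining vertices of $Y\setminus Y_0$ are a bounded number of domains, also contributing $O(1/L)$.

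I expect the delicate point to be making "$f^j(X)$ lies between consecutive translates" rigorous. I will handle it by choosing $X$ to be itself convex, for instance $\{w: w_0\le w< f(w_0)\}$-type slabs, if the order allows this. Otherwise I would first prove that each vertex is comparable to some translate of a base point. Averaging over the sites then gives $\|\widehat{\tilde\sigma_V-\tilde\sigma^\infty_V}\|_{L^1([-h,h])}\lesssim h^2/L+e^{-h}$, so $d(\tilde\sigma_V,\tilde\sigma_V^\infty)\lesssim L^{-1/3}$.

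\textbf{Step 3: conclusion.} Lemma \ref{Fourier controls cumulative}, applied with $\mathcal{N}_1=\mathcal{N}^\infty_V$, gives $\|\mathcal{N}_V-\mathcal{N}^\infty_V\|_\infty\lesssim\omega_C(L^{-1/18})\to0$. The supports stay in $[-\lambda-2k,\lambda+2k]$, so this bound is uniform. This is the asymptotic $\omega_C$-uniform continuity required, and Theorem \ref{IDS regularity implies delocalization} then yields asymptotically dense delocalization. Its proof additionally uses that the maximal jump of the IDS tends to $0$, and this is implied by the estimate just obtained.
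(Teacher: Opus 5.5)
Your proposal is correct and is essentially the paper's proof. Both reduce to Theorem~\ref{IDS regularity implies delocalization}, compare with a periodic extension whose IDS is $C_{\lambda,k}$-log H\"older by Theorem~\ref{IDS regularity graphs}, and transfer this through the moment argument of Lemma~\ref{Fourier - periodic vs box} and Lemma~\ref{Fourier controls cumulative}, with a rate depending only on the length. Your Step~2 additionally makes explicit (via $f^{-K}(v)\le w\le f^{K}(v)$ and convexity) the boundary-layer count that the paper leaves implicit.

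The one step that rests on an unstated assumption is your claim that $Y\setminus Y_0$ consists of $O(1)$ fundamental domains. The paper makes the same assumption tacitly, since its Thouless section treats truncations as intervals of an $f$-compatible total order. It fails for a set that is convex only for the partial order. For example, take the strip $\mathbb{Z}\times\{1,\dots,r\}$ with $(x,i)\le(x',i')$ iff $x'-x\ge|i-i'|$ and the wedge $\{(x,i): a-i\le x\le b+i\}$. There the per-orbit counts differ by up to $2(r-1)$, so no single period block $Y_0\subset Y$ captures most of $Y$. Uniformity in the section size would therefore need an extra argument.
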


\begin{proof}
    In view of theorem \ref{IDS regularity implies delocalization}, it will be sufficient to show that the IDSs of Schr\"odinger Operators on $\mathcal{G}_N$ are asymptotically $C$-$\log$ Holder continuous, where $C$ depends only on the bound on the degrees $k$ and a fixed bound on the potentials $\lambda$.\newline

    The regularity of the IDSs is proved in the same way as theorem \ref{L infty ids}.\newline 
    Theorem \ref{IDS regularity graphs} has here the same role played by theorem \ref{log continuity}, providing the information that the IDSs of the infinitely extended graphs are indeed $\log$-Holder continuous.\newline
    Finally one can prove that, as the length increases, the IDSs of operators on $\mathcal{G}_N$ converge in $\ell^{\infty}$ to the ones of the extended operators, following the proof of lemma \ref{Fourier - periodic vs box} and concluding thanks to lemma \ref{Fourier controls cumulative}. We remark that the speed of convergence depends only on the length.
\end{proof}

We now turn our attention to the proof of theorem \ref{IDS regularity graphs}, which follows \cite{craig1983log}.\newline

The main idea is to prove a Thouless formula for $H_{\mathcal{G},V}$.\newline

The first step is to reduce the operator to a long (but finite) range operator on $\ell^2(\mathbb{Z})$. We extend the partial order given on $\mathcal{V}(\mathcal{G})$ to a total one, that we again denote by $\leq$. It easy to check that $\mathcal{G}$ is propagating also with respect to the total order.\newline
We fix a convex fundamental domain $X$, and we denote by $r$ its cardinality. Since the set of vertices $\mathcal{V}(\mathcal{G})$ is totally ordered, countable and discrete, we can identify it with $\mathbb{Z}$ (and $X$ with $[1,r]$). We can therefore assume $H$ to be an operator from $\ell^2(\mathbb{Z})$ to itself, and to be defined by
\begin{equation}
    Hu(n) = V(n)u(n) +  \sum_{i=-r}^{r} A_{n,n+i}u(n+i).
\end{equation}
We exploited that $N(v)\subset[n-r,n+r]$.\newline
When $H$ is a Schr\"odinger opertaor $A_{n,n+i}$ are the entries of the adjacency matrix of $\mathcal{G}$, but in the following it will be sometimes convenient to work with a more general class of operators, that we will refer to as $(2d+1)$-diagonal operators. In that generality, we will only assume the $A_{i,j}$'s to be bounded and that $A_{n,n+r}=A_{n,n-r}=1$ for any $n$. (Symmetry is not be required).\newline
Suppose now that $u$ is a formal solution to $Hu=Eu$, for $E\in\mathbb{C}$. Insisting on the fact that $A_{n,n+r}=1$ for any $n$, we get that
\begin{equation}
    u(n+r) = (E-V(n))u(n) - \sum_{i=-r}^{r-1} A_{n,n+i}u(n+i).
\end{equation}
We denote the vector $(u(a),u(a+1),\dots,u(b))^t$ by $u([a,b])$. The previous formula can be interpreted in terms of transfer matrices. Indeed,
\begin{equation}
    u[(n+1,n+2r)] = A_n(E) u([n,n+2r-1]),
\end{equation}
where

\begin{equation}
A_n(E)=
    \begin{tikzpicture}[baseline=(current bounding box.center)]
\matrix (m) [matrix of math nodes,nodes in empty cells,right delimiter={]},left delimiter={[} ]{
0  & 1 & & &  \\
\phantom{a_{n+2r-1,n}}  & \phantom{a_{n+2r-1,n}} & \phantom{a_{n+2r-1,n}} & \phantom{a_{n+2r-1,n}} &  \\
 & & & & \\
  & & & & \\
  & & & & \\
  & & & & \\
 & & & 0 & 1\\
a_{n,n+2r}  & & & & a_{n+2r-1,n+2r}\\
} ;
\draw[loosely dotted] (m-1-1)-- (m-7-4);
\draw[loosely dotted] (m-1-2)-- (m-7-5);
\draw[loosely dotted] (m-8-1)-- (m-8-4);
\end{tikzpicture},
\end{equation}

and we set (for later convenience) $a_{m,n+2r}=-A_{n+r,m}$ for $i\neq n$ and $a_{n+r,n+2r}=E - V(n+r) - A_{n+r,n+r}$.

\begin{remark}\label{degree of entries}
    For every $n$, $a_{n,n+r}$ is a monic monomial in $E$, while $a_{n,m}$ are constant in $E$, whenever $m\neq n+r$.
\end{remark}

We set $T_n(E)=A_n(E)\dots A_1(E)$, in such a way that $u([n+1,n+2r])=T_n(E)u([1,2r])$. We also set $T_{a,b}(E)=A_{b}(E)\cdots A_{a+1}(E)$. The periodicity of graph structure and of the potential (with possibly different periods), imply the periodicity of the sequence of matrices $A_n(E)$ for every fixed $E$. This guarantees the existence of Lyapunov exponents $\gamma_1(E)\geq\dots\geq\gamma_{2r}(E)$, that can be defined through the formula
\begin{equation}
    \gamma_1(E)+\dots+\gamma_{k}(E) = \lim_{n\rightarrow\infty} \frac1n \log \left\|{\bigwedge} ^k T_E(n) \right\|.
\end{equation}

Since $a_{n,n-r}=-1$, we have that $\det(A_n(E))=1$ for every $n$ and $E$. This implies that $\gamma_1(E)+\dots+\gamma_{2r}(E)=0$. The matrices $A_n(E)$ preserve  each a symplectic form, but not a common one. Therefore, $T_n(E)$ needs not to be symplectic.\newline

We define the growth rate, as
\begin{equation}
    \gamma(E) = \gamma_1(E)+\dots+\gamma_{r}(E),
\end{equation}
and we remark that $\gamma(E)\geq0$ for every $E$. We can now state the Tohuless formula.

\begin{lemma}\label{Thouless formula}
    For any Schr\"odinger operator $H$ on a propagating graph $\mathcal{G}$, with periodic potential $V$, and for any $E\in\mathbb{C}$ with $Im(E)\neq 0$, we have that
    \begin{equation}
        \int_{\mathbb{R}}\log|E'-E|d\tilde\sigma(E') = {\gamma(E)}.
    \end{equation}
\end{lemma}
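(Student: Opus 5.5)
I would follow the strategy the paper uses for the one-dimensional Thouless formula: express the truncated determinants through the transfer matrices, identify their zeros with the eigenvalues of truncated operators, and then pass to the limit. The natural object is not an entry of $T_n(E)$ but an $r\times r$ minor, or equivalently the norm of $\bigwedge^r T_n(E)$ tested on the subspace of initial data vanishing on the first $r$ coordinates. The plan is to set $H_n$ equal to the truncation of $H$ to $[r+1,r+n]$, which corresponds to a convex set and hence to a truncated propagating graph. A formal solution $u$ with $u([1,r])=0$ satisfies $u([n+r+1,n+2r])=0$ exactly when the restriction of $u$ to $[r+1,r+n]$ is an eigenvector of $H_n$. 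Writing $T_n(E)$ in $r\times r$ blocks, this says that $\det D_n(E)=0$, where $D_n(E)$ is the block sending $u([r+1,2r])$ to $u([n+r+1,n+2r])$.

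The first step is to show that $D_n(E)$ has determinant $\pm\det(E-H_n)$, up to boundary factors that do not depend on $E$. By Remark \ref{degree of entries}, $E$ enters each $A_j(E)$ only through a single monic entry. Consequently $\det D_n(E)$ is a polynomial of degree $n$ with leading coefficient $\pm1$. Its zeros, counted with multiplicity, coincide with the spectrum of $H_n$. To get multiplicities right I would prove the identity directly by a Schur-complement / Gaussian elimination computation, using $A_{n,n\pm r}=1$ to eliminate the variables band by band. This gives
\[
\frac1n\log|\det D_n(E)|=\int\log|E-E'|\,d\tilde\sigma_{H_n}(E').
\]

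The second step takes $n\to\infty$ on the spectral side. For $\mathrm{Im}E\neq0$, the function $\log|E-\cdot|$ is smooth and bounded on $[-\lambda-2k,\lambda+2k]$. Moreover, $\tilde\sigma_{H_n}\to\tilde\sigma$ weakly, by the moments argument of Lemma \ref{Fourier - periodic vs box}: the boundary consists of $O(r)$ sites out of $n$, and periodicity makes the average over a period well defined. So the right-hand side converges to $\int\log|E-E'|\,d\tilde\sigma(E')$.

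The third step, the dynamical side, is the main obstacle: we need $\frac1n\log|\det D_n(E)|\to\gamma_1+\dots+\gamma_r$. The upper bound is immediate, since $|\det D_n|\le\|\bigwedge^rT_n(E)\|$. For the lower bound I would use $\mathrm{Im}E\neq0$ in the following way.
\begin{itemize}
\item Since $H$ is self-adjoint, no nonzero solution with $E\notin\mathbb R$ can be $\ell^2$ on a half-line while being the restriction of an eigenvector. More quantitatively, the resolvent bound $\|(H_n-E)^{-1}\|\le|\mathrm{Im}E|^{-1}$ should show that the Oseledets subspace of the $r$ slowest directions (the decaying solutions) is transverse to the coordinate subspace $\{u([1,r])=0\}$, and similarly at the far end.
\item Periodicity lets me replace the Oseledets theorem by the eigenvalues of the monodromy matrix over one period.
\item The count needed is that exactly $r$ exponents are positive, hence $\gamma_r>0>\gamma_{r+1}$ for nonreal $E$. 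I would obtain this from a Wronskian/Green-identity argument adapted to the non-common symplectic structure (the exponents of $T_n$ and its adjoint system pair up), combined with the absence of $\ell^2(\mathbb Z)$ eigenvectors.
\item Transversality then gives $|\det D_n|\ge c\,\|\bigwedge^rT_n\|$, with $c$ independent of $n$ because the angles are controlled over a period.
\end{itemize}

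If this route proves awkward, the fallback is a Craig–Simon style argument. Both sides are subharmonic in $E$, and the two sides agree asymptotically in a neighbourhood of $E=\infty$, where the monic leading term dominates and $\gamma(E)\sim n\log|E|$. Harmonicity off $\mathbb R$ of both sides, together with $\limsup$ control, then forces equality on $\mathbb C\setminus\mathbb R$.
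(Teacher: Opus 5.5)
Your main route is sound, but after the first step it differs from the paper's.

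\textbf{What you share with the paper.} The paper also shows that $P(E)=\langle d_1\wedge\dots\wedge d_r,\bigwedge^rT_n(E)\,(d_1\wedge\dots\wedge d_r)\rangle$, your $\det D_n(E)$, is a monic polynomial of degree $n$ whose zeros are the eigenvalues of $H_n$. It handles multiplicities by density of simple spectrum plus continuity rather than by elimination. It then reads off $\int\log|E'-E|\,d\tilde\sigma\le\gamma(E)$, exactly as you do.

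\textbf{How the paper gets the reverse inequality.} It never looks at Oseledets or Floquet subspaces. It bounds $\|\bigwedge^rT_n(E)\|$ by finitely many matrix elements $\langle s_1,\bigwedge^rT_n(E)s_2\rangle$, with each $s_i$ of the form $(e_1+\tilde d_1)\wedge\dots\wedge(e_r+\tilde d_r)$. Each such element is realized as the Dirichlet determinant of a non-self-adjoint truncation $\tilde H_n$, which carries a ``fake'' boundary potential on two extra fundamental domains. The paper then quotes Craig--Simon for the weak convergence of the complex densities of states of $\tilde H_n$ to $\tilde\sigma$. It concludes by upper semicontinuity of $\log|\cdot-E|$.

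\textbf{How you get it.} You use self-adjointness directly. A nonzero solution in the stable Floquet subspace with $u([1,r])=0$ would be an $\ell^2$ eigenvector, with nonreal eigenvalue, of the self-adjoint truncation to $[r+1,\infty)$. Symmetrically, an unstable solution vanishing on $[n+r+1,n+2r]$ would be such an eigenvector of the truncation to $(-\infty,n+r]$. Since periodicity leaves only finitely many phases, this gives uniform transversality and hence $|\det D_n(E)|\asymp\|\bigwedge^rT_n(E)\|$.

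\textbf{What each route buys.} Yours yields more: convergence of $\frac1n\log|\det D_n(E)|$ itself, the count $\gamma_r>0>\gamma_{r+1}$, and harmonicity of $\gamma$ off $\mathbb{R}$. It also avoids the auxiliary non-self-adjoint operators and the convergence of their spectra. The paper's route needs no gap in the Lyapunov spectrum and no angle control. As in Craig--Simon, it therefore carries over to ergodic potentials, where uniform transversality is no longer available.

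\textbf{Two points to tighten.}

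First, absence of $\ell^2(\mathbb{Z})$ eigenvectors does not by itself exclude a zero exponent. A unimodular Floquet multiplier produces a bounded Bloch solution, not an $\ell^2$ one. To rule it out you need the Green identity: $\mathrm{Im}E\sum_{m=a}^{b}|u(m)|^2$ equals boundary terms, which stay bounded for a Bloch solution while the left side grows linearly. Floquet--Bloch theory would also do. Once that is done, the pairing of exponents is superfluous. Your half-line argument already shows that the sum of generalized eigenspaces of the monodromy with $|\mu|<1$ meets $\{u([1,r])=0\}$ trivially, so it has dimension at most $r$. The same holds for $|\mu|>1$, so both dimensions are exactly $r$.

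Second, the fallback does not close as stated. The difference $\gamma-\int\log|\cdot-E'|\,d\tilde\sigma(E')$ is nonnegative. However, a nonnegative harmonic function on a half-plane tending to $0$ at infinity need not vanish; $\mathrm{Im}(-1/E)$ is an example. So real-axis information would still be required. Moreover, harmonicity of $\gamma$ off $\mathbb{R}$ itself comes from the splitting in your main route, so the fallback is not independent of it.
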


The proof of lemma \ref{Thouless formula} follows \cite{craig1983log}, with minor changes, that we explain in detail in the following section \ref{Proof of Thouless formula}.\newline

Since $\gamma(E)\geq0$ for any $E$ such that $Im(E)\neq0$, lemma \ref{IDS regularity graphs} follows immediately from the following statement (see lemma 1.5 from \cite{craig1983log} for the proof). 

\begin{lemma}
    Let $\mu$ be a positive measure on $\mathbb{R}$, supported on $[-M,M]$. Suppose that for any complex number $E$, with $Im(E)\neq 0$ we have that
    \begin{equation}
        \int_{\mathbb{R}}\log|E'-E|d\mu(E') \geq 0.
    \end{equation}
    Then, the cumulative function $F(t)=\mu((-\infty,t])$ is $C_M$-$\log$ Holder continuos in $t$.
\end{lemma}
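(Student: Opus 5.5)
The plan is to read off log-Hölder continuity from the positivity of the logarithmic potential, via a one-sided upper bound on that potential near the real axis. Fix $t\in[-M,M]$ and $0<h<1/2$, and set $I=[t-h,t+h]$ and $E=t+ih$. The goal is to show
\begin{equation*}
\mu(I)\le \frac{C_M}{\log(1/h)} .
\end{equation*}
This bounds $F(t+h)-F(t-h)$, which is the log-Hölder statement. Note that we may take $\mu$ of bounded mass: for the density of states, $\mu$ is a probability measure. If a general positive measure is intended, the constant will depend on $\mu(\mathbb{R})$ as well.

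First I split the integral according to where $E'$ lies.

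\emph{Near part, $E'\in I$.} Here $|E'-E|\le \sqrt{h^2+h^2}=\sqrt2\,h$. Hence this part of the integral is at most $\mu(I)\log(\sqrt2 h)$, which is a large negative number times $\mu(I)$.

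\emph{Far part, $E'\in[-M,M]\setminus I$.} Here $|E'-E|\le 2M+1$. Hence this part is at most $\mu(\mathbb{R})\log^+(2M+1)$, a constant depending only on $M$ once the total mass is bounded.

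Combining the two bounds with the hypothesis $\int\log|E'-E|\,d\mu(E')\ge 0$ gives
\begin{equation*}
0\le \mu(I)\log(\sqrt2 h)+\mu(\mathbb{R})\log(2M+1).
\end{equation*}
Rearranging yields $\mu(I)\le \mu(\mathbb{R})\log(2M+1)/\log\bigl(1/(\sqrt2 h)\bigr)$. For $h<1/4$, say, this is at most $C_M/\log(1/h)$.

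For $1/4\le h<1/2$ the estimate is trivial, since then $\mu(I)\le\mu(\mathbb{R})\le C/\log(1/h)$ after enlarging $C$. This also handles small $|x-y|$ of any sign.

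The only delicate point is choosing the imaginary part comparable to the window size, $\operatorname{Im}E=h$. This is what makes $\log|E'-E|\approx\log h$ uniformly on $I$ while keeping $\operatorname{Im}E\neq0$ as the hypothesis requires. There is no limiting procedure onto the real axis, so no subharmonicity argument is needed. I expect no real obstacle beyond tracking the mass constant.

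Applied to $\tilde\sigma$ together with Lemma \ref{Thouless formula} and $\gamma(E)\ge0$, this gives the claimed $C_{\lambda,k}$-$\log$ Hölder continuity. Here $M=\lambda+2k$ and the mass is $1$.
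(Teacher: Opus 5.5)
Your proof is correct and is essentially the standard argument behind Lemma 1.5 of Craig--Simon, which the paper cites instead of proving: split $\int\log|E'-E|\,d\mu$ into the window, where the integrand is at most $\log(\sqrt2 h)$, and the rest, where it is at most $\log(2M+1)$, then invoke positivity. Taking $\operatorname{Im}E=h$ neatly avoids extending the hypothesis to real $E$. Your remark that the constant must depend on $\mu(\mathbb{R})$ is right, since the hypothesis is invariant under $\mu\mapsto m\mu$, and it is harmless here because $\tilde\sigma$ is a probability measure. The case $t\notin[-M,M]$ is also trivial: either $\mu(I)=0$, or $|t|\le M+h$ and the same bound $|E'-E|\le 2M+1$ holds.
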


\section{Proof of the Thouless formula}\label{Proof of Thouless formula}

Recalling that $\mathcal{V}(\mathcal{G})$ is identified with $\mathbb{Z}$, we denote by $\mathcal{G}_n$ the subgraph of $\mathcal{G}$ induced by $\{1+r,\dots,n+r\}$. Its length is $\lfloor n/r \rfloor$, and we denote it by $l$. We also denote by $H_n$ the Schr\"odinger operator induced by $\mathcal{G}_n$ and $V|_{\{r+1,\dots,n+r\}}$. The eigenvectors, with eigenvalue $E$, of the truncated operator $H_n$ are formal solution of the equation $Hu=Eu$ that, in addition, satisfy the boundary condition $u([1,r])=u([n+r+1,n+2r])=0$, restricted to $[r+1,n+r]$.\newline
Let us denote by $\tilde\sigma_{(n)}$ the density of states of $H_n$. As already stated in the proof of theorem \ref{main results graph}, $\tilde\sigma_{(n)}\rightarrow\tilde\sigma$ weakly, as $l\rightarrow\infty$.\newline

Our strategy is to prove a version of Thouless formula for the truncated operators and pass to the limit. We therefore focus on the operator ${\bigwedge} ^r T_n(E)$, trying to relate its norm to the eigenvalues of $H_n$.\newline

The next lemma states that, while testing with a specific pair of multi-vectors we observe the desired rate of growth. It is proved by the Thouless argument. We need some preliminary considerations. \newline

We recall that $T_n(E)$ is a linear map from $\mathbb{R}^{2r}$ to itself and we denote the standard basis of $\mathbb{R}^{2r}$ by $\{e_1,\dots,e_r,d_1,\dots,d_r\}$. We provide a description of the matrix entries of $T_n(E)$.

\begin{equation}\label{formula matrix entries}
    u(n+j) =  \sum_{i=1}^{2r} u(i) \sum_{m_0=i} a_{m_0,m_1}\cdots a_{m_{k-1},m_k}, 
\end{equation}
where we have that $i=m_0<m_1<\dots<m_k=n+j$ and the difference between any two consecutive terms of the previous inequality is at most $2r$. As a consequence of the previous formula, and remark \ref{degree of entries}, we have the following.

\begin{remark}\label{matrix entries polynomials}
    $\big(T_n(E)\big)_{i,j}$ is a polynomial in $E$, of degree at most $\lfloor(n+j-i)/r\rfloor$. Furthermore, the coefficient of degree $\lfloor(n+j-i)/r\rfloor$ is $1$ when $r$ divides $n+j-i$.
\end{remark}

\begin{lemma}\label{Thouless formula step 1}
For any $(2r+1)$-diagonal operator, we have that
    \begin{equation}\label{multi-vector test}
        \frac1{n}\log\left(\langle d_1\wedge\dots\wedge d_r, {\bigwedge} ^r T_n(E) (d_1\wedge\dots\wedge d_r)\rangle \right) = \int_{\mathbb{R}}\log|E'-E|d\tilde\sigma_{(n)}(E').
    \end{equation}
    In particular,
    \begin{equation}
        \int_{\mathbb{R}}\log|E'-E|d\tilde\sigma(E') \leq \gamma(E).
    \end{equation}
\end{lemma}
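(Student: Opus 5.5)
The plan is to identify the pairing $\langle d_1\wedge\dots\wedge d_r, \bigwedge^r T_n(E)(d_1\wedge\dots\wedge d_r)\rangle$ with the determinant of the $r\times r$ block of $T_n(E)$ that maps the last $r$ initial coordinates $u([r+1,2r])$ to the last $r$ final coordinates $u([n+r+1,n+2r])$. That determinant will then be compared with the characteristic polynomial of the truncated operator $H_n$, exactly as in the classical Thouless argument recalled in the preliminaries.

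\textbf{Step 1 (zeros).} Write $D_n(E)=\det\big(P_d T_n(E)\iota_d\big)$, where $\iota_d$ embeds $\mathrm{span}\{d_1,\dots,d_r\}$ and $P_d$ projects onto it. By Cauchy--Binet this is precisely the $(d,d)$ matrix coefficient of $\bigwedge^r T_n(E)$.

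Now $D_n(E)=0$ if and only if there is a nonzero $w$ with the following property. The solution $u$ with $u([1,r])=0$ and $u([r+1,2r])=w$ satisfies $u([n+r+1,n+2r])=0$.

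Such a $u$ is nonzero on $[r+1,n+r]$; otherwise propagating back would force $w=0$. So $u|_{[r+1,n+r]}$ is an eigenvector of $H_n$ with eigenvalue $E$, and conversely. The identification uses the boundary conditions stated before the lemma, namely that eigenvectors of $H_n$ are formal solutions vanishing on $[1,r]$ and on $[n+r+1,n+2r]$. Multiplicities must also match. The plan is to show that $\dim\ker(P_dT_n(E)\iota_d)$ equals $\dim\ker(H_n-E)$, since the map $w\mapsto u$ is injective. This gives equality of geometric multiplicities; because $H_n$ is self-adjoint, geometric and algebraic multiplicities coincide on the $H_n$ side.

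\textbf{Step 2 (degree and leading coefficient).} Using Remark \ref{matrix entries polynomials}, the entries of the relevant block are polynomials. The entry in position $(r+i, n+r+j)$ has degree at most $\lfloor (n+j-i)/r\rfloor$, with leading coefficient $1$ when $r\mid n+j-i$. Expanding the determinant, I expect the top-degree contribution to come from a single permutation, giving a monic polynomial of degree exactly $n=\dim\ell^2([r+1,n+r])$. To establish this I would track the formula (\ref{formula matrix entries}) and note that degree is gained only through the monic diagonal entries $a_{m,m+r}$. Since a degree-$n$ polynomial has at most $n$ roots, Steps 1 and 2 together yield
\begin{equation*}
D_n(E)=\prod_{i=1}^n (E-E_i^{(n)}),
\end{equation*}
where $E_i^{(n)}$ are the eigenvalues of $H_n$ counted with multiplicity. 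The multiplicity issue can be sidestepped by a perturbation and continuity argument in the entries $A_{i,j}$ for generic simple spectrum. I expect this bookkeeping of degrees, together with matching algebraic multiplicity of the roots of $D_n$ to eigenvalue multiplicity, to be the main obstacle.

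\textbf{Step 3 (conclusion).} Taking $\frac1n\log|\cdot|$ gives $\frac1n\sum_i\log|E-E_i^{(n)}|=\int\log|E'-E|\,d\tilde\sigma_{(n)}(E')$, which is (\ref{multi-vector test}), reading the left side with absolute values.

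For the inequality, bound the pairing by $\|\bigwedge^rT_n(E)\|$, whose growth rate is $\gamma_1+\dots+\gamma_r=\gamma(E)$. Then pass to the limit using $\tilde\sigma_{(n)}\to\tilde\sigma$ weakly. For $\mathrm{Im}(E)\neq0$ the integrand $\log|E'-E|$ is continuous and bounded on the common compact support, so the integrals converge.
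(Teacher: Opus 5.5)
Your proposal is correct and is essentially the paper's proof. In both, the pairing is the determinant of the $d$-block of $T_n(E)$; Remark~\ref{matrix entries polynomials} singles out a unique permutation giving degree exactly $n$ with leading coefficient $\pm1$ (the sign is harmless since you take absolute values); eigenvalues of $H_n$ are roots; density of simple spectrum plus continuity handles multiplicities; and the inequality follows from $|P(E)|\le\|\bigwedge^r T_n(E)\|$ and $\tilde\sigma_{(n)}\to\tilde\sigma$ weakly.

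One caution concerns your kernel-dimension route to multiplicities. It closes only in the self-adjoint case, and even then you must add that $D_n$ vanishes at $E_0$ to order at least $\dim\ker$ and then count degrees. The lemma is stated for possibly non-symmetric $(2r+1)$-diagonal operators and is later applied to the non-self-adjoint $\tilde H_n$, so the generic-simple-spectrum argument, which you also mention and which is what the paper uses, is the one to rely on.
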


\begin{proof}
    We set $P(E) = \langle d_1\wedge\dots\wedge d_r, {\bigwedge} ^r T_n(E) (d_1\wedge\dots\wedge d_r)\rangle$. Let $Q$ be the projection from $\mathbb{R}^{2r}$ to $span\{d_1,\dots,d_r\}$. One can observe that $P(E)$ is the detrminant of $QT_{n}(E)$, seen as an operator from $Q\mathbb{R}^{2r}$ to itself.\newline
    We claim that $P(E)$ is a monic polynomial in $E$ of degree exactly $n$.\newline
    Let $(R_{i,j})_{i,j}$ be the matrix representation of $QT_n(E)$ with respect to the basis $\{d_1,\dots,d_r\}$. Then,
    \begin{equation}
        \det(QT_n(E)) = \sum_{\sigma\in S_r} \text{sgn}(\sigma) \prod_{i=1}^r R_{i,\sigma(i)}.
    \end{equation}
    In view of remark \ref{matrix entries polynomials}, $R_{i,\sigma(i)}$ is a polynomial in $E$ and its degree is $\lfloor (n+i-\sigma(i))/r\rfloor$. We have that $\sum_{i=1}^r (n+i-\sigma(i))/r = n$, hence the degree of every term of the previous sum is less or equal than $n$. Equality holds if and only if $r$ divides $n+i+\sigma(i)$ for every $i$, and this is true for exactly one permutation, that we donote by $\bar\sigma$. Finally we conclude that $\prod_{i=1}^r R_{i,\bar\sigma(i)}$ is in fact monic, since each of its factors is, again, thanks to remark \ref{matrix entries polynomials} and the fact that $r$ divides $n+i+\sigma(i)$.\newline

    We claim that if $E$ is an eigenvector of $H_{l}$, the $P(E)=0$.\newline
    If $E$ is an eigenvalue of $H_n$, there exists a solution $u$ of $Hu=Eu$, satisfying $u([1,r])=u([n+1+r, n+2r])=0$. This implies that $u[(1,2r)]\in Q\mathbb{R}^{2r}$. Then, $QT_{n}(E) u[(1,2r)] = Qu[(n+1,n + 2r)] = u([n+1+r, n+2r])=0$. Proving that $P(E)=0$ since $QT_{n}(E)$ is not invertible from $Q\mathbb{R}^{2r}$ to itself. \newline
    
    Suppose that $H_n$ has simple spectrum, in this case the number of distinct eigenvalues matches the number of roots of $P$, determining them uniquely. Hence $P(E)$ has to satisfying equation \ref{multi-vector test}.\newline
    Simplicity of the spectrum is a dense condition in the space of potentials and both terms of the equality \ref{multi-vector test} are continuous with respect to the potential. This concludes the proof.
\end{proof}

To control the norm of ${\bigwedge} ^r T_n(E)$ we need to test it agaist a spanning set of pairs of $r$-vectors. For the following family of $r$-vectors it is possible to reduce this computation to an application of lemma \ref{Thouless formula step 1}. This is done by extending the operator $H_{n}$ close to the boundary with a 'fake potential'.\newline
We define the family of $r$-vectors
\begin{equation}
\tilde S=\left\{  (e_1+\tilde d_1)\wedge\dots\wedge (e_r+\tilde d_r) : \tilde d_i\in span\{d_1,\dots,d_r\} \text{ for } i=1,\dots,r   \right\}.
\end{equation}
One can check that $\tilde S$ is a generating set of $\bigwedge^r(\mathbb{R}^{2r})$.\newline
We extract a finite generating set $S$ out of $\tilde S$.\newline

\begin{lemma}
    Let $E$ be a complex number with $Im(E)\neq0$ and $s_1,s_2$ be two $r$-vectors in $S$. Then, there exists a sequence of measures $\tilde\sigma_{(n),s_1,s_2}$ on $\mathbb{C}$, sharing a common bounded support and weakly converging to $\tilde\sigma$, such that
    \begin{equation}\label{multi-vector test 2}
        \frac1{n+2r}\log\Big(\langle s_1, {\bigwedge} ^r T_n(E) s_2\rangle \Big) =  \int_{\mathbb{R}}\log|E'-E|d\tilde\sigma_{(n),s_1,s_2}(E').
    \end{equation}
\end{lemma}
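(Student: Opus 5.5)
The plan is to reduce the statement to Lemma \ref{Thouless formula step 1}. The reduction interprets the pairing $\langle s_1, \bigwedge^r T_n(E) s_2\rangle$ as the principal $d$-minor of a transfer product for a slightly longer $(2r+1)$-diagonal operator. This longer operator agrees with $H$ on $[r+1,n+r]$ and carries extra ``fake'' rows near each end.

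Write $s_2=(e_1+\tilde d_1)\wedge\dots\wedge(e_r+\tilde d_r)$. The columns $e_i+\tilde d_i$ are exactly the initial data $u([1,2r])$ whose first block is $e_i$ and whose second block is prescribed. The first step is to construct $r$ additional transfer matrices $B_1,\dots,B_r$ of the companion form $A_m(E)$. Each has the same structure: last-row entries constant in $E$ except one monic-monomial entry $E-\text{const}$, and corner entry $-1$, so that $\det=1$. The constants in the last rows are chosen so that $B_r\cdots B_1$ maps $span\{d_1,\dots,d_r\}$ onto $span\{e_i+\tilde d_i\}$ modulo lower order terms.

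More concretely, I would prepend $r$ fictitious sites to the chain. The corresponding $A_{m}$ have off-diagonal coefficients $A_{m,m+i}$ chosen to realize the linear map $d_i\mapsto e_i+\tilde d_i$ up to the entries forced to depend on $E$. By the same mechanism I would append $r$ fictitious sites on the right, so that the left pairing with $s_1$ becomes the pairing with $d_1\wedge\dots\wedge d_r$. The result is a $(2r+1)$-diagonal operator $H'$ of size $n+2r$, bounded coefficients, and $A_{m,m\pm r}=1$, with
\begin{equation*}
\langle s_1, \textstyle\bigwedge^r T_n(E)s_2\rangle = c\,\langle d_1\wedge\dots\wedge d_r, \textstyle\bigwedge^r T'_{n+2r}(E)\, d_1\wedge\dots\wedge d_r\rangle
\end{equation*}
for some nonzero constant $c$. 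Since $S$ is finite, $c$ is bounded above and below uniformly. If the fake rows introduce $E$-dependence mismatched with $s_i$, I would instead absorb it by letting the fake diagonal entries be $E$-independent, shifting $E$ only at the designated monomial positions.

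Applying Lemma \ref{Thouless formula step 1} to $H'$ then gives the formula. Here $\tilde\sigma_{(n),s_1,s_2}$ is the normalized eigenvalue counting measure of the truncation of $H'$, and the constant $c$ contributes $O(1/n)$, which is absorbed by a tiny adjustment or by a point mass. Lemma \ref{Thouless formula step 1} was proved through monic-polynomial root counting and never used self-adjointness. The eigenvalues may therefore be complex, which is why the measures live on $\mathbb{C}$; they are uniformly bounded because the coefficients are bounded, so the measures share a common compact support.

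Weak convergence to $\tilde\sigma$ follows by the moment method, as in Lemma \ref{Fourier - periodic vs box}: $H'$ differs from $H_n$ only in $O(r)$ rows, so $\frac{1}{n+2r}\mathrm{tr}(H'^k)-\frac1n\mathrm{tr}(H_n^k)=O_k(1/n)$. Because the supports lie in a common compact subset of $\mathbb{C}$, polynomial moments determine weak limits there, using both $z$ and $\bar z$ moments or a subharmonicity argument. The main obstacle is this last point for non-self-adjoint $H'$: eigenvalue moments only control holomorphic moments. I would circumvent it by showing that the two polynomials $P'(E)$ and $P_n(E)$ have ratio whose $\frac1n\log$ tends to $0$ for $Im(E)\neq0$. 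That in turn follows from Cramer's rule: the fake boundary rows change $P$ by $O(1)$ multiplicative factors away from the real axis.
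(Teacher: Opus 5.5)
Your construction is the paper's. You append fake sites so that $\widetilde T_{-r,0}(E)d_i=e_i+d_i^b$ and $\widetilde T_{n,n+r}(E)^t d_i=e_i+d_i^\#$. You then apply Lemma~\ref{Thouless formula step 1}, which indeed never uses self-adjointness, to the resulting operator $\widetilde H_n$ of size $n+2r$, and take its eigenvalue counting measure.

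One point should be stated cleanly rather than hedged. The fake constants may depend on the fixed $E$, including the constant $c_j$ in each monic entry $E'-c_j$. Concretely, take $c_j=E-\tilde d_j(j)$, set the other last-row entries to $\tilde d_k(j)$, and put zero coefficients on newly computed values. Then $B_r\cdots B_1$ evaluated at $E$ sends $d_i$ to $e_i+\tilde d_i$ exactly, and likewise at the right end up to a sign. So $|c|=1$, with no ``lower order terms'' or point masses to absorb: you apply Lemma~\ref{Thouless formula step 1} in the variable $E'$ and evaluate at $E'=E$. This is also what the paper does; its support bound depends on $E$.

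The gap is in the weak convergence $\tilde\sigma_{(n),s_1,s_2}\to\tilde\sigma$. The paper does not prove this but imports it from Lemma 3.3 of \cite{craig1983log}. Your substitute fails as written, for two reasons.

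First, the mixed moments $\int z^j\bar z^k\,d\mu_n$ are not traces of $\widetilde H_n^{\,j}(\widetilde H_n^{*})^k$ when $\widetilde H_n$ is not normal. So only holomorphic moments are available, and these do not determine a measure on $\mathbb{C}$.

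Second, the Schur complement gives $\det(z-\widetilde H_n)=\det(z-H_n)\det S_n(z)$, with $S_n(z)$ a $2r\times 2r$ matrix. Since the interior block of $\widetilde H_n$ is the self-adjoint $H_n$, the bound $\|(z-H_n)^{-1}\|\le|\mathrm{Im}\,z|^{-1}$ gives $|\det S_n(z)|\le C(z)$ uniformly in $n$. There is no matching lower bound. The fake diagonal entries are non-real (they contain $E$), so $\widetilde H_n$ can have non-real eigenvalues, even near $E$ itself, and $\det S_n$ vanishes there. Hence your claim that $\frac1n\log|P'/P_n|\to0$ for $\mathrm{Im}\,z\neq0$ is unjustified.

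You can repair this by combining the one-sided bound with your trace comparison, which correctly gives equality of holomorphic moments in the limit. Let $\mu$ be a weak subsequential limit, and write $U_\nu(z)=\int\log|z-w|\,d\nu(w)$.
\begin{enumerate}
\item $L^1_{\mathrm{loc}}$ convergence of the potentials, together with the one-sided bound, gives $U_\mu\le U_{\tilde\sigma}$ on $\mathbb{C}\setminus\mathbb{R}$.
\item Expanding $\log|z-w|=\log|z|-\mathrm{Re}\sum_{k\ge1}w^k/(kz^k)$, the moment identity gives $U_\mu=U_{\tilde\sigma}$ for $|z|$ large.
\item $U_\mu-U_{\tilde\sigma}$ is subharmonic and nonpositive on each half-plane and vanishes on an open subset. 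The maximum principle then forces $U_\mu=U_{\tilde\sigma}$ off $\mathbb{R}$.
\item Taking Laplacians, $\mu$ is carried by $\mathbb{R}$, and equality of moments gives $\mu=\tilde\sigma$.
\end{enumerate}

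Note also that the argument after the lemma only needs an upper bound. The one-sided Schur estimate at the fixed $E$ already gives $\limsup_n\frac1n\log|\langle s_1,\bigwedge^r T_n(E)s_2\rangle|\le\int\log|E-x|\,d\tilde\sigma(x)$ directly, with no measures on $\mathbb{C}$ at all.
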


\begin{proof}
We set $s_1=(e_1+d_1^{b})\wedge\dots\wedge (e_r+d_r^{b})$ and $s_2=(e_1+d_1^{\#})\wedge\dots\wedge (e_r+d_r^{\#})$, for $d^b_i,d^{\#}_i\in span\{d_1,\dots,d_r\}$ for $i=1,\dots,r$.\newline

We start by defining a modified truncated operator $\tilde H_n$. First we extend the truncated graphs by two copies of the fundamental domain: we set $\tilde G_n$ to be the graph induced by $\{1, n+2r\}$. And we define the operator $\tilde H_n$ again by the formula
\begin{equation}
    \widetilde H_l u(m) = \widetilde V(m)u(m) +  \sum_{i=-r}^r \widetilde A_{m,m+i}u(m+i),
\end{equation}
where $\widetilde V(m)$ coincides with $V(m)$ for $m\in[1+r,n+r]$ and $\tilde A_{m,m+i}=A_{m,m+i}$ when $m,m+i\in[1+r,n+r]$. Out od $[1+r,n+r]$ we define $\widetilde A_{m,m+i}$ and $\widetilde V(n)$ artificially: this is what we called boundary fake potential.\newline
We define the transfer matrices $\widetilde T_{a,b}(E)$ for $\widetilde H_n$, and the corresponding entries $\tilde a_{i,m}$. We have that $\tilde a_{i,m} = a_{i,m}$ for any $m\in[1+2r, n+2r]$. Defining $\tilde a_{i,m}$ for $m\in[1+r,2r]$ and $m\in[n+1+2r,n+3r]$, implicitly extends the definition of $A$ and $V$ to $\widetilde A$ and $\widetilde V$.\newline
In particular, for $j \in [1,r]$ we define $\tilde a_{i,r+j} = d_{i}^b(j)$ for $i\in[1,r]$, and $\tilde a_{i,r+j} = 0$ for $i\not\in [1,r]$. Equation \ref{formula matrix entries} ensure that $\widetilde T_{-r,0}(E)$ sends $d_i$ to $e_i+d_i^b$ for any $i=1,\dots,r$. Similarly, we can define $\tilde a_{i,m}$ for $m\in [n+1+2r,n+3r]$ in such a way that $\widetilde T_{n,n+r}(E)^t$ sends $d_i$ to $e_i+d_i^{\#}$ for any $i=1,\dots,r$.\newline

We remark that $\tilde H_n$ is not a self-adjoint operator, hence its spectrum does not necessarily lie on $\mathbb{R}$. Nevertheless its spectral radius is bounded by a constant that depends only on $\lambda$, $r$, $E$ and the fixed vectors $d_1^{\#}, \dots,d_r^{\#},d_1^b,\dots,d^b_r$.\newline
We define $\tilde\sigma_{(n),*}$ to be the density of states of $\tilde H_n$. This measures, in view of the previous discussion, share a common bounded support in $\mathbb{C}$. They converge weakly to $\tilde\sigma$ in view of lemma 3.3 from \cite{craig1983log} (the proof works unchannged for our setting).\newline

We are left to prove equation \ref{multi-vector test 2}.\newline

\begin{align}
    &\frac1{n+2r}\log\Big(\left\langle s_1, {\bigwedge} ^r T_n(E) s_2\right\rangle \Big)\\
    =&\frac1{n+2r}\log\Big(\left\langle (e_1+d_1^{\#})\wedge\dots\wedge (e_r+d_r^{\#}),{\bigwedge} ^r \widetilde T_n(E) ((e_1+d_1^b)\wedge\dots\wedge (e_r+d_r^b))\right\rangle \Big) \\
    =& \frac1{n+2r} \log\Big(\left\langle (d_1\wedge\dots\wedge d_r),{\bigwedge} ^r  \widetilde T_{-r,n+r}(E) (d_1\wedge\dots\wedge d_r)\right\rangle \Big)\\
    =& \int_{\mathbb{R}}\log|E'-E|d\tilde\sigma_{(n),s_1,s_2}(E').
\end{align}
For the last equality we applied lemma \ref{Thouless formula step 1}, to the operator $\tilde H_n$.
\end{proof}

Since $S$ is a finite spanning set, we have that
\begin{equation}
    \left\| {\bigwedge} ^r T_n(E) \right\| \leq C_S \sup_{s_1,s_2\in S} \left|\left\langle s_1, {\bigwedge} ^r T_n(E) s_2\right\rangle \right|.
\end{equation}
Hence,

\begin{align}
     {\gamma(E)} &\leq \lim_{n\rightarrow\infty} \frac1{n} \log \left\| {\bigwedge} ^r T_n(E) \right\|\\
    &\leq \lim_{n\rightarrow\infty} \frac{\log(C_S)}{n} + \limsup_{n\rightarrow\infty} \frac1{n} \sup_{s_1,s_2\in S} \left|\left\langle s_1, {\bigwedge} ^r T_n(E) s_2\right\rangle \right|\\
    &\leq  \sup_{s_1,s_2\in S}   \lim_{n\rightarrow\infty}  \frac{n+2r}n\int_{\mathbb{R}}\log|E'-E|d\tilde\sigma_{(n),s_1,s_2}(E')\\
    &= \int_{\mathbb{R}}\log|E'-E|d\tilde\sigma(E').
\end{align}

Where the last equality follows from the weak convergence of $\tilde\sigma_{(n),s_1,s_2}$ to $\tilde\sigma$. We remark that $\log(\cdot-E)$ is not a bounded function on $\mathbb{C}$, but the convergence of the integral follows from an approximation argument, see lemma 3.4 from \cite{craig1983log}.

\bibliographystyle{siam}
\bibliography{Bibliography}


\end{document}